\let\save@mathaccent\mathaccent
\newcommand*\if@single[3]{%
  \setbox0\hbox{${\mathaccent"0362{#1}}^H$}%
  \setbox2\hbox{${\mathaccent"0362{\kern0pt#1}}^H$}%
  \ifdim\ht0=\ht2 #3\else #2\fi
  }
\newcommand*\rel@kern[1]{\kern#1\dimexpr\macc@kerna}
\newcommand*\widebar[1]{\@ifnextchar^{{\wide@bar{#1}{0}}}{\wide@bar{#1}{1}}}
\newcommand*\wide@bar[2]{\if@single{#1}{\wide@bar@{#1}{#2}{1}}{\wide@bar@{#1}{#2}{2}}}
\newcommand*\wide@bar@[3]{%
  \begingroup
  \def\mathaccent##1##2{%
    \let\mathaccent\save@mathaccent
    \if#32 \let\macc@nucleus\first@char \fi
    \setbox\z@\hbox{$\macc@style{\macc@nucleus}_{}$}%
    \setbox\tw@\hbox{$\macc@style{\macc@nucleus}{}_{}$}%
    \dimen@\wd\tw@
    \advance\dimen@-\wd\z@
    \divide\dimen@ 3
    \@tempdima\wd\tw@
    \advance\@tempdima-\scriptspace
    \divide\@tempdima 10
    \advance\dimen@-\@tempdima
    \ifdim\dimen@>\z@ \dimen@0pt\fi
    \rel@kern{0.6}\kern-\dimen@
    \if#31
      \overline{\rel@kern{-0.6}\kern\dimen@\macc@nucleus\rel@kern{0.4}\kern\dimen@}%
      \advance\dimen@0.4\dimexpr\macc@kerna
      \let\final@kern#2%
      \ifdim\dimen@<\z@ \let\final@kern1\fi
      \if\final@kern1 \kern-\dimen@\fi
    \else
      \overline{\rel@kern{-0.6}\kern\dimen@#1}%
    \fi
  }%
  \macc@depth\@ne
  \let\math@bgroup\@empty \let\math@egroup\macc@set@skewchar
  \mathsurround\z@ \frozen@everymath{\mathgroup\macc@group\relax}%
  \macc@set@skewchar\relax
  \let\mathaccentV\macc@nested@a
  \if#31
    \macc@nested@a\relax111{#1}%
  \else
    \def\gobble@till@marker##1\endmarker{}%
    \futurelet\first@char\gobble@till@marker#1\endmarker
    \ifcat\noexpand\first@char A\else
      \def\first@char{}%
    \fi
    \macc@nested@a\relax111{\first@char}%
  \fi
  \endgroup
}
\DeclarePairedDelimiter\norm\lVert\rVert%
\newtheorem{theorem}{Theorem}[section]
\newtheorem{lemma}[theorem]{Lemma}
\newtheorem{corr}[theorem]{Corollary}
\theoremstyle{definition}
\newtheorem{defi/}{Definition}
\newenvironment{defi}
  {%
   \pushQED{\qed}\begin{defi/}}
  {\popQED\end{defi/}}
\DeclareMathOperator{\diam}{diam}
\DeclareMathOperator{\supp}{supp}
\renewcommand{\i}{{\mathrm{i}}}
\renewcommand{\d}{{\mathrm{d}}}
\newcommand{\e}{{\mathrm{e}}}
\newcommand{\bfr}{{\mathbf{r}}}
\newcommand{\vertiii}[1]{{\left\vert\kern-0.25ex\left\vert\kern-0.25ex\left\vert #1
    \right\vert\kern-0.25ex\right\vert\kern-0.25ex\right\vert}}
\theoremstyle{remark}
\newtheorem{rem/}[theorem]{Remark}
\newenvironment{rem}
  {%
   \pushQED{\qed}\begin{rem/}}
  {\popQED\end{rem/}}
\numberwithin{equation}{section}
\renewcommand\subsection{\@startsection{subsection}{2}%
  \z@{.5\linespacing\@plus.7\linespacing}{-.5em}%
  {\normalfont\scshape}}
\begin{document}

\title{``Bootstrap domain of dependence'':
  bounds and time decay of solutions of
  the wave equation}
\author{Thomas G. Anderson}
\address{Applied \& Comp.\ Mathematics, California Institute of
Technology, Pasadena, CA}
\curraddr{Department of Mathematics, University of Michigan, Ann Arbor, MI}
\email{tganders@umich.edu}

\author{Oscar P. Bruno}
\address{Applied \& Comp.\ Mathematics, California Institute of
Technology, Pasadena, CA}
\email{obruno@caltech.edu}
\thanks{This work was supported by NSF and DARPA through contracts DMS-1714169
and HR00111720035, and the NSSEFF Vannevar Bush Fellowship under
contract number N00014-16-1-2808.}

\subjclass[2010]{Primary 35B40, 35L05, 45M05}
\date{\today}
\keywords{boundary integral equations, wave equation, decay, Huygens' principle}
\begin{abstract}
  This article introduces a novel ``bootstrap domain-of-dependence''
  concept, according to which, for all time following a given
  illumination period of arbitrary duration, the wave field scattered
  by an obstacle is encoded in the history of boundary scattering
  events for a time-length equal to the diameter of the obstacle,
  measured in time units. Resulting solution bounds provide estimates
  on the solution values in terms of a short-time history record, and
  they establish super-algebraically fast decay (i.e., decay faster
  than any negative power of time) for a wide range of scattering
  obstacles---including certain types of ``trapping'' obstacles whose
  periodic trapped orbits span a set of positive volumetric measure,
  and for which no previous fast-decay theory was available.  The
  results, which do not rely on consideration of the Lax-Phillips
  complex-variables scattering framework and associated resonance-free
  regions in the \emph{complex} plane, utilize only \emph{real-valued}
  frequencies, and follow from use of Green functions and boundary
  integral equation representations in the frequency and time domains,
  together with a certain $q$-growth condition on the frequency-domain
  operator resolvent.
\end{abstract}

\maketitle
\markright{\MakeUppercase{Bootstrap Domain-of-dependence bounds \& decay for wave
solutions}}

\section{Introduction\label{Introduction}}

We present a novel ``bootstrap domain-of-dependence'' concept
(bootstrap DoD) and associated bounds on solutions of the scattering
problem for the wave equation on an exterior region $\Omega^c$.  The
classical domain-of-dependence concept~\cite{John} for a space-time
point $(\bfr_0, T_0)$ concerns the initial and boundary values that
determine the free-space solution at that space-time point. The
bootstrap DoD ending at a given time $T_0$, in contrast, involves
field values on the scattering boundary
$\Gamma = \partial \Omega= \partial \Omega^c$ over the length of time
$T_*$ preceding $T_0$, where $T_*$ equals the amount of time that is
sufficient for a wave in free space to traverse a distance equal to
the diameter of the obstacle. As discussed in this paper, in absence
of additional illumination after time $t = T_0 - T_*$, the bootstrap
DoD completely determines the scattered field on $\Gamma$ for all
times $t\geq T_0$.  Further, solution bounds that result from the
bootstrap DoD approach provide estimates on the solution values in
terms of a short-time history record, and they establish
super-algebraically fast decay (i.e., decay faster than any negative
power of time) for a wide range of scattering obstacles---including
certain types of trapping obstacles for which no previous fast-decay
theory was available. These results, whose proofs do not rely on
consideration of the Lax-Phillips complex-variables scattering
framework and associated resonance-free regions in the \emph{complex}
plane, utilize only \emph{real-valued} frequencies, and follow from
use of Green functions and boundary integral equation representations
in the frequency and time domains, together with a certain $q$-growth
condition on the frequency-domain operator resolvent. As discussed in
\Cref{3d_decay_rmk_iii}, on the basis of some of the materials in
\Cref{sec:theory_part_i} and most of those in
\Cref{sec:theory_part_ii}, related but less informative
super-algebraically decaying solution bounds can also be obtained by
relying on the complete time-history of the incident field instead of
the bootstrap DoD.

(Estimates based on the {\em classical} concept of domain of
dependence have previously been provided, including, in particular, a
``domain-of-dependence inequality'' for the problem of scattering by
obstacles~\cite[Thm.\ 5.2]{Wilcox:62}. We note however, that that is
not a decay result and, in fact, it plays an important but very
different role: by establishing that at most exponential {\em growth}
can occur, it provides the necessary stability elements in a proof of
existence and uniqueness based on energy considerations.)

In addition to the decay-problem application, the bootstrap DoD
estimates introduced in this paper provide a valuable tool in
connection with the numerical analysis of certain frequency-time
``hybrid'' numerical methods introduced recently~\cite{Anderson:20}
for the time-domain wave equation, which proceed by Fourier
transformation from the time domain into the frequency domain.  These
hybrid solvers evaluate solutions of the wave equation by partitioning
incident wave fields into a sequence of smooth compactly-supported
wave packets followed by Fourier transformation in time for each
packet, and they incorporate a number of novel approaches designed to
accurately capture all high-frequency behavior---both in time, for
high frequencies, and in frequency, for large time. The overall
solution is then reconstructed as a sum of many wave equation
solutions, each with a distinct center in time. Unlike the
aforementioned complete time-history bounds, the bootstrap DoD bounds
introduced in this paper provide a natural theoretical basis for
efficient truncation of this sum while meeting a prescribed accuracy
tolerance.

Returning to the decay problem we note that, in contrast with previous
approaches, that typically rely on energy arguments and/or on analytic
continuation of the frequency-domain resolvent, the method proposed in
this paper is based on use of boundary integral equations for the
frequency-domain and time-domain problems (along with the Fourier
transform that relates them), and it characterizes the
multiple-scattering effects and trapping character of domain
boundaries (which are described by means of billiard-ball
trajectories, see Section~\ref{sec:detailed_review}) in terms of the
growth of the norm of the resolvent for the frequency-domain problem
as the {\em real} frequency $\omega$ grows without bound. In detail,
without recourse to complex-analytic methods, the new approach to the
decay problem proceeds on the basis of the bootstrap DoD formulation
introduced in Section~\ref{sec:theory_part_i}, which, as mentioned
above, captures the impact of the complete time history up to a given
time $t$ in terms of the history over the time interval, immediately
preceding $t$, of time-length $T_*$ required for free-space
propagation across a distance equal to the diameter of the
obstacle. The resulting bounds provide super-algebraically-fast
time-decay energy estimates (decaying faster than any negative power
of time) for a wide range of (both trapping and non-trapping) obstacles
satisfying a certain $q$-growth condition on the resolvent operator as
a function of the \emph{real} frequency $\omega$. In particular, this
theory establishes the first rapid decay estimates for specific types
of trapping obstacles, such as those depicted in
\Cref{fig:3d_connected_trapping}, which are not equal to unions of
convex obstacles---and whose periodic trapped orbits, in fact, span a
set of positive volumetric measure.

\subsection{Overview}\label{sec:overview}
This paper is organized as follows. A brief but somewhat detailed
overview of previous decay results for the obstacle scattering problem
is presented in \Cref{sec:detailed_review}.  Then, after preliminaries
presented in \Cref{sec:prelim}, \Cref{sec:theory_part_i} introduces
the bootstrap domain-of-dependence formulation and it presents
\Cref{3d_decay_thm}, which shows that for obstacles satisfying the
$q$-growth condition, if the Neumann trace is ``small'' on a (slightly
extended) bootstrap DoD time interval, then, absent additional
illumination during and after that interval, it must remain
``permanently small'', that is, small for all times subsequent to that
interval.  \Cref{sec:theory_part_ii} then extends the results of
\Cref{sec:theory_part_i}, establishing, in \Cref{3d_decay_thm_ii} and
Corollaries~\ref{decay_corr} and~\ref{decay_corr_energy}, various
super-algebraically fast-decay results, including super-algebraic decay
of the local energy (eq.~\eqref{local_energy} below) for all
obstacles satisfying the $q$-growth condition.

\subsection{Additional Background on Decay
  Theory}\label{sec:detailed_review}

In order to provide relevant background concerning decay estimates we
briefly review the literature on this long-standing problem, and we
note the significant role played in this context by the shape of the
obstacle $\Omega$. Previous studies of the decay problem establish
exponential decay of solutions for certain classes of domain
shapes---including star-shaped domains~\cite{Morawetz:61}, domains
that are ``non-trapping'' with respect to
rays~\cite{Morawetz:77,Melrose:79}, and unions of strictly convex
domains that satisfy certain spacing
criteria~\cite{Ikawa:82,Ikawa:88}. (A domain is non-trapping if each
billiard ball traveling in the exterior of $\Omega$, which bounces off
the boundary $\Gamma = \partial \Omega$ in accordance with the law of
specular reflection, and which starts within any given ball $B_R$ of
radius $R$ containing $\Omega$, eventually escapes $B_R$ with bounded
trajectory length~\cite{Melrose:79}.)

Early results~\cite{Morawetz:61,Lax:63,Morawetz:66}, obtained on the
basis of energy estimates in the domains of time and frequency,
establish exponential decay for ``star-shaped'' obstacles, that is,
obstacles $\Omega$ which, for a certain $\mathbf{r}_0 \in \Omega$,
contain the line segment connecting $\mathbf{r}_0$ and any other point
$\mathbf{r}\in\Omega$. As noted in~\cite{Lax:63,Lax:67}, exponential
decay generally implies analyticity of the resolvent operator in a
strip around the real $\omega$ frequency axis. A significant
generalization of these results was achieved in~\cite{Morawetz:77}
using a hypothesis somewhat more restrictive than the non-trapping
condition, while~\cite{Melrose:79} established
exponential
local-energy decay for all non-trapping obstacles.  All of these
results establish exponential decay
\begin{equation}\label{energy_decay}
  E(u, D, t) \le C\e^{-\alpha t} E(u, \infty, 0), \quad \alpha > 0,
\end{equation}
for the local energy
\begin{equation}\label{local_energy}
  E(u, D, t) = \int_D \left|\nabla u(\bfr, t)\right|^2 + \left|u_t(\bfr,
  t)\right|^2\,\d V(\bfr)
\end{equation}
contained in a compact region $D \subset \Omega^c$ in terms of the
energy $E(u, \infty, 0)$ contained in all of $\Omega^c$---the latter
one of which is, of course, conserved.  As reviewed in
Remark~\ref{ralston}, a uniform decay estimate of the
form~\eqref{energy_decay} cannot hold for trapping obstacles.

In this connection it is tangentially relevant to consider
reference~\cite{Datchev:12}, which establishes sub-exponential decay
for the problem of scattering by a globally defined smooth
potential. The method utilized in that work relies on use of simple
resolvent manipulations for the differential scattering operator
$P =-\frac{1}{\omega^2}\Delta + V(\bfr)$, where
$V\in C_0^\infty(\mathbb{R}^n)$ denotes the globally defined
potential. Such manipulations are not applicable in the context of the
Green function-based operators for the impenetrable-scattering
problem, for which, in particular, the frequency $\omega$ is featured
in the {\em Green function exponent} of the {\em integral} scattering
operator instead of a {\em linear factor} $\frac{1}{\omega^2}$ in the
corresponding {\em differential} operator.

As mentioned above, a decay estimate of the form~\eqref{energy_decay}
cannot hold for trapping obstacles. However, by relying on analytic
continuation of the frequency-domain resolvent into a strip
surrounding the real-axis in the complex frequency domain $\omega$,
exponential decay (of a different character than expressed
in~\eqref{energy_decay}; see \Cref{ralston}) has been established for
certain trapping geometries~\cite{Ikawa:82,Ikawa:88,Fahry:91}.  In
view of previous work leading to results of exponential decay, even
for trapping geometries, the question may arise as to whether the
results of super-algebraically fast convergence presented in this paper
could actually be improved to imply, for example, exponentially fast
decay for trapping geometries which merely satisfy the $q$-growth
condition. A definite answer in the negative to such a question is
provided in~\cite[Thm.\ 1]{Ikawa:85}. This contribution exhibits an
example for which, consistent with earlier general
suggestions~\cite[p.\ 158]{Lax:67}, the sequence of imaginary parts of
the pole-locations of the scattering matrix in the complex plane tends
to zero as the corresponding sequence of real parts tends to infinity;
clearly, such a domain cannot exhibit exponential decay in view of the
Paley-Wiener theorem~\cite[Thm.\ I]{PaleyWiener:34}. In detail, the
example presented in~\cite{Ikawa:85} concerns a domain $\Omega$
consisting of a union of two disjoint convex obstacles, where the
principal curvature of each connected component vanishes at the
closest point between the obstacles.  Reference~\cite{Burq:98}
provides a general-obstacle inverse polylogarithmic decay estimate
that is the only previous decay result applicable to such a trapping
domain. In view of this background it may be suggested that the
real-$\omega$ decay analysis presented in this paper provides
significant progress, as it establishes super-algebraic decay for a
wide range of obstacles not previously treated by classical scattering
theory, including the aforementioned vanishing-curvature
example~\cite{Ikawa:85} and the connected and significantly more
strongly trapping structures depicted in
\Cref{fig:3d_connected_trapping}, for which the trapped rays form a
set of positive volumetric measure.

The aforementioned references~\cite{Ikawa:82,Ikawa:88,Fahry:91}
establish exponential decay for wave scattering for certain trapping
structures consisting of unions of disjoint convex obstacles (but see
Remark~\ref{ralston}), and thereby answer in the negative a conjecture
by Lax and Phillips~\cite[p.  158]{Lax:67} according to which
exponential decay could not occur for any trapping structure (in view
of the Lax/Phillips conjectured existence, for all trapping obstacles,
of a sequence of resonances $\lambda_j$ for which
$\mathrm{Im}\,\lambda_j \to 0^-$ as $j \to \infty$). The trapping
structures with exponential decay are taken to equal a disjoint union
of two smooth strictly convex obstacles in~\cite{Ikawa:82}, and
otherwise unions of disjoint convex obstacles in~\cite{Ikawa:88}; in
all cases the geometries considered give rise to sets of trapping rays
spanning three-dimensional point sets of zero volumetric measure:
single rays in~\cite{Ikawa:82,Fahry:91}, and countable sets of
primitive trapped rays in~\cite{Ikawa:88} as implied by Assumption
(H.2) in that reference. To the authors' knowledge, these are the only
known results on fast decay of solutions in trapping
geometries. Despite these known exceptions to the Lax-Phillips
conjecture, it has been surmised~\cite{Ikawa:85} that ``it seems very
sure that the conjecture remains to be correct for a great part of
trapping obstacles,''---which would disallow exponential decay for
most trapping obstacles---thus providing an interesting perspective on
the main results presented in this paper---which, in particular,
establish super-algebraic decay for certain obstacles for which the set
of trapped rays span a set of positive measure.

\section{Preliminaries}\label{sec:prelim}
\subsection{Dirichlet problems for the three-dimensional wave-equation}\label{sec:prelim_wave}

We consider the problem of scattering of an incident field $b$ by an
(open) bounded obstacle $\Omega\subset \mathbb{R}^3$ with Lipschitz
boundary. More precisely, letting
$\Box = \frac{\partial^2}{\partial t^2} - c^2\Delta$ denote the
d'Alembertian operator with wave-speed $c > 0$ and given an open set
$\Omega^\textit{inc}$ containing the closure $\widebar{\Omega}$ of
$\Omega$, $\widebar{\Omega} \subset \Omega^\textit{inc}$, we study the
solution $u$ of the initial and boundary value problem
\begin{subequations}\label{eq:w_eq}
  \begin{align}
    \Box\,u(\mathbf{r}, t)
    &= 0,\quad\mbox{for}\quad (\bfr, t) \in \widebar{\Omega}^c \times (0, \infty),\label{eq:w_eq_a}\\
    u(\mathbf{r},0) &= \frac{\partial u}{\partial t}(\mathbf{r}, 0)
                      = 0,\quad\mbox{for}\quad\mathbf{r} \in \Omega^c,\\
    u(\mathbf{r}, t) &=  -\gamma^+ b(\mathbf{r}, t),
                       \quad\mbox{for}\quad(\mathbf{r},t)\in\Gamma\times (0,\infty),\label{eq:w_eq_c}
    \end{align}
  \end{subequations}
  on the complement $\widebar{\Omega}^c$ of $\widebar{\Omega}$, for a
  given incident-field function
\begin{equation}\label{eq:bdef}
b: \Omega^\textit{inc}\times\mathbb{R} \to \mathbb{R}\;\mbox{satisfying}\;
b \in C^2(\Omega^\textit{inc}\times\mathbb{R})\; \mbox{and}\; \Box b = 0\;\mbox{in}\; \Omega^\textit{inc}\times\mathbb{R},
\end{equation}
(cf.\ \Cref{rem:wave_eq_sob_assump} below), where
$\Gamma = \partial\Omega^c= \partial\Omega$ denotes the boundary of
the obstacle $\Omega$, and where $\gamma^+$ (\Cref{trace_def} in
\Cref{sob-boch}) denotes the exterior trace operator.  Compatibility
of the boundary values with the initial condition requires
$b(\bfr, 0) = \frac{\partial}{\partial t} b(\bfr, 0) = 0$ for $\bfr \in \Gamma$. In
fact, for compatibility with the integral equation
formulation~\eqref{eq:tdie_sl} below, letting
$\mathbb{R}_0^- =\{t\in\mathbb{R}\ : \ t\leq 0 \}$, throughout this
paper we assume that
\begin{equation}\label{all_t_b}
  b \in C^2(\Omega^\textit{inc} \times \mathbb{R})\cap L^2(\Omega^\textit{inc} \times \mathbb{R})\;\mbox{and}\; b(\bfr, t) = 0\;\mbox{for}\; (\bfr, t) \in \widebar{\Omega} \times \mathbb{R}_0^-.
\end{equation}
The solution $u$ is the ``scattered'' component of the total field
$u^\textit{tot} = u + b$; with these conventions, we clearly have
$\gamma^+ u^\textit{tot}(\mathbf{r}, t) = 0$ for $\mathbf{r}$ on $\Gamma$.

\begin{rem}\label{rem:w_eq_ivp_ibvp_equiv}
  In the case $\Omega^\textit{inc}= \mathbb{R}^3$, the function $v = u^\textit{tot} = u + b$, satisfies
  the initial-value problem
\begin{subequations}\label{eq:w_eq_ivp}
    \begin{align}
      \Box &v(\mathbf{r}, t) = 0,\quad\mbox{for}\quad (\bfr, t) \in
            \widebar{\Omega}^c\times (0, \infty),\label{eq:w_eq_ivp_a}\\
        &v(\mathbf{r},0) = g_0(\bfr),\, \frac{\partial
        v}{\partial t}(\mathbf{r}, 0)
            = g_1(\bfr),\quad\mbox{for}\quad\mathbf{r} \in
            \Omega^c,\label{eq:w_eq_ivp_b}\\
        &v(\mathbf{r}, t) = 0,
            \quad\mbox{for}\quad(\mathbf{r},t)\in\Gamma\times (0,\infty),\label{eq:w_eq_ivp_c}
    \end{align}
\end{subequations}
in that domain, where $g_0(\mathbf{r}) = u^\textit{tot}(\bfr, 0)$ and
$g_1(\mathbf{r}) = \frac{\partial}{\partial t} u^\textit{tot}(\bfr, 0)$. Conversely,
given smooth data $g_0$ and $g_1$ in all of
$\Omega^c \cap \Omega^\textit{inc}$, an incident field $b$ can be
obtained by first extending $g_0$ and $g_1$ to all of
space~\cite[Thm.\ 3.10]{Necas:11} and using the extended data as
initial data for a free-space wave equation with solution $b$. In
other words, the problems~\eqref{eq:w_eq} and~\eqref{eq:w_eq_ivp} are
equivalent.
\end{rem}

\begin{rem}
  The classical literature on decay rates for the wave
  equation~\cite{Lax:67} concerns problem~\eqref{eq:w_eq_ivp} with the
  additional assumption that $g_0$ and $g_1$ are compactly
  supported. In view of the strong Huygens principle~\cite{John},
  which is valid in the present three-dimensional context, the
  procedure described in \Cref{rem:w_eq_ivp_ibvp_equiv} translates the
  spatial compact-support condition on $g_0$ and $g_1$ into a temporal
  compact-support condition for the function $b$
  in~\eqref{eq:w_eq}---i.e.\ that the incident field vanishes on the
  boundary $\Gamma = \partial \Omega$ after a certain initial
  ``illumination time period'' has elapsed.
\end{rem}

\begin{rem}\label{rem:wave_eq_sob_assump}
  Even though the main problem considered in this paper,
  problem~\eqref{eq:w_eq}, is solely driven by the Dirichlet
  data~\eqref{eq:w_eq_c}, our analysis relies on assumptions that are
  imposed not only on the function $\gamma^+b$, but also on the values
  $\gamma^+\partial_\mathbf{n} b$ of the normal derivative of $b$ on
  $\Gamma$. Specifically, various results presented in this paper
  assume that, for an integer $s\geq 0$ the incident field $b$
  satisfies the ``$s$-regularity conditions''
  \begin{equation}\label{eq:gamma_Hs_assump}
    \gamma^+b \in H^{s+1}(\mathbb{R};\,L^2(\Gamma))\quad\mbox{and}\quad
    \gamma^+\partial_\mathbf{n} b \in H^s(\mathbb{R};L^2(\Gamma)),
  \end{equation}
  where, for an integer $p$, $H^p(\mathbb{R};\,L^2(\Gamma))$ denotes
  the Sobolev-Bochner space of order $p$ with values in $L^2(\Gamma)$,
  as defined in~\Cref{sob-boch}.  Clearly, for the most relevant
  incident fields $b$, such as those mentioned in
  \Cref{rem:w_eq_ivp_ibvp_equiv}, namely, solutions of the wave
  equation that are smooth and compactly supported in time over any
  compact subset of $\Omega^{\textit{inc}}$, the $s$-regularity
  conditions hold for all non-negative integers $s$.
\end{rem}

\subsection{Time-domain single layer potential}\label{sec:prelim_td_layer}

The boundary value problem~\cref{eq:w_eq} can be
reduced~\cite{HaDuong:86} to an equivalent time-domain integral
equation formulation, in which a boundary integral density
$\psi = \psi(\mathbf{r}, t)$ (defined to vanish for all $t < 0$) is
sought, that satisfies the spatio-temporal boundary integral equation
\begin{equation}\label{eq:tdie_sl}
  \left(S \psi \right)(\mathbf{r}, t) = \gamma^+ b(\mathbf{r}, t)\quad\mbox{for}\quad (\mathbf{r},
  t) \in \Gamma \times \mathbb{R}.
\end{equation}
Here, calling
\begin{equation}\label{eq:green_fnct_time}
  G(\mathbf{r}, t; \mathbf{r}', t') = \frac{\delta\left((t - t') - |\mathbf{r} -
  \mathbf{r}'|/c\right)}{4\pi |\mathbf{r} - \mathbf{r}'|}
\end{equation}
the Green function for the three-dimensional wave equation,
$S = \gamma^+ \mathscr{S}$ denotes the trace of the time-domain
single-layer potential
\begin{equation}\label{eq:single_layer_pot_time}
  (\mathscr{S}\mu)(\mathbf{r}, t) = \int_{-\infty}^t \int_\Gamma G(\mathbf{r}, t;
  \mathbf{r}', t') \mu(\mathbf{r}', t')\,\d\sigma(\mathbf{r}')\,\d
  t',\quad(\mathbf{r},
  t) \in \mathbb{R}^3 \times \mathbb{R}.
\end{equation}
Note that the single-layer potential and its restriction to the
boundary may be expressed without recourse to distributions in the
forms
\begin{equation}\label{eq:single_layer_pot_time_conv}
    (\mathscr{S}\mu)(\mathbf{r}, t) = \int_\Gamma \frac{\mu(\mathbf{r}', t - |\mathbf{r} - \mathbf{r}'|/c)}{4\pi|\mathbf{r} - \mathbf{r}'|}\,\d\sigma(\mathbf{r}'),\quad\bfr\in\mathbb{R}^3,
\end{equation}
and
\begin{equation}\label{eq:single_layer_op_time_conv}
  \left(S\mu\right)(\mathbf{r}, t) = \int_\Gamma \frac{\mu(\mathbf{r}', t - |\mathbf{r} - \mathbf{r}'|/c)}{4\pi|\mathbf{r} - \mathbf{r}'|}\,\d\sigma(\mathbf{r}'),\quad \mathbf{r} \in \Gamma,
\end{equation}
respectively.  Clearly, these operators are well-defined, for example,
for densities $\mu \in \mathcal{S}(\mathbb{R}; L^2(\Gamma))$, where
$\mathcal{S}(\mathbb{R}; L^2(\Gamma))$ denotes~\cite[Def.\
2.4.21]{Weis} the Schwartz space of smooth and rapidly decaying
functions of $t\in \mathbb{R}$ with values in $L^2(\Gamma)$.  The
potential~\eqref{eq:single_layer_pot_time_conv} is also well-defined
for functions $\mu$ in the space $L^2(\mathbb{R}; L^2(\Gamma))$, a
fact that follows easily for $\mathbf{r}\not\in\Gamma$, on account of
the smoothness of the Green-function kernel in the
integrand~\eqref{eq:single_layer_pot_time} for such values of
$\mathbf{r}$. As discussed in \Cref{sec:prelim_spaces}, the same is
true for $\mathbf{r}\in\Gamma$. More precisely, as shown in
\Cref{fd_layer_continuity_lemma}, the integral
operator~\eqref{eq:single_layer_op_time_conv} maps
$L^2(\mathbb{R}; L^2(\Gamma))$ continuously into itself.

As is well known~\cite{HaDuong:86}, problem~\eqref{eq:tdie_sl} admits
a unique solution. (In fact, \Cref{3d_decay_lemma_2ndkind_wellposed}
below shows that $\psi \in L^2(\mathbb{R}; L^2(\Gamma))$ for obstacles
$\Omega$ satisfying the $q$-growth condition (\Cref{q-nontrapp}).)
Once $\psi$ has been obtained, the solution $u$ of~\eqref{eq:w_eq} is
given by
\begin{equation}\label{eq:kirchhoff_3d_soft}
  u(\mathbf{r}, t) = \left(\mathscr{S} \psi \right)(\mathbf{r}, t), \quad \mathbf{r} \in
  \Omega^c.
\end{equation}
As is well known, further, the solution $\psi$ of~\eqref{eq:tdie_sl}
equals the Neumann trace of the solution $u$:
\begin{equation}\label{psi_dnu}
  \psi(\mathbf{r}, t) = \gamma^+ \frac{\partial
    u^\textit{tot}}{\partial \mathbf{n}}(\mathbf{r}, t),\quad\mathbf{r}\in\Gamma.
\end{equation}
In the functional setting utilized in this paper, the
result~\eqref{psi_dnu} follows directly from
\Cref{td_sl_lemma} below together with the corresponding
result~\cite[Thm.\ 2.44]{ChandlerWilde:12} for solutions of the
frequency-domain single-layer equation (which is uniquely solvable for
all frequencies except for the measure-zero set of square roots of
Laplace eigenvalues in the domain $\Omega$).

As shown in \Cref{decay_corr_energy}, in view
of~\eqref{eq:kirchhoff_3d_soft}, spatio-temporal estimates and
temporal decay rates for the density $\psi$ imply corresponding decay
properties for the energy $E(u, D, t)$ in~\Cref{local_energy} over any
given compact set $D \subset \Omega^c$.
\begin{rem}\label{operator_notat}
  Throughout this paper three different kinds of notation are used to
  denote the application of an operator to a function, namely, e.g. in
  the case of the operator in~\eqref{eq:tdie_sl},
  \begin{equation}
    \left(S \psi \right) = S[\psi] = S\psi.\qedhere
  \end{equation}
\end{rem}
\subsection{The Fourier transform and frequency-domain layer potentials}\label{sec:prelim_fd_layer}
As indicated above, this paper presents time-decay estimates on the
solutions $\psi$ of the integral equation problem~\eqref{eq:tdie_sl},
including results for certain classes of trapping obstacles. Our
analysis is based on consideration of frequency-domain counterparts of
problems~\eqref{eq:w_eq} and~\eqref{eq:tdie_sl}. On one hand, the
frequency-domain counterpart of~\eqref{eq:w_eq} a given frequency
$\omega$ is the Dirichlet problem for the Helmholtz equation with
wavenumber $\kappa = \kappa(\omega)=\omega/c$,
\begin{subequations}\label{helmholtz}
  \begin{alignat}{2}
    \Delta &U^f (\mathbf{r}, \omega)+\kappa^2(\omega) U^f (\mathbf{r},\omega)\label{helmholtz_eq}
    = 0,\quad&&\mbox{for}\quad \bfr \in \widebar{\Omega}^c,\\
    & U^f(\mathbf{r}, \omega) =  -\gamma^+ B^f(\mathbf{r}, \omega),
                       \quad&&\mbox{for}\quad\mathbf{r}\in\Gamma,
  \end{alignat}
\end{subequations}
with unknown and
incident field given by
\begin{equation}\label{freq_u_b}
  U^f=\mathcal{F} [u]\quad\mbox{and}\quad B^f=\mathcal{F} [b],
\end{equation}
respectively (see Remark~\ref{FT_conv} below). The frequency-domain
counterpart of~\eqref{eq:tdie_sl}, in turn, is the equation
$S_\omega \psi^f = \gamma^+ B^f$, where $S_\omega$ denotes the
frequency-domain single-layer operator introduced in
equation~\eqref{eq:single_layer_op} below. But the latter equation is
not uniquely solvable at some frequencies, and we thus use the
uniquely solvable frequency-domain combined-field integral equation
\begin{equation}\label{CFIE_direct}
  A_{\omega,\eta} \psi^f = \gamma^+ \partial_\mathbf{n} B^f - \i \eta \gamma^+ B^f
\end{equation}
for the same unknown, where the operator $A_{\omega,\eta}$ is
presented in
\Cref{Aop_def_eqn}. \Cref{3d_decay_lemma_2ndkind_wellposed} below
shows that, indeed, for each $\omega\in\mathbb{R}$, the solution
$\psi^f = \psi^f(\mathbf{r}, \omega)$ of \Cref{CFIE_direct} coincides
with the temporal Fourier transform of $\psi = \psi(\mathbf{r}, t)$:
$ \psi^f(\mathbf{r}, \omega) =\mathcal{F}[ \psi](\mathbf{r}, \omega)$,
$\mathbf{r}\in\Gamma$---where, using~\eqref{eq:fourier_transf} below,
the notational convention
$\mathcal{F}[ \psi](\mathbf{r}, \omega) = \mathcal{F}[\psi(\mathbf{r},
\cdot)](\omega)$ has been introduced.
\begin{rem}\label{FT_conv}
  Throughout this article the superscript $f$ is often used to
  emphasize the dependence of a given function on the temporal
  frequency $\omega$. The situation occurs frequently in connection
  with the use of the Fourier transform.  For example, for a function
  $h(t)$ of the time variable $t$, the Fourier transform of $h$ could
  be denoted by
\begin{equation}\label{eq:fourier_transf}
  H^f(\omega) = \mathcal{F} [h](\omega) = \int_{-\infty}^\infty h(t) \e^{-\i\omega t}\,\d t.
\end{equation}
Here $h$ and $H^f = \mathcal{F} [h]$ could denote either
complex-valued scalar functions or functions with values on a Banach
space $X$ over the complex numbers. In the latter case the integral on
the right-hand side of~\eqref{eq:fourier_transf} indicates integration
in the sense of Bochner~\cite{Hille,DunfordSchwartz,Weis}.
\end{rem}

Our analysis relates the decay problem under consideration to the
growth of norms of associated frequency-domain solution operators as
the frequency grows without bound. Motivated in part by work
concerning numerical analysis of integral equations, studies of such
norm growths, which provide an indicator of the energy-trapping
character of obstacles, have been undertaken over the last several
decades, and have resulted in frequency-growth estimates for
geometries that exhibit a variety of trapping
behavior~\cite{Popov:91,Cardoso:02,ChandlerWilde:09,BetckeChandlerWilde:10,Spence:16,Spence:20}. The
relevant frequency-domain operators are introduced in the following
definition.

\begin{defi}[Frequency-domain operators]\label{Aop_def}
  Let $\Omega$ denote a Lipschitz domain with boundary $\Gamma$.
  Then, calling $G_\omega$ the Green function for the Helmholtz
  equation~\eqref{helmholtz_eq} with wavenumber $\kappa(\omega) = \omega / c$,
  $G_\omega(\mathbf{r}, \mathbf{r}') = \frac{\e^{\i\frac{\omega}{c}
      |\mathbf{r} - \mathbf{r}'|}}{4\pi|\mathbf{r} - \mathbf{r}'|}$,
  we define the single-layer potential $\mathscr{S}_\omega$, and the
  single-layer and adjoint double-layer operators, $S_\omega$ and
  $K_\omega^*$ , respectively (see e.g.~\cite{ChandlerWilde:12}),
\begin{equation}\label{eq:single_layer_pot}
    (\mathscr{S}_\omega\mu)(\mathbf{r}) = \int_\Gamma G_{\omega}(\mathbf{r}, \mathbf{r'})
    \mu(\mathbf{r'})\,\d\sigma(\mathbf{r'}),\quad  \mathbf{r} \in \mathbb{R}^3,
  \end{equation}
\begin{equation}\label{eq:single_layer_op}
    (S_\omega\mu)(\mathbf{r}) = \int_\Gamma G_{\omega}(\mathbf{r}, \mathbf{r'})
    \mu(\mathbf{r'})\,\d\sigma(\mathbf{r'}),\quad  \mathbf{r} \in \Gamma, \quad \mbox{and}
  \end{equation}
\begin{equation}\label{eq:adjoint_double_layer_op}
  (K^*_\omega\mu)(\mathbf{r}) = \int_\Gamma \frac{\partial G_{\omega}(\mathbf{r},
    \mathbf{r'})}{\partial n(\mathbf{r})}
  \mu(\mathbf{r'})\,\d\sigma(\mathbf{r'}), \quad \mathbf{r} \in \Gamma.
  \end{equation}
  Well known expressions for the exterior and interior traces of the
  the single layer potential and its normal
  derivative~\cite[p. 219]{McLean} tell us that, for a given
  $\eta\in\mathbb{R}$,
\begin{equation}\label{jump_cond}
  \left(\gamma^\pm \partial_\mathbf{n} -
    \i\eta\gamma^\pm \right)(\mathscr{S}_\omega\mu) =  \left(\mp\frac{1}{2}I + K_\omega^* -
    \i\eta
    S_\omega\right).
\end{equation}
In what follows we utilize the interior-trace instance of
\eqref{jump_cond} for a real coupling parameter $\eta \ne 0$, and we thus define
the combined-field operator
\begin{equation}\label{Aop_def_eqn}
  A_{\omega,\eta} = \frac{1}{2}I + K_\omega^* - \i\eta
  S_\omega,\quad\eta\ne 0.
\end{equation}
For a given $\omega_0 > 0$ and for $\omega \ge 0$ we also define
\begin{equation}\label{Aomega_def_eqn}
  A_\omega = A_{\omega,\eta_0(\omega)} \quad\mbox{where}\quad \eta_0 (\omega)=
\begin{cases} 1, &~\mbox{if}~0 \le \omega < \omega_0\\
\omega, &~\mbox{if}~\omega \ge \omega_0,
\end{cases}
\end{equation}
where the dependence of $A_\omega$ on $\omega_0$ has been suppressed.
\end{defi}

\subsection{Time- and frequency-domain functional spaces}\label{sec:prelim_spaces}

This paper develops an $L^2(\Gamma)$-theory for time-domain
scattering. We rely, in part, on classical results for frequency-domain layer
potentials on Lipschitz domains. The associated literature is a storied
one, and it contains well known contributions as described
in~\cite[p.\ 209]{McLean}; the corresponding results required in this
paper are outlined in the following lemma.
\begin{lemma}\label{fd_layer_continuity_lemma}
  Let $\Gamma$ denote a Lipschitz boundary. Then, the operators
  $S_\omega$, $K_\omega^*$ and $A_\omega$ are continuous linear
  operators on $L^2(\Gamma)$ for each $\omega \ge 0$. Further, the
  frequency-domain operator $\widetilde S$ given by
  \begin{equation}\label{eq:Stilde_def}
    \widetilde S[\mu] (\omega,\mathbf{r}) = S_\omega[\mu](\mathbf{r}), \quad\mathbf{r}\in\Gamma,
  \end{equation}
  is a continuous operator on the space $L^2(\mathbb{R}; L^2(\Gamma))$
  (i.e., the space $H^r(\mathbb{R}; H^s(\mathcal{U})$ in
  Definition~\ref{def:sob_bochner} with $r=s=0$ and
  $\mathcal{U} = \Gamma$):
  \begin{equation}\label{SL_cont_Bochner}
    \widetilde S: L^2(\mathbb{R}; L^2(\Gamma)) \to L^2(\mathbb{R}; L^2(\Gamma)).
  \end{equation}
  Finally, for $\mathrm{Re}(\eta) \ne 0$ the operator
  $A_{\omega,\eta}$ is invertible for all $\omega \ge 0$.
\end{lemma}
\begin{proof}
  Proofs for the results concerning the mapping properties of the
  operators $S_\omega$, $K_\omega^*$ and $A_\omega$ can be found
  in~\cite{Costabel:88} and~\cite[Thm.\ 6.12 and pp. 209]{McLean}; and
  the invertibility of $A_\omega$ is established \ e.g. in~\cite[Thm.\
  2.27]{ChandlerWilde:12}. (The continuity result for $S_\omega$ with
  $\omega = 0$, which easily implies the result for $\omega\ne 0$, was
  initially established in~\cite{Verchota:84}.)

  The continuity of the operator $\widetilde S$, finally, can be
  obtained from the norm-boundedness relation
  \begin{equation}\label{single_layer_uniform}
    \|S_\omega\|_{L^2(\Gamma) \to L^2(\Gamma)} \le C\quad
    \mbox{for all}\quad \omega \in\mathbb{R},
  \end{equation}
  which is given e.g. in~\cite[Thm.\ 3.3]{ChandlerWilde:09} for
  $\omega \geq 0$ and which follows also for $\omega<0$ in view of the
  Hermitian symmetry relation
  $\overline{S_{\omega}\mu^f(\mathbf{r}, \omega)} =
  S_{-\omega}\overline{\mu^f(\mathbf{r}, \omega)}$.  Indeed, for
  $\mu^f \in L^2(\mathbb{R}; L^2(\Gamma))$,
  equation~\eqref{single_layer_uniform} tells us that
  \begin{equation}
    \int_\Gamma \left|\int_\Gamma \frac{\e^{\i\omega |\mathbf{r} - \mathbf{r}'|}}{4\pi
        |\mathbf{r} -
        \mathbf{r}'|}\mu^f(\mathbf{r}', \omega) \,\d\sigma(\mathbf{r}')\right|^2\,\d\sigma(\mathbf{r})
    \leq C \left\| \mu^f(\cdot, \omega)\right\|_{L^2(\Gamma)}^2,
  \end{equation}
  and, thus, integrating with respect to $\omega$, we obtain
  \begin{equation}\label{S_in_L2}
    \| \widetilde{S}\mu^f\|^2_{L^2(\mathbb{R}; L^2(\Gamma))}  \le C \int_{-\infty}^\infty \left\| \mu^f(\cdot, \omega)\right\|_{L^2(\Gamma)}^2\,\d \omega = C\left\| \mu^f\right\|_{L^2(\mathbb{R}; L^2(\Gamma))}^2,
  \end{equation}
  establishing the desired continuity property for the operator
  $\widetilde{S}$.
\end{proof}
As indicated in the following Lemma, the time-domain single-layer
operator is also continuous in the space
$L^2(\mathbb{R}; L^2(\Gamma))$, and the frequency- and time-domain
single-layer operators are related by the Fourier transform.
\begin{lemma}\label{td_sl_lemma}
  The time-domain single-layer boundary integral
  operator~\cref{eq:single_layer_op_time_conv} is well defined for
  $\mu$ in the function-valued Schwartz space
  $\mathcal{S}(\mathbb{R},L^2(\Gamma))$ of infinitely smooth and
  rapidly decaying functions on the real line, and it may be extended
  to a continuous operator
  \begin{equation}\label{S_oper_bochner}
    S: L^2(\mathbb{R}; L^2(\Gamma)) \to L^2(\mathbb{R}; L^2(\Gamma)).
  \end{equation}
  Further, for each $\mu\in L^2(\mathbb{R}; L^2(\Gamma))$ the temporal
  Fourier transform of the operator~\eqref{S_oper_bochner} applied to
  $\mu$ equals the frequency domain single layer
  operator~\eqref{SL_cont_Bochner} applied to the Fourier transform of
  $\mu$:
  \begin{equation}\label{eq:single_layer_op_ii}
    \mathcal{F}\left [ S[\mu]\right] = \widetilde S\left[\mathcal{F}[\mu]\right].
  \end{equation}
\end{lemma}
\begin{proof}
See \Cref{app_c}.
\end{proof}

\subsection{Growth of operators norms}\label{sec:prelim_norm_growth}

This article utilizes a certain ``$q$-growth condition'' which, for a
given $q\in \mathbb{R}$, may or may not be satisfied by a given
obstacle $\Omega$---namely, that a constant $C$ exists such that, for
all $\omega\in\mathbb{R}$, the relation~\eqref{eq:q-nontrapp} below
holds. The $q$-growth condition is fundamentally a statement on the
high-frequency character of the operator $A_\omega^{-1}$. Further, the
$q$-growth condition is indeed a condition on the domain $\Omega$
which is independent on the constant $\omega_0$
in~\eqref{Aomega_def_eqn}. This can be verified by noting that, given
any compact one-dimensional intervals $I_\eta$ and $I_\omega$, where
$I_\eta$ is bounded away from $0$, the norm
$\norm{A_{\omega,\eta}^{-1}}_{L^2(\Gamma)\to L^2(\Gamma)}$ is
uniformly bounded for $(\eta,\omega)\in I_\eta\times I_\omega$. The
latter property, in turn, can be established on the basis of the
theory~\cite{ChandlerWilde:12,McLean} for the combined field operator
on a Lipschitz domain, together with a compactness argument based on
Taylor expansions of the oscillatory exponential factor of the Green
function as well as a Neumann series for the resolvent around each
pair $(\eta,\omega)\in I_\eta\times I_\omega$.
\begin{defi}[$q$-growth condition]\label{q-nontrapp} For real $q$
  and $\omega_0 >0$, a Lipschitz domain $\Omega$ and its boundary
  $\Gamma$ are said to satisfy the $q$-\emph{growth} condition iff for
  all $\omega \ge 0$ the operator $A_\omega$ in~\eqref{Aomega_def_eqn}
  satisfies the bound
  \begin{equation}\label{eq:q-nontrapp}
    \norm{A_\omega^{-1}}_{L^2(\Gamma)\to L^2(\Gamma)} \le C (1 +
    \omega^2)^{q/2}
  \end{equation}
  for some constant $C > 0$, or, equivalently, the bounds
  \begin{equation}\label{eq:q-nontrapp-cases}
    \left\|A_\omega^{-1}\right\|_{L^2(\Gamma)\to L^2(\Gamma)} \le
    \begin{cases} C_1, &~\mbox{if}~\omega \le \omega_0\\
      C_2 \omega^q, &~\mbox{if}~\omega > \omega_0
    \end{cases}
  \end{equation}
  for certain constants $C_1 > 0$ and $C_2 > 0$.
\end{defi}
Per~\cite[Lem.\ 6.2]{Spence:20}, polynomially growing bounds on the
norm of this inverse operator in the high-frequency regime can be
obtained on the basis of corresponding polynomially growing bounds as
$\omega\to\infty$ on the norm of the resolvent operator
$(\Delta + \omega^2)^{-1}$ outside $\Omega$, with zero Dirichlet
boundary conditions on $\Gamma$. The results of the present article
thus imply super-algebraically-fast local energy decay of solutions to
\Cref{eq:w_eq} for any Lipschitz domain for which such real-axis
high-frequency resolvent bounds for the Helmholtz operator can be
established.

\begin{rem}\label{3d_decay_generalization}
  It is known that the $q$-growth condition is satisfied with a
  variety of $q$-values for various classes of
  obstacles~\cite{ChandlerWilde:08,Spence:14,Spence:16,Spence:20,ChandlerWilde:09}. For
  example, reference~\cite[Thm.\ 1.13]{Spence:16} shows that a smooth
  \emph{non-trapping} obstacle satisfies the $q$-growth condition with
  $q = 0$. A related $q = 0$ result is presented
  in~\cite{ChandlerWilde:08} for merely Lipschitz domains, but under
  the stronger assumption that the obstacle is star-shaped.
  Reference~\cite{Spence:20} shows that for ``hyperbolic'' trapping
  regions (in which all periodic billiard ball trajectories are
  unstable), a merely logarithmic growth in $\omega$ results, while
  for certain ``parabolic'' trapping regions, stronger ($q = 2$ or
  $q = 3$) growth takes place. It is also known that much more
  strongly-trapping obstacles exist, including obstacles for which
  exponentially-large inverse operator norms
  $\left\|A_\omega^{-1}\right\|_{L^2(\Gamma)\to L^2(\Gamma)}$
  occur~\cite{Cardoso:02, BetckeChandlerWilde:10, Lafontaine:20} (and
  which, therefore, do not satisfy the $q$-growth condition for any
  value of $q$).
\end{rem}

\begin{rem}\label{connected-trapping}
  Existing results can be used to obtain examples of connected
  trapping obstacles containing cavities which satisfy the $q$-growth
  condition in \Cref{q-nontrapp} for some value of $q$. For example,
  the obstacle
  \[
\Omega = [-1,1]\times[-1,1]\times[-1,1]\setminus [-1/2,1/2]\times[-1/2,1/2]\times[0,1]
    \]
    (a cube with a smaller cube removed from one of its faces, as
    displayed in the left panel of \Cref{fig:3d_connected_trapping}),
    is an $(R_0, R_1, a)$ parallel trapping obstacle in the sense
    of~\cite[Def.\ 1.9]{Spence:20}, for $R_1 > \e^{1/4} R_0$,
    $R_0 \ge \sqrt{3/2}$, and $a = 1$.  According to~\cite[Cor.\ 1.14
    and Rem.\ 1.16]{Spence:20}, $\Omega$ satisfies the $q$-growth
    condition with $q=3$.  Smoothing of the corners of this obstacle
    results in a connected trapping obstacle that satisfies a
    $q$-growth condition with $q=2$.
\end{rem}

\begin{figure}
  \centering
  \includegraphics[width=0.49\textwidth]{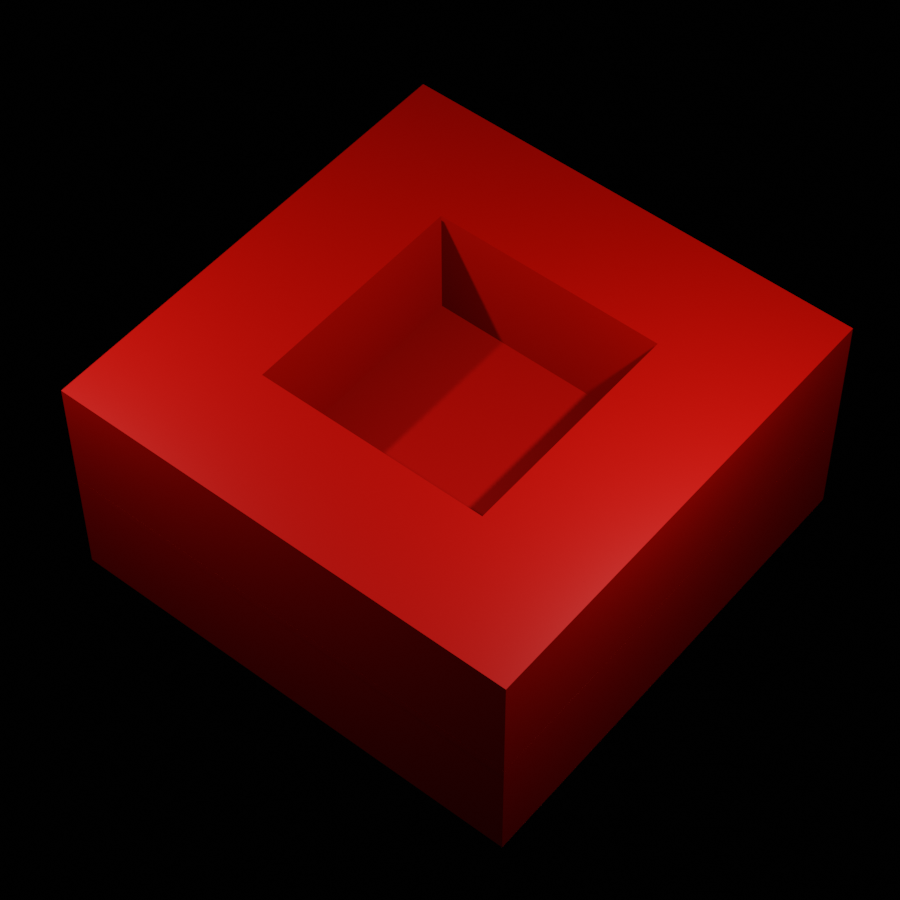}
  \includegraphics[width=0.49\textwidth]{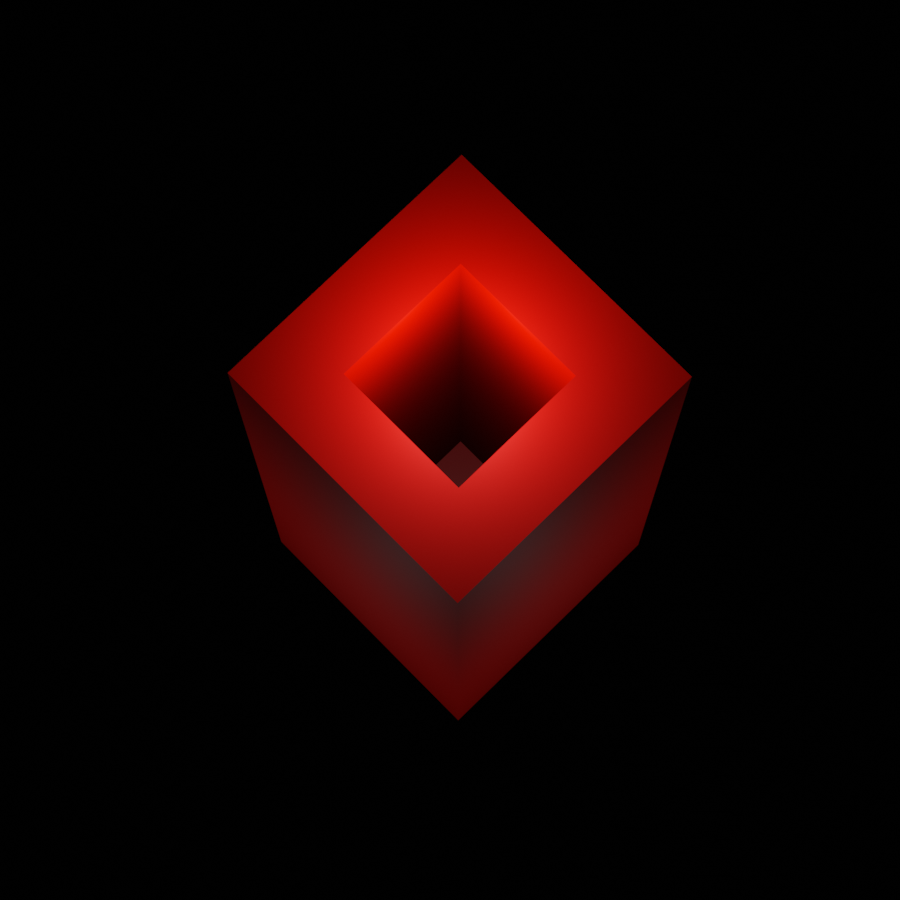}
  \caption[Example of connected trapping obstacles that satisfy a
  $q$-growth condition and for which wave equation decay rates are
  established.]{Examples of connected trapping obstacles that satisfy
    the $q$-growth condition ($q = 3$) and for which
    super-algebraically-fast wave equation time decay rates are
    established in this article. Left: Visualization of the obstacle
    mentioned in \Cref{connected-trapping}: a connected trapping
    obstacle satisfying the $q$-growth condition of \Cref{q-nontrapp}
    for $q=3$. Right: A trapping obstacle containing a deep cavity
    (arbitrary cavity depths are admissible), that satisfies the
    $q$-growth condition with the same value of
    $q$.}\label{fig:3d_connected_trapping}
\end{figure}

\section{Uniform ``bootstrap domain-of-dependence'' boundary density estimates}\label{sec:theory_part_i}

This section presents uniform-in-time ``bootstrap
domain-of-dependence'' estimates (\Cref{3d_decay_thm}) on the solution
$\psi$ of the time-domain boundary integral
equation~\cref{eq:tdie_sl}. In particular, these estimates show that
if $\psi$ is ``small'' throughout $\Gamma$ for any
domain-of-dependence time interval of length
\begin{equation}\label{Tstar_def}
 T_* \coloneqq \diam(\Gamma) / c = \max_{\mathbf{r},\mathbf{r}' \in \Gamma} |\mathbf{r} - \mathbf{r}'|/c
\end{equation}
after the $\Gamma$-values of the incident field have been turned off,
then $\psi$ will remain small for all time thereafter. At a
fundamental level, the domain-of-dependence analysis presented in this
section exploits an interesting property of
equation~\eqref{eq:tdie_sl} (made precise in
Lemmas~\ref{3d_decay_lemma_2ndkind} and~\ref{3d_decay_thm_h_equiv}),
namely, that, after the incident field $b(\mathbf{r}, t)$ has been
turned off permanently throughout $\Gamma$, the values of the solution
$\psi(\mathbf{r}, t)$ over any given domain-of-dependence time length
$T^*$ determine the solution uniquely for all subsequent times: the
boundary integral density over the interval $I_{T}$ encodes the
necessary information to reproduce all future scattering events and,
in particular, it encapsulates the effect of all previously imposed
boundary values, over time periods of arbitrarily long duration.

For technical reasons we utilize domain-of-dependence intervals
$I_{T}$, as detailed in \Cref{domainofdep}, of a length slightly
larger than $T^*$---larger by a small amount $2\tau>0$. Any positive
value of $\tau$ can be used, and the selection only affects the
multiplicative constants and integration domains in the main
domain-of-dependence estimates presented in this paper. The rest of
this section is organized as follows. Definitions~\ref{domainofdep}
and~\ref{timewinddens} lay down the conventions necessary to
subsequently state and prove the main result of the section, namely,
\Cref{3d_decay_thm}. After these definitions, the statement of the
theorem is introduced.  Lemmas~\ref{3d_decay_lemma_2ndkind_wellposed}
through~\ref{sob_lemma} then establish a series of results required in
the proof of the theorem, and then, concluding the section, the
theorem's proof is presented.

\begin{defi}[Domain-of-dependence interval]\label{domainofdep}
  Using the definition~\eqref{Tstar_def}
  $T_* \coloneqq \diam(\Gamma) / c$ for a given Lipschitz boundary
  $\Gamma$, and given real numbers $T$ and $\tau > 0$, the time
  interval
  \begin{equation}\label{time_interval_def}
    I_{T} = I_{T,T_*,\tau}= [T - T_* - 2\tau, T),
  \end{equation}
  will be referred to as the $\tau$-extended ``domain-of-dependence''
  interval ($\tau$DoD) relative to the ``observation'' time $T$.  (As
  suggested by the notations in~\eqref{time_interval_def}, the
  dependence of $I_T$ on the parameters $T_*$ and $\tau$ will not be
  made explicit in what follows.)
\end{defi}

\begin{rem}\label{ITvsI0}
  For future reference we mention here that the interval $I_T$ plays
  two important roles in our analysis. On one hand, this interval
  figures prominently, for a given value $T=T_0>0$, in the statements
  of the two main theorems of this paper, namely,
  Theorems~\ref{3d_decay_thm} and~\ref{3d_decay_thm_ii}: for any fixed
  given time $T_0>0$, the estimates provided by these theorems are
  valid for times past the upper endpoint $T_0$ of the interval
  $I_{T_0}$, provided the incident field vanishes at all times after
  the lower endpoint $T_0 - T_* - 2\tau$ of $I_{T_0}$. On the other
  hand, in order to adequately translate the fast oscillations that
  take place in inverse Fourier-transform integrands for large times
  into the decay rates claimed in \Cref{3d_decay_thm_ii}, a
  time-recentering technique is utilized that is embodied in
  \Cref{rem:breve} and \Cref{decay_estimate_L2}, and which is
  exercised in the portions of the proof of \Cref{3d_decay_thm_ii}
  containing equations~\eqref{rchipsi_L2_estimate},
  \eqref{L2_deriv_rchidecay} and~\eqref{Hp_tdecay_ii}. This
  time-recentering technique is based on consideration of the interval
  $I_T$, but, this time, with $T = 0 < T_0$. In this manner, the idea
  of recentering the problem in time around $t=0$, that is exploited
  in~\cite{Anderson:20} to reduce oscillations for algorithmic
  optimization purposes, is manifested in the present contribution in
  the use of the recentered interval $I_0$ equal to $I_T$ with $T=0$,
  to properly account for fast oscillations, as functions of
  frequency, that are observed for large time in frequency-domain
  inverse Fourier-transform integrands. From an analytical point of
  view, the time-recentering of frequency-domain integrals to the
  interval $I_0$ allows us to usefully exploit the frequency-time
  isometry inherent in the Plancherel theorem, to produce Sobolev
  estimates, but without incurring the uncontrollably large
  derivatives of the Fourier transform integrands with respect to
  frequency that result as $t$ grows, and which, instead of the
  Sobolev estimates provided in Lemma~\ref{Rderiv_sobolev_bound},
  would yield Sobolev estimates with constants $C$ that grow without
  bound as $T_0$ grows.
\end{rem}

\begin{defi}[Time-windowed solutions]\label{timewinddens}
  For a given solution $\psi$ of~\eqref{eq:tdie_sl} and a given
  $\tau$DoD interval $I_{T}$, using smooth non-negative window
  functions
  \begin{equation}\label{wtau_def}
  w_-(t) = \begin{cases} &1\mbox{~for~} t < -\tau\\
  &0\mbox{~for~} t \ge 0
  \end{cases},\quad
  w_+(t) = \begin{cases} &0\mbox{~for~} t < -\tau\\
  &1\mbox{~for~} t \ge 0,
  \end{cases}
  \end{equation}
  defined over the real line, which satisfy the ``Partition-of-unity''
  relation $w_- + w_+ = 1$, we define the auxiliary densities
  \begin{equation}\label{psipm_def}
    \psi_{-,T}(\bfr, t) = w_-(t - T)\psi(\bfr,
    t),\quad\psi_{+,T}(\bfr, t) = w_+(t - T)\psi(\bfr, t),
  \end{equation}
  whose temporal supports satisfy
  $\supp \psi_{-,T} \subset (-\infty, T]$ and
  $\supp \psi_{+,T} \subset [T -\tau, \infty)$, and for which the
  relation
\begin{equation}\label{pou_decomp}
  \psi(\cdot, t) = \psi_{-,T}(\cdot, t) + \psi_{+,T}(\cdot, t)
\end{equation}
holds for all real values of the time variables $t$ and $T$. We
further define
  \begin{equation}\label{psistar_def}
    \psi_{*,T}(\mathbf{r}, t) = w_+(t - T + T_* + \tau)w_-(t - T)\psi(\mathbf{r}, t),
  \end{equation}
  which is nonzero only in the interior of $I_{T}$.
\end{defi}

The smooth temporal decompositions introduced in \Cref{timewinddens}
play central roles in the derivations of the uniform-in-time bounds
and decay estimates presented in this paper. Noting that $\psi$ is
identical to $\psi_{+,T_0}$ for $t > T_0$, to produce such bounds and
estimates we first relate $\psi_{+,T_0}$ to $\psi_{-,T_0}$ and thereby
obtain a norm bound for $\psi_{+,T_0}$ over the slightly larger time
interval $[T_0 - \tau, \infty)$---which then yields, in particular,
the desired domain-of-dependence estimate for $\psi$ on the interval
$[T_0 , \infty)$.  The necessary bounds for $\psi_{+,T_0}$ are
produced via Fourier transformation into the frequency domain in
conjunction with use of the $q$-growth condition introduced in
\Cref{q-nontrapp}. A similar approach is followed for the
time-derivatives of the incident field data, leading to bounds in
temporal Sobolev norms of arbitrary orders, and, in particular, via
Sobolev embeddings, to uniform bounds in time.  As suggested above,
the approach intertwines the frequency and time domains, and it thus
incorporates frequency-domain estimates while also exploiting the time
domain Huygens' principle, cf.\ for example the
relations~\cref{h_equiv} and~\cref{HtoUbound0}.

The main result of this section, \Cref{3d_decay_thm}, which is stated
in what follows, relies on the definition of Sobolev-Bochner spaces
presented in \Cref{sob-boch}.
\begin{theorem}\label{3d_decay_thm}
  Let $p$ and $q$ denote non-negative integers, let $T_0 > 0$ and
  $\tau > 0$ be given, and assume (i) $\Gamma$ satisfies the
  $q$-growth condition (\Cref{q-nontrapp}); (ii) The incident field
  satisfies the $s$-regularity conditions~\eqref{eq:gamma_Hs_assump}
  with $s = p + 2q + 1$; and, (iii) The incident field
  $b = b(\bfr, t)$ satisfies \Cref{all_t_b} and it vanishes for
  $(\bfr, t) \in \widebar{\Omega} \times \left\lbrace I_{T_0} \cup
    [T_0, \infty)\right\rbrace$, with $I_{T_0}$ as in
  \Cref{domainofdep}. Then, the solution $\psi$ of \Cref{eq:tdie_sl}
  satisfies both the $H^p$ estimate
\begin{equation}\label{density_Hp_time_bound}
  \left\|\psi\right\|_{H^p([T_0, \infty);\,L^2(\Gamma))} \le
  C(\Gamma, \tau, p) \left\|\psi\right\|_{H^{p+q+1}(I_{T_0};\,L^2(\Gamma))}
  < \infty,
\end{equation}
and, in the case $p=1$ the time-uniform estimate
\begin{equation}\label{density_unif_time_bound}
  \left\|\psi(\cdot, t)\right\|_{L^2(\Gamma)} \le
  C(\Gamma, \tau)\left\|\psi\right\|_{H^{q+2}(I_{T_0};\,L^2(\Gamma))}\quad \mbox{for}\quad t > T_0,
\end{equation}
where the constants $C = C(\Gamma, \tau, p)$ and $C = C(\Gamma, \tau)$
are independent of $T_0>0$.
\end{theorem}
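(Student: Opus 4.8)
The plan is to split $\psi$ at the observation time $T_0$, to show that its ``future part'' $\psi_{+,T_0}$ obeys a boundary integral equation whose data is determined \emph{only} by the values of $\psi$ over the $\tau$DoD interval $I_{T_0}$ (the analytic incarnation of strong Huygens' principle), and then to estimate $\psi_{+,T_0}$ by temporal Fourier transformation, recasting that equation in combined-field form so that the $q$-growth bound~\eqref{eq:q-nontrapp} becomes applicable, and finally invoking the Plancherel theorem. With the partition of unity $w_-+w_+=1$ of~\eqref{wtau_def}, write $\psi=\psi_{-,T_0}+\psi_{+,T_0}$ as in~\eqref{pou_decomp}. Since $\psi(\cdot,t)=\psi_{+,T_0}(\cdot,t)$ for $t\ge T_0$ and $\supp\psi_{+,T_0}\subset[T_0-\tau,\infty)$, estimate~\eqref{density_Hp_time_bound} will follow from a bound on $\|\psi_{+,T_0}\|_{H^p(\mathbb{R};L^2(\Gamma))}$, and, for $p=1$, estimate~\eqref{density_unif_time_bound} will follow from the latter together with the one-dimensional Sobolev embedding $H^1(\mathbb{R};L^2(\Gamma))\hookrightarrow L^\infty(\mathbb{R};L^2(\Gamma))$. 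Applying the retarded single-layer operator $S$ of~\eqref{eq:single_layer_op_time} to~\eqref{pou_decomp} and using $S\psi=\gamma^+b$ from~\eqref{eq:tdie_sl} yields $S\psi_{+,T_0}=\gamma^+b-S\psi_{-,T_0}$; by hypothesis~(iii) the term $\gamma^+b$ vanishes on $\Gamma$ for $t\ge T_0-T_*-2\tau$, while causality forces $S\psi_{+,T_0}=0$ for $t<T_0-\tau$. Because the time-domain Green function~\eqref{eq:green_fnct_time} is supported precisely on the light cone $t-t'=|\mathbf{r}-\mathbf{r}'|/c$, of aperture $T_*$ (strong Huygens' principle), and because $\psi_{-,T_0}$ coincides with the window $\psi_{*,T_0}$ of~\eqref{psistar_def} for $t\ge T_0-T_*-\tau$, one gets $S\psi_{-,T_0}=S\psi_{*,T_0}$ for $t\ge T_0-\tau$, whence $S\psi_{+,T_0}(\mathbf{r},t)=-S\psi_{*,T_0}(\mathbf{r},t)$ for $t\ge T_0-\tau$. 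Thus the Dirichlet trace $h_{T_0}:=(S\psi_{+,T_0})\big|_{\Gamma}$ is a causal functional of $\psi_{*,T_0}$, hence of $\psi\big|_{I_{T_0}}$ alone---this is the bootstrap domain of dependence---and, since multiplication by the smooth compactly supported window is bounded from $H^m(I_{T_0};L^2(\Gamma))$ into $H^m(\mathbb{R};L^2(\Gamma))$, one has $\|\psi_{*,T_0}\|_{H^m(\mathbb{R};L^2(\Gamma))}\le C(\Gamma,\tau,m)\,\|\psi\|_{H^m(I_{T_0};L^2(\Gamma))}$ for every $m\ge0$.

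Next I would recast the equation for $\psi_{+,T_0}$ so that~\eqref{eq:q-nontrapp} applies; the relation $(S\psi_{+,T_0})\big|_\Gamma=h_{T_0}$ is first-kind, and its frequency-domain symbol $S_\omega$ fails to be invertible at the real interior Dirichlet eigenfrequencies of $\Omega$, so it cannot be inverted directly. Consider instead the causal single-layer potential $v_{+,T_0}:=S\psi_{+,T_0}$ on $\mathbb{R}^3$: it solves the wave equation in $\Omega$ and in $\widebar{\Omega}^c$, is continuous across $\Gamma$ with $\gamma^\pm v_{+,T_0}=h_{T_0}$, and vanishes for $t<T_0-\tau$, so that $v_{+,T_0}\big|_{\Omega}$ is the solution of the interior wave-equation initial--boundary value problem with Dirichlet data $h_{T_0}$ and vanishing initial data. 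As that interior problem is well posed in the time domain, the interior Neumann trace $\gamma^-\partial_{\mathbf{n}}v_{+,T_0}$ is likewise a causal functional of $\psi\big|_{I_{T_0}}$. Temporal Fourier transformation (\Cref{FT_conv}) of the single-layer jump relations---which give $S_\omega\widehat{\psi_{+,T_0}}=\widehat{h_{T_0}}$ and $(\tfrac12 I+K_\omega^*)\widehat{\psi_{+,T_0}}=\mathcal{F}[\gamma^-\partial_{\mathbf{n}}v_{+,T_0}]$---together with the definition~\eqref{Aomega_def_eqn} of $A_\omega$ then yields, for every real $\omega\ge0$,
\[
  A_\omega\,\widehat{\psi_{+,T_0}}(\cdot,\omega)
  = \mathcal{F}[\gamma^-\partial_{\mathbf{n}}v_{+,T_0}](\cdot,\omega)
  - \i\,\eta_0(\omega)\,\widehat{h_{T_0}}(\cdot,\omega)
  =: \mathcal{R}^f_{T_0}(\cdot,\omega) ,
\]
where $A_\omega$ is invertible on $L^2(\Gamma)$ for every $\omega\ge0$ by \Cref{A_om_invert}, and where \Cref{psi_omega_independent} ensures consistency of the $\eta$-dependent equations used along the way. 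I would then bound $\mathcal{R}^f_{T_0}$ directly in the temporal Sobolev norm---rather than pointwise in $\omega$, since the interior Dirichlet-to-Neumann map is not pointwise bounded in $\omega$, while the time-domain interior problem obeys the expected hidden-regularity trace estimate---combining the boundedness of the retarded single layer on $H^m(\mathbb{R};L^2(\Gamma))$, the interior hidden-regularity estimate, and the factor $\eta_0(\omega)$ (which equals $\omega$ for $\omega\ge\omega_0$, hence costs one temporal derivative):
\[
  \big\|\mathcal{R}^f_{T_0}\big\|_{H^{p+q}(\mathbb{R};L^2(\Gamma))}
  \le C(\Gamma,\tau)\,\big\|\psi_{*,T_0}\big\|_{H^{p+q+1}(\mathbb{R};L^2(\Gamma))}
  \le C(\Gamma,\tau)\,\|\psi\|_{H^{p+q+1}(I_{T_0};L^2(\Gamma))} .
\]

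It remains to invert and integrate. Since $A_\omega$ is injective, $\widehat{\psi_{+,T_0}}(\cdot,\omega)=A_\omega^{-1}\mathcal{R}^f_{T_0}(\cdot,\omega)$, so~\eqref{eq:q-nontrapp} gives $\|\widehat{\psi_{+,T_0}}(\cdot,\omega)\|_{L^2(\Gamma)}\le C(1+\omega^2)^{q/2}\|\mathcal{R}^f_{T_0}(\cdot,\omega)\|_{L^2(\Gamma)}$; multiplying by $(1+\omega^2)^{p/2}$, squaring, integrating over $\omega\in\mathbb{R}$ and using the Plancherel identity for Sobolev--Bochner norms yields
\[
  \|\psi_{+,T_0}\|_{H^p(\mathbb{R};L^2(\Gamma))}
  \le C\,\big\|\mathcal{R}^f_{T_0}\big\|_{H^{p+q}(\mathbb{R};L^2(\Gamma))}
  \le C(\Gamma,\tau,p)\,\|\psi\|_{H^{p+q+1}(I_{T_0};L^2(\Gamma))} ,
\]
which, as $\psi=\psi_{+,T_0}$ on $[T_0,\infty)$, is~\eqref{density_Hp_time_bound}; finiteness of the right-hand side follows from hypothesis~(ii), since passing from the $s$-regular incident data~\eqref{eq:gamma_Hs_assump} to $\psi$ via $\psi^f=A_\omega^{-1}(\gamma^+\partial_{\mathbf{n}}B^f-\i\eta_0\gamma^+B^f)$ and~\eqref{eq:q-nontrapp} loses at most $q$ temporal derivatives, so $\psi\in H^{s-q}(\mathbb{R};L^2(\Gamma))=H^{p+q+1}(\mathbb{R};L^2(\Gamma))$ because $s=p+2q+1$. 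Finally, for $p=1$ the embedding $H^1(\mathbb{R};L^2(\Gamma))\hookrightarrow L^\infty(\mathbb{R};L^2(\Gamma))$ applied to $\psi_{+,T_0}$ gives $\sup_{t>T_0}\|\psi(\cdot,t)\|_{L^2(\Gamma)}\le\|\psi_{+,T_0}\|_{H^1(\mathbb{R};L^2(\Gamma))}\le C(\Gamma,\tau)\,\|\psi\|_{H^{q+2}(I_{T_0};L^2(\Gamma))}$, which is~\eqref{density_unif_time_bound}.

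The step I expect to be the main obstacle is the second displayed inequality above: carrying out the passage from the non-invertible retarded single-layer equation for $\psi_{+,T_0}$ to the combined-field identity $A_\omega\widehat{\psi_{+,T_0}}=\mathcal{R}^f_{T_0}$ in such a way that (a)~$\mathcal{R}^f_{T_0}$ is genuinely expressed through the $I_{T_0}$-window $\psi_{*,T_0}$---so that strong Huygens' principle really produces the domain-of-dependence localization---and (b)~the temporal derivative loss is tracked \emph{exactly}: one derivative in the passage $h_{T_0}\mapsto\mathcal{R}^f_{T_0}$ (from the $\omega$-coupling $\eta_0$ and from the interior trace estimate) and $q$ derivatives from $A_\omega^{-1}$, so that the loss in~\eqref{density_Hp_time_bound} is precisely $q+1$ and the hypothesis $s=p+2q+1$ is exactly what is needed. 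The remaining ingredients---the interplay of the windows $w_\pm$ with the light cone of aperture $T_*$ so that the support bookkeeping closes, the boundedness of window multiplication from $H^m(I_{T_0})$ into $H^m(\mathbb{R})$, and the Sobolev--Bochner Plancherel identity---are routine.
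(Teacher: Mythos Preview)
Your overall architecture matches the paper's: split $\psi=\psi_{-,T_0}+\psi_{+,T_0}$, derive the combined-field identity $A_\omega\widehat{\psi_{+,T_0}}=\mathcal{R}^f_{T_0}$ via the interior representation and jump relations (this is exactly Lemma~\ref{3d_decay_lemma_2ndkind}), invoke Huygens' principle to localize the data to $I_{T_0}$ (Lemma~\ref{3d_decay_thm_h_equiv}), apply the $q$-growth bound and Plancherel, and finish with Sobolev embedding. The one substantive divergence is in how you bound $\mathcal{R}^f_{T_0}$ in terms of $\psi_{*,T_0}$, which you correctly flag as the crux.

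You propose to estimate $\gamma^-\partial_{\mathbf n}v_{+,T_0}$ by invoking an ``interior hidden-regularity trace estimate'' for the wave equation in $\Omega$, treating the interior Dirichlet-to-Neumann map as a black box that loses one temporal derivative. The paper avoids this detour entirely: since $h_{T_0}=-u_{*,T_0}=-S\psi_{*,T_0}$ on $\widebar{\Omega}$ for $t\ge T_0-\tau$ (and vanishes before), the interior Neumann trace of $h_{T_0}$ is already given \emph{explicitly} by the single-layer jump relation, $\gamma^-\partial_{\mathbf n}(S_\omega\psi_{*,T_0}^f)=(\tfrac12 I+K^*_\omega)\psi_{*,T_0}^f$. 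The mismatch between $h_{T_0}$ and $-u_{*,T_0}$ for $t<T_0-\tau$ is handled by a frequency--time--frequency Plancherel maneuver (Lemma~\ref{conv_H_to_psi}): pass to the time domain, use the identity where it holds, enlarge the integration domain to all of $\mathbb R$ (the integrand being nonnegative), and return to frequency, where the elementary bound $\|(\gamma^-\partial_{\mathbf n}-\i\omega\gamma^-)S_\omega\|_{L^2(\Gamma)\to L^2(\Gamma)}\le C(1+\omega^2)^{1/2}$ (Lemma~\ref{per_freq_rhs_oper_bounds}) delivers exactly the one-derivative loss. This is what makes the count $q+1$ fall out without any PDE black box; the sharp trace estimate you would need from hidden regularity is essentially what the paper \emph{proves} by this single-layer calculation, so invoking it externally is circular in spirit and vaguer in bookkeeping. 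A minor organizational difference: the paper obtains the $H^p$ bound by establishing the $L^2$ estimate for a generic right-hand side $\widetilde b$ and then taking $\widetilde b=\partial^p b$ (Lemma~\ref{int_eq_pderiv}), whereas you multiply pointwise by $(1+\omega^2)^{p/2}$; both are valid.
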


\begin{rem}\label{laplace_estimate_remark}
  Several results in the present contribution, including
  \Cref{3d_decay_thm}, \Cref{int_eq_pderiv}, \Cref{3d_decay_thm_ii}
  and \Cref{decay_estimate_L2}, utilize the $q$-growth condition-based
  estimate provided by \Cref{3d_decay_lemma_2ndkind_wellposed} below
  to conclude that $\psi \in H^p(\mathbb{R}; L^2(\Gamma))$ for certain
  values of $p$, and thus guarantee the less stringent condition
  $\psi \in H^p([\xi_1, \xi_2]; L^2(\Gamma))$ (for certain values
  $-\infty < \xi_1< \xi_2 < \infty$) that is actually needed to make
  the results meaningful. We note, however, that the regularity
  assumptions in these theorems and lemmas can be relaxed by using,
  instead, Laplace-domain bounds for the purpose of establishing the
  aforementioned less stringent condition. Indeed, in contrast to
  \Cref{3d_decay_lemma_2ndkind_wellposed}, on the basis of
  Laplace-domain estimates (in particular~\cite[Thm.\ 4.2]{Monk:14} in
  conjunction with~\cite[Lem.\ 2]{Chen:10}, cf.\
  also~\cite{HaDuong:86,Lubich:94}) such bounds show that if $b$ is
  causal and satisfies
    \[
        \gamma^+ b \in H^{r+1}([0, T]; L^2(\Gamma))\quad\mbox{and}\quad \gamma^+ \partial_\mathbf{n} b \in H^r([0, T]; L^2(\Gamma)),
    \]
    then $\psi \in H^r([0, T]; L^2(\Gamma))$. While we eschewed use of
    these bounds to achieve a simpler and more self-contained
    presentation, we note here how the regularity assumptions of the
    various lemmas and theorems would be relaxed by application of
    these auxiliary results. The $s$-regularity assumptions in
    \Cref{3d_decay_thm}, \Cref{int_eq_pderiv}, \Cref{3d_decay_thm_ii}
    and \Cref{decay_estimate_L2} would be relaxed, respectively, to
    requirements of $s$ regularity with $s = p + q + 1$, $s = p + 1$,
    $s = p + (n+1)(q+1)$, and $s = (n+1)(q+1)$.  We additionally note
    that none of the estimates are quantitatively affected---these
    assumptions are only used to ensure that the right-hand side
    quantities in the desired estimates are finite.  We finally
    mention that all of the assumptions mentioned here, including the
    original and the relaxed assumptions, are immediately satisfied
    for data $b$ that is smooth in time---see
    \Cref{rem:wave_eq_sob_assump}.
\end{rem}

The proof of \Cref{3d_decay_thm} is deferred to the end of this
section, following a series of eight preparatory Lemmas (the first and
last of which are Lemmas~\ref{3d_decay_lemma_2ndkind_wellposed} and
\ref{int_eq_pderiv}). The first of these lemmas relates the size of
the solution of equation~\eqref{eq:tdie_sl} to the size of the imposed
incident fields $b(\mathbf{r}, t)$ in various norms.

\begin{rem}\label{negative_freq}
  According to~\eqref{eq:bdef}, and without loss of generality,
  throughout this paper the incident field $b(\mathbf{r}, t)$ is
  assumed to be real-valued. It follows that the solution
  $\psi(\mathbf{r}, t)$ of \Cref{eq:tdie_sl} is real-valued, which
  implies that its Fourier transform $\psi^f(\mathbf{r}, \omega)$
  satisfies the Hermitian symmetry relation
  \begin{equation}\label{negative_freq_eq}
    \psi^f(\mathbf{r}, -\omega) = \overline{\psi^f(\mathbf{r}, \omega)}.
  \end{equation}
  Our studies of frequency-domain operator norms are therefore
  restricted to the range $\omega \geq 0$, as is common practice in
  the mathematical frequency-domain scattering
  literature~\cite{Spence:20,Spence:16,BetckeChandlerWilde:10}. Such
  symmetry relations apply to other quantities defined above and in
  what follows, including $\psi_{\pm,T}(\mathbf{r}, t)$,
  $\psi_{*,T}(\mathbf{r}, t)$, $h_T(\mathbf{r}, t)$, etc., that are
  defined in terms of the real-valued density $\psi(\mathbf{r}, t)$.
\end{rem}

\begin{rem}\label{rem:tilde_1}
  It will be necessary in what follows (specifically, in the proofs of
  the $H^p$ estimates in \Cref{3d_decay_thm} and
  \Cref{3d_decay_thm_ii} for $p > 0$, and associated preparatory
  lemmas), to allow for right-hand sides other than the specific
  incident-field function $b(\mathbf{r}, t)$ present in the Dirichlet
  problem \Cref{eq:w_eq} and the integral equation formulation
  \Cref{eq:tdie_sl}, and we thus consider scattering problems of an
  incident field function $\widetilde{b}(\mathbf{r}, t)$, similar to
  $b$, for which we have a corresponding integral equation formulation
\begin{equation}\label{eq:tdie_sl_generic}
    S\widetilde{\psi}(\mathbf{r}, t) = \gamma^+ \widetilde{b}(\mathbf{r}, t)\quad\mbox{for}\quad (\mathbf{r}, t) \in
    \Gamma\times\mathbb{R},
\end{equation}
with solution $\widetilde{\psi}$, where $S$ denotes the single layer
integral operator \Cref{eq:single_layer_op_time_conv}. Generalizing
\Cref{all_t_b}, and letting
$\mathbb{R}_\alpha^- =\{t\in\mathbb{R}\ : \ t\leq \alpha \}$,
throughout the remainder of this paper we assume that, for some
$\alpha \in \mathbb{R}$,
\begin{equation}\label{all_t_b_generic}
     \widetilde{b} \in C^2(\Omega^\textit{inc} \times \mathbb{R})\cap L^2(\Omega^\textit{inc} \times \mathbb{R})\;\mbox{and}\;  \widetilde{b}(\bfr, t) = 0\;\mbox{for}\; (\bfr, t) \in \widebar{\Omega} \times \mathbb{R}_\alpha^-
\end{equation}
(so that, in particular, $\widetilde{b}$ is square-integrable as a
function of time for $-\infty < t < \infty$). We also define
$\widetilde{\psi}_{-,T}$, $\widetilde{\psi}_{+,T}$, and
$\widetilde{\psi}_{*,T}$ paralleling the definitions of $\psi_{-,T}$,
$\psi_{+,T}$, and $\psi_{*,T}$, respectively, in
\Cref{timewinddens}. Similarly, with reference to~\eqref{freq_u_b}, we
write
\begin{equation}\label{freq_u_b_tilde} \quad \widetilde{B}^f =\mathcal{F}
  [\widetilde{b}].\qedhere
\end{equation}
\end{rem}

\begin{lemma}[Well-posedness of the time-domain integral equation~\Cref{eq:tdie_sl_generic}]\label{3d_decay_lemma_2ndkind_wellposed}
  Let $p$ and $q$ denote non-negative integers, and assume that the
  obstacle $\Omega$ satisfies the $q$-growth condition.  Let
  $\Omega^\textit{inc}$ denote an open domain containing
  $\widebar{\Omega}$, and assume the function $\widetilde{b}$ is a
  solution to the wave equation in $\Omega^\textit{inc}$, which
  satisfies~\eqref{all_t_b_generic} as well as the $s$-regularity
  conditions~\eqref{eq:gamma_Hs_assump} with $s = q$.  Then, for each
  fixed $\omega\in\mathbb{R}$ and each $\eta\in\mathbb{C}$ with
  $\mathrm{Re}(\eta) \ne 0$, the solution $\widetilde{\psi}^f$ of the
  $\eta$-dependent uniquely-solvable second-kind integral equation
\begin{equation}\label{CFIE_proof_generic_21}
  A_{\omega,\eta} \widetilde{\psi}^f (\mathbf{r}, \omega) =
  \gamma^+\partial_\mathbf{n} \widetilde{B}^f(\mathbf{r}, \omega)
  - \i  \eta \gamma^+ \widetilde{B}^f(\mathbf{r}, \omega), \quad\mathbf{r} \in \Gamma,
  \end{equation}
  is itself independent of $\eta$, and, therefore,
  using~\eqref{Aomega_def_eqn}, the $\eta$ independent function
  $\widetilde{\psi}^f$ also satisfies the uniquely-solvable
  second-kind integral equation
  \begin{equation}\label{CFIE_proof_generic_2}
  A_{\omega} \widetilde{\psi}^f (\mathbf{r}, \omega) =
    \gamma^+\partial_\mathbf{n} \widetilde{B}^f(\mathbf{r}, \omega)
    - \i  \eta_0(\omega) \gamma^+ \widetilde{B}^f(\mathbf{r}, \omega), \quad\mathbf{r} \in \Gamma.
\end{equation}
Further, there exists a unique solution
$\widetilde{\psi} \in L^2(\mathbb{R}; L^2(\Gamma))$ satisfying the
integral equation~\eqref{eq:tdie_sl_generic}, which is given by the
inverse Fourier transform, with respect to $\omega$, of the function
$\widetilde{\psi}^f = \widetilde{\psi}^f(\mathbf{r}, \omega)$.  If, in
addition, the incident field $\widetilde{b}$ satisfies the
$s$-regularity conditions~\eqref{eq:gamma_Hs_assump} with $s = p + q$,
then $\widetilde{\psi} \in H^p(\mathbb{R};L^2(\Gamma))$ and
  \begin{equation}\label{psi_Hp_bound}
    \left\|\widetilde{\psi}\right\|_{H^p(\mathbb{R};\,L^2(\Gamma))} \le
    C(\Gamma)\left( \left\|\gamma^+ \partial_\mathbf{n}
        \widetilde{b}\right\|_{H^{p+q}(\mathbb{R};\,L^2(\Gamma))}\\
      + \left\|\gamma^+
        \widetilde{b}\right\|_{H^{p+q+1}(\mathbb{R};\,L^2(\Gamma))}\right).
  \end{equation}
  Finally, if $\widetilde{b}$ satisfies the $s$-regularity
  conditions~\eqref{eq:gamma_Hs_assump} with $s = p + q + 1$, then
  $\widetilde{\psi} \in C^p(\mathbb{R};L^2(\Gamma))$, and the norm of
  $\widetilde{\psi}$ in $C^p(\mathbb{R};L^2(\Gamma))$ is bounded by
  the corresponding boundary data, as it follows
  from~\eqref{psi_Hp_bound} and the Sobolev lemma (\Cref{sob_lemma})
  below.
\end{lemma}
\begin{rem}\label{rem:psi_omega_independent}
  With reference to \Cref{rem:tilde_1}, we note that, for the
  particular case $\widetilde{b} = b$,
  \Cref{3d_decay_lemma_2ndkind_wellposed} tells us that the Fourier
  transform $\psi^f$ of the solution $\psi$ to
  \Cref{eq:tdie_sl} satisfies the frequency-domain equation
\begin{equation}\label{CFIE_proof_2}
  A_{\omega} \psi^f (\mathbf{r}, \omega) =
    \gamma^+\partial_\mathbf{n} B^f(\mathbf{r}, \omega)
    - \i \eta_0(\omega) \gamma^+B^f(\mathbf{r}, \omega), \quad\mathbf{r} \in \Gamma.\qedhere
\end{equation}
\end{rem}
\begin{proof}[Proof of \Cref{3d_decay_lemma_2ndkind_wellposed}.]
  As is well known~\cite[Thm.\ 2.27]{ChandlerWilde:12}, for any
  $\eta\in\mathbb{C}$ satisfying $\mathrm{Re}(\eta)\ne 0$, the
  integral equation~\eqref{CFIE_proof_generic_21} is uniquely
  solvable, its solution $\widetilde{\psi}^f$ does not depend on
  $\eta$, and $\widetilde{\psi}^f$ additionally satisfies~\cite[Thms.\
  2.44 and 2.46]{ChandlerWilde:12} the single-layer integral equation
\begin{equation}\label{muf_slie}
  S_\omega \widetilde{\psi}^
  f(\mathbf{r}, \omega) = \gamma^+ \widetilde{B}^f(\mathbf{r}, \omega), \quad (\mathbf{r}, \omega) \in  \Gamma \times \mathbb{R}.
\end{equation}
(The references cited establish the result for $\omega \ge 0$; the
statement for $\omega < 0$ then follows directly in view of the
aforementioned Hermitian symmetry property.)  Clearly, substituting
$\eta = \eta_0(\omega)$ in~\eqref{CFIE_proof_generic_21} tells us that
$\widetilde{\psi}^f$ is also a solution
of~\eqref{CFIE_proof_generic_2}.

To complete the proof of the lemma we now note that, for any given
non-negative integer $r$, \cref{CFIE_proof_generic_2} tells us that
\[
  (1 + \omega^2)^{r/2} \widetilde{\psi}^f = (1 + \omega^2)^{r/2}
    A_\omega^{-1}\left(\gamma^+ \partial_\mathbf{n} \widetilde{B}^f - \i \eta_0 \gamma^+\widetilde{B}^f \right),
  \]
  and, thus, in view of \eqref{eq:q-nontrapp} and taking into account
  the relation $|\eta_0| \le (1 + \omega^2)^{1/2}$ that follows
  from~\eqref{Aomega_def_eqn},
  we obtain
  \begin{equation}\label{psitilde_norm_bound_freq}
    \begin{split}
      \int_{-\infty}^\infty (1 + \omega^2)^r\left\|\widetilde{\psi}^f(\cdot, \omega)\right\|^2_{L^2(\Gamma)}\,\d\omega \le C &\int_{-\infty}^\infty \left((1 + \omega^2)^{r+q}\left\|\gamma^+\partial_\mathbf{n} \widetilde{B}^f(\cdot,
    \omega)\right\|^2_{L^2(\Gamma)}\right.\\    
    &\left.+ (1 + \omega^2)^{r+q+1}\left\|\gamma^+\widetilde{B}^f(\cdot,
      \omega)\right\|^2_{L^2(\Gamma)}\right)\,\d\omega.
      \end{split}
  \end{equation}
  In particular, equation~\eqref{psitilde_norm_bound_freq} with
  $r = 0$ tells us that that
  $\widetilde{\psi}^f \in L^2(\mathbb{R}; L^2(\Gamma))$, in view of
  the $s$-regularity conditions assumed with $s=q$. As a result, the
  inverse time Fourier transform
  $\widetilde{\psi} = \mathcal{F}^{-1}\left[
    \widetilde{\psi}^f\right]$ (see \Cref{eq:fourier_transf}) is
  well-defined. Since $\widetilde{\psi}^f$ satisfies \Cref{muf_slie}
  for every $\omega$, it follows from~\eqref{eq:single_layer_op_ii}
  that its inverse Fourier transform $\widetilde{\psi}$ satisfies
  \Cref{eq:tdie_sl_generic}, and, thus the existence of a solution to
  \Cref{eq:tdie_sl_generic}, given by the inverse Fourier transform of
  $\widetilde{\psi}^f$, follows. To establish the uniqueness of this
  solution in $ L^2(\mathbb{R}; L^2(\Gamma))$, we assume
  $ \lambda \in L^2(\mathbb{R}; L^2(\Gamma))$ satisfies
  $\left(S \lambda\right)(\mathbf{r}, t) = 0$ for
  $(\mathbf{r}, t) \in \Gamma \times
  \mathbb{R}$. Using~\eqref{eq:Stilde_def}
  and~\eqref{eq:single_layer_op_ii} we see that the Fourier transform
  $\lambda^f = \mathcal{F}[\lambda]$ satisfies the equation
  $\left(S_\omega \lambda^f(\cdot, \omega)\right)(\mathbf{r}) = 0$ for
  each $\omega \in \mathbb{R}$.  Since $S_\omega$ is invertible for
  almost every $\omega$ (or, more precisely, for all
  $\omega\in\mathbb{R}$ except for the measure-zero set of $\omega$
  for which $-\omega^2$ is a Dirichlet eigenvalue for the Laplace
  operator in the interior of $\Omega$~\cite[Thm.\
  2.25]{ChandlerWilde:09}), we conclude that
  $\lambda^f(r,\omega)\equiv 0$ for almost every
  $\omega\in\mathbb{R}$, and thus
  $\lambda = \mathcal{F}^{-1}[\lambda^f] \equiv 0$. The existence and uniqueness of solutions of \eqref{eq:tdie_sl_generic} thus follows.

  The estimate~\cref{psi_Hp_bound} is equivalent
  to~\eqref{psitilde_norm_bound_freq} with $r=p$, in view of the
  equivalence of the norms displayed in
  equations~\eqref{sob_bochner_norm} and~\eqref{sob_bochner_fourier}
  in~\Cref{sob-boch} below.  Under the additional assumption of
  $s$-regularity with $s = p + q + 1$, the fact that
  $\widetilde{\psi} \in C^p(\mathbb{R};\,L^2(\Gamma))$, together with
  an associated norm bound, result from an application
  of~\eqref{psi_Hp_bound} with $p$ substituted by $p+1$ and the use of
  the Sobolev embedding \Cref{sob_lemma}. The proof is now complete.
\end{proof}

In preparation for Lemma~\ref{3d_decay_lemma_2ndkind}
we note the relation
\begin{equation}\label{potent_time_freq}
  \mathcal{F}\left [ \mathscr{S}[\mu]\right] =
  \widetilde{\mathscr{S}}\left[\mathcal{F}[\mu]\right]
\end{equation}
that relates the time- and frequency-domain single layer
potentials~\eqref{eq:single_layer_pot_time}
and~\eqref{eq:single_layer_pot} (where we have set
\begin{equation}\label{tilde_scr_S}
  \widetilde{\mathscr{S}}[\mu](\mathbf{r},\omega) =
  \mathscr{S}_\omega[\mu(\cdot,\omega)](\mathbf{r}),\quad (\mathbf{r}, \omega) \in \mathbb{R}^3 \times \mathbb{R};
\end{equation}
cf. equations~\eqref{eq:Stilde_def}
and~\eqref{eq:single_layer_op_ii}), and which, as in \Cref{app_c},
follows via an application of the change of
variables~\eqref{tau_chvar} to the left-hand term of the equation.
Also, letting
\begin{equation}\label{umintilde_def}
    \widetilde{u}_{-,T}(\mathbf{r}, t) := (\mathscr{S} \widetilde{\psi}_{-,T})(\mathbf{r}, t), \quad (\mathbf{r}, t) \in \mathbb{R}^3 \times \mathbb{R},
  \end{equation}
we define the function
\begin{equation}\label{htildedef}
  \widetilde{h}_T(\mathbf{r}, t) := \widetilde{b}(\mathbf{r}, t) -
  \widetilde{u}_{-,T}(\mathbf{r}, t),\quad (\mathbf{r}, t) \in \Omega^{inc} \times \mathbb{R},
\end{equation}
and, using~\eqref{potent_time_freq}, its Fourier transform
\begin{equation}\label{Ht_def_generic}
  \widetilde{H}^f_T(\mathbf{r}, \omega) = \widetilde{B}^f(\mathbf{r}, \omega) - \widetilde{\mathscr{S}}\left[ 
  \widetilde{\psi}_{-,T}^f\right](\mathbf{r}, \omega),\quad \mathbf{r}\in\Omega^{inc} \times \mathbb{R}.
\end{equation}

\begin{rem}\label{rem:tilde_2}
  With reference to Remark~\ref{rem:tilde_1}, we mention that
  Lemma~\ref{3d_decay_lemma_2ndkind} and subsequent lemmas apply in
  particular to the function $\widetilde{b} = b$ in \Cref{eq:tdie_sl};
  when such applications arise we will accordingly use untilded
  notations such as e.g.
\begin{equation}\label{umin_def}
    u_{-,T}(\mathbf{r}, t) := (\mathscr{S} \psi
  _{-,T})(\mathbf{r}, t), \quad (\mathbf{r}, t) \in \mathbb{R}^3 \times \mathbb{R},
\end{equation}
instead of~\eqref{umintilde_def}, 
\begin{equation}\label{hdef}
  h_T(\mathbf{r}, t) := b(\mathbf{r}, t) - u_{-,T}(\mathbf{r}, t),\quad \mathbf{r}\in\Omega^{inc} \times \mathbb{R},
\end{equation}
instead of~\eqref{htildedef}, and
\begin{equation}\label{Ht_untilded_def}
    H^f_T(\mathbf{r}, \omega) = B^f(\mathbf{r}, \omega) - \widetilde{\mathscr{S}}\left[
      \psi_{-,T}^f\right] (\mathbf{r}, \omega),\quad \mathbf{r}\in\Omega^{inc} \times \mathbb{R},
\end{equation}
instead of~\eqref{Ht_def_generic}.
\end{rem}

Relying on the $\tau$DoD interval $I_{T}$ introduced in
\Cref{domainofdep}, as well as the windowed time-dependent boundary
densities $\widetilde{\psi}_{-,T}$, $\widetilde{\psi}_{+,T}$, and
$\widetilde{\psi}_{*,T}$ introduced in \Cref{timewinddens} and
\Cref{rem:tilde_1}, and the trace operators $\gamma^\pm$ on $\Gamma$
in \Cref{trace_def} (Appendix~\ref{sob-boch}),
Lemmas~\ref{3d_decay_lemma_2ndkind} to~\ref{per_freq_rhs_oper_bounds}
below develop estimates on the time-dependent density
$\widetilde{\psi}_{+,T}$ and bounds on the norms of certain
frequency-domain operators, all of which are used in the proof of
\Cref{3d_decay_thm}. The subsequent \Cref{sob_lemma} is the Sobolev
Lemma for functions with values in a Banach space
(i.e. $L^2(\Gamma)$), that is used, in particular, to establish the
theorem's uniform-in-time estimate~\eqref{density_unif_time_bound}.
\begin{lemma}[Direct second-kind integral equations for
  $\widetilde{\psi}_{+,T}^f$]\label{3d_decay_lemma_2ndkind}
  Assume that the obstacle $\Omega$ satisfies the $q$-growth condition
  for some integer $q \ge 0$.  Let $\widetilde{b}$ denote a function
  defined in an open set $\Omega^\textit{inc}$ containing
  $\widebar{\Omega}$, and assume that $\widetilde{b}$ is a solution to
  the wave equation in $\Omega^\textit{inc}$ that
  satisfies~\eqref{all_t_b_generic} as well as the $s$-regularity
  conditions~\eqref{eq:gamma_Hs_assump} with $s = q$.  Additionally,
  let $T$ denote a real number with associated time-windowed solutions
  $\widetilde{\psi}_{+,T}$ and $\widetilde{\psi}_{-,T}$ as in
  \Cref{timewinddens} and \Cref{rem:tilde_1}, where $\widetilde{\psi}$
  is the solution to \Cref{eq:tdie_sl_generic}. Then
  $\widetilde{\psi}_{+,T}$ solves the integral equation
    \begin{equation}\label{int_eq_lambda}
      (S\widetilde{\psi}_{+,T})(\mathbf{r}, t) = \gamma^+ \widetilde{h}_T(\mathbf{r}, t),
    \end{equation}
    (cf. \Cref{eq:tdie_sl}) where $\widetilde{h}_T$ is given by
    \Cref{htildedef}.  Further, for each fixed $\omega \ge 0$ the
    Fourier transform $\widetilde{\psi}_{+,T}^f$ of
    $\widetilde{\psi}_{+,T}$ satisfies the second-kind integral
    equation
\begin{equation}\label{CFIE_proof_generic}
  \left(A_{\omega} \widetilde{\psi}_{+,T}^f \right)(\mathbf{r}, \omega) =
    \gamma^-\partial_\mathbf{n} \widetilde{H}^f_T(\mathbf{r}, \omega)
    - \i  \eta_0(\omega) \gamma^- \widetilde{H}^f_T(\mathbf{r}, \omega), \quad\mathbf{r} \in \Gamma,
\end{equation}
where $A_{\omega}$ and $\widetilde{H}^f_T$ are given by
equations~\cref{Aomega_def_eqn} and \cref{Ht_def_generic},
respectively.
\end{lemma}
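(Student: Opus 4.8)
The plan is to establish \eqref{int_eq_lambda} first in the time domain and then pass to the frequency domain by Fourier transformation, using the invertibility and $q$-growth properties of $A_\omega$ recorded in \Cref{A_om_invert} and \Cref{q-nontrapp} only insofar as they guarantee that all transforms involved are well defined (via \Cref{psi_Hp} applied in the form of \Cref{rem:tilde_1}). First I would note that, since $\widetilde\psi = \widetilde\psi_{-,T} + \widetilde\psi_{+,T}$ by the partition-of-unity relation $w_- + w_+ = 1$ in \Cref{timewinddens}, linearity of the single-layer operator $S$ in \eqref{eq:single_layer_op_time} gives $(S\widetilde\psi_{+,T})(\mathbf r, t) = (S\widetilde\psi)(\mathbf r, t) - (S\widetilde\psi_{-,T})(\mathbf r, t)$. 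On $\Gamma$ the first term on the right equals $\gamma^+\widetilde b(\mathbf r, t)$ by \eqref{eq:tdie_sl_generic}, while the second term is, by definition \eqref{umintilde_def}, the trace $\gamma^+\widetilde u_{-,T}(\mathbf r,t)$ (here one must be slightly careful: $\widetilde u_{-,T} = S\widetilde\psi_{-,T}$ is continuous across $\Gamma$ because $S$ maps to the single-layer potential, so $\gamma^+$ and $\gamma^-$ agree on it). Hence $(S\widetilde\psi_{+,T})(\mathbf r,t) = \gamma^+\bigl(\widetilde b - \widetilde u_{-,T}\bigr)(\mathbf r,t) = \gamma^+\widetilde h_T(\mathbf r,t)$ with $\widetilde h_T$ as in \eqref{htildedef}, which is \eqref{int_eq_lambda}.

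Next I would obtain \eqref{CFIE_proof_generic} by recognizing that \eqref{int_eq_lambda}, being of the form $(S\nu)(\mathbf r,t) = \gamma^+ g(\mathbf r,t)$ for the wave solution $g = \widetilde h_T$ in $\Omega^{\textit{inc}}$, is exactly the time-domain direct first-kind equation whose frequency-domain combined-field reformulation is \eqref{CFIE_direct}. Concretely: applying the temporal Fourier transform and using the convolution structure of $S$ turns \eqref{int_eq_lambda} into $S_\omega\widetilde\psi_{+,T}^f = \gamma^+\widetilde H_T^f$ on $\Gamma$, where $\widetilde H_T^f$ is given in \eqref{Ht_def_generic}. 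The standard combined-field manipulation—adding $\tfrac12 I + K_\omega^*$ applied to the jump relations for the single-layer potential $\widetilde H_T^f$ (whose normal derivative from the exterior involves $-\tfrac12 I + K_\omega^*$ and the single-layer trace involves $S_\omega$), and combining with $-\mathrm i\eta$ times the Dirichlet trace—yields $A_\omega\widetilde\psi_{+,T}^f = \gamma^-\partial_\mathbf{n}\widetilde H_T^f - \mathrm i\eta\gamma^-\widetilde H_T^f$, which is \eqref{CFIE_proof_generic}. I would cite \cite{ChandlerWilde:12} for the Lipschitz-domain jump relations and for the equivalence of the first-kind and combined-field forms, mirroring the passage from \eqref{eq:tdie_sl} to \eqref{CFIE_direct} already used in the paper (and noting, as in \Cref{psi_omega_independent}, that $\widetilde\psi_{+,T}^f$ solves this for every real $\eta$, in particular for $\eta = \eta_0$ as in \eqref{Aomega_def_eqn}).

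The main obstacle—and the step requiring the most care—is justifying the Fourier transformation and the interchange of $\mathcal F$ with $S$: one must know a priori that $\widetilde\psi_{+,T}$, hence $\widetilde\psi_{+,T}^f$, lies in the right function space for the frequency-domain identity to hold pointwise in $\omega$ and for $\widetilde H_T^f$ to be a genuine $L^2(\Gamma)$-valued function of $\omega$. This is precisely where the hypotheses of the lemma enter: the $q$-growth condition together with the $s$-regularity conditions with $s = q$ (Remark~\ref{rem:tilde_1}'s version of \Cref{psi_Hp} with $p = 0$) give $\widetilde\psi \in H^0(\mathbb R; L^2(\Gamma)) = L^2(\mathbb R; L^2(\Gamma))$, and since $w_\pm$ are bounded the windowed densities $\widetilde\psi_{\pm,T}$ inherit this integrability; one then needs that the causality/support structure of $\widetilde b$ (Assumption \eqref{all_t_b_generic}) and of $\widetilde\psi_{-,T}$ (supported in $(-\infty,T]$) makes $\widetilde h_T$ and its traces well behaved enough—this is a routine but non-trivial bookkeeping of temporal supports, exactly the sort of thing the paper handles via its window functions. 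I would therefore structure the proof as: (1) time-domain identity by linearity; (2) verify integrability/regularity so Fourier transform applies; (3) Fourier transform and standard combined-field reformulation to reach \eqref{CFIE_proof_generic}.
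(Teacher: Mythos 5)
Your proposal follows essentially the same route as the paper: the time-domain identity \eqref{int_eq_lambda} via linearity of $S$ and the partition of unity, followed by the standard direct combined-field derivation (Green's representation formulas plus the single-layer jump relations, with $\widetilde{h}_T$ playing the role of the incident field) to reach \eqref{CFIE_proof_generic} --- which is exactly what the paper carries out explicitly through the auxiliary fields $v^{\pm}$ and the identification of the resulting density $\mu$ with $\widetilde{\psi}_{+,T}^f$ by uniqueness of \eqref{int_eq_lambda}. One small correction: $\widetilde{h}_T=\widetilde{b}-S\widetilde{\psi}_{-,T}$ is a wave (resp.\ Helmholtz) solution only in $\Omega$ and in $\widebar{\Omega}^c$ separately, not in all of $\Omega^{\textit{inc}}$, since the normal derivative of the single-layer potential jumps across $\Gamma$; fortunately only the interior property is what the direct combined-field argument requires.
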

\begin{rem}\label{rem:tilde_3}
  With reference to \Cref{rem:tilde_1}, we note that, for the
  particular case $\widetilde{b} = b$, \Cref{3d_decay_lemma_2ndkind}
  tells us that the Fourier transform $\psi^f$ of the solution $\psi$ to \Cref{eq:tdie_sl} satisfies
\begin{equation}\label{CFIE_proof}
  \left(A_{\omega} \psi_{+,T}^f \right)(\mathbf{r}, \omega) =
    \gamma^-\partial_\mathbf{n} H^f_T(\mathbf{r}, \omega)
    - \i \eta_0(\omega) \gamma^-H^f_T(\mathbf{r}, \omega), \quad\mathbf{r} \in \Gamma,
\end{equation}
where
\begin{equation}\label{Ht_def}
    H^f_T(\mathbf{r}, \omega) = B^f(\mathbf{r}, \omega) - \left(\mathscr{S}_\omega
        \psi_{-,T}^f\right)(\mathbf{r}, \omega).\qedhere
\end{equation}
\end{rem}
\begin{rem}\label{rem:traces_interior_jump}
  It is important to note a certain structural difference between
  equations~\eqref{CFIE_proof_generic_2}
  and~\eqref{CFIE_proof_generic} (both of which concern the
  frequency-domain operator $A_{\omega}$), namely, that the right hand
  side in the first equation is expressed in terms of the exterior
  trace operator $\gamma^+$ while the one for the second utilizes the
  interior trace operator $\gamma^-$ instead. In the first case using
  $\gamma^-$ on the right hand side produces identical results, since,
  according to~\eqref{all_t_b_generic}, the incident field is assumed
  to be sufficiently smooth on the set $\Omega^\textit{inc}$ and,
  therefore, in a neighborhood of $\Gamma$. In the second case,
  however, as indicated in the proof of~\Cref{3d_decay_lemma_2ndkind},
  interior traces appear in view of the relation~\cite[p. 219]{McLean}
    \begin{equation}\label{gammam}
      \gamma^- \partial_\mathbf{n} \mathscr{S}_\omega = \frac{1}{2} I
      + K_\omega^*
    \end{equation}
    for the values of the interior normal-derivative of the
    single-layer potential on $\Gamma$, which happens to reproduce the
    $\frac{1}{2} I + K_\omega^*$ contribution in the
    expression~\eqref{Aop_def_eqn}-\eqref{Aomega_def_eqn} for the
    operator $A_\omega$, while use of an exterior normal derivative
    would yield a different result, on account of the jump conditions
    for the normal derivatives of the single layer potential across
    $\Gamma$. In a related interpretation, the presence of $\gamma^-$
    reflects the fact that the Green-formula derivations of
    \emph{direct} integral equation formulations, like the
    $A_\omega$-equations we use, rely on consideration of the interior
    problem for the single layer potential that represents the
    incident field.
\end{rem}
\begin{proof}[Proof of \Cref{3d_decay_lemma_2ndkind}.]
  Since $\widetilde{b}$ satisfies the assumptions in
  \Cref{3d_decay_lemma_2ndkind_wellposed}, an application of that
  lemma with with $p=0$ tells us that there exists a unique solution
  $\widetilde{\psi} \in L^2(\mathbb{R}; L^2(\Gamma))$ of equation
  \eqref{eq:tdie_sl_generic}. Using the partition of unity
  decomposition embodied in~\cref{psipm_def},
  equation~\Cref{eq:tdie_sl_generic} may be re-expressed in the
  form~\eqref{int_eq_lambda}, where $\widetilde{h}_T$ and the
  associated $\widetilde{u}_{-,T}$ are given by \Cref{htildedef} and
  \Cref{umintilde_def} respectively.

  To establish \Cref{CFIE_proof_generic} we use
  \Cref{3d_decay_lemma_2ndkind_wellposed} again which tells us that
  $\widetilde{\psi}^f = \mathcal{F}[\widetilde{\psi}]$ solves
  \Cref{CFIE_proof_generic_2}. But using the identity
  $\widetilde{\psi}^f = \widetilde{\psi}_{+,T}^f +
  \widetilde{\psi}_{-,T}^f$ (see \Cref{timewinddens} and
  \Cref{rem:tilde_1}), together with the relations
  $\gamma^- \partial_\mathbf{n} \widetilde{B}^f = \gamma^+
  \partial_\mathbf{n} \widetilde{B}^f$, and
  $\gamma^- \widetilde{B}^f = \gamma^+ \widetilde{B}^f$, which result
  in view of the assumed smoothness of $\widetilde{B}^f$ in a
  neighborhood of $\Gamma$, we thus obtain
  \begin{equation}\label{moved_2_RHS}
    \begin{split}
      \left(A_{\omega} \widetilde{\psi}_{+,T}^f \right)(\mathbf{r},
      \omega) = &-\left(A_{\omega} \widetilde{\psi}_{-,T}^f
      \right)(\mathbf{r}, \omega) \\&+ \gamma^-\partial_\mathbf{n}
      \widetilde{B}^f(\mathbf{r}, \omega) - \i \eta_0(\omega) \gamma^-
      \widetilde{B}^f(\mathbf{r}, \omega), \quad\mathbf{r} \in \Gamma.
        \end{split}
      \end{equation}
      In view of the definition~\cref{Aomega_def_eqn} of $A_\omega$
      and the relation~\eqref{gammam} we find that
    \begin{equation}
        \begin{split}
            \left(A_{\omega} \widetilde{\psi}_{+,T}^f \right)(\mathbf{r}, \omega) = &\gamma^- \partial_{\mathbf{n}} \left( \widetilde{B}^f - \mathscr{S}_\omega \widetilde{\psi}_{-,T}^f\right) 
            \\&- \i\eta_0(\omega) \gamma^- \left( \widetilde{B}^f - \mathscr{S}_\omega \widetilde{\psi}^f_{-,T}\right), \quad\mathbf{r} \in \Gamma.
        \end{split}
    \end{equation}
    Using the definition~\cref{Ht_def_generic} of $\widetilde{H}_T^f$
    together with~\cref{tilde_scr_S}, finally, the desired
    equation~\cref{CFIE_proof_generic} follows, and the proof is
    complete.
\end{proof}

\begin{lemma}[Frequency $L^2$ bounds on the solution
  of~\cref{CFIE_proof_generic}]\label{3d_decay_lemma_Ainv_freq_bounds}
  Let $T\in\mathbb{R}$ and $q \geq 0$, let $\Gamma = \partial \Omega$
  satisfy the $q$-growth condition, and assume $\widetilde{b}$
  satisfies the conditions of \Cref{3d_decay_lemma_2ndkind}. Then
  there exist constants $C_1 = C_1(\Gamma) > 0$ and $C_2 = C_2(\Gamma) > 0$
  such that $\widetilde{\psi}^f_{+,T}$ satisfies
\begin{equation}\label{psiplus_estimate_3}
    \begin{split}
    \left\|\widetilde{\psi}_{+,T}^f\right\|^2_{L^2(\mathbb{R};\,L^2(\Gamma))}
    &\le C_1\int_0^{\omega_0}
    \left\|\gamma^-\partial_\mathbf{n} \widetilde{H}^f_T(\cdot,
    \omega) - \i \gamma^-\widetilde{H}^f_T(\cdot,
    \omega)\right\|^2_{L^2(\Gamma)}\,\d\omega\\
    & + C_2\int_{\omega_0}^\infty\omega^{2q}
    \left\|\gamma^-\partial_\mathbf{n} \widetilde{H}^f_T(\cdot,
    \omega) - \i\omega \gamma^- \widetilde{H}^f_T(\cdot.
    \omega)\right\|^2_{L^2(\Gamma)}\,\d\omega,
    \end{split}
\end{equation}
    where $\widetilde{H}^f_T$ is given by \Cref{Ht_def_generic}.
\end{lemma}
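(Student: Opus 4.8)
The plan is to obtain the estimate by reading it off, frequency by frequency, from \Cref{3d_decay_lemma_2ndkind} and then integrating in $\omega$. First I would note that, by \Cref{3d_decay_lemma_2ndkind}, for each fixed $\omega \ge 0$ the function $\widetilde{\psi}_{+,T}^f(\cdot,\omega)\in L^2(\Gamma)$ satisfies the combined-field equation~\eqref{CFIE_proof_generic}, whose operator is $A_\omega = A_{\omega,\eta_0}$ with $\eta_0$ as in~\eqref{Aomega_def_eqn}; since $A_{\omega,\eta_0}$ is boundedly invertible on $L^2(\Gamma)$ for every $\omega \ge 0$ (\Cref{A_om_invert}), one may write $\widetilde{\psi}_{+,T}^f(\cdot,\omega) = A_\omega^{-1}\big(\gamma^-\partial_\mathbf{n}\widetilde{H}^f_T(\cdot,\omega) - \i\eta_0\,\gamma^-\widetilde{H}^f_T(\cdot,\omega)\big)$. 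Taking $L^2(\Gamma)$ norms and using submultiplicativity of the operator norm then yields, for every $\omega \ge 0$, the pointwise bound $\|\widetilde{\psi}_{+,T}^f(\cdot,\omega)\|_{L^2(\Gamma)} \le \|A_\omega^{-1}\|_{L^2(\Gamma)\to L^2(\Gamma)}\,\big\|\gamma^-\partial_\mathbf{n}\widetilde{H}^f_T(\cdot,\omega) - \i\eta_0\,\gamma^-\widetilde{H}^f_T(\cdot,\omega)\big\|_{L^2(\Gamma)}$.

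Next I would feed in the two-branch form~\eqref{eq:q-nontrapp-cases} of the $q$-growth hypothesis together with the piecewise definition of $\eta_0$ in~\eqref{Aomega_def_eqn}. For $0 \le \omega \le \omega_0$ one has $\eta_0 = 1$ and $\|A_\omega^{-1}\|_{L^2(\Gamma)\to L^2(\Gamma)} \le C_1$, so squaring the pointwise bound gives $\|\widetilde{\psi}_{+,T}^f(\cdot,\omega)\|^2_{L^2(\Gamma)} \le C_1^2\,\big\|\gamma^-\partial_\mathbf{n}\widetilde{H}^f_T(\cdot,\omega) - \i\,\gamma^-\widetilde{H}^f_T(\cdot,\omega)\big\|^2_{L^2(\Gamma)}$; for $\omega > \omega_0$ one has $\eta_0 = \omega$ and $\|A_\omega^{-1}\|_{L^2(\Gamma)\to L^2(\Gamma)} \le C_2\,\omega^q$, so $\|\widetilde{\psi}_{+,T}^f(\cdot,\omega)\|^2_{L^2(\Gamma)} \le C_2^2\,\omega^{2q}\,\big\|\gamma^-\partial_\mathbf{n}\widetilde{H}^f_T(\cdot,\omega) - \i\omega\,\gamma^-\widetilde{H}^f_T(\cdot,\omega)\big\|^2_{L^2(\Gamma)}$. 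These are precisely the two integrands appearing on the right-hand side of~\eqref{psiplus_estimate_3}.

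Finally I would integrate in $\omega$. By the Hermitian symmetry recorded in \Cref{negative_freq} (which applies equally to $\widetilde{\psi}_{+,T}$ and $\widetilde{H}^f_T$, cf.\ \Cref{rem:tilde_1}) one has $\|\widetilde{\psi}_{+,T}^f\|^2_{L^2(\mathbb{R};L^2(\Gamma))} = 2\int_0^\infty \|\widetilde{\psi}_{+,T}^f(\cdot,\omega)\|^2_{L^2(\Gamma)}\,\d\omega$; splitting this integral at $\omega = \omega_0$, inserting the two pointwise bounds just derived, and absorbing the factor $2$ together with $C_1^2$ and $C_2^2$ into new constants $C_1(\Gamma), C_2(\Gamma) > 0$ then produces~\eqref{psiplus_estimate_3} verbatim. (Finiteness of the left-hand side, needed only if one wishes the estimate to be nonvacuous, follows either from the bound itself whenever its right-hand side is finite, or directly from \Cref{psi_Hp} applied---as permitted by \Cref{rem:tilde_1}---to the integral equation~\eqref{int_eq_lambda} for $\widetilde{\psi}_{+,T}$, using the $s$-regularity of $\widetilde{b}$ with $s=q$.) The argument is essentially bookkeeping; the only points meriting attention are that the piecewise value of $\eta_0$ in~\eqref{CFIE_proof_generic} be correctly matched to the two branches of~\eqref{eq:q-nontrapp-cases}, so that the two frequency ranges in~\eqref{psiplus_estimate_3} carry the intended integrands, and that folding the negative-frequency contribution onto $[0,\infty)$ via \Cref{negative_freq} be legitimate, which it is under the standing real-valuedness convention for the incident field.
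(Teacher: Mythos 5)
Your proposal is correct and follows essentially the same route as the paper's own proof: invert $A_\omega$ in \Cref{CFIE_proof_generic} via \Cref{A_om_invert}, apply the two-branch form~\eqref{eq:q-nontrapp-cases} of the $q$-growth condition with the piecewise $\eta_0$ of~\eqref{Aomega_def_eqn}, fold negative frequencies onto $[0,\infty)$ by the Hermitian symmetry of \Cref{negative_freq}, and integrate. The only difference is your added parenthetical on finiteness of the left-hand side, which the paper omits here and handles later via \Cref{psi_Hp}.
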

\begin{proof}
  In view of \Cref{fd_layer_continuity_lemma}
  (cf. \Cref{3d_decay_lemma_2ndkind_wellposed}), $A_\omega$ is
  invertible for all $\omega \ge 0$, and, thus,
  using~\cref{CFIE_proof_generic} we obtain
  $\widetilde{\psi}_{+,T}^f = A_{\omega}^{-1} \left(
    \gamma^-\partial_\mathbf{n} \widetilde{H}^f_T - \i \eta_0 \gamma^-
    \widetilde{H}^f_T\right)$. It follows that, for all
  $\omega \ge 0$,
\begin{equation*}
\begin{split}
  \|\widetilde{\psi}_{+,T}^f(\cdot, \omega)\|_{L^2(\Gamma)} \le
    \|A_{\omega}^{-1}&\|_{L^2(\Gamma) \to L^2(\Gamma)}\cdot
    \left\|\gamma^-\partial_\mathbf{n} \widetilde{H}^f_T(\cdot,
    \omega) - \i\eta_0(\omega) \gamma^-\widetilde{H}^f_T(\cdot,
    \omega)\right\|_{L^2(\Gamma)}.\label{psiplus_estimate_1}
\end{split}
\end{equation*}
Using the $q$-growth condition~\Cref{eq:q-nontrapp-cases} we then
obtain
\begin{equation}
  \|\widetilde{\psi}_{+,T}^f(\cdot, \omega)\|_{L^2(\Gamma)} ^2\le
  \frac{C_1}{2}\left\|\gamma^-\partial_\mathbf{n} \widetilde{H}^f_T(\cdot,
    \omega) - \i \gamma^-\widetilde{H}^f_T(\cdot,
    \omega)\right\|_{L^2(\Gamma)}^2,\, (0 \le \omega < \omega_0)\label{psiplus_estimate_2b}
\end{equation}
    and
\begin{equation}
  \|\widetilde{\psi}_{+,T}^f(\cdot, \omega)\|_{L^2(\Gamma)}^2 \le
    \frac{C_2}{2}\omega^{2q}\left\|\gamma^-\partial_\mathbf{n} \widetilde{H}^f_T(\cdot,
    \omega) - \i\omega \gamma^-\widetilde{H}^f_T(\cdot,
    \omega)\right\|_{L^2(\Gamma)}^2,\, (\omega > \omega_0)\label{psiplus_estimate_2}
\end{equation}
for certain constants $C_1$ and $C_2$.  Noting that by Hermitian
symmetry (see \Cref{negative_freq_eq}) we have
$\left\|\widetilde{\psi}_{+,T}^f(\cdot, \omega)\right\|_{L^2(\Gamma)}
= \left\|\widetilde{\psi}_{+,T}^f(\cdot,
  |\omega|)\right\|_{L^2(\Gamma)}$ for $\omega \in \mathbb{R}$, it follows that
\begin{equation}
    \begin{split}
    \left\|\widetilde{\psi}_{+,T}^f\right\|^2_{L^2(\mathbb{R};\,L^2(\Gamma))}
      &= 2 \int_0^\infty \left\|\widetilde{\psi}_{+,T}^f(\cdot,
    \omega)\right\|^2_{L^2(\Gamma)}\,\d\omega \\
    &\le C_1\int_0^{\omega_0}
      \left\|\gamma^-\partial_\mathbf{n} \widetilde{H}^f_T(\cdot,
      \omega) - \i \gamma^-\widetilde{H}^f_T(\cdot,
    \omega)\right\|^2_{L^2(\Gamma)}\,\d\omega\\
    & + C_2\int_{\omega_0}^\infty
      \omega^{2q}\left\|\gamma^-\partial_\mathbf{n} \widetilde{H}^f_T(\cdot,
      \omega) - \i\omega \gamma^-\widetilde{H}^f_T(\cdot,
    \omega)\right\|^2_{L^2(\Gamma)}\,\d\omega,\\
  \end{split}
\end{equation}
as desired.
\end{proof}

\begin{lemma}[About $\widetilde{h}_T$: Limited temporal history
  horizon and bounded support]\label{3d_decay_thm_h_equiv}

  Let $T>0$ be given. Further, let $\widetilde{b}$ denote a function
  defined in the open set $\Omega^\textit{inc}$ containing
  $\widebar{\Omega}$, and assume that $\widetilde{b}$ is a solution of
  the wave equation in $\Omega^\textit{inc}$ that
  satisfies~\eqref{all_t_b_generic} for some $\alpha\in\mathbb{R}$
  and which vanishes for
  $(\bfr,t) \in \widebar\Omega \times \left\lbrace I_{T}\cup [T,
    \infty)\right\rbrace$ (that is, for $\bfr \in \widebar\Omega$ and
  $t \ge T - T_* - 2\tau$; see \Cref{domainofdep}). Then, the function
  $\widetilde{h}_T$ defined in \Cref{htildedef} satisfies
\begin{equation}\label{h_equiv}
    \widetilde{h}_T(\mathbf{r}, t) =
  \left\{\begin{aligned}
    -\widetilde{u}_{*,T}(\mathbf{r}, t), \quad &t \ge T - \tau\\
    0, \quad &t < T - \tau
  \end{aligned}\right.\quad\mbox{for}\quad\mathbf{r} \in \widebar{\Omega},
\end{equation}
  where, with reference to \Cref{timewinddens} and  \Cref{rem:tilde_1},
\begin{equation}\label{ustar_def}
    \widetilde{u}_{*,T}(\mathbf{r}, t) = (\mathscr{S} \widetilde{\psi}_{*,T})(\mathbf{r}, t).
\end{equation}
Further, the function $\widetilde{h}_T(\mathbf{r}, t)$ has a bounded
temporal support:
  \begin{equation}\label{h_compactsupp}
      \supp \widetilde{h}_T(\mathbf{r}, \cdot)\subset [T - \tau, T + T_*], \quad\mbox{for}\quad \mathbf{r} \in \widebar{\Omega}.
  \end{equation}
\end{lemma}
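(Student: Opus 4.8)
The plan is to split $\widetilde{\psi}_{-,T}$ into its part supported inside the $\tau$DoD interval $I_{T}$ and an ``earlier tail'', to annihilate the single-layer potential of that tail on $\widebar{\Omega}$ via finite propagation speed (using the sharp identity $\diam(\widebar{\Omega})=\diam(\Gamma)=cT_*$), and to exploit the fact that the single-layer potential of the \emph{full} density $\widetilde{\psi}$ reproduces $\widetilde{b}$ throughout $\widebar{\Omega}$. Since $w_-+w_+=1$ and $w_-(t-T)\equiv 1$ on the set where $w_-(t-T+T_*+\tau)\ne 0$ (there $t<T-T_*-\tau<T-\tau$), one may write $\widetilde{\psi}_{-,T}=\widetilde{\psi}_{*,T}+\rho$ with $\rho(\mathbf{r},t):=w_-(t-T+T_*+\tau)\,\widetilde{\psi}(\mathbf{r},t)$, which vanishes for $t\ge T-T_*-\tau$; equivalently $\widetilde{u}_{-,T}=\widetilde{u}_{*,T}+S\rho$.

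Next I would record two facts. (i) For $\mathbf{r}\in\widebar{\Omega}$ and $\mathbf{r}'\in\Gamma$ we have $|\mathbf{r}-\mathbf{r}'|\le cT_*$, so in view of the retarded structure of $S$ (see~\eqref{eq:single_layer_op_time}) the value $(S\mu)(\mathbf{r},t)$ samples $\mu$ only at the retarded time $t'=t-|\mathbf{r}-\mathbf{r}'|/c\ge t-T_*$; with $\mu=\rho$ and $t\ge T-\tau$ this gives $t'\ge T-T_*-\tau$, where $\rho$ vanishes, so $(S\rho)(\mathbf{r},t)=0$ for all $\mathbf{r}\in\widebar{\Omega}$ and $t\ge T-\tau$. (ii) The single-layer potential $S\widetilde{\psi}$ solves the wave equation off $\Gamma$, is continuous across $\Gamma$ with $\gamma^\pm(S\widetilde{\psi})=\gamma^+\widetilde{b}=\gamma^-\widetilde{b}$ on $\Gamma$ (by~\eqref{eq:tdie_sl_generic} and continuity of $\widetilde{b}$), and vanishes for $t<\alpha$ (since $\widetilde{\psi}$ does, by~\eqref{all_t_b_generic} and causality of $S$); since $\widetilde{b}$ also solves the wave equation in $\Omega$, with the same Dirichlet values on $\Gamma$ and vanishing Cauchy data at $t=\alpha$, uniqueness for the interior initial--boundary value problem gives $(S\widetilde{\psi})(\mathbf{r},t)=\widetilde{b}(\mathbf{r},t)$ for all $(\mathbf{r},t)\in\widebar{\Omega}\times\mathbb{R}$. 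Granting (i) and (ii), identity~\eqref{h_equiv} is immediate: for $\mathbf{r}\in\widebar{\Omega}$ and $t<T-\tau$ every retarded time is $\le t<T-\tau$, where $w_-(\cdot-T)\equiv 1$, hence $\widetilde{u}_{-,T}(\mathbf{r},t)=(S\widetilde{\psi})(\mathbf{r},t)=\widetilde{b}(\mathbf{r},t)$ and $\widetilde{h}_T(\mathbf{r},t)=0$; for $\mathbf{r}\in\widebar{\Omega}$ and $t\ge T-\tau$ we get $\widetilde{h}_T=\widetilde{b}-\widetilde{u}_{*,T}-S\rho=-\widetilde{u}_{*,T}$, since $S\rho$ vanishes by (i) and $\widetilde{b}(\mathbf{r},t)=0$ there by hypothesis (as $t\ge T-\tau>T-T_*-2\tau$).

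For the support statement~\eqref{h_compactsupp}, the lower bound $T-\tau$ is exactly the bottom case of~\eqref{h_equiv} just proved, while for the upper bound one notes that $\widetilde{\psi}_{*,T}$ vanishes for $t\ge T$ (as $w_-(t-T)=0$ there), so by the same retarded-structure argument and $|\mathbf{r}-\mathbf{r}'|\le cT_*$ the potential $\widetilde{u}_{*,T}(\mathbf{r},t)=(S\widetilde{\psi}_{*,T})(\mathbf{r},t)$ vanishes for $\mathbf{r}\in\widebar{\Omega}$ and $t\ge T+T_*$; together with~\eqref{h_equiv} this yields $\supp\widetilde{h}_T(\mathbf{r},\cdot)\subset[T-\tau,T+T_*]$ for $\mathbf{r}\in\widebar{\Omega}$. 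I expect the only point beyond routine bookkeeping to be fact (ii)---extracting the interior reproduction identity $S\widetilde{\psi}=\widetilde{b}$ on all of $\widebar{\Omega}$, which requires a uniqueness theorem for the interior Dirichlet problem for the wave equation plus a small amount of care that $S\widetilde{\psi}$ has enough regularity for its Cauchy data at $t=\alpha$ to vanish (which is where the $q$-growth hypothesis and \Cref{psi_Hp} come in). The one feature of the bookkeeping worth flagging is that fact (i), and hence the lemma, depends on the sharp equality $\diam(\Gamma)=cT_*$ and thus on the restriction $\mathbf{r}\in\widebar{\Omega}$; for $\mathbf{r}$ outside $\widebar{\Omega}$ no such clean description of $\widetilde{h}_T$ is available.
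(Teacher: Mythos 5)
Your proof is correct, and for the main identity on $t\ge T-\tau$ and for the support bound~\eqref{h_compactsupp} it is the paper's argument in only slightly different clothing: your decomposition $\widetilde{\psi}_{-,T}=\widetilde{\psi}_{*,T}+\rho$, with $S\rho$ annihilated on $\widebar{\Omega}\times[T-\tau,\infty)$ by finite propagation speed over $\diam(\Gamma)=cT_*$, is exactly the paper's truncation of the $t'$-integral to $[T-T_*-\tau,t]$ followed by insertion of the factor $w_+(t'-T+T_*+\tau)\equiv 1$ there. The one place you genuinely diverge is the claim $\widetilde{h}_T=0$ for $t<T-\tau$. The paper gets this from the already-established equation~\eqref{int_eq_lambda}: since $\widetilde{\psi}_{+,T}$ vanishes for $t<T-\tau$, so does $\gamma^+\widetilde{h}_T=S\widetilde{\psi}_{+,T}$ there, and interior uniqueness applied to the wave solution $\widetilde{h}_T$ (zero data at early times, zero Dirichlet trace up to $T-\tau$) finishes. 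You instead prove the interior reproduction identity $S\widetilde{\psi}=\widetilde{b}$ on all of $\widebar{\Omega}\times\mathbb{R}$---itself obtained from interior uniqueness applied to $\widetilde{b}-S\widetilde{\psi}$---and then use finite propagation speed once more to replace $\widetilde{u}_{-,T}$ by $S\widetilde{\psi}$ when $t<T-\tau$. Both routes rest on the same uniqueness theorem for the interior initial--boundary value problem and on causality of $\widetilde{\psi}$ relative to $\widetilde{b}$ (which you assert rather than prove; it belongs to the well-posedness theory of the time-domain integral equation, cf.~\cite{HaDuong:86}, and the paper uses it at the same level of rigor). Your version has the minor advantage of not invoking~\eqref{int_eq_lambda} from \Cref{3d_decay_lemma_2ndkind} for this step, at the price of establishing the somewhat stronger interior reproduction identity; the conclusions and the role of the sharp bound $|\mathbf{r}-\mathbf{r}'|\le cT_*$ for $\mathbf{r}\in\widebar{\Omega}$, $\mathbf{r}'\in\Gamma$ are identical.
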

\begin{rem}\label{rem:h_equiv_rem_1}
  The boundedness of the temporal support of the function
  $\widetilde{h}_T(\mathbf{r}, t) = \widetilde{b} -
  \widetilde{u}_{-,T}$ provides an essential element in the proof of
  Theorem~\ref{3d_decay_thm_ii} and associated lemmas; additional
  details in this regard are provided in
  \Cref{breve{h}_0_impact_rem}. Note that, in particular, the
  right-hand side $\gamma^+\widetilde{h}_T(\mathbf{r}, t)$
  of~\eqref{int_eq_lambda} vanishes at times for which the solution
  $\widetilde{\psi}_{+,T}$ generically does not vanish.
\end{rem}
\begin{rem}\label{rem:h_equiv_rem_2}
  In view of~\eqref{psistar_def}, and taking into account
  \Cref{rem:tilde_1}, \Cref{3d_decay_thm_h_equiv} tells us that
  $\widetilde{h}_T$ is determined by the restriction of
  $\widetilde{\psi}(\bfr, t)$ to the interval $I_T$, and, thus, in
  view of~\cref{int_eq_lambda}, the same is true of
  $\widetilde{\psi}_{+,T}$. Since, by~\eqref{psipm_def},
  $\widetilde{\psi}(\bfr, t)$ coincides with
  $\widetilde{\psi}_{+,T}(\bfr, t)$ for all $t\geq T$, it follows
  that, for such values of $t$ the solution
  $\widetilde{\psi}(\bfr, t)$ is solely determined by the restriction
  of $\widetilde{\psi}(\bfr, t)$ to the interval $I_T$. In other
  words, for a given time $T$ and in absence of illumination for
  $t\geq T-T_* - 2\tau$, $\widetilde{\psi}(\bfr, t)$ is determined by
  the values of the same solution function $\widetilde{\psi}(\bfr, t)$
  in the bounded time interval $I_T$ immediately preceding time
  $T$. Thus, \Cref{3d_decay_thm_h_equiv} implies a bootstrap
  domain-of-dependence relation for solutions of the wave equation.
\end{rem}
\begin{proof}[Proof of \Cref{3d_decay_thm_h_equiv}.]
  We first establish~\eqref{h_equiv} for $t \geq T - \tau$ and
  $\bfr \in \widebar{\Omega}$. Since $\widetilde{b}$ vanishes for
  $t \geq T - T_* - 2\tau$, it suffices to show that
  $\widetilde{u}_{-,T}(\mathbf{r}, t) =
  \widetilde{u}_{*,T}(\mathbf{r}, t)$ for $t \geq T - \tau$, or,
  equivalently, in view of~\eqref{umintilde_def}
  and~\eqref{ustar_def}, that
  \begin{equation}\label{Spsimin_Spsistar}
  (\mathscr{S} \widetilde{\psi}_{-,T})(\mathbf{r}, t) = (\mathscr{S}
  \widetilde{\psi}_{*,T})(\mathbf{r}, t)\quad\mbox{ for }\quad t > T - \tau.
\end{equation}
Letting $\xi = t - |\mathbf{r} - \mathbf{r}'|/c$ denote the second
argument of the density factor
$\mu(\mathbf{r}', t - |\mathbf{r} - \mathbf{r}'|/c)$ in the
integrand~\eqref{eq:single_layer_pot_time_conv}, the density factors
in $(\mathscr{S} \widetilde{\psi}_{-,T})$ and
$(\mathscr{S} \widetilde{\psi}_{*,T}) $ are given by
$w_-(\xi - T)\psi(\bfr, \xi)$ and
$w_+(\xi - T + T_* + \tau) w_-(\xi - T)\psi(\mathbf{r}, \xi)$,
respectively. But, in view of~\eqref{wtau_def},
$w_+(\xi - T + T_* + \tau)=1$---since, using~\eqref{Tstar_def} we see
that
$\xi - T + T_* + \tau \geq \xi - T + \tau + |\mathbf{r} -
\mathbf{r}'|/c = t - T + \tau$, and, thus, $\xi - T + T_* + \tau > 0$
in the present case $t > T - \tau$. It follows that for such times the
two density factors coincide, showing that~\eqref{Spsimin_Spsistar}
holds, and, thus, that
$\widetilde{u}_{-,T}(\mathbf{r}, t) = \widetilde{u}_{*,T}(\mathbf{r},
t)$ for $t > T - \tau$---which establishes~\eqref{h_equiv} for
$t > T - \tau$.

  To complete the proof of \Cref{h_equiv} it remains to show that
  $\widetilde{h}_T(\mathbf{r}, t) = 0$ for $\bfr \in \widebar{\Omega}$
  and $t < T - \tau$. But~\eqref{int_eq_lambda} tells us that
  $\widetilde{h}_T = 0$ for $\bfr \in \Gamma$ and $t < T - \tau$,
  since, by definition, $\widetilde{\psi}_{+,T}(\mathbf{r}, t)$ itself
  vanishes for $\bfr \in \Gamma$ and $t < T - \tau$. In view
  of~\cref{htildedef} it follows that $\widetilde{h}_T(\bfr, t)$ is a solution
  of the wave equation for $\bfr \in \Omega$ with vanishing boundary values for
  $t\in (-\infty, T - \tau)$ which satisfies vanishing initial
  conditions throughout $\Omega$ for a sufficiently large negative
  value of $t$. Thus, $\widetilde{h}_T(\mathbf{r}, t)$ vanishes for
  $(\mathbf{r}, t) \in \widebar{\Omega} \times (-\infty, T - \tau)$,
  by solution uniqueness, and \Cref{h_equiv} follows.

  In order to establish \Cref{h_compactsupp}, finally, it suffices, in
  view of \Cref{h_equiv}, to show that
  $\widetilde{h}_T(\mathbf{r}, t) = 0$ for all
  $\mathbf{r} \in \widebar{\Omega}$ and $t > T + T_*$. To do this we
  note from \Cref{timewinddens} that
  $\supp \widetilde{\psi}_*(\mathbf{r}, \cdot) \subset I_T = [T - T_*
  - 2\tau, T)$. Thus, equation~\eqref{ustar_def}, which can be
  expressed in the form
  $\widetilde{u}_{*,T}(\mathbf{r}, t) = \int_\Gamma
  \frac{\widetilde{\psi}_{*,T}(\mathbf{r}', t - |\mathbf{r} -
    \mathbf{r}'|/c)}{4\pi |\mathbf{r} -
    \mathbf{r}'|}\,\d\sigma(\mathbf{r}')$, tells us that
  $\widetilde{h}_T(\mathbf{r}, t) = 0$ for
  $(\mathbf{r}, t) \in \widebar{\Omega} \times [T + T_*, \infty)$.
  Equation~\eqref{h_compactsupp} thus follows and the proof is
  complete.
  \end{proof}

\begin{lemma}\label{per_freq_rhs_oper_bounds}
  Let $\Gamma$ denote the boundary of a Lipschitz obstacle. Then there
  exist constants $C_1 = C_1(\Gamma) > 0$ and $C_2 = C_2(\Gamma) > 0$
  such that the operator norm bounds
  \begin{equation}\label{per_freq_rhs_oper_bound_i}
    \left\|\left(\gamma^-\partial_\mathbf{n} \pm
      \i\omega\gamma^- \right) \mathscr{S}_\omega\right\|_{L^2(\Gamma)\to L^2(\Gamma)} \le C_1(1 +
    \omega^2)^{1/2}
  \end{equation}
  and
  \begin{equation}\label{per_freq_rhs_oper_bound_ii}
    \left\|\left(\gamma^-\partial_\mathbf{n} \pm
      \i\gamma^- \right) \mathscr{S}_\omega\right\|_{L^2(\Gamma)\to L^2(\Gamma)} \le C_2(1 +
    \omega^2)^{1/2}
  \end{equation}
  hold for all $\omega \ge 0$.
\end{lemma}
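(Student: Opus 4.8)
The plan is to reduce the statement, via the classical jump relations for the single-layer potential, to two wavenumber-explicit operator bounds that can be obtained by the Schur test together with the known $L^2(\Gamma)$-boundedness of the static adjoint double-layer operator. Writing $S_\omega$ also for the single-layer \emph{potential} $\mu\mapsto\int_\Gamma G_\omega(\,\cdot\,,\mathbf{r}')\mu(\mathbf{r}')\,\d\sigma(\mathbf{r}')$ on $\mathbb{R}^3$, I would first note that this potential is continuous across $\Gamma$, so that $\gamma^-S_\omega\mu$ equals the boundary operator $S_\omega\mu$ of~\eqref{eq:single_layer_op}, and that, exactly as in the proof of \Cref{3d_decay_lemma_2ndkind} but with the interior rather than the exterior sign of the normal-derivative jump relation, $\gamma^-\partial_\mathbf{n}S_\omega\mu=(\tfrac12 I+K_\omega^*)\mu$; by~\cite[Cor.~2.28]{ChandlerWilde:12} these traces map $L^2(\Gamma)$ into $L^2(\Gamma)$. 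Hence
\[
  (\gamma^-\partial_\mathbf{n}\pm\i\omega\gamma^-)S_\omega=\tfrac12 I+K_\omega^*\pm\i\omega S_\omega,\qquad
  (\gamma^-\partial_\mathbf{n}\pm\i\gamma^-)S_\omega=\tfrac12 I+K_\omega^*\pm\i S_\omega ,
\]
and, since $(1+\omega^2)^{1/2}\ge\max\{1,\omega\}$, the triangle inequality reduces~\eqref{per_freq_rhs_oper_bound_i} and~\eqref{per_freq_rhs_oper_bound_ii} to the two bounds $\|S_\omega\|_{L^2(\Gamma)\to L^2(\Gamma)}\le C(\Gamma)$, uniformly in $\omega\ge 0$, and $\|K_\omega^*\|_{L^2(\Gamma)\to L^2(\Gamma)}\le C(\Gamma)(1+\omega)$.

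For $S_\omega$ I would apply the Schur test: the kernel $G_\omega(\mathbf{r},\mathbf{r}')$ has modulus $1/(4\pi|\mathbf{r}-\mathbf{r}'|)$ \emph{independently of} $\omega$, and $\sup_{\mathbf{r}\in\Gamma}\int_\Gamma \d\sigma(\mathbf{r}')/(4\pi|\mathbf{r}-\mathbf{r}'|)$ is finite because $|\mathbf{r}-\mathbf{r}'|^{-1}$ is locally integrable on a two-dimensional Lipschitz graph and $\Gamma$ is compact; this gives the $\omega$-independent bound on $S_\omega$. For $K_\omega^*$ I would split $K_\omega^*=K_0^*+(K_\omega^*-K_0^*)$. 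The operator $K_0^*$ is bounded on $L^2(\Gamma)$ with $\Gamma$-dependent norm by~\cite[Thm.~2.17]{ChandlerWilde:12} (i.e.\ by the Coifman--McIntosh--Meyer theorem). The difference $K_\omega^*-K_0^*$ has kernel $\partial_{\mathbf{n}(\mathbf{r})}(G_\omega-G_0)(\mathbf{r},\mathbf{r}')$, which I would estimate, using that $\nabla_{\mathbf{r}}|\mathbf{r}-\mathbf{r}'|$ is a unit vector, the bound $|\e^{\i\omega|\mathbf{r}-\mathbf{r}'|/c}-1|\le\omega|\mathbf{r}-\mathbf{r}'|/c$, and $|(\mathbf{r}-\mathbf{r}')\cdot\mathbf{n}(\mathbf{r})|\le|\mathbf{r}-\mathbf{r}'|$, by $C(c)\,\omega/|\mathbf{r}-\mathbf{r}'|$ (the cancellation $\e^{\i\omega d/c}-1=O(\omega d)$ reduces what would be two factors of $|\mathbf{r}-\mathbf{r}'|^{-1}$ to one); a second Schur test then yields $\|K_\omega^*-K_0^*\|_{L^2(\Gamma)\to L^2(\Gamma)}\le C(\Gamma)\,\omega$, and combining the two pieces gives $\|K_\omega^*\|_{L^2(\Gamma)\to L^2(\Gamma)}\le C(\Gamma)(1+\omega)$, as required.

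The one delicate point---and the step I expect to be the main obstacle---is the kernel estimate for $K_\omega^*-K_0^*$ on a \emph{merely Lipschitz} boundary. The operator $K_\omega^*$ is not itself amenable to the Schur test on such a surface, since its kernel is only $O(|\mathbf{r}-\mathbf{r}'|^{-2})$ after the crude bound $|(\mathbf{r}-\mathbf{r}')\cdot\mathbf{n}(\mathbf{r})|\le|\mathbf{r}-\mathbf{r}'|$---which is why $K_0^*$ must be peeled off and controlled by the deep $L^2$-boundedness result, and why one must verify that the exponential cancellation in $G_\omega-G_0$ genuinely restores a weakly singular, $O(\omega/|\mathbf{r}-\mathbf{r}'|)$, kernel uniformly up to the diagonal $\mathbf{r}=\mathbf{r}'$. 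Granting this, the remainder is a routine application of the triangle inequality and the Schur test.
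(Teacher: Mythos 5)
Your proposal is correct and follows essentially the same route as the paper: both reduce the claim, via the continuity of the single-layer potential and the interior jump relation $\gamma^-\partial_\mathbf{n}S_\omega=\tfrac12 I+K_\omega^*$, to the two bounds $\left\|S_\omega\right\|_{L^2(\Gamma)\to L^2(\Gamma)}\le C(\Gamma)$ and $\left\|K_\omega^*\right\|_{L^2(\Gamma)\to L^2(\Gamma)}\le C(\Gamma)(1+\omega)$, and then conclude by the triangle inequality. The only difference is that the paper simply cites these two bounds from \cite[Thms.\ 3.3 and 3.5]{ChandlerWilde:09}, whereas you re-derive them via the Schur test and the splitting $K_\omega^*=K_0^*+(K_\omega^*-K_0^*)$ --- which is in fact the standard proof of the cited results, so your ``delicate point'' is handled correctly.
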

\begin{proof}
  The relations~\eqref{per_freq_rhs_oper_bound_i}
  and~\eqref{per_freq_rhs_oper_bound_ii} follow directly from the bounds
\begin{equation}\label{Somega_Kstaromega_bounds}
    \left\|\gamma^- \mathscr{S}_\omega\right\|_{L^2(\Gamma) \to L^2(\Gamma)}
    \le D_1 ,\quad\mbox{and}\quad
    \left\|K_\omega^*\right\|_{L^2(\Gamma) \to L^2(\Gamma)}
    \le D_2 \omega + D_3,
\end{equation}
valid for all $\omega \geq 0$ (where $D_1, D_2, D_3 > 0$ are
constants), which are presented in reference~\cite[Thms.\ 3.3 and
3.5]{ChandlerWilde:09}. Indeed, in view of~\eqref{jump_cond} it follows
that there exist constants $\widetilde{D}_1 > 0$ and
$\widetilde{D}_2 > 0$ and $C_1 > 0$ such that
\begin{equation*}
  \begin{split}
      \left\|\left(\gamma^-\partial_\mathbf{n} \pm \i\omega\gamma^-\right) \mathscr{S}_\omega\right\|_{L^2(\Gamma)\to L^2(\Gamma)} &= \left\|\frac{1}{2}I + K_\omega^* \pm
    \i\omega S_\omega\right\|_{L^2(\Gamma)\to L^2(\Gamma)}\\
      &\le (\widetilde{D}_1 + \widetilde{D}_2\omega) \le C_1(1 + \omega^2)^{1/2}.
  \end{split}
\end{equation*}
Similarly,
\begin{equation*}
  \begin{split}
      \left\|\left(\gamma^-\partial_\mathbf{n} \pm \i\gamma^- \right)\mathscr{S}_\omega\right\|_{L^2(\Gamma)\to L^2(\Gamma)}
    &\le C_2(1 + \omega^2)^{1/2}
  \end{split}
\end{equation*}
for some constant $C_2 > 0$, and the result follows.
\end{proof}

\begin{lemma}\label{conv_H_to_psi}
  Let $q$ denote a non-negative integer, let $\Gamma = \partial\Omega$
  satisfy the $q$-growth condition (\Cref{q-nontrapp}), let $T_0$ denote a
  given real number such that $\widetilde{b}$ vanishes for
  $(\bfr,t) \in \widebar{\Omega} \times \left\lbrace I_{T_0}\cup [T_0,
    \infty)\right\rbrace$, and let $\widetilde{H}_{T_0}^f$ be defined
  by \Cref{Ht_def_generic} with $T = T_0$. Then for some constant
  $C > 0$ independent of $T_0$ and $\widetilde{b}$ we have
\begin{equation}\label{conv_H_to_psi_eq}
  \begin{alignedat}{2}
      \int_0^{\omega_0} &\, &&
      \left\|\left(\gamma^-\partial_\mathbf{n} - \i
      \gamma^-\right) \widetilde{H}^f_{T_0}(\cdot,
      \omega)\right\|^2_{L^2(\Gamma)}\,\d\omega \\
      +\,& &&\int_{\omega_0}^\infty \omega^{2q}
      \left\|\left(\gamma^-\partial_\mathbf{n} - \i\omega
      \gamma^-\right) \widetilde{H}^f_{T_0}(\cdot.
      \omega)\right\|^2_{L^2(\Gamma)}\,\d\omega\\
      &\,&&\le C \int_{-\infty}^\infty (1 + \omega^2)^{q+1} \left\|
      \widetilde{\psi}_{*,T_0}^f(\cdot, \omega)\right\|_{L^2(\Gamma)}^2\,\d\omega.
  \end{alignedat}
\end{equation}
\end{lemma}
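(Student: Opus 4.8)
The plan is to turn the left-hand side of \cref{conv_H_to_psi_eq} into a frequency-domain expression built from the single-layer potential of the limited-time-history density $\widetilde{\psi}_{*,T_0}$, to control it with the operator bounds of \Cref{per_freq_rhs_oper_bounds} (and \cref{Somega_Kstaromega_bounds}), and to close the estimate by elementary bookkeeping of powers of $(1+\omega^2)$. The bootstrap domain-of-dependence identity of \Cref{3d_decay_thm_h_equiv} is what makes the first step possible.

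First I would apply \Cref{3d_decay_thm_h_equiv} with $T=T_0$: since $\widetilde{b}$ vanishes on $\widebar{\Omega}$ for $t\in I_{T_0}\cup[T_0,\infty)$, the function $\widetilde{h}_{T_0}$ is, on $\widebar{\Omega}$, temporally compactly supported and equals $-\widetilde{u}_{*,T_0}=-S\widetilde{\psi}_{*,T_0}$ for $t\ge T_0-\tau$ while vanishing for $t<T_0-\tau$. Writing $-S\widetilde{\psi}_{*,T_0}=\widetilde{h}_{T_0}-\mathbf{1}_{\{t<T_0-\tau\}}\,S\widetilde{\psi}_{*,T_0}$ on $\widebar{\Omega}$ gives the decomposition
\[
  \widetilde{h}_{T_0}=-S\widetilde{\psi}_{*,T_0}+r_{T_0},\qquad r_{T_0}:=\mathbf{1}_{\{t<T_0-\tau\}}\,S\widetilde{\psi}_{*,T_0},
\]
with $r_{T_0}(\mathbf{r},\cdot)$ supported in $[T_0-T_*-2\tau,\,T_0-\tau]$. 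Because $\widetilde{h}_{T_0}$ is as regular in $t$ as $\widetilde{\psi}$ (\Cref{psi_Hp}) and vanishes identically for $t<T_0-\tau$ on $\widebar{\Omega}$, the potential $S\widetilde{\psi}_{*,T_0}$ and its $t$-derivatives up to the available order all vanish at $t=T_0-\tau$; together with the high-order vanishing of the smooth window of $\widetilde{\psi}_{*,T_0}$ at $t=T_0-T_*-2\tau$, this shows that the sharp truncation in $r_{T_0}$ costs nothing, so $r_{T_0}$ is temporally smooth, boundedly supported, and has temporal Sobolev norms controlled by those of $\widetilde{\psi}_{*,T_0}$. Fourier transforming in $t$, this gives on $\widebar{\Omega}$
\[
  \widetilde{H}^f_{T_0}=-S_\omega\widetilde{\psi}^f_{*,T_0}+R^f_{T_0},\qquad R^f_{T_0}=\mathcal{F}[r_{T_0}].
\]

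Next I would take the interior traces $\gamma^-$ and $\gamma^-\partial_\mathbf{n}$ of this identity, using the jump relation $\gamma^-\partial_\mathbf{n}S_\omega=\tfrac12 I+K_\omega^*$. For $\omega>\omega_0$ the triangle inequality gives $\bigl\|(\gamma^-\partial_\mathbf{n}-\i\omega\gamma^-)\widetilde{H}^f_{T_0}(\cdot,\omega)\bigr\|_{L^2(\Gamma)}\le\bigl\|(\gamma^-\partial_\mathbf{n}-\i\omega\gamma^-)S_\omega\widetilde{\psi}^f_{*,T_0}(\cdot,\omega)\bigr\|_{L^2(\Gamma)}+\bigl\|(\gamma^-\partial_\mathbf{n}-\i\omega\gamma^-)R^f_{T_0}(\cdot,\omega)\bigr\|_{L^2(\Gamma)}$, and \Cref{per_freq_rhs_oper_bounds} bounds the first term on the right by $C_1(1+\omega^2)^{1/2}\|\widetilde{\psi}^f_{*,T_0}(\cdot,\omega)\|_{L^2(\Gamma)}$. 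For the remainder, $\gamma^-R^f_{T_0}$ and $\gamma^-\partial_\mathbf{n}R^f_{T_0}$ are the Fourier transforms of $\mathbf{1}_{\{t<T_0-\tau\}}$ applied to $(S\widetilde{\psi}_{*,T_0})|_\Gamma$ and to $\gamma^-\partial_\mathbf{n}(S\widetilde{\psi}_{*,T_0})$ respectively; since by the previous step the cutoff is harmless, the bounds $\|\gamma^-S_\omega\|_{L^2(\Gamma)\to L^2(\Gamma)}\le D_1$ and $\|K_\omega^*\|_{L^2(\Gamma)\to L^2(\Gamma)}\le D_2\omega+D_3$ from \cref{Somega_Kstaromega_bounds} give $\int_{\omega_0}^\infty\omega^{2q}\bigl\|(\gamma^-\partial_\mathbf{n}-\i\omega\gamma^-)R^f_{T_0}(\cdot,\omega)\bigr\|_{L^2(\Gamma)}^2\,\d\omega\le C\int_{-\infty}^\infty(1+\omega^2)^{q+1}\bigl\|\widetilde{\psi}^f_{*,T_0}(\cdot,\omega)\bigr\|_{L^2(\Gamma)}^2\,\d\omega$. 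The same reasoning with $\i\omega$ replaced by $\i$ and using the second bound of \Cref{per_freq_rhs_oper_bounds} treats the range $0\le\omega<\omega_0$.

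Inserting these estimates into the left-hand side of \cref{conv_H_to_psi_eq} and using $\omega^{2q}(1+\omega^2)\le(1+\omega^2)^{q+1}$ for $\omega\ge0$ and $1+\omega^2\le(1+\omega^2)^{q+1}$ for $q\ge0$ then yields \cref{conv_H_to_psi_eq}, with a constant $C$ depending only on $\Gamma$ (through $C_1,C_2$, the $D_i$, and $\omega_0$) and hence independent of $T_0$ and $\widetilde{b}$. I expect the only real difficulty to lie in the first step: rigorously justifying that the sharp temporal truncation separating $\widetilde{h}_{T_0}$ from the full potential $S\widetilde{\psi}_{*,T_0}$ produces a remainder whose weighted frequency-domain norm is still dominated by that of $\widetilde{\psi}^f_{*,T_0}$ — which amounts to exploiting the smoothness and causality already encoded in \Cref{psi_Hp} and \Cref{3d_decay_thm_h_equiv}, in particular the vanishing, together with that of enough of its time derivatives, of $S\widetilde{\psi}_{*,T_0}$ at $t=T_0-\tau$ on $\widebar{\Omega}$. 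The remaining trace, operator-norm, and power-counting steps are routine.
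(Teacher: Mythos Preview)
Your overall strategy---use \cref{h_equiv} to tie $\widetilde{h}_{T_0}$ to $S\widetilde{\psi}_{*,T_0}$ and then invoke the operator bounds of \Cref{per_freq_rhs_oper_bounds}---is the right one, and your power-counting is fine. The difficulty you flag, however, is real and is not disposed of by your argument: the decomposition $\widetilde{h}_{T_0}=-S\widetilde{\psi}_{*,T_0}+r_{T_0}$ with $r_{T_0}=\mathbf{1}_{\{t<T_0-\tau\}}S\widetilde{\psi}_{*,T_0}$ puts a hard cutoff on a function that is \emph{not} identically zero on $(-\infty,T_0-\tau)$. To control $\int_{\omega_0}^\infty\omega^{2q}\|(\gamma^-\partial_\mathbf{n}-\i\omega\gamma^-)R^f_{T_0}\|^2$ you would need $\gamma^-\widetilde{u}_{*,T_0}$ and $\gamma^-\partial_\mathbf{n}\widetilde{u}_{*,T_0}$, together with their first $q{+}1$ time derivatives, to vanish at $t=T_0-\tau$; the lemma's hypotheses only implicitly give $\widetilde{\psi}_{*,T_0}\in H^{q+1}$, which by Sobolev embedding yields $C^q$ but not the $C^{q+1}$ matching you need. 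Your appeal to the frequency-domain operator norms in \cref{Somega_Kstaromega_bounds} does not help here, because after the hard time cutoff the Fourier transform $R^f_{T_0}$ is no longer of the form $S_\omega$ applied to a density.

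The paper sidesteps this remainder entirely by a two-pass Plancherel argument. Write the left-hand side as $\int_0^{\omega_0}\|\hat{\mathcal R}\,\widetilde{H}^f_{T_0}\|^2\,\d\omega+\int_{\omega_0}^\infty\|\hat{\mathcal T}\,\widetilde{H}^f_{T_0}\|^2\,\d\omega$, where $\hat{\mathcal R}=\gamma^-\partial_\mathbf{n}-\i\gamma^-$ and $\hat{\mathcal T}=\omega^{q}(\gamma^-\partial_\mathbf{n}-\i\omega\gamma^-)$ are Fourier symbols of operators $\mathcal R,\mathcal T$ that are \emph{local in time}. Extend both frequency integrals to all of $\mathbb{R}$ (only an increase), apply Plancherel to land in the time domain, and then use \cref{h_equiv} exactly: temporal locality gives $\mathcal R\widetilde{h}_{T_0}=\mathcal T\widetilde{h}_{T_0}=0$ for $t<T_0-\tau$ and $=-\mathcal R\widetilde{u}_{*,T_0},\,-\mathcal T\widetilde{u}_{*,T_0}$ for $t>T_0-\tau$, as identities between $L^2$ functions on open time intervals---no cutoff, no matching of derivatives. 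Extend the $t$-integrals back to $\mathbb{R}$ by nonnegativity, apply Plancherel again, and you obtain $\int_{-\infty}^\infty\|\hat{\mathcal R}\,S_\omega\widetilde{\psi}^f_{*,T_0}\|^2\,\d\omega+\int_{-\infty}^\infty\|\hat{\mathcal T}\,S_\omega\widetilde{\psi}^f_{*,T_0}\|^2\,\d\omega$, to which \Cref{per_freq_rhs_oper_bounds} applies directly. The upshot: do not split $\widetilde{h}_{T_0}$ algebraically into a ``main term plus remainder'' in frequency; instead, exploit the time-local nature of $\mathcal R,\mathcal T$ together with the piecewise identity \cref{h_equiv} in the time domain, so that no remainder ever appears.
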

\begin{proof}
  In order to obtain the desired bound we define the differential
  operators
\begin{equation}\label{ST_op_time}
  \mathcal{R} = \left(\gamma^-\partial_{\mathbf{n}} - \i\gamma^- \right)
  \enspace\mbox{and}\enspace\mathcal{T} = \left(-\i \frac{\partial}{\partial
    t}\right)^{q}\left(\gamma^-\partial_{\mathbf{n}} -
  \frac{\partial}{\partial t}\gamma^-\right),
\end{equation}
whose Fourier symbols are given by
\begin{equation}\label{ST_op_freq}
\widehat{\mathcal{R}} =
  \left(\gamma^-\partial_\mathbf{n} - \i \gamma^-\right)\enspace\mbox{and}\enspace \widehat{\mathcal{T}} =
    \omega^{q}\left(\gamma^-\partial_\mathbf{n} - \i\omega\gamma^- \right).
\end{equation}
Then, denoting by $\mathcal{Q}$ the sum of quantities on the left-hand
side of \Cref{conv_H_to_psi_eq} and using~\eqref{h_equiv} together
with Plancherel's theorem we obtain
\begin{alignat*}{2}
  &\begin{alignedat}{2}
      \mathcal{Q} =
      \int_0^{\omega_0} &\left\|\widehat{\mathcal{R}}\widetilde{H}^f_{T_0}(\cdot,
    \omega)\right\|^2_{L^2(\Gamma)}\,\d\omega
      &+ \int_{\omega_0}^\infty
      \left\|\widehat{\mathcal{T}}\widetilde{H}^f_{T_0}(\cdot,
      \omega)\right\|^2_{L^2(\Gamma)}\,\d\omega
    \end{alignedat}\\
    &\quad\quad\begin{alignedat}{2}
        \le \int_\Gamma \int_{-\infty}^\infty &\left|\widehat{\mathcal{R}} \widetilde{H}^f_{T_0}(\mathbf{r},
    \omega)\right|^2\,\d\omega\,\d\sigma(\mathbf{r}) + \int_\Gamma
        \int_{-\infty}^\infty \left|\widehat{\mathcal{T}} \widetilde{H}^f_{T_0}(\mathbf{r},
    \omega)\right|^2\,\d\omega\,\d\sigma(\mathbf{r})
    \end{alignedat}\\
    &\quad\quad\begin{alignedat}{2}
        = \int_\Gamma \int_{-\infty}^\infty &\left|\mathcal{R} \widetilde{h}_{T_0}(\mathbf{r},
    t')\right|^2\,\d t'\,\d\sigma(\mathbf{r}) + \int_\Gamma
        \int_{-\infty}^\infty \left|\mathcal{T} \widetilde{h}_{T_0}(\mathbf{r},
    t')\right|^2\,\d t'\,\d\sigma(\mathbf{r})
    \end{alignedat}\\
    &\quad\quad\begin{alignedat}{2}
        = \int_\Gamma \int_{T_0 - \tau}^\infty &\left|\mathcal{R} \widetilde{u}_{*,T_0}(\mathbf{r},
    t')\right|^2\,\d t'\,\d\sigma(\mathbf{r}) + \int_\Gamma
        \int_{T_0 - \tau}^\infty \left|\mathcal{T} \widetilde{u}_{*,T_0}(\mathbf{r},
    t')\right|^2\,\d t'\,\d\sigma(\mathbf{r}),\\
    \end{alignedat}
\end{alignat*}
where the last equality follows from~\eqref{h_equiv} by virtue of the
temporal locality of the operators $\mathcal{R}$ and $\mathcal{T}$. In
view of the non-negativity of the integrands on the right-hand side of
this estimate, it follows that
\begin{equation}
\begin{split}
    \mathcal{Q} \le \int_\Gamma
    \int_{-\infty}^\infty \left|\mathcal{R} \widetilde{u}_{*,T_0}(\mathbf{r}, t')\right|^2\,\d
    t'\,\d\sigma(\mathbf{r}) + \int_\Gamma
    \int_{-\infty}^\infty \left|\mathcal{T} \widetilde{u}_{*,T_0}(\mathbf{r}, t')\right|^2\,\d
    t'\,\d\sigma(\mathbf{r}).\label{HtoUbound0}
\end{split}
\end{equation}
Using once again Plancherel's theorem, \Cref{HtoUbound0} becomes
\begin{equation}\label{HtoUbound}
  \begin{split}
      \mathcal{Q} \le
      \int_{-\infty}^\infty \left\|\widehat{\mathcal{R}} \widetilde{U}_{*,T_0}^f(\cdot,
    \omega)\right\|^2_{L^2(\Gamma)}\,\d\omega + \int_{-\infty}^\infty \left\|\widehat{\mathcal{T}}
      \widetilde{U}_{*,T_0}^f(\cdot, \omega)\right\|^2_{L^2(\Gamma)}\,\d\omega,
  \end{split}
\end{equation}
where $\widetilde{U}_{*,T_0}^f$ denotes the temporal Fourier transform of the single layer
potential $\widetilde{u}_{*,T_0}$,
\begin{equation}\label{UstarT0}
    \widetilde{U}_{*,T_0}^f(\bfr, \omega) = \left(\mathscr{S}_\omega \widetilde{\psi}^f_{*,T_0}\right)(\bfr, \omega).
\end{equation}

Using \Cref{ST_op_freq}, \Cref{per_freq_rhs_oper_bound_i} and
\Cref{per_freq_rhs_oper_bound_ii} to bound the integrands on the
right-hand side of \Cref{HtoUbound}, and then using the fact that
$\omega^{2r} \le (1 + \omega^2)^{r}$ for nonnegative $r$, we obtain
\begin{equation*}
  \begin{split}
    &\begin{alignedat}{2}
    \mathcal{Q} \le
        C_1 \int_{-\infty}^\infty &\left(1 + \omega^2\right)
        \left\|\widetilde{\psi}_{*,T_0}^f(\cdot,
    \omega)\right\|^2_{L^2(\Gamma)}\,\d\omega\\
        &+ C_2 \int_{-\infty}^\infty \omega^{2q} \left(1 + \omega^2\right)
        \left\|\widetilde{\psi}_{*,T_0}^f(\cdot,
    \omega)\right\|^2_{L^2(\Gamma)}\,\d\omega
\end{alignedat}\\
&\quad\le C\int_{-\infty}^\infty \left(1 + \omega^2\right)^{q+1}
\left\|\widetilde{\psi}_{*,T_0}^f(\cdot,
  \omega)\right\|^2_{L^2(\Gamma)}\,\d\omega.
  \end{split}
\end{equation*}
for a suitable constant $C$, and the result follows.
\end{proof}

In order to establish the uniform-in-time
estimates~\eqref{density_unif_time_bound}
and~\eqref{density_unif_time_bound_Tn} we utilize a Bochner-space
version of the Sobolev lemma (cf. \Cref{def:sob_bochner}) over a
physical domain $\mathcal{U}$. This result, which is presented in what
follows, is primarily used in the surface-domain case
$\mathcal{U} = \Gamma$, but it is also used for volumetric domains in
\Cref{decay_corr}.  The proof results by merely incorporating the
Bochner space nomenclature in the classical arguments used for the
real-valued case~\cite[Lem.\ 6.5]{Folland}.
\begin{lemma}[Sobolev embedding in Bochner spaces]\label{sob_lemma}
  Let $k$ denote a nonnegative integer, let $s \ge 0$, and let
  $\mathcal{U}$ denote a subset of $\mathbb{R}^3$, where either
  $\mathcal{U} = \Gamma$ equals the Lipschitz boundary of an open and
  bounded domain $\Omega\subset\mathbb{R}^3$, or $\mathcal{U} = D$
  equals an open and bounded domain $D\subset\mathbb{R}^3$ with a Lipschitz
  boundary.  Then for $r > k + 1/2$,
  $H^r(\mathbb{R}; H^s(\mathcal{U}))$ is continuously embedded in
  $C^k(\mathbb{R}; H^s(\mathcal{U}))$:
  $H^r(\mathbb{R}; H^s(\mathcal{U})) \hookrightarrow C^k(\mathbb{R};
  H^s(\mathcal{U}))$. In particular, there exists a constant $C$ such
  that for all functions $a \in H^r(\mathbb{R}; H^s(\mathcal{U}))$ the
  bound
    \begin{equation}\label{eq:sob_lemma}
      \max_{0 \le \ell \le k} \sup_{t \in \mathbb{R}}
      \left\|\frac{\partial^\ell}{\partial t^\ell} a(t)\right\|_{H^s(\mathcal{U})} \le C \left\|a\right\|_{H^r(\mathbb{R}; H^s(\mathcal{U}))}
    \end{equation}
    holds, where $\frac{\partial^\ell}{\partial t^\ell} a(t)$ denotes
    the $\ell$-th classical derivative of $a$ with respect to $t$ (see
    \Cref{sob-boch}).
\end{lemma}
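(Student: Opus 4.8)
The plan is to lift the classical Fourier-analytic proof of the scalar embedding $H^p(\mathbb{R})\hookrightarrow C^k(\mathbb{R})$ to the Bochner setting, exploiting the fact that $X:=H^q(\mathcal{S})$ is a Hilbert space, so that Plancherel's theorem applies to $X$-valued functions and the Bochner calculus supplies the required Fourier-inversion and differentiation rules. Throughout I work with the temporal Fourier transform $\mathcal F$ of \eqref{eq:fourier_transf} acting on $X$-valued functions in the sense of Bochner (cf.\ \Cref{FT_conv}), and I use (cf.\ \Cref{sob-boch}) that $\norm{f}_{H^p(\mathbb{R};X)}$ is equivalent to $\big(\int_{\mathbb{R}}(1+\omega^2)^p\norm{F^f(\omega)}_X^2\,\d\omega\big)^{1/2}$ --- the Fourier characterization of the $H^p$-norm, which for integer $p$ follows from the $X$-valued Plancherel identity applied to the distributional derivatives $\partial_t^j f$, $0\le j\le p$, together with $\mathcal F[\partial_t^j f]=(\i\omega)^j F^f$.

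The first step is to show that $\omega\mapsto\abs{\omega}^\ell F^f(\omega)$ is Bochner-integrable with values in $X$ for every integer $\ell$ with $0\le\ell\le k$. Splitting $\abs{\omega}^\ell\norm{F^f(\omega)}_X=\big(\abs{\omega}^\ell(1+\omega^2)^{-p/2}\big)\big((1+\omega^2)^{p/2}\norm{F^f(\omega)}_X\big)$ and applying the Cauchy--Schwarz inequality in the variable $\omega$ gives
\begin{equation*}
  \int_{\mathbb{R}}\abs{\omega}^\ell\norm{F^f(\omega)}_X\,\d\omega
  \le\Big(\int_{\mathbb{R}}\frac{\abs{\omega}^{2\ell}}{(1+\omega^2)^p}\,\d\omega\Big)^{1/2}\norm{f}_{H^p(\mathbb{R};X)}.
\end{equation*}
The scalar integral on the right is finite precisely because $2p-2\ell>1$, which holds by hypothesis since $p>k+\tfrac12\ge\ell+\tfrac12$; denote its square root by $c_{p,\ell}<\infty$. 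Hence $\abs{\omega}^\ell\norm{F^f(\cdot)}_X\in L^1(\mathbb{R})$, and since $F^f$ is strongly measurable it follows that $(\i\omega)^\ell F^f\in L^1(\mathbb{R};X)$ for each such $\ell$.

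With the case $\ell=0$ in hand, the Bochner-valued Fourier-inversion theorem furnishes a continuous representative $g$ of $f$, namely $g(t)=\tfrac{1}{2\pi}\int_{\mathbb{R}}F^f(\omega)\e^{\i\omega t}\,\d\omega$. For $1\le\ell\le k$, the $L^1(\mathbb{R};X)$-membership of $(\i\omega)^\ell F^f$ licenses differentiation under the Bochner integral --- justified by the dominated-convergence theorem for Bochner integrals and the bounds just obtained --- so that $g\in C^k(\mathbb{R};X)$ with $\partial_t^\ell g(t)=\tfrac{1}{2\pi}\int_{\mathbb{R}}(\i\omega)^\ell F^f(\omega)\e^{\i\omega t}\,\d\omega$. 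Estimating under the integral and inserting the displayed bound yields, for every $t\in\mathbb{R}$ and every $0\le\ell\le k$,
\begin{equation*}
  \norm{\partial_t^\ell g(t)}_X\le\tfrac{1}{2\pi}\int_{\mathbb{R}}\abs{\omega}^\ell\norm{F^f(\omega)}_X\,\d\omega\le C\,\norm{f}_{H^p(\mathbb{R};X)},\qquad C:=\tfrac{1}{2\pi}\max_{0\le\ell\le k}c_{p,\ell}.
\end{equation*}
Identifying $f$ with $g$, this is exactly \eqref{eq:sob_lemma}, and the same inequality expresses the continuity of the embedding $H^p(\mathbb{R};X)\hookrightarrow C^k(\mathbb{R};X)$.

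I expect the only genuinely non-routine point to be the verification that the three scalar tools used above --- Plancherel's identity, Fourier inversion on $L^1$, and differentiation under the integral sign --- remain valid for functions valued in the Hilbert space $X=H^q(\mathcal{S})$. Plancherel is available because $X$ is a Hilbert space; the inversion and differentiation statements are standard facts of Bochner integration (see e.g.\ \cite{Hille,DunfordSchwartz,Weis}), whose hypotheses reduce in each case to the Bochner-integrability established in the first step. Once these are granted, the classical argument for the scalar case \cite[Lem.\ 6.5]{Folland} transcribes essentially verbatim, as anticipated in the remark preceding the lemma.
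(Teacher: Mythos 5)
Your proposal is correct and follows essentially the same route as the paper's proof: both establish, via Cauchy--Schwarz against the weight $(1+\omega^2)^{p/2}$ and the condition $p>k+1/2$, that the frequency-domain quantities $(\i\omega)^\ell F^f$ lie in $L^1(\mathbb{R};H^q(\mathcal{S}))$ for $0\le\ell\le k$, and then invoke Bochner-space Fourier inversion to obtain continuity and the uniform bound \eqref{eq:sob_lemma}. The only cosmetic difference is that the paper applies inversion directly to $\widehat{\partial^\ell f}=(\i\omega)^\ell\widehat f$ (using the Riemann--Lebesgue lemma for continuity), whereas you invert $F^f$ and then differentiate under the Bochner integral via dominated convergence; the two are interchangeable here.
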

\begin{proof}
  Let $a \in H^r(\mathbb{R}; H^s(\mathcal{U}))$. We first show that
  $\widehat{\partial_t^\ell a} \in L^1(\mathbb{R}; H^s(\mathcal{U}))$ for all
  nonnegative integers $\ell \le k$.  Indeed, using the relation
  $\widehat{\partial_t^\ell a}(\omega) = (\i \omega)^\ell
  \widehat{a}(\omega)$ (in accordance with the Fourier transform
  convention~\cref{eq:fourier_transf}) we obtain
    \begin{equation}\label{eq:bochner_L1_estimate}
        \begin{split}
          &\int_{-\infty}^\infty \left\|\widehat{\partial_t^\ell
              a}\right.\left.  (\omega)\right\|_{H^s(\mathcal{U})}\,\d\omega =
          \int_{-\infty}^\infty \left\|(\i\omega)^\ell
            \widehat{a}(\omega)\right\|_{H^s(\mathcal{U})}\,\d\omega\\
          &\le \int_{-\infty}^\infty (1 + \omega^2)^{k/2}\left\|\widehat{a}(\omega)\right\|_{H^s(\mathcal{U})}\,\d\omega\\
          &= \int_{-\infty}^\infty (1 +
          \omega^2)^{(k-r)/2}\left\|\widehat{a}(\omega)\right\|_{H^s(\mathcal{U})} (1 +
          \omega^2)^{r/2}\,\d\omega\\
          &\le \left(\int_{-\infty}^\infty (1 +
            \omega^2)^{k-r}\,\d\omega \right)^{1/2}
          \left(\int_{-\infty}^\infty (1 +
            \omega^2)^r \left\|\widehat{a}(\omega)\right\|_{H^s(\mathcal{U})}^2\,\d\omega\right)^{1/2}\\
          &\le \widetilde{C} \left\|a\right\|_{H^r(\mathbb{R};\, H^s(\mathcal{U}))},
        \end{split}
    \end{equation}
    where we used the fact that $r > k + 1/2$ to bound the integral of
    $(1 + \omega^2)^{k-r}$ by a constant independent
    of $a$, and where we utilized the equivalent norm of the space
    $H^r(\mathbb{R}; H^s(\mathcal{U}))$ displayed
    in~\eqref{sob_bochner_fourier}. It follows that
    $\widehat{\partial_t^\ell a} \in L^1(\mathbb{R}; H^s(\mathcal{U}))$,
    as claimed.

    By the Bochner-specific version of the Riemann-Lebesgue
    lemma~\cite[Lem.\ 2.4.3]{Weis} (which ensures that the Fourier
    transform of an $L^1(\mathbb{R}; H^s(\mathcal{U}))$ function is a
    norm-continuous function of $\omega$ that tends to zero in norm as
    $\omega\to\infty$), in conjunction with the fact that, as
    established above,
    $\widehat{\partial_t^\ell a} \in L^1(\mathbb{R}; H^s(\mathcal{U}))$,
    it follows that
    \begin{equation}\label{fell_C}
      \widehat{\widehat{\partial_t^\ell a}} \in C(\mathbb{R}; H^s(\mathcal{U})), \quad 0 \le \ell \le k.
    \end{equation}

    Since by hypothesis $a\in H^r(\mathbb{R}; H^s(\mathcal{U}))$, further, it follows
    that for $0 \le \ell \le k$, $\partial_t^\ell a \in L^2(\mathbb{R}; H^s(\mathcal{U}))$, and
    thus by the Bochner Plancherel theorem~\cite[Thm.\ 2.20]{Karunakaran:98} we
    have, additionally,
    $\widehat{\partial_t^\ell a} \in L^1(\mathbb{R}; H^s(\mathcal{U})) \cap L^2(\mathbb{R};
    H^s(\mathcal{U}))$. Now, applying the Bochner Fourier inversion
    theorem~\cite{Karunakaran:98} (see also~\cite[\S 8.4]{Zemanian}),
    we obtain
    \begin{equation}\label{sob_lemma_partial_ell_f}
      \partial_t^\ell a(t) = \frac{1}{2\pi} \int_{-\infty}^\infty
      \widehat{\partial_t^\ell a}(\omega) \e^{\i\omega t}\,\d\omega,\quad 0 \le
      \ell \le k,
    \end{equation}
    which, in view of \Cref{fell_C} shows that
    $\partial_t^\ell a(t) = \frac{1}{2\pi}
    \widehat{\widehat{\partial_t^\ell a}}(-t) \in C(\mathbb{R};
    H^s(\mathcal{U}))$ for $0 \le \ell \le k$ and, therefore, that
    $H^r(\mathbb{R}; H^s(\mathcal{U})) \subset C^k(\mathbb{R};
    H^s(\mathcal{U}))$.  Finally, using \Cref{sob_lemma_partial_ell_f} and
    then \Cref{eq:bochner_L1_estimate} we see that, for
    $0 \le \ell \le k$,
    \begin{equation*}
      \begin{split}
        \sup_{t \in \mathbb{R}} \left\|\frac{\partial^\ell}{\partial t^\ell} a(t)\right\|_{H^s(\mathcal{U})} &= \sup_{t \in
      \mathbb{R}} \frac{1}{2\pi} \left\|\int_{-\infty}^\infty
        \widehat{\partial_t^\ell a}(\omega)
        \e^{\i \omega (-t)}\,\d\omega\right\|_{H^s(\mathcal{U})}\\
        &\le \frac{1}{2\pi} \int_{-\infty}^\infty
        \left\|\widehat{\partial_t^\ell a}(\omega) \right\|_{H^s(\mathcal{U})}\,\d\omega \le C
        \left\|a\right\|_{H^r(\mathbb{R}; H^s(\mathcal{U}))}.
      \end{split}
    \end{equation*}
    This establishes \Cref{eq:sob_lemma} and concludes the proof of
    the lemma.
\end{proof}

\begin{lemma}\label{int_eq_pderiv}
  Let $p,q$ denote non-negative integers, assume $\Gamma$ satisfies
  the $q$-growth condition, and assume $\widetilde{b}$ satisfies the
  $s$-regularity conditions~\eqref{eq:gamma_Hs_assump} with
  $s = p + q + 1$.  Then, letting $\widetilde{\psi}$ denote the
  solution of \Cref{eq:tdie_sl_generic}, we have
  $\widetilde{\psi} \in C^{p}(\mathbb{R}; L^2(\Gamma))$, and
  $\partial_t^p \widetilde{\psi}$ satisfies the integral equation
  \begin{equation}\label{eq:tdie_sl_generic_p}
        \left(S\partial_t^p \widetilde{\psi}\right)(\mathbf{r}, t) = \gamma^+ \partial_t^p \widetilde{b}(\mathbf{r}, t)\quad\mbox{for}\quad (\mathbf{r}, t) \in
    \Gamma\times\mathbb{R}.
  \end{equation}
\end{lemma}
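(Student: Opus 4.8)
The plan is to obtain the regularity statement $\widetilde{\psi}\in C^{p}(\mathbb{R};L^2(\Gamma))$ directly from \Cref{psi_Hp}, and then to establish the differentiated integral equation \eqref{eq:tdie_sl_generic_p} by exploiting the translation invariance in time of the retarded single-layer operator $S$, which makes the time derivative $\partial_t$ commute with $S$.

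For the regularity assertion: since $\widetilde{b}$ satisfies the $s$-regularity conditions \eqref{eq:gamma_Hs_assump} with $s=p+q+1$, \Cref{psi_Hp}---which, by \Cref{rem:tilde_1}, applies verbatim to the solution $\widetilde{\psi}$ of \eqref{eq:tdie_sl_generic}---yields, on the one hand (with the integer there taken to be $p+1$, so that the required regularity index is $(p+1)+q=p+q+1$), that $\widetilde{\psi}\in H^{p+1}(\mathbb{R};L^2(\Gamma))$, and, on the other hand, through its concluding statement (with integer $p$ and required index $p+q+1$), that $\widetilde{\psi}\in C^{p}(\mathbb{R};L^2(\Gamma))$. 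In particular $\partial^p\widetilde{\psi}\in H^{1}(\mathbb{R};L^2(\Gamma))\subset L^2(\mathbb{R};L^2(\Gamma))$, and, like $\widetilde{\psi}$ itself, $\partial^p\widetilde{\psi}$ vanishes for $t$ below a fixed time, in view of \eqref{all_t_b_generic} and causality of the time-domain problem.

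To obtain \eqref{eq:tdie_sl_generic_p} I would pass to the frequency domain. Taking the temporal Fourier transform of \eqref{eq:tdie_sl_generic} and using that $S$ has temporal Fourier symbol equal to the Helmholtz single-layer operator $S_\omega$ of \eqref{eq:single_layer_op}---precisely the fact already employed in the proof of \Cref{3d_decay_lemma_2ndkind} when Fourier-transforming \eqref{int_eq_lambda_proof}---one obtains $(S_\omega\widetilde{\psi}^f)(\cdot,\omega)=\gamma^+\widetilde{B}^f(\cdot,\omega)$ on $\Gamma$ for almost every $\omega$. Multiplying both sides by the scalar $(\i\omega)^p$, using the linearity of $S_\omega$ to move this factor inside, and invoking the identity $\widehat{\partial^p g}(\omega)=(\i\omega)^p\widehat{g}(\omega)$ (valid for $g=\widetilde{\psi}$ and $g=\widetilde{b}$ by the temporal regularity just established), one gets $\bigl(S_\omega\,\widehat{\partial^p\widetilde{\psi}}(\cdot,\omega)\bigr)=\gamma^+\widehat{\partial^p\widetilde{b}}(\cdot,\omega)$ on $\Gamma$. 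Since $\norm{\gamma^-S_\omega}_{L^2(\Gamma)\to L^2(\Gamma)}$ is bounded uniformly in $\omega$ by \eqref{Somega_Kstaromega_bounds} and $\partial^p\widetilde{\psi}\in L^2(\mathbb{R};L^2(\Gamma))$, the left-hand side lies in $L^2(\mathbb{R};L^2(\Gamma))$ and, by the time-convolution structure of $S$, coincides with the temporal Fourier transform of $S(\partial^p\widetilde{\psi})$; inverse Fourier transformation then produces \eqref{eq:tdie_sl_generic_p}.

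The one point genuinely deserving care---and the step I would regard as the main obstacle, though it is routine given the regularity already in hand---is the justification that the inverse Fourier transform of $\omega\mapsto S_\omega\,\widehat{\partial^p\widetilde{\psi}}(\cdot,\omega)$ is indeed the function $S(\partial^p\widetilde{\psi})$ defined pointwise by \eqref{eq:single_layer_op_time}, i.e.\ that $\partial_t$ truly commutes with the retarded integral operator. This holds because causality restricts, for each fixed $(\mathbf{r},t)$, the $t'$-integration in \eqref{eq:single_layer_op_time} to a bounded interval, so the pointwise definition agrees with the distributional time-convolution whose Fourier symbol is $S_\omega$; equivalently, one may differentiate under the integral sign in \eqref{eq:single_layer_op_time}, using $\partial_t G=-\partial_{t'}G$ together with $p$ integrations by parts in $t'$ whose boundary contributions vanish by causality. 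No new estimates are needed.
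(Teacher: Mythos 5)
Your proposal is correct. The regularity half is identical to the paper's: both invoke \Cref{psi_Hp} with Sobolev index $p+1$ (so that the required $s$-regularity index is $(p+1)+q=p+q+1$, exactly the hypothesis) and then the Sobolev embedding of \Cref{sob_lemma} to conclude $\widetilde{\psi}\in C^p(\mathbb{R};L^2(\Gamma))$. For the differentiated equation \eqref{eq:tdie_sl_generic_p} the paper simply differentiates \eqref{eq:tdie_sl_generic} under the integral sign (one sentence), whereas your primary route passes to the frequency domain, multiplies the transformed equation by $(\i\omega)^p$, and inverts; this is a legitimate alternative that trades the pointwise justification of differentiation under the integral for the (already established in \Cref{3d_decay_lemma_2ndkind}) identification of the Fourier symbol of $S$ with $S_\omega$, at the cost of yielding the identity only almost everywhere in $t$ rather than classically---a distinction that is immaterial here since both sides are continuous in time by the regularity just proved. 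Your closing paragraph, which observes that the delta-function kernel in \eqref{eq:single_layer_op_time} confines the $t'$-integration to the bounded interval $[t-T_*,t]$ so that $\partial_t$ commutes with $S$, is in fact precisely the justification the paper's one-line argument implicitly relies on, so the two proofs ultimately rest on the same observation.
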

\begin{proof}
  From \Cref{3d_decay_lemma_2ndkind_wellposed} we have
  $\widetilde{\psi} \in H^{p+1}(\mathbb{R};\, L^2(\Gamma))$, and,
  thus, by \Cref{sob_lemma} we obtain
  $\widetilde{\psi} \in C^{p}(\mathbb{R};\,L^2(\Gamma))$ as claimed.
  Similarly, by \Cref{sob_lemma}, $\widetilde{b}$ satisfies
  $\gamma^+ \widetilde{b} \in C^{p+q+1}(\mathbb{R};
  L^2(\Gamma))\subset C^{p}(\mathbb{R}; L^2(\Gamma))$. The proof is
  now completed inductively, by differentiation of
  \Cref{eq:tdie_sl_generic} under the integral sign in the
  expression~\eqref{eq:single_layer_op_time_conv} for the operator
  $S$.
\end{proof}
\noindent
The proof of \Cref{3d_decay_thm} is presented in what follows.
\vspace{0.5cm}
\begin{center}
{\bf Proof of \Cref{3d_decay_thm}}
\end{center}
The proof of~\Cref{density_Hp_time_bound} follows by first obtaining
an $L^2$-in-time estimate for the solution $\widetilde{\psi}$ of
\Cref{eq:tdie_sl_generic} with a generic right-hand side
$\widetilde{b}$ in suitable Sobolev-Bochner spaces, and then applying
that estimate to the particular cases $\widetilde{\psi} = \psi$ (for
$\widetilde{b} = b$) and $\widetilde{\psi} = \partial_t^p \psi$ (for
$\widetilde{b} = \partial_t^p b$, see~\Cref{def:sob_bochner}).

To obtain the necessary estimates for these functions
$\widetilde{\psi}$, in turn, we develop $L^2$-in-time estimates for
the quantities $\widetilde{\psi}_{+,T_0}$ that are related to the
solution $\widetilde{\psi}$ of \Cref{eq:tdie_sl_generic} via
\Cref{timewinddens} and \Cref{rem:tilde_1}.  Since, by hypothesis,
both aforementioned selections $\widetilde{b} = b$ and
$\widetilde{b} = \partial_t^p b$ of $\widetilde b$ satisfy
$\gamma^+\widetilde{b} \in H^{q+1}(\mathbb{R}; H^1(\Gamma))$ and
$\gamma^+\partial_\mathbf{n} \widetilde{b} \in H^{q}(\mathbb{R};
L^2(\Gamma))$, it follows that the conditions of
\Cref{3d_decay_lemma_2ndkind} are met, and, thus
\Cref{3d_decay_lemma_Ainv_freq_bounds} tells us that
$\widetilde{\psi}_{+,T_0}^f$ satisfies the
estimate~\cref{psiplus_estimate_3}.  Using this bound in conjunction
with the relation $\widetilde{\psi} = \widetilde{\psi}_{+,T_0}$ for
$t > T_0$ (see \Cref{psipm_def}) and Plancherel's identity we obtain
\begin{equation}\label{psi_to_psiplus_to_H}
  \begin{alignedat}{2}
      \left\|\widetilde{\psi}\right.\mkern-24mu &&&\left.\vphantom{\widetilde{\psi}}\right\|^2_{L^2([T_0, \infty);\,L^2(\Gamma))} \le
      \left\|\widetilde{\psi}_{+,T_0}\right\|^2_{L^2(\mathbb{R};\,L^2(\Gamma))} =
      \left\|\widetilde{\psi}_{+,T_0}^f\right\|^2_{L^2(\mathbb{R};\,L^2(\Gamma))} \\
    &\le\,&& C_1\int_0^{\omega_0}
      \left\|\gamma^-\partial_\mathbf{n} \widetilde{H}^f_{T_0}(\cdot,
      \omega) - \i \gamma^-\widetilde{H}^f_{T_0}(\cdot,
    \omega)\right\|^2_{L^2(\Gamma)}\,\d\omega\\
    & && + C_2\int_{\omega_0}^\infty \omega^{2q}
      \left\|\gamma^-\partial_\mathbf{n} \widetilde{H}^f_{T_0}(\cdot,
      \omega) - \i\omega \gamma^-\widetilde{H}^f_{T_0}(\cdot.
    \omega)\right\|^2_{L^2(\Gamma)}\,\d\omega.\\
  \end{alignedat}
\end{equation}
Using \Cref{conv_H_to_psi_eq} in \Cref{conv_H_to_psi}, further, to
estimate the integrals on the right-hand side of
\Cref{psi_to_psiplus_to_H} we obtain
\begin{equation}\label{final_psiplus_estimate}
    \left\|\widetilde{\psi}\right\|^2_{L^2([T_0, \infty);\,L^2(\Gamma))} \le \tilde{C} \int_{-\infty}^\infty (1 + \omega^2)^{q+1} \left\|
    \widetilde{\psi}_{*,T_0}^f(\cdot, \omega)\right\|_{L^2(\Gamma)}^2\,\d\omega,
\end{equation}
and, in view of the equivalence of the norms~\eqref{sob_bochner_norm}
and~\eqref{sob_bochner_fourier}, we re-express
\Cref{final_psiplus_estimate} as
\begin{equation}\label{final_psiplus_estimate_2}
    \left\|\widetilde{\psi}\right\|^2_{L^2([T_0, \infty);\,L^2(\Gamma))} \le \tilde{\tilde{C}}\left(\left\|
    \widetilde{\psi}_{*,T_0}\right\|_{L^2(\mathbb{R}; L^2(\Gamma))}^2 + \left\|
    \partial_t^{q+1} \widetilde{\psi}_{*,T_0}\right\|_{L^2(\mathbb{R}; L^2(\Gamma))}^2\right).
\end{equation}
Applying the Leibniz formula to the expression
\[
    \partial_t^{q+1} \widetilde{\psi}_{*,T_0}(\bfr, t) = \partial_t^{q+1} \left( w_+(t - T_0 + T_* + \tau)w_-(t - T_0)
    \widetilde{\psi}(\bfr, t)\right)
\]
and noting that for all $\bfr \in \Gamma$ and for $1 \le i \le p$ we
have
$\supp \partial_t^{i} \widetilde{\psi}_{*,T_0}(\bfr, \cdot) \subset
I_{T_0}$, using straightforward bounds on the functions $w_-$ and
$w_+$ (that do not depend on $T_0$---see \Cref{timewinddens}), we
obtain
\begin{equation}\label{final_psiplus_estimate_2a}
    \left\|\widetilde{\psi}\right\|^2_{L^2([T_0, \infty);\,L^2(\Gamma))} \le \sum_{i=0}^{q+1} C_i\left\|
    \partial_t^i \widetilde{\psi}\right\|_{L^2(I_{T_0}; L^2(\Gamma))}^2, \quad C_i = C_i(\Gamma, \tau).
\end{equation}
Using the continuity of the inclusion map in Sobolev spaces, it follows that
\begin{equation}\label{psiplus_estimate_L2_nontrapping}
    \lVert\widetilde{\psi}\rVert_{L^2([T_0, \infty);\,L^2(\Gamma))} \le
    C\left\|\widetilde{\psi}\right\|_{H^{q+1}(I_{T_0};\,L^2(\Gamma))} < \infty,\quad C = C(\Gamma, \tau),
\end{equation}
where the finiteness of the norm over $I_{T_0}$ in
\Cref{psiplus_estimate_L2_nontrapping} follows from
\Cref{3d_decay_lemma_2ndkind_wellposed}---since that lemma tells us
that $\widetilde{\psi} \in H^{q+1}(I_{T_0};L^2(\Gamma))$, in view of
the assumed hypotheses
$\gamma^+\widetilde{b} \in H^{2q+2}(\mathbb{R};L^2(\Gamma))$ and
$\gamma^+\partial_\mathbf{n} \widetilde{b} \in
H^{2q+1}(\mathbb{R};L^2(\Gamma))$ for each of $\widetilde{b} = b$ and
$\widetilde{b} = \partial_t^p b$. Applying
\Cref{psiplus_estimate_L2_nontrapping} with $\widetilde{b} = b$ yields
\Cref{density_Hp_time_bound} in the case $p = 0$. Furthermore,
\Cref{int_eq_pderiv} with $\widetilde{b} = b$ implies that
$\widetilde{\psi} = \psi$ satisfies \Cref{eq:tdie_sl_generic_p}. But
this equation can be expressed in the form \Cref{eq:tdie_sl_generic}
with $\widetilde{b} = \partial_t^p b$ and
$\widetilde \psi = \partial_t^p \psi$, for which the estimate
\Cref{psiplus_estimate_L2_nontrapping} becomes
\[
    \|\partial_t^p \psi\|_{L^2([T_0, \infty);\,L^2(\Gamma))} \le
    C\left\|\partial_t^p \psi\right\|_{H^{q+1}(I_{T_0};\,L^2(\Gamma))} < \infty,\quad C = C(\Gamma, \tau).
\]
which, together with \Cref{psiplus_estimate_L2_nontrapping} in the case $\widetilde{b} = b$ and $\widetilde{\psi} = \psi$, and using once again the continuity of the inclusion map in Sobolev spaces, implies \Cref{density_Hp_time_bound}.

Applying~\cref{density_Hp_time_bound} with $p = 1$ together with \Cref{sob_lemma}
yields
\begin{equation}
  \begin{split}
    \sup_{t>T_0} \left\|\psi(\cdot, t)\right\|_{L^2(\Gamma)} &\le \widetilde{C}
    \left\|\psi\right\|_{H^{1}([T_0, \infty);\,L^2(\Gamma))}\\
    &\le C(\Gamma, \tau)
    \left\|\psi\right\|_{H^{q+2}(I_{T_0};\,L^2(\Gamma))},
  \end{split}
\end{equation}
and, thus, \cref{density_unif_time_bound}. The proof is now
complete. $\qed$

\section{Super-algebraic decay of boundary densities and local
  energies}\label{sec:theory_part_ii}
This section extends the theoretical results of
\Cref{sec:theory_part_i}: it establishes that not only is it possible
to bound the density $\psi$ in the unbounded time interval
$[T_0, \infty)$ by its values on the preceding bounded subinterval
$I_{T_0}$, as shown in \Cref{3d_decay_thm}, but also, in
\Cref{3d_decay_thm_ii} below, the main theorem of this paper, that the
{\em temporal} Sobolev and maximum norms of the solution $\psi$ on
time intervals of the form $[T_0 + t, \infty)$, $t > 0$, each decay
rapidly (e.g., super-algebraically fast for temporally smooth incident
signals) as $t\to \infty$; see also \Cref{3d_decay_rmk_iii} where a
related but somewhat modified decay result and proof strategy are
suggested.  The statement and proof of \Cref{3d_decay_thm_ii} rely on
the nomenclature introduced in Sections~\ref{sec:prelim}
and~\ref{sec:theory_part_i}. Two important corollaries to this theorem,
namely Corollaries~\ref{decay_corr} and~\ref{decay_corr_energy},
relate \Cref{3d_decay_thm_ii} to rapid decay of solutions of the
wave equation. Following the statement and
proof of \Cref{decay_corr_energy} a brief discussion is presented that lays
out the main lines of the proof of \Cref{3d_decay_thm_ii}; a detailed
proof of this theorem is presented at the end of this
section, following a sequence of preparatory lemmas.

\begin{theorem}\label{3d_decay_thm_ii}
  Let $p$, $q$ and $n$ denote non-negative integers, $n>0$, let
  $T_0 > 0$ and $\tau > 0$ be given, and assume (i) $\Gamma$ satisfies
  the $q$-growth condition (\Cref{q-nontrapp}); (ii) The incident
  field $b(\bfr, t)$ satisfies the $s$-regularity
  conditions~\eqref{eq:gamma_Hs_assump} with
  $s = p + q + (n+1)(q+1)$; as well as, (iii) The
  incident field $b = b(\bfr, t)$ satisfies \Cref{all_t_b} and
  it vanishes for $(\bfr, t) \in \widebar{\Omega}\times\left \{I_{T_0} \cup [T_0, \infty)\right\}$, with $I_{T_0}$ as in
  \Cref{domainofdep}. Then for $t > T_0$ the solution $\psi$ of
  \Cref{eq:tdie_sl} satisfies the $t$-decay estimate
\begin{equation}\label{density_Hp_time_bound_Tn}
  \left\|\psi\right\|_{H^p([t, \infty);\,L^2(\Gamma))} \le
    C(\Gamma, \tau, p, n) (t - T_0)^{1/2-n} \left\|\psi\right\|_{H^{p + (n+1)(q +
    1)}(I_{T_0};\,L^2(\Gamma))} < \infty.
\end{equation}
If $p\geq 1$ then $\psi$ additionally satisfies the temporally
pointwise $t$-decay estimate
\begin{equation}\label{density_unif_time_bound_Tn}
  \left\|\psi(\cdot, t)\right\|_{L^2(\Gamma)} \le
  C(\Gamma, \tau, n) (t - T_0)^{1/2-n} \left\|\psi\right\|_{H^{(n+1)(q + 1) +
      1}(I_{T_0};\,L^2(\Gamma))} < \infty
\end{equation}
for all $t > T_0$.
\end{theorem}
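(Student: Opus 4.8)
The plan is to combine the bootstrap domain-of-dependence identity of Lemmas~\ref{3d_decay_lemma_2ndkind} and~\ref{3d_decay_thm_h_equiv} with a recentered, weighted frequency-domain estimate, extracting the $(t-T_0)$-decay from $\omega$-differentiation. Since $b$ vanishes on $\widebar{\Omega}\times\{I_{T_0}\cup[T_0,\infty)\}$, \Cref{3d_decay_thm_h_equiv} gives $h_{T_0}=-u_{*,T_0}=-S\psi_{*,T_0}$ with $\psi_{*,T_0}$ supported in $I_{T_0}$, so by \Cref{3d_decay_lemma_2ndkind} (cf.\ \Cref{rem:tilde_3}) the solution satisfies $\psi_{+,T_0}^f=-A_\omega^{-1}(\gamma^-\partial_\mathbf{n}-\i\eta\gamma^-)S_\omega\,\psi_{*,T_0}^f$, and $\psi(\cdot,t)=\psi_{+,T_0}(\cdot,t)$ for $t>T_0$ by \eqref{psipm_def}. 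First I would recenter at $T_0$: writing $\chi(\bfr,\sigma):=\psi_{*,T_0}(\bfr,T_0+\sigma)$, which is supported in the fixed interval $[-T_*-2\tau,0)$ \emph{independently of $T_0$}, one has $\psi_{*,T_0}^f(\omega)=\e^{-\i\omega T_0}\chi^f(\omega)$, so that $\psi(\bfr,T_0+\sigma)=\check G(\bfr,\sigma)$ for $\sigma>0$, where $G(\omega):=-A_\omega^{-1}(\gamma^-\partial_\mathbf{n}-\i\eta\gamma^-)S_\omega\,\chi^f(\omega)$ and $\check{}$ denotes the inverse temporal Fourier transform. As emphasized in \Cref{ITvsI0}, this recentering is the key to $T_0$-independent constants: the decay is produced by differentiating $G$ in $\omega$, and the $\omega$-derivatives of $\chi^f$ are moments $\partial_\omega^j\chi^f(\bfr,\omega)=\int(-\i\sigma)^j\chi(\bfr,\sigma)\e^{-\i\omega\sigma}\,\d\sigma$ whose $L^2(\Gamma)$-norms are bounded by $\|\psi_{*,T_0}\|_{H^j(I_{T_0})}\lesssim\|\psi\|_{H^j(I_{T_0})}$ with constants depending only on $T_*$, $\tau$, $j$, whereas differentiating the unrecentered $\psi_{*,T_0}^f$ would produce constants growing with $T_0$.

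The central preparatory lemma I would prove bounds the $n$-fold $\omega$-derivative of $G$, namely $\int_{-\infty}^\infty(1+\omega^2)^p\left\|\partial_\omega^n G(\cdot,\omega)\right\|_{L^2(\Gamma)}^2\,\d\omega\le C(\Gamma,\tau,p,n)\,\|\psi\|_{H^{p+(n+1)(q+1)}(I_{T_0})}^2$. The Leibniz rule distributes the $n$ derivatives among $A_\omega^{-1}$, $(\gamma^-\partial_\mathbf{n}-\i\eta\gamma^-)S_\omega$, and $\chi^f$. For the middle factor, differentiating the kernel $G_\omega$ brings down bounded powers of $|\bfr-\bfr'|/c$ (and does not worsen the spatial singularity of $K_\omega^*$), so, as in \Cref{per_freq_rhs_oper_bounds}, each $\omega$-derivative of $(\gamma^-\partial_\mathbf{n}-\i\eta\gamma^-)S_\omega$ still maps $L^2(\Gamma)\to L^2(\Gamma)$ with norm $\lesssim(1+\omega^2)^{1/2}$. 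The delicate factor is $A_\omega^{-1}$: iterating $\partial_\omega A_\omega^{-1}=-A_\omega^{-1}(\partial_\omega A_\omega)A_\omega^{-1}$ (a Fa\`a di Bruno bookkeeping) writes $\partial_\omega^j A_\omega^{-1}$ as a sum of products of $j+1$ copies of $A_\omega^{-1}$ and $\omega$-derivatives of $A_\omega$ of total order $j$; since $\|\partial_\omega^i A_\omega\|\lesssim(1+\omega^2)^{1/2}$ and the $q$-growth condition gives $\|A_\omega^{-1}\|\lesssim(1+\omega^2)^{q/2}$, one obtains $\|\partial_\omega^j A_\omega^{-1}\|\lesssim(1+\omega^2)^{((j+1)q+j)/2}$. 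Here one must also dispose of the kink of $\eta_0$ in \eqref{Aomega_def_eqn} at $\omega=\omega_0$: invoking the $\eta$-independence of $\psi^f$ (\Cref{psi_omega_independent}) one replaces $\eta_0$ by a smooth cutoff such as $\eta=(1+\omega^2)^{1/2}$, so that all quantities are genuinely differentiable and all the above bounds hold with $|\partial_\omega^i\eta|\lesssim(1+\omega^2)^{1/2}$. Collecting the worst contribution (all $n$ derivatives on $A_\omega^{-1}$) gives integrand $\lesssim(1+\omega^2)^{p+(n+1)q+n+1}\|\chi^f(\omega)\|^2=(1+\omega^2)^{p+(n+1)(q+1)}\|\psi_{*,T_0}^f(\omega)\|^2$; integrating and using the Leibniz rule on the window functions defining $\psi_{*,T_0}$ (exactly as at the end of the proof of \Cref{3d_decay_thm}) bounds it by $\|\psi\|_{H^{p+(n+1)(q+1)}(I_{T_0})}^2$, and assumption~(ii), $s=p+q+(n+1)(q+1)$, is precisely what \Cref{psi_Hp} requires for this last norm to be finite.

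With this lemma I would pass back to the time domain by Plancherel's theorem: the weight $(1+\omega^2)^p$ corresponds to temporal derivatives up to order $p$ and $\partial_\omega^n$ to multiplication by $\sigma^n$, so the lemma yields $\int_{-\infty}^\infty\sigma^{2n}\sum_{\ell\le p}\left\|\partial_\sigma^\ell\check G(\cdot,\sigma)\right\|_{L^2(\Gamma)}^2\,\d\sigma\le C\,\|\psi\|_{H^{p+(n+1)(q+1)}(I_{T_0})}^2$ (the lower-order Leibniz terms costing fewer $\omega$-derivatives, hence fewer resolvent powers, are dominated by the same right-hand side). Using $\psi(\bfr,T_0+\sigma)=\check G(\bfr,\sigma)$ for $\sigma>0$ and setting $\sigma=s-T_0$, for $t>T_0$ one has $\|\psi\|_{H^p([t,\infty);L^2(\Gamma))}^2=\int_{t-T_0}^\infty\sum_{\ell\le p}\|\partial_\sigma^\ell\check G(\cdot,\sigma)\|^2\,\d\sigma$, and a routine weighted estimate (multiplying by $((t-T_0)^{-1}\sigma)^{2n-1}\ge1$ on the range $\sigma\ge t-T_0$, treating $t-T_0\le1$ separately via \Cref{3d_decay_thm}, and using the finiteness of $\|\psi\|_{H^p([T_0,\infty);L^2(\Gamma))}$ from \Cref{3d_decay_thm}) gives $\|\psi\|_{H^p([t,\infty);L^2(\Gamma))}^2\le C(t-T_0)^{1-2n}\|\psi\|_{H^{p+(n+1)(q+1)}(I_{T_0})}^2$, i.e.\ \eqref{density_Hp_time_bound_Tn}. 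Finally, for $p\ge1$ the pointwise estimate \eqref{density_unif_time_bound_Tn} follows by applying \eqref{density_Hp_time_bound_Tn} with $p=1$ on $[t,\infty)$ and invoking the Sobolev embedding $H^1\hookrightarrow C^0$ (\Cref{sob_lemma}), exactly as in the last step of the proof of \Cref{3d_decay_thm}. I expect the main obstacle to be the resolvent-derivative estimate $\|\partial_\omega^j A_\omega^{-1}\|\lesssim(1+\omega^2)^{((j+1)q+j)/2}$ together with the accompanying bounds on $\partial_\omega^j S_\omega$, $\partial_\omega^j K_\omega^*$ and the careful treatment of the non-smooth normalization $\eta_0$; everything else is bookkeeping with the Leibniz rule, Plancherel, and the moment bounds that the recentering renders uniform in $T_0$.
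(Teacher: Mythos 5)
Your overall architecture---recentering at $T_0$ so that all moment bounds are uniform in $T_0$, extracting the decay by $n$-fold $\omega$-differentiation, the Leibniz/resolvent bookkeeping that produces the exponent $p+(n+1)(q+1)$, and the Sobolev embedding for the pointwise statement---is essentially the paper's (cf.\ Lemmas~\ref{3d_decay_thm_recursive_bound_general}, \ref{Rderiv_sobolev_bound} and~\ref{decay_estimate_L2}). There is, however, one step that is genuinely wrong as written: the identity $\psi_{+,T_0}^f=-A_\omega^{-1}(\gamma^-\partial_\mathbf{n}-\i\eta\gamma^-)S_\omega\,\psi_{*,T_0}^f$, i.e.\ $H^f_{T_0}=-\bigl(S_\omega\psi^f_{*,T_0}\bigr)$, does not hold. \Cref{3d_decay_thm_h_equiv} gives only the piecewise relation~\eqref{h_equiv}: $h_{T_0}=-u_{*,T_0}$ for $t\ge T_0-\tau$ but $h_{T_0}=0$ for $t<T_0-\tau$, whereas $u_{*,T_0}=S\psi_{*,T_0}$ is generically nonzero on $[T_0-T_*-2\tau,\,T_0-\tau)$ (the single-layer field radiated by $\psi_{*,T_0}$ while that density is ``on''). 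Hence $h_{T_0}\ne-u_{*,T_0}$ as functions on $\mathbb{R}$, their Fourier transforms differ, your $G(\omega)$ is not the (recentered) $\psi^f_{+,T_0}$, and the representation $\psi(\bfr,T_0+\sigma)=\check G(\bfr,\sigma)$ on which your endgame rests fails. The repair is exactly the maneuver of \Cref{conv_H_to_psi} and \Cref{Rderiv_sobolev_bound}: keep $R_{T_0}=(\gamma^-\partial_\mathbf{n}-\i\eta\gamma^-)H^f_{T_0}$, exploit the compact temporal support~\eqref{h_compactsupp} of $h_{T_0}$ (which is also what keeps the powers of $t$ produced by $\partial_\omega^m$ bounded), and only then dominate $\int|\mathcal{L}h_{T_0}|^2\,\d t$ by $\int_{\mathbb{R}}|\mathcal{L}u_{*,T_0}|^2\,\d t$ for temporally \emph{local} operators $\mathcal{L}$ using nonnegativity of the integrand---an inequality, not an identity---before applying Plancherel and \Cref{per_freq_rhs_oper_bounds}. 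With this correction, and with the $\omega$-derivatives of $\mu=\breve{\psi}^f_{+,0}$ controlled through the implicit recursion $A_\omega\partial_\omega^p\mu=\partial_\omega^pR-\sum_k a_k^p(\partial_\omega^kA_\omega)\partial_\omega^{p-k}\mu$ rather than an explicit product formula, your ``central preparatory lemma'' becomes the combination of \Cref{3d_decay_thm_recursive_bound_general} and \Cref{Rderiv_sobolev_bound}.

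Two further remarks. First, your endgame differs from the paper's: the paper converts the frequency-derivative bounds into decay via a smooth temporal partition of unity, integration by parts against $\e^{-\i\tau\varphi}$ in a convolution integral, Young's inequality, and summation over the windows (which costs a half power and yields the exponent $1/2-n$); your Chebyshev-type argument---Plancherel to obtain $\int\sigma^{2n}\|\partial_\sigma^\ell\breve{\psi}_{+,0}(\cdot,\sigma)\|^2\,\d\sigma<\infty$ and then restriction to $\sigma\ge t-T_0$---is a legitimate and arguably cleaner alternative once the commutators between $\sigma^n$ and $\partial_\sigma^\ell$ are handled as you indicate, and it in fact yields $(t-T_0)^{-n}$ rather than $(t-T_0)^{1/2-n}$. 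Second, replacing the piecewise coupling parameter $\eta_0$ of~\eqref{Aomega_def_eqn} by a smooth $\eta(\omega)$ is admissible in view of \Cref{psi_omega_independent}, but you must then verify that the $q$-growth condition, stated for $A_\omega=A_{\omega,\eta_0}$, transfers to the modified operator; the paper avoids this by keeping $\eta_0$ and simply working on $\omega\ne\pm\omega_0$, splitting every frequency integral at $\omega_0$.
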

\begin{corr}[Spatial-$L^2$/pointwise solution decay]\label{decay_corr}
  Let $q$ denote a non-negative integer, let $\Gamma$ satisfy the
  $q$-growth condition (\Cref{q-nontrapp}), and assume that the data
  $b$ for the problem~\cref{eq:w_eq}-\cref{eq:bdef} is such that
      \begin{equation}\label{b_smooth}
\gamma^+b \in C^\infty(\mathbb{R}; L^2(\Gamma))\quad\mbox{and}\quad
\gamma^+\partial_\mathbf{n} b \in C^\infty(\mathbb{R};L^2(\Gamma)).
\end{equation}
Further, assume that $b = b(\bfr, t)$ satisfies~\eqref{all_t_b}, and
that, for given $T_0 > 0$ and $\tau > 0$, $b(\bfr, t)$ vanishes for
$(\bfr, t) \in \widebar{\Omega} \times \left\lbrace I_{T_0} \cup [T_0,
  \infty)\right\rbrace$, where $I_{T_0} = I_{T_0, T_*, \tau}$ is
defined in \Cref{domainofdep} with $T_* = \diam(\Gamma)/c$ as
indicated in eq.~\Cref{Tstar_def}.  Then, letting
$D = \Omega^c \cap \left\lbrace |\mathbf{r}| < R\right\rbrace$, for
given $R > 0$ such that
$\widebar{\Omega} \subset \left\lbrace |\mathbf{r}| < R\right\rbrace$,
and defining
$r_{\mathrm{max}} = \sup_{\mathbf{r} \in D, \mathbf{r}' \in \Gamma}
|\mathbf{r} - \mathbf{r}'|$, for each pair of integers $n > 0$ and
$p \ge 0$ there exists a constant $C = C(p, n, \tau, R, \Gamma) > 0$
such that the solution $u$ to \Cref{eq:w_eq} satisfies the $t$-decay
estimate
\begin{equation}\label{uk_l2_sup_bound_decay}
    \left\|\partial_t^p u(\cdot, t)\right\|_{L^2(D)} \le C (t - T_0 -
    r_{\mathrm{max}}/c)^{1/2-n} \left\|\psi\right\|_{H^{p + (n+1)(q+1)+1}(I_{T_0};
    L^2(\Gamma))} < \infty,
\end{equation}
for all $t \in (T_0 + r_{\max}/c, \infty)$.

Further, $u$ also decays super-algebraically fast with increasing time
$t$ at each point $\mathbf{r}$ outside $\widebar{\Omega}$. More
precisely, given any compact set
$\mathcal{R} \subset (\widebar{\Omega})^c$ and defining
$r_{\mathrm{max}} = \max_{\mathbf{r} \in \mathcal{R}, \mathbf{r}' \in
  \Gamma} |\mathbf{r} - \mathbf{r}'|$, for each pair of integers
$n > 0$ and $p \ge 0$ there exists a constant
$C = C(p, n, \tau, \mathcal{R}, \Gamma) > 0$ such that $u$ satisfies
the $t$-decay estimate
\begin{equation}\label{u_ptwise_bound_decay}
      \left|\partial_t^p u(\mathbf{r}, t)\right| \le
      C(t - T_0 - r_{\mathrm{max}}/c)^{1/2-n} \left\|\psi\right\|_{H^{p + (n+1)(q+1)+1}(I_{T_0}; L^2(\Gamma))} < \infty,
\end{equation}
for all $(\mathbf{r}, t) \in \mathcal{R}\times (T_0 + r_{\mathrm{max}}/c,
\infty)$.

\end{corr}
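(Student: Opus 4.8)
The plan is to push the density decay of \Cref{3d_decay_thm_ii} through the retarded single-layer representation $u=S\psi$ of \eqref{eq:kirchhoff_3d_soft}, i.e.\ through $u(\mathbf{r},t)=\int_\Gamma\frac{\psi(\mathbf{r}',\,t-|\mathbf{r}-\mathbf{r}'|/c)}{4\pi|\mathbf{r}-\mathbf{r}'|}\,\d\sigma(\mathbf{r}')$ for $\mathbf{r}\notin\Gamma$, where $\psi$ solves \eqref{eq:tdie_sl} with the given datum $b$. Since $b$ is $C^\infty$ in time, the $s$-regularity conditions hold for every $s$ (\Cref{rem:wave_eq_sob_assump}); hence \Cref{psi_Hp} and \Cref{int_eq_pderiv} give $\psi\in C^\infty(\mathbb{R};L^2(\Gamma))$ and justify differentiation under the integral sign, $\partial_t^p u=S\,\partial_t^p\psi$, and the hypotheses (i)--(iii) of \Cref{3d_decay_thm_ii} hold (here (iii) is assumed verbatim and (ii) holds for all $s$). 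The common reduction I would use is: apply Cauchy--Schwarz in $\mathbf{r}'$ against the weight $1/|\mathbf{r}-\mathbf{r}'|$, bound the resulting ``mass'' factor $\int_\Gamma\d\sigma(\mathbf{r}')/|\mathbf{r}-\mathbf{r}'|$ by a constant (weakly singular kernel on a compact Lipschitz surface, uniformly bounded for $\mathbf{r}$ in a bounded set, and bounded by $|\Gamma|/(4\pi\,\dist(\mathbf{r},\Gamma))$ when $\mathbf{r}$ is separated from $\Gamma$), and then convert the remaining quantity $\int_\Gamma|\partial_t^p\psi(\mathbf{r}',\,t-|\mathbf{r}-\mathbf{r}'|/c)|^2\,\d\sigma(\mathbf{r}')$, or its $\mathbf{r}$-average over $D$, into a \emph{temporal} Sobolev norm of $\psi$ over the retarded window $[t-r_{\mathrm{max}}/c,\infty)$, which \eqref{density_Hp_time_bound_Tn} evaluated at observation time $t-r_{\mathrm{max}}/c$ then bounds; this is where the hypothesis $t>T_0+r_{\mathrm{max}}/c$ enters.

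For the full local-$L^2$ estimate \eqref{uk_l2_sup_bound_decay} the conversion is clean: integrating $|\partial_t^p u(\mathbf{r},t)|^2$ over $\mathbf{r}\in D$, swapping the $\mathbf{r}$ and $\mathbf{r}'$ integrations by Tonelli, passing to polar coordinates centred at $\mathbf{r}'$ (the Jacobian $\rho^2\,\d\rho$ absorbing the $1/\rho$ singularity, the $\mathbf{r}$-domain being contained in the ball of radius $r_{\mathrm{max}}$ about $\mathbf{r}'$), and substituting $s=t-\rho/c$ --- the step in which retardation turns a spatial ball into a time interval --- yields $\|\partial_t^p u(\cdot,t)\|_{L^2(D)}^2\le C\,\|\partial_t^p\psi\|_{L^2([t-r_{\mathrm{max}}/c,\infty);\,L^2(\Gamma))}^2\le C\,\|\psi\|_{H^p([t-r_{\mathrm{max}}/c,\infty);\,L^2(\Gamma))}^2$, after which \eqref{density_Hp_time_bound_Tn} finishes the argument. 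This actually produces the sharper exponent $p+(n+1)(q+1)$ on the right; the slightly larger exponent stated in \eqref{uk_l2_sup_bound_decay} follows from the inclusion $H^{m+1}(I_{T_0};L^2(\Gamma))\hookrightarrow H^m(I_{T_0};L^2(\Gamma))$ on the bounded interval $I_{T_0}$.

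For the pointwise estimate \eqref{u_ptwise_bound_decay} the polar-coordinate manoeuvre is unavailable because $\mathbf{r}$ is a single point, but now $d_0:=\dist(\mathcal{R},\Gamma)>0$, so after Cauchy--Schwarz I must bound $\int_\Gamma|\partial_t^p\psi(\mathbf{r}',\,t-|\mathbf{r}-\mathbf{r}'|/c)|^2\,\d\sigma(\mathbf{r}')$, in which the evaluation time $s(\mathbf{r}')=t-|\mathbf{r}-\mathbf{r}'|/c$ depends on $\mathbf{r}'$ yet always lies in the fixed bounded window $J:=[t-r_{\mathrm{max}}/c,\,t-d_0/c]$. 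Here I would invoke Fubini: since $g:=\partial_t^p\psi\in H^{p+1}(\mathbb{R};L^2(\Gamma))\subset H^1(\mathbb{R};L^2(\Gamma))$, for a.e.\ $\mathbf{r}'\in\Gamma$ the scalar function $s\mapsto g(\mathbf{r}',s)$ is absolutely continuous with square-integrable derivative, so the elementary one-variable inequality $|\phi(s^*)|\le \frac{1}{|J|}\int_J|\phi|\,\d s+\int_J|\phi'|\,\d s$ (valid for $s^*\in J$) yields $|g(\mathbf{r}',s(\mathbf{r}'))|^2\le C\bigl(\int_J|g(\mathbf{r}',s)|^2\,\d s+\int_J|\partial_s g(\mathbf{r}',s)|^2\,\d s\bigr)$ with $C=C(|J|)$; integrating over $\mathbf{r}'\in\Gamma$ bounds the quantity of interest by $C\,\|\psi\|_{H^{p+1}([t-r_{\mathrm{max}}/c,\infty);\,L^2(\Gamma))}^2$, and \eqref{density_Hp_time_bound_Tn} applied with index $p+1$ at observation time $t-r_{\mathrm{max}}/c$ produces exactly the factor $(t-T_0-r_{\mathrm{max}}/c)^{1/2-n}$ and the Sobolev index $p+(n+1)(q+1)+1$ appearing in \eqref{u_ptwise_bound_decay}. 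The superalgebraic-decay statement in both displays is then immediate since the integer $n>0$ is arbitrary.

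The one place where care is genuinely needed is this last Fubini/absolute-continuity step: because $\Gamma$ is only Lipschitz, $\psi$ need not be continuous on $\Gamma$, so one cannot legitimately take a pointwise-in-$\mathbf{r}'$ supremum over time, and the composition $\mathbf{r}'\mapsto\partial_t^p\psi(\mathbf{r}',s(\mathbf{r}'))$ with the $\mathbf{r}'$-dependent retarded time must be given meaning via the a.e.\ absolute continuity coming from the Bochner-Sobolev regularity $\partial_t^p\psi\in H^1(\mathbb{R};L^2(\Gamma))$ (no such difficulty arises for \eqref{uk_l2_sup_bound_decay}, where every quantity remains an $L^2$ integral). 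Everything else --- boundedness of the weakly singular single-layer kernel, Tonelli, the change of variables $s=t-\rho/c$, and the invocations of \Cref{3d_decay_thm_ii} --- is routine.
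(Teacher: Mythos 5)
Your proposal is correct and follows the same overall strategy as the paper --- push the density decay of \Cref{3d_decay_thm_ii} through the retarded representation \Cref{eq:kirchhoff_3d_soft} via Cauchy--Schwarz and the retardation-induced shift of the time window to $[t-r_{\max}/c,\infty)$ --- but the two places where you recover a \emph{pointwise-in-time} quantity are handled differently, and in a way worth recording. For \Cref{uk_l2_sup_bound_decay} the paper does not bound $\|\partial_t^p u(\cdot,t)\|_{L^2(D)}$ directly: it first integrates in $t'$ to control $\|\partial_t^s u\|_{L^2([t,\infty);L^2(D))}$ for $s=p$ and $s=p+1$ (keeping the non-uniformly-bounded factor $\int_\Gamma |\mathbf{r}-\mathbf{r}'|^{-2}\,\d\sigma(\mathbf{r}')$ under the $D$-integral and bounding $\int_D I_2$ rather than $I_2$ itself), and then recovers the time-pointwise statement from the Bochner--Sobolev embedding \Cref{sob_lemma}; this is the origin of the extra $+1$ in the stated Sobolev index. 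Your symmetric split, with $|\mathbf{r}-\mathbf{r}'|^{-1/2}$ in each Cauchy--Schwarz factor so that the mass factor $\int_\Gamma|\mathbf{r}-\mathbf{r}'|^{-1}\,\d\sigma(\mathbf{r}')$ is uniformly bounded and the polar Jacobian $\rho^2\,\d\rho$ absorbs the remaining $1/\rho$, yields the time-pointwise bound in one step and the sharper index $p+(n+1)(q+1)$, with the stated index recovered by inclusion --- a genuine (if minor) improvement; just note that the generic description in your first paragraph, which pairs the power-one mass factor with an \emph{unweighted} second factor, is not a valid Cauchy--Schwarz split (the detailed computation in your second paragraph is the correct one). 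For \Cref{u_ptwise_bound_decay} the paper again integrates in $t'$ to get $\|\partial_t^p u(\mathbf{r},\cdot)\|_{H^1([t,\infty))}$ and applies the scalar Sobolev embedding to $u$, whereas you apply a one-dimensional embedding to $\psi(\mathbf{r}',\cdot)$ for a.e.\ $\mathbf{r}'$ via Fubini and then integrate over $\Gamma$; both routes land on the $H^{p+1}$ norm of $\psi$ over the retarded window and hence the same final index. The only points needing a word of care in your version are the degenerate case $|J|=0$ of your evaluation interval (fixed by taking, say, $J=[t-r_{\max}/c,\,t]$) and the $t$-independence of the constant $C(|J|)$, which holds since $|J|$ is fixed.
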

\begin{proof}
  Except for hypothesis (ii) of \Cref{3d_decay_thm_ii}, all conditions
  of that Theorem follow immediately from hypotheses of the present
  corollary. Hypothesis (ii), in turn, follows since by
  hypothesis~\eqref{b_smooth} and the assumed compact support of $b$,
  this function satisfies $\gamma^+b \in H^p(\mathbb{R}; L^2(\Gamma))$
  and
  $\gamma^+ \partial_{\mathbf{n}} b \in H^p(\mathbb{R}; L^2(\Gamma))$
  for every integer $p\ge 0$. Thus, the conditions of
  \Cref{3d_decay_thm_ii} are satisfied for every integer $n > 0$ and
  every integer $p \ge 0$ and we thus have
\begin{equation}\label{psi_decay}
  \begin{split}
    \left\|\partial_t^p \psi\right\|_{H^1([t', \infty);\,L^2(\Gamma))} &\le
  \left\|\psi\right\|_{H^{p+1}([t', \infty);\,L^2(\Gamma))}\\
  &\le C_1(p, n, \Gamma, \tau) (t' - T_0)^{1/2-n} \left\|\psi\right\|_{H^{p + (n+1)(q +
    1) + 1}(I_{T_0};\,L^2(\Gamma))}
  \end{split}
\end{equation}
for $t' > T_0$.

The estimates \Cref{uk_l2_sup_bound_decay} and
\Cref{u_ptwise_bound_decay} are obtained in what follows by relying on
a corresponding estimate on $\partial_t^p u$ in the norm of
$H^1([T_0 + r_{\max}/c, \infty); L^2(\Gamma))$, on one hand, and an
estimate on the norm of $\partial_t^p u(\mathbf{r}, \cdot)$ in
$H^1([t + r_{\max}/c, \infty))$ for each $(\mathbf{r}, t) \in \mathcal{R}\times (T_0 + r_{\mathrm{max}}/c,
\infty)$,
on the other hand.  Estimate~\eqref{uk_l2_sup_bound_decay} is
established by first differentiating $s$ times under the integral sign
in the integral representation~\eqref{eq:kirchhoff_3d_soft} for the
solution $u$ (for certain values of the integer $s$) obtaining
    $\partial_t^s u(\mathbf{r}, t) = \left(\mathscr{S} \partial_t^s
  \psi\right)(\mathbf{r}, t)$, and then applying the Cauchy-Schwarz
inequality to the formula~\cref{eq:single_layer_pot_time_conv} for $\mathscr{S}$
to obtain, for all $t'$,
\begin{equation}
    \begin{split}
      \left\|\partial_t^s u(\cdot, t')\right.&\hspace{-1.5mm}\left.\vphantom{\partial_t^s u(\cdot, t')}\right\|^2_{L^2(D)} = \int_D
      \left|\partial_t^s u(\mathbf{r}, t')\right|^2\,\d V(\mathbf{r})\\
    &\le \frac{1}{16\pi^2} \int_D \left(\int_\Gamma
    \left|\partial_t^s \psi(\mathbf{r}', t' - |\mathbf{r} -
    \mathbf{r}'|/c)\right|^2\,\d\sigma(\mathbf{r}')\right)
    \left(\int_\Gamma \frac{\d\sigma(\mathbf{r}')}{|\mathbf{r} -
    \mathbf{r}'|^2}\right)\d V(\mathbf{r}).
    \end{split}
    \label{uk_L2_estimate}
\end{equation}
Integrating this bound in $t'$ for $t'\geq t$, for a given $t$, we
obtain
\begin{equation}
    \begin{split}
      \left\|\partial_t^s u\right\|^2_{L^2([t, \infty); L^2(D))} &\le
      \frac{1}{16\pi^2} \int_{D} \left[ \left(\int_t^\infty \int_\Gamma
      |\partial_t^s \psi(\mathbf{r}', t' - |\mathbf{r} - \mathbf{r}'|/c)|^2\,
      \d\sigma(\mathbf{r}')\,\d t'\right) \times\right.\\
      &\left.\quad\quad\quad\quad \times \left(\int_\Gamma \frac{\d\sigma(\mathbf{r}')}{|\mathbf{r} -
    \mathbf{r}'|^2}\right)\right]\,\d V(\mathbf{r}).
    \end{split}
    \label{uk_L2_estimate_integrated}
\end{equation}
  Letting $I_{s,1}(\mathbf{r}, t)$ and $I_2(\mathbf{r})$ denote, respectively, the first
  and second factors in the integrand of the  integral over
  $D$,
  \begin{equation}\label{I1_I2_decay_corr}
    I_{s,1}(\mathbf{r}, t) = \int_t^\infty \int_\Gamma
      |\partial_t^s\psi(\mathbf{r}', t' - |\mathbf{r} - \mathbf{r}'|/c)|^2\,
      \d\sigma(\mathbf{r}')\,\d t',\;\mbox{and}\; I_2(\mathbf{r}) = \int_\Gamma \frac{\d\sigma(\mathbf{r}')}{|\mathbf{r} -
    \mathbf{r}'|^2},
  \end{equation}
  we seek to bound each of $I_{s,1}(\mathbf{r}, t)$ and $I_2(\mathbf{r})$ by quantities independent of $\mathbf{r}$.
  Considering first $I_{s,1}(\mathbf{r}, t)$ for fixed $\mathbf{r} \in D$, a change in integration order and a slight extension of the integration domain in the $t'$ variable yields
  \begin{equation}\label{I1_decay_corr_estimate}
    I_{s,1}(\mathbf{r}, t) \le \int_\Gamma \int_{t - r_{\max}/c}^\infty |\partial_t^s \psi(\mathbf{r}', t')|^2\,\d t'\,\d\sigma(\mathbf{r}')= \left\|\partial_t^s\psi\right\|^2_{L^2([t - r_{\max}/c, \infty); L^2(\Gamma))}.
  \end{equation}
    The integral of $I_2(\mathbf{r})$ over $D$, in turn, is bounded by
an $R-$ and $\Gamma$-dependent constant since by switching the order of integration we obtain
    \begin{equation*}
      \int_{D} I_2(\mathbf{r}) \,\d V(\mathbf{r}) = \int_\Gamma \left(\int_{D} \frac{1}{|\mathbf{r} - \mathbf{r}'|^2}\,\d V(\mathbf{r})\right)\d\sigma(\mathbf{r}'),
    \end{equation*}
    where the inner integral is bounded for each
    $\mathbf{r}' \in \Gamma$ by an $R-$ and $\Gamma$-dependent
    constant. (The latter statement can easily be established by using
    a spherical coordinate system centered at each
    $\mathbf{r}' \in \Gamma$ for integration in $\mathbf{r} \in D$.)
    We have therefore established that
    \begin{equation}
      \label{u_l2_to_psi}
      \left\| \partial_t^s u\right\|^2_{L^2([t, \infty); L^2(D))} \le C_2\left\|\partial_t^s \psi\right\|^2_{L^2([t - r_{\max}/c, \infty); L^2(\Gamma))}.
    \end{equation}
    Taking separately $s = p$ and $s = p + 1$ in \Cref{u_l2_to_psi} and adding the results we obtain
    \begin{equation}\label{u_h1_to_psi}
      \left\| \partial_t^p u\right\|^2_{H^1([t, \infty); L^2(D))} \le C_3\left\|\partial_t^p\psi\right\|^2_{H^1([t - r_{\max}/c, \infty); L^2(\Gamma))},
    \end{equation}
    Using \Cref{u_h1_to_psi} in conjunction
    with \Cref{psi_decay} and the Sobolev embedding \Cref{sob_lemma} we obtain
    \begin{equation*}
      \begin{split}
        \left\|\partial_t^p u(\cdot, t)\right\|_{L^2(D)} &\le C_4 \left\|\partial_t^p u\right\|_{H^1([t, \infty); L^2(D))} \le \sqrt{C_3} C_4\left\|\partial_t^p\psi\right\|_{H^1([t - r_{\max}/c, \infty); L^2(\Gamma))}\\
        &\le C_5(\Gamma, \tau, n) (t - T_0 - r_{\max}/c)^{1/2-n} \left\|\psi\right\|_{H^{p + (n+1)(q +
  1) + 1}(I_{T_0};\,L^2(\Gamma))},
      \end{split}
    \end{equation*}
    and, thus, \Cref{uk_l2_sup_bound_decay} follows.

    To establish \Cref{u_ptwise_bound_decay}, in turn, we once again
    use $s$-times differentiation under the integral sign in the
    variable $t$ in the representation~\Cref{eq:kirchhoff_3d_soft}
    followed by the Cauchy-Schwarz inequality in the
    formula~\cref{eq:single_layer_pot_time_conv} for $\mathscr{S}$ to
    obtain
\begin{equation}\label{u_CS_to_psi_sup}
    \begin{split}
  |\partial_t^s u(\mathbf{r}, t')|^2 &\le \left( \int_\Gamma \frac{1}{|\mathbf{r} -
    \mathbf{r}'|^2}\,\d\sigma(\mathbf{r}')\right) \left(\int_\Gamma
    |\partial_t^s \psi(\mathbf{r}', t' - |\mathbf{r} - \mathbf{r}'|/c)|^2
    \,\d\sigma(\mathbf{r}')\right)\\
      &\le C_6^2 \int_\Gamma
    |\partial_t^s \psi(\mathbf{r}', t' - |\mathbf{r} - \mathbf{r}'|/c)|^2
    \,\d\sigma(\mathbf{r}'),
    \end{split}
\end{equation}
where
$C_6^2 = \sup_{\mathbf{r}\in \mathcal{R}} \int_\Gamma
\frac{1}{|\mathbf{r} - \mathbf{r}'|^2}\,\d\sigma(\mathbf{r}')$.
Integrating in $t'$ and using \Cref{I1_decay_corr_estimate} to estimate
the resulting quantity $I_{s,1}(\mathbf{r}, t)$, for each
$\mathbf{r} \in \mathcal{R}$ we obtain the $L^2$ bound
\begin{equation}\label{u_ptwise_l2_to_psi}
  \int_{t}^\infty |\partial_t^s u(\mathbf{r}, t')|^2\,\d t' \le C_6 \left\|\partial_t^s \psi\right\|^2_{L^2([t - r_{\max}/c, \infty); L^2(\Gamma))}.
\end{equation}
Taking separately $s = p$ and $s = p + 1$ in \Cref{u_ptwise_l2_to_psi}, adding the results, and using \Cref{psi_decay} in conjunction with the Sobolev embedding \Cref{sob_lemma}, we obtain, again for $(\mathbf{r}, t) \in \mathcal{R}\times (T_0 + r_{\mathrm{max}}/c,
\infty)$,
    \begin{equation*}
      \begin{split}
        \left|\partial_t^p u(\mathbf{r}, t)\right| &\le C_7 \left\|\partial_t^p u(\mathbf{r}, \cdot)\right\|_{H^1([t, \infty))} \le C_8\left\|\partial_t^p \psi\right\|_{H^1([t - r_{\max}/c, \infty); L^2(\Gamma))}\\
        &\le C_9(\Gamma, \tau, n) (t - T_0 - r_{\max}/c)^{1/2-n} \left\|\psi\right\|_{H^{p + (n+1)(q +
  1) + 1}(I_{T_0};\,L^2(\Gamma))} < \infty,
      \end{split}
    \end{equation*}
establishing \Cref{u_ptwise_bound_decay}. The proof is complete.
\end{proof}

While equation~\eqref{u_ptwise_bound_decay} provides a concrete
solution decay estimate at each spatial point $\mathbf{r}$ outside
$\widebar{\Omega}$, the associated constant $C$ does increase without
bound as $\mathbf{r}$ approaches $\Omega$ (cf.\
\Cref{u_CS_to_psi_sup}). The result presented next, in contrast, which
holds for arbitrary bounded subsets of $\widebar{\Omega}^c$,
establishes decay of the local energy expression~\eqref{local_energy}
considered in much of the literature---thus enriching the previous
estimate~\eqref{uk_l2_sup_bound_decay} by incorporating a norm
containing spatial derivatives. The proof below relies on the
estimates provided in \Cref{decay_corr} in conjunction with standard
elliptic regularity properties.
\begin{corr}[Local energy decay estimates]\label{decay_corr_energy}
  Let $\Gamma$ and $b$ satisfy the hypotheses of \Cref{decay_corr} and
  let $u$ denote the solution to the problem~\eqref{eq:w_eq}. Then,
  the local energy $E$ (equation~\eqref{local_energy}) associated with
  the solution $u$ decays super-algebraically fast as
  $t\to\infty$. More precisely, letting $R > 0$ be such that the
  radius-$R$ ball $B_R =\{ |\mathbf{r}| \le R\}$ contains
  $\widebar\Omega$, defining the compact region
  $D = \Omega^c \cap  B_R$,
  and letting
  $r_{\mathrm{max}} = \sup_{\mathbf{r} \in D, \mathbf{r}' \in \Gamma}
  |\mathbf{r} - \mathbf{r}'|$, for each integer $n > 0$ there exists a
  constant $C = C(\Gamma, R, \tau, n) > 0$ such that
  \begin{equation}\label{local_energy_superalg_decay}
    E(u, D, t) \le C (t - T_0 - r_{\max}/c)^{1 - 2n} \left\| \psi \right\|^2_{H^{(n+1)(q+1)+3}(I_{T_0}; L^2(\Gamma))} < \infty,
  \end{equation}
  for all $t \in (T_0 + r_{\max}/c, \infty)$.
\end{corr}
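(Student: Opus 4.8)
The plan is to split the local energy $E(u,D,t)=\|\nabla u(\cdot,t)\|_{L^2(D)}^2+\|u_t(\cdot,t)\|_{L^2(D)}^2$ and treat the two terms separately. The time-derivative term needs nothing new: it is the square of the $p=1$ instance of the pointwise-in-$t$ estimate \eqref{uk_l2_sup_bound_decay} of \Cref{decay_corr}, so it is already bounded by $C(t-T_0-r_{\max}/c)^{1-2n}\|\psi\|_{H^{(n+1)(q+1)+2}(I_{T_0};L^2(\Gamma))}^2$, whose Sobolev index is dominated (over the bounded interval $I_{T_0}$) by the one appearing in the statement. The only genuinely new point concerns $\|\nabla u(\cdot,t)\|_{L^2(D)}$, and the idea---as anticipated in the remark preceding the corollary---is to trade this spatial derivative for a \emph{second time} derivative using the equation itself: for each fixed $t>T_0+r_{\max}/c$ the function $v:=u(\cdot,t)$ solves the elliptic problem $c^2\Delta v=(\partial_t^2 u)(\cdot,t)$ in $\widebar{\Omega}^c$ with $v=0$ on $\Gamma$ (by hypothesis (iii), since $t>T_0-T_*-2\tau$ forces $\gamma^+b(\cdot,t)=0$ on $\Gamma$).

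To exploit this I would fix a radius $R'>R$ with $\widebar{\Omega}\subset B_{R'}$, set $D'=\widebar{\Omega}^c\cap B_{R'}\supseteq D$, choose a smooth cutoff $\chi$ equal to $1$ on $\widebar{B}_R$ and supported in $B_{R'}$, and carry out a Caccioppoli-type energy estimate: pairing $c^2\Delta v=(\partial_t^2 u)(\cdot,t)$ with $v\chi^2$ and integrating by parts over $\widebar{\Omega}^c$, the boundary contributions vanish (on $\Gamma$ because $v=0$, on $\partial B_{R'}$ because $\chi=0$), and a Young's-inequality absorption yields $\|\nabla v\|_{L^2(D)}^2\le C\bigl(\|v\|_{L^2(D')}^2+\|(\partial_t^2 u)(\cdot,t)\|_{L^2(D')}^2\bigr)$ with $C$ depending only on $c$ and $\chi$. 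This uses only an integration by parts valid on the Lipschitz exterior domain---no $H^2$ elliptic regularity is invoked---so the mere Lipschitz character of $\Gamma$ is no obstruction. Applying \eqref{uk_l2_sup_bound_decay} of \Cref{decay_corr} over $D'$ (which also meets the hypotheses of that corollary) with $p=0$ and $p=2$ then bounds the two right-hand terms by $C(t-T_0-r'_{\max}/c)^{1-2n}$ times $\|\psi\|_{H^{(n+1)(q+1)+1}(I_{T_0};L^2(\Gamma))}^2$ and $\|\psi\|_{H^{(n+1)(q+1)+3}(I_{T_0};L^2(\Gamma))}^2$ respectively, where $r'_{\max}=\sup_{\mathbf{r}\in D',\,\mathbf{r}'\in\Gamma}|\mathbf{r}-\mathbf{r}'|$; adding the $u_t$ contribution gives \eqref{local_energy_superalg_decay} with $r'_{\max}$ in place of $r_{\max}$ and for $t>T_0+r'_{\max}/c$.

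I expect the main obstacle to be not conceptual but the bookkeeping required to convert this into the precise statement, i.e.\ removing the discrepancy between $r'_{\max}$ and $r_{\max}$ and recovering the full time range $t>T_0+r_{\max}/c$. This I would handle by fixing $R'$ once as a function of $R$ (say $R'=2R$), so that its effect is absorbed into the constant $C=C(\Gamma,R,\tau,n)$, and then disposing of the leftover compact interval $t\in(T_0+r_{\max}/c,\,T_0+r'_{\max}/c]$ directly: there $(t-T_0-r_{\max}/c)^{1-2n}$ is bounded below by the positive constant $\bigl((r'_{\max}-r_{\max})/c\bigr)^{1-2n}$, while $E(u,D,t)$ is bounded above by $C\|\psi\|_{H^{(n+1)(q+1)+3}(I_{T_0};L^2(\Gamma))}^2$ uniformly---this uniform bound following from the same Caccioppoli estimate combined with the \emph{non-decaying} density bounds of \Cref{3d_decay_thm} in place of \Cref{decay_corr} (the argument of \Cref{decay_corr} run without the $(t-T_0)^{1/2-n}$ factor). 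A subsidiary point to record is that differentiating \eqref{eq:kirchhoff_3d_soft} under the integral sign to get $\partial_t^2 u=S\partial_t^2\psi$, and the membership $u(\cdot,t)\in H^1_{\mathrm{loc}}(\widebar{\Omega}^c)$ needed for the integration by parts, are both legitimate because $\psi\in C^\infty(\mathbb{R};L^2(\Gamma))$ under \eqref{b_smooth} (cf.\ \Cref{psi_Hp}) and by the standard mapping properties of the single-layer potential.
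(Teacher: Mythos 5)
Your decomposition of $E(u,D,t)$ and your treatment of the $u_t$ term match the paper's proof, but your handling of the gradient term takes a genuinely different route. The paper applies Green's first identity on $D$ itself (\eqref{gradu_grns_identity}), so a boundary term $\int_{\partial B_R}u\,\partial_{\mathbf{n}}u\,\d\sigma$ survives and must be estimated separately; this is done by differentiating the representation \eqref{eq:kirchhoff_3d_soft} in the normal direction on $\partial B_R$ (which is bounded away from $\Gamma$) and running the $H^1$-plus-Sobolev-embedding argument again, at the cost of the auxiliary computations \eqref{nablau_ptwise_to_l2_bound}--\eqref{nablau_ptwise_bound}. Your Caccioppoli estimate with the cutoff $\chi$ kills all boundary terms and replaces them by volume integrals over the slightly larger set $D'$, so everything reduces to \eqref{uk_l2_sup_bound_decay} with $p\in\{0,2\}$ and no normal-derivative estimates are needed; the price is the $r'_{\max}$-versus-$r_{\max}$ bookkeeping, which the paper avoids entirely. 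Both routes need the same regularity input to license the integration by parts on the Lipschitz domain, namely $u(\cdot,t)\in H^1$ up to $\Gamma$ with $\Delta u(\cdot,t)\in L^2$ and the weak Green identity of \cite[Thm.\ 4.4]{McLean}; your one-line appeal to mapping properties of the single-layer potential should be expanded to that level (the paper does this via elliptic regularity), but this is not a new obstacle.

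The one step that fails as written is the choice $R'=2R$. Your uniform bound for $E(u,D,t)$ on the leftover interval near $t=T_0+r_{\max}/c$ ultimately rests on quantities of the form $\|\partial_t^s\psi\|_{H^1([t-r'_{\max}/c,\infty);L^2(\Gamma))}$, and for such $t$ the left endpoint $t-r'_{\max}/c$ lies up to $(r'_{\max}-r_{\max})/c$ to the left of $T_0$. \Cref{3d_decay_thm} controls $\psi$ on $[T_0,\infty)$ by its $I_{T_0}$-norm, but the remaining piece $[T_0-(r'_{\max}-r_{\max})/c,\,T_0)$ is contained in $I_{T_0}=[T_0-T_*-2\tau,T_0)$ only if $r'_{\max}-r_{\max}\le c(T_*+2\tau)$; since $r'_{\max}-r_{\max}$ can be as large as $R'-R=R$, which need not be comparable to $\diam(\Gamma)$ or $\tau$, the choice $R'=2R$ would leave the bound depending on values of $\psi$ outside $I_{T_0}$, which the statement does not permit. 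The repair is immediate: take $R'$ with $0<R'-R\le c\tau$, so that $r'_{\max}\le r_{\max}+(R'-R)$ forces the needed time interval into $I_{T_0}$, with the cutoff constants absorbed into $C(\Gamma,R,\tau,n)$. A smaller slip of the same flavor: for $t$ just above $T_0+r'_{\max}/c$ the factor $(t-T_0-r'_{\max}/c)^{1-2n}$ blows up and cannot be converted into $(t-T_0-r_{\max}/c)^{1-2n}$ times a constant, so the compact ``leftover'' interval handled by the uniform bound must extend slightly past $T_0+r'_{\max}/c$ (e.g.\ to $T_0+r'_{\max}/c+\tau$), after which the ratio of the two factors is bounded. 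With these adjustments your argument is sound and yields the stated Sobolev index $(n+1)(q+1)+3$.
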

\begin{proof}
  In view of the bound on
  $\int_D |u_t(\mathbf{r}, t)|^2\,\d V(\mathbf{r})$ provided by
  \Cref{decay_corr}, it suffices to establish a corresponding estimate
  for $\int_D |\nabla u(\mathbf{r}, t)|^2\,\d V(\mathbf{r})$ for
  $t \in (T_0 + r_{\max}/c, \infty)$. To do this, we use the fact that
  $u$ is a solution of the wave equation~\eqref{eq:w_eq_a} as well as
  the hypothesis that $\gamma^+ b(\mathbf{r}', t') = 0$ for
  $(\mathbf{r}', t') \in \Gamma \times \left\{ I_{T_0} \cup [T_0,
    \infty)\right\}$, which together imply that, for each
  $t' \in I_{T_0} \cup [T_0, \infty)$, $u$ satisfies the exterior
  Dirichlet problem
\begin{subequations}\label{u_ellipt}
  \begin{align}
    \Delta u(\mathbf{r}, t') &= f(\mathbf{r}, t'), \quad\mbox{for}\quad \mathbf{r} \in \Omega^c,\label{u_ellipt_a}\\
    u(\mathbf{r}, t') &=  0,
                       \quad\mbox{for}\quad \mathbf{r}\in\Gamma = \partial\Omega,\label{u_ellipt_b}
    \end{align}
  \end{subequations}
  for the Poisson equation~\eqref{u_ellipt_a}, where, for
  $\mathbf{r} \in \Omega^c$,
  $f(\mathbf{r}, t') = \frac{1}{c^2} \partial_t^2 u(\mathbf{r},
  t')$. Exploiting, in addition, the relation
  \Cref{uk_l2_sup_bound_decay} with $p = 2$, we obtain, for
  $t > T_0 + r_{\max}/c$,
  \begin{equation}\label{f_l2_decay_estimate}
    \left\|f(\cdot, t)\right\|_{L^2(D)} \le \frac{C_1}{c^2}(t - T_0 -
    r_{\mathrm{max}}/c)^{1/2-n} \left\|\psi\right\|_{H^{(n+1)(q+1)+3}(I_{T_0};
    L^2(\Gamma))} < \infty,
  \end{equation}
  which, in particular, implies that
  \begin{equation}\label{f_poisson_def}
    f(\cdot, t) \in L^2(D) \quad\mbox{for}\quad t \in (T_0 + r_{\max}/c, \infty).
  \end{equation}

  To establish regularity for the Poisson solution we introduce the
  region
  $D_\varepsilon = \Omega^c \cap \left\lbrace |\mathbf{r}| \le R +
    \varepsilon\right\rbrace$ for some $\varepsilon > 0$ as well as,
  for each fixed $t\in (T_0 + r_{\max}/c, \infty)$, a function
  $\varphi: D_\varepsilon \to \mathbb{R}$ satisfying
  $\varphi(\bfr) = u(\bfr, t)$ in a neighborhood of
  $|\mathbf{r}| = R + \varepsilon$ as well as $\varphi = 0$ in both, a
  neighborhood of $\partial\Omega$ and in the region
  $|\mathbf{r}| > R + 2\varepsilon$---which can be easily constructed
  by multiplication of $u$ by a smooth cutoff function in the radial
  variable.  Moreover, using the
  representation~\eqref{eq:kirchhoff_3d_soft} for
  $\mathbf{r} \in D_\varepsilon$ bounded away from $\partial \Omega$,
  we see that $\varphi \in H^1(D_\varepsilon)$ for every $t$ since
  $\psi \in C^\infty(\mathbb{R}; L^2(\Gamma))$ per the assumptions in
  \Cref{decay_corr}.  Applying the regularity result~\cite[Thm.\
  8.9]{GilbargTrudinger} to the problem~\eqref{u_ellipt} on
  $D_\varepsilon$ with $f(\cdot, t) \in L^2(D_\varepsilon)$ and with
  boundary values given by $\varphi \in H^1(D_\varepsilon)$, we have
  $u(\cdot, t) \in H^1(D_\varepsilon)\cap
  H^2_{\mathrm{loc}}(D_\varepsilon)$. Since $D \subset D_\varepsilon$
  we also have $u(\cdot, t) \in H^1(D)\cap H^2_{\mathrm{loc}}(D)$. In
  particular, for each fixed $t $ we have $u(\cdot, t) \in H^1(D)$ and
  $\Delta u(\cdot, t) \in L^2(D)$. As a result, we can apply the
  version~\cite[Thm.\ 4.4]{McLean} of Green's first identity to
  obtain, for each $t \in (T_0 + r_{\max}/c, \infty)$,
  \begin{equation}\label{gradu_grns_identity}
    \int_D \left|\nabla u(\mathbf{r}, t)\right|^2\,\d V(\mathbf{r}) = \left\langle \gamma u(\cdot, t), \gamma \partial_\mathbf{n} u(\cdot, t) \right\rangle_{\partial D} - \int_D u(\mathbf{r}, t) \Delta u(\mathbf{r}, t)\,\d V(\mathbf{r}),
  \end{equation}
  where $\left\langle \cdot, \cdot \right\rangle$ denotes the duality
  pairing of $H^{-1/2}(\partial D)$ and $H^{1/2}(\partial D)$ and
  where $\gamma$ denotes the trace operator on $D$. In fact, since
  $\gamma u(\cdot, t) = 0$ on $\Gamma$, and since $u(\cdot, t)$ is
  smooth for $\mathbf{r}$ in a neighborhood of $\partial B_R$, we have
  $\left\langle \gamma u(\cdot, t), \gamma \partial_\mathbf{n}
    u(\cdot, t)\right\rangle_{\partial D} = \int_{\partial B_R}
  u(\mathbf{r}, t) \partial_\mathbf{n} u(\mathbf{r},
  t)\,\d\sigma(\mathbf{r})$.

  Using \Cref{gradu_grns_identity} we now obtain a bound
  on
  $E(u, D, t)$. Since $u(\cdot, t)$ satisfies \Cref{u_ellipt}, using the
  Cauchy-Schwarz inequality we obtain
  \begin{equation}\label{gradu_first_bound}
    \begin{split}
      \int_D \left|\nabla u(\mathbf{r}, t)\right|^2\,\d & V(\mathbf{r}) = \int_{\partial B_R} u(\mathbf{r}, t) \partial_{\mathbf{n}} u(\mathbf{r}, t) \,\d\sigma(\mathbf{r}) - \int_D u(\mathbf{r}, t) f(\mathbf{r}, t)\,\d V(\mathbf{r})\\
      &\le \left\| u(\cdot, t)\right\|_{L^2(\partial B_R)} \left\|\partial_{\mathbf{n}} u(\cdot, t)\right\|_{L^2(\partial B_R)} + \left\|u(\cdot, t)\right\|_{L^2(D)} \left\|f(\cdot, t)\right\|_{L^2(D)}.
    \end{split}
  \end{equation}
  The first volumetric term on the right-hand side in
  \Cref{gradu_first_bound} can be estimated using \Cref{decay_corr}
  with $p = 0$:
  \begin{equation}\label{u_volume_decay}
    \left\|u(\cdot, t)\right\|_{L^2(D)} \le C_2 (t - T_0 - r_{\max}/c)^{1/2-n} \left\|\psi\right\|_{H^{(n+1)(q+1)+1}(I_{T_0}; L^2(\Gamma))}.
  \end{equation}
  Then, using \Cref{f_l2_decay_estimate} and the continuity of the
  inclusion map in Sobolev spaces, \Cref{u_volume_decay} provides a bound for the second summand on the right-hand side of~\eqref{gradu_first_bound}:
  \begin{equation}\label{gradu_volume_bound}
    \left\|u(\cdot, t)\right\|_{L^2(D)} \left\|f(\cdot, t)\right\|_{L^2(D)} \le C_3 (t - T_0 - r_{\max}/c)^{1-2n} \left\|\psi\right\|_{H^{(n+1)(q+1)+3}(I_{T_0}; L^2(\Gamma))}^2.
  \end{equation}

  We now to turn to the first summand on the right-hand side
  of~\eqref{gradu_first_bound}, and we estimate each one of the
  corresponding boundary terms by relying on the representation
  formula~\cref{eq:kirchhoff_3d_soft}. Indeed, differentiating that
  formula once respect to the normal $\mathbf{n}$ and $s$-times with
  respect to time with $s = 0$, $1$, and adding the results, for
  $(\mathbf{r},t') \in \partial B_R \times \mathbb{R}$, we obtain the
  relation
  \begin{equation}
    \begin{split}
        \partial_{\mathbf{n}} \partial_t^s u(\mathbf{r}, t') = -\int_\Gamma &\left( \frac{\hat{\mathbf{r}}\cdot (\mathbf{r} - \mathbf{r}') \partial_t^s \psi(\mathbf{r}', t' - |\mathbf{r} - \mathbf{r}'|/c)}{|\mathbf{r} - \mathbf{r}'|^3}\right.\\&\left.+ \frac{\hat{\mathbf{r}} \cdot (\mathbf{r} - \mathbf{r}') \partial_t^{s+1} \psi(\mathbf{r}', t' - |\mathbf{r} - \mathbf{r}'|/c)}{c|\mathbf{r} - \mathbf{r}'|^2}\right)\,\d\sigma(\mathbf{r}'),
    \end{split}
  \end{equation}
  for which the Cauchy-Schwarz inequality implies, for
  $(\mathbf{r},t') \in \partial B_R \times \mathbb{R}$, that
  \begin{equation}\label{nablau_ptwise_to_l2_bound}
    \begin{split}
      |\partial_{\mathbf{n}} \partial_t^s u(\mathbf{r}, t')|^2 &\le C_4(\Gamma, R, \varepsilon) \int_\Gamma |\partial_t^s \psi(\mathbf{r}, t' - |\mathbf{r} - \mathbf{r}'|/c)|^2\,\d\sigma(\mathbf{r}')\\
        &+ C_5(\Gamma, R, \varepsilon) \int_\Gamma |\partial_t^{s+1} \psi(\mathbf{r}', t' - |\mathbf{r} - \mathbf{r}'|/c)|^2\,\d\sigma(\mathbf{r}').
    \end{split}
  \end{equation}
  The argument we use to bound $\partial_{\mathbf{n}} u$ uniformly in
  time is similar to the one used in the proof of \Cref{decay_corr}:
  we obtain bounds on $\partial_{\mathbf{n}} u(\mathbf{r}, \cdot)$ in
  the norm of $H^1([t, \infty))$ by taking $s = 0$ and $s = 1$ in
  \Cref{nablau_ptwise_to_l2_bound}, adding the results, and
  integrating the resulting inequality over $t'$ for $t' > t$; the
  uniform-in-time bound then follows from the Sobolev Lemma. Indeed,
  applying \Cref{nablau_ptwise_to_l2_bound} with $s=0$ and $s=1$ and
  using the definition~\eqref{I1_I2_decay_corr} of
  $I_{0, 1}(\mathbf{r}, t)$, $I_{1,1}(\mathbf{r}, t)$ and
  $I_{2,1}(\mathbf{r}, t)$, we obtain
  \begin{equation}\label{nablau_ptwise_to_l2_bound_integrated}
    \begin{split}
      \left\| \partial_{\mathbf{n}} u(\mathbf{r}, \cdot) \right\|^2_{H^1([t, \infty))} &\le C_4(\Gamma, R, \varepsilon) I_{0,1}(\mathbf{r}, t) + C_5(\Gamma, R, \varepsilon) I_{2,1}(\mathbf{r}, t)\\
      &\quad + (C_4(\Gamma, R, \varepsilon) + C_5(\Gamma, R, \varepsilon))I_{1,1}(\mathbf{r}, t).
    \end{split}
  \end{equation}
  Then, using the bound~\eqref{I1_decay_corr_estimate}, $I_{s,1}(\mathbf{r}, t)
  \le \left\|\partial_t^s \psi\right\|^2_{L^2([t - r_{\max}/c, \infty);
  L^2(\Gamma))}$, we obtain the relation
  \begin{equation}\label{nablau_H1_bound}
    \left\|\partial_{\mathbf{n}} u(\mathbf{r}, \cdot)\right\|_{H^1([t, \infty)} \le C_6(\Gamma, R, \varepsilon) \left\|\psi\right\|_{H^2([t - r_{\max}/c, \infty); L^2(\Gamma))}.
  \end{equation}
  In conjunction with the Sobolev embedding \Cref{sob_lemma} and the bound \Cref{density_Hp_time_bound_Tn} of \Cref{3d_decay_thm_ii}, the bound~\Cref{nablau_H1_bound} implies that
  \begin{equation}\label{nablau_ptwise_bound}
    \begin{split}
      \left|\partial_{\mathbf{n}} u(\mathbf{r}, t)\right| &\le C_7 \left\|\partial_{\mathbf{n}} u(\mathbf{r}, \cdot)\right\|_{H^1([t, \infty)} \le C_7 C_6(\Gamma, R, \varepsilon) \left\|\psi\right\|_{H^2([t - r_{\max}/c, \infty); L^2(\Gamma))}\\
      &\le C_8(\Gamma, R, \tau, n, \varepsilon) (t - T_0 - r_{\max}/c)^{1/2 - n} \left\|\psi\right\|_{H^{(n+1)(q+1)+2}(I_{T_0}; L^2(\Gamma))}
    \end{split}
  \end{equation}
  for
  $(\mathbf{r}, t) \in \partial B_R \times (T_0 + r_{\max}/c,
  \infty)$. Taking $L^2(\partial B_R)$ norms in both
  \Cref{u_ptwise_bound_decay} with $p=0$ and in
  \Cref{nablau_ptwise_bound} yields the desired estimate for the first
  summand on the right-hand side of~\eqref{gradu_first_bound}:
  \begin{equation}\label{unablau_ptwise_bound}
    \begin{split}
      \left\|u(\cdot, t)\right\|_{L^2(\partial B_R)} \left\|\partial_{\mathbf{n}} u(\cdot, t)\right\|_{L^2(\partial B_R)} \le C_9 &(t - T_0 - r_{\max}/c)^{1-2n} \times\\
    &\times \left\|\psi\right\|^2_{H^{(n+1)(q+1)+2}(I_{T_0}; L^2(\Gamma))}
    \end{split}
  \end{equation}
  for $t\geq T_0 + r_{\max}/c$, where we once again used the
  continuity of the inclusion map in Sobolev spaces.

  As suggested above, to obtain decay estimates for the second term
  in~\eqref{local_energy} we use the bound
  \Cref{uk_l2_sup_bound_decay} with $p = 1$, which tells us that
  \begin{equation}\label{dt2u_decay}
    \left\|\partial_t u(\cdot, t)\right\|_{L^2(D)} \le C_{10} (t - T_0 -
    r_{\mathrm{max}}/c)^{1/2-n} \left\|\psi\right\|_{H^{(n+1)(q+1)+2}(I_{T_0};
      L^2(\Gamma))}
\end{equation}
for $t \in (T_0 + r_{\max}/c, \infty)$.

To complete the proof we now utilize~\Cref{gradu_first_bound} with
right-hand side terms substituted by~\eqref{gradu_volume_bound}
and~\eqref{unablau_ptwise_bound}, together with the
bound~\eqref{dt2u_decay}, and obtain the bound
  \begin{equation}
    \begin{split}
      E(u, D, t) &\le \left\| u(\cdot, t)\right\|_{L^2(\partial B_R)} \left\|\partial_{\mathbf{n}} u(\cdot, t)\right\|_{L^2(\partial B_R)}\\
      &\quad + \left\|u(\cdot, t)\right\|_{L^2(D)} \left\|f(\cdot, t)\right\|_{L^2(D)} + \left\|\partial_t u(\cdot, t)\right\|^2_{L^2(D)}\\
      &\le C_9 (t - T_0 - r_{\max}/c)^{1-2n} \left\|\psi\right\|^2_{H^{(n+1)(q+1)+2}(I_{T_0}; L^2(\Gamma))}\\
      &\quad + C_3^2 (t - T_0 - r_{\max}/c)^{1-2n} \left\|\psi\right\|_{H^{(n+1)(q+1)+3}(I_{T_0}; L^2(\Gamma))}^2\\
      &\quad + C_{10}^2 (t - T_0 -
    r_{\mathrm{max}}/c)^{1-2n} \left\|\psi\right\|^2_{H^{(n+1)(q+1)+2}(I_{T_0};
    L^2(\Gamma))}.
    \end{split}
  \end{equation}
  The desired estimate~\eqref{local_energy_superalg_decay} now
  follows, once again, by virtue of the continuity of the inclusion
  map in Sobolev spaces.
\end{proof}

\begin{rem}\label{ralston}
  The well known contribution~\cite{Ralston:69} shows that, for a
  trapping obstacle $\Omega$ and for an arbitrarily large time
  $\mathcal{T}$, initial conditions can be selected so that the local
  energy $E(u, D, t)$ in~\eqref{energy_decay} with
  $D =\Omega^c\cap \{|\bfr| < R\}$ remains arbitrarily close to the
  initial energy value $E(u, D, 0)$ for $0\leq t\leq \mathcal{T}$. In
  particular, the result~\cite{Ralston:69} implies that a decay bound
  of the form~\eqref{energy_decay} that is uniformly valid for all
  admissible incident initial conditions and for all compact sets
  $D\subset \Omega^c$ cannot hold for a trapping
  obstacle. References~\cite{Ikawa:82,Ikawa:88} do present, however,
  uniformly valid decay estimates relative to higher order Sobolev
  norms over the complete exterior domain, which are valid for
  trapping structures consisting of certain unions of convex
  obstacles, for which the trapping rays span spatial regions of zero
  measure (indeed, a single ray in the case of the structure
  consisting of two convex connected obstacles in~\cite{Ikawa:82}, and
  more generally, as implied by Assumption (H.2)
  in~\cite{Ikawa:88}). In the same spirit, \Cref{3d_decay_thm_ii} and
  Corollaries~\ref{decay_corr} and~\ref{decay_corr_energy} present
  decay results for trapping obstacles satisfying the $q$-growth
  condition (including a result of decay for the local energy
  $E(u, D, t)$) in terms of higher-order {\em surface} Sobolev norms,
  that are uniformly valid for all admissible incident fields. In
  particular, these results apply to obstacles such as those depicted
  in \Cref{fig:3d_connected_trapping} (for which the trapping rays
  span volumetric regions of positive measure), in addition to the
  examples~\cite{Ikawa:82,Ikawa:88} which, per
  reference~\cite{Spence:20}, satisfy the $q$-growth condition with
  $q=1$.
\end{rem}

The overall approach to the proof of \Cref{3d_decay_thm_ii} relies
critically on the time-history domain-of-dependence ideas described in
\Cref{rem:h_equiv_rem_2} (see also \Cref{3d_decay_rmk_iii} where a
related but somewhat modified decay result and proof strategy are
suggested). Technically, the proof of \Cref{3d_decay_thm_ii} proceeds
on the basis of an argument resulting from integration by parts with
respect to the temporal frequency $\omega$, which requires
$\omega$-differentiation of a certain function
$\breve{\psi}^f_{+,0}(\bfr, \omega)$ closely related to the boundary
integral density $\psi^f(\bfr, \omega)$.  Certain necessary results
concerning differentiability of boundary integral operators and
associated integral densities are established in a series of lemmas
presented in the Appendix.  Some of the main elements of the proof of
\Cref{3d_decay_thm_ii}, in turn, are presented in
Lemmas~\ref{3d_decay_thm_recursive_bound_general}
through~\ref{decay_estimate_L2} below. Thus, with reference to the
Appendix, \Cref{3d_decay_thm_recursive_bound_general} provides
pointwise bounds on density derivatives. Then, the technical
\Cref{Rderiv_sobolev_bound} (of a similar character to
\Cref{conv_H_to_psi}) establishes bounds on the integrals of certain
quantities in~\Cref{3d_decay_thm_recursive_bound_general}, and
\Cref{decay_estimate_L2} provides the primary decay estimate used in
the proof of \Cref{3d_decay_thm_ii}.

\begin{rem}\label{3d_decay_rmk_iii}
  Before proceeding with the proof of \Cref{3d_decay_thm_ii} we note
  that a related but somewhat less informative decay result and proof
  strategy, which do not depend on the bootstrap DoD concept, can be
  contemplated. In the alternative approach the decay proof results
  once again from an argument based on integration by parts (in
  frequency-domain) in the Fourier integral that represents the
  time-domain solution. The proof of such a result proceeds as
  follows: starting from the given time-domain data $ b(r,t)$ defined
  in the neighborhood $\Omega^\textit{inc}$ of $\Omega$, a Fourier
  transform is performed to obtain the right-hand side of the integral
  equation~\eqref{CFIE_direct}. Since $\Gamma$ satisfies the
  $q$-growth condition, it follows that the solution
  $\psi^f(\bfr,\omega)$ of this equation admits an upper bound that
  grows polynomially as a function of $\omega$. Using an argument
  similar to the one presented in
  \Cref{3d_decay_thm_recursive_bound_general} below, corresponding
  polynomially growing bounds are obtained for the $\omega$
  derivatives of $\psi^f(\bfr,\omega)$. Thus, the decay result can be
  obtained by an integration by parts argument followed by a bound on
  the integral of the resulting integrands. Such a bound can be
  obtained by relying on smooth partitioning of the integral together
  with a Young inequality-based estimate in an argument similar to the
  one in \Cref{decay_estimate_L2}. The resulting time-decay bound
  resembles the bound~\eqref{density_Hp_time_bound_Tn}. But, unlike
  equation~\eqref{density_Hp_time_bound_Tn}, which expresses decay in
  terms of the norm of the solution $\psi$ itself over the bootstrap
  DoD $I_{T_0}$, the alternative bound expresses decay in terms of the
  norm, {\em over all time}, of the right-hand side function
  $b(\bfr,t)$ and its normal derivative on $\Gamma$. Thus
  \Cref{3d_decay_thm_ii} provides a significantly more precise decay
  estimate, but it does so at the cost of certain added complexity in
  the proof---which is mainly confined to \Cref{Rderiv_sobolev_bound}.
\end{rem}

Following the aforementioned proof plan for
Theorem~\ref{3d_decay_thm_ii} we first present, in
\Cref{3d_decay_thm_recursive_bound_general}, estimates on
frequency-derivatives of certain frequency-domain density solutions.

\begin{lemma}\label{3d_decay_thm_recursive_bound_general}
  Let $q$ denote a non-negative integer, assume $\Omega$ satisfies the
  $q$-growth condition (\Cref{q-nontrapp}), and let $\widetilde{b}$
  satisfy the assumptions of Lemma~\ref{3d_decay_thm_h_equiv} (so
  that, in particular, $\widetilde{b} (\mathbf{r}, t)$ is $C^2$ and
  temporally compactly supported in the time interval
  $[\alpha, T-T_*-2\tau]$ for all $\bfr \in \widebar{\Omega}$) as well
  as the $s$-regularity conditions~\eqref{eq:gamma_Hs_assump} with
  $s=q$.  Further, let $\widetilde{\psi}$ and
  $\mu = \widetilde{\psi}_{+,T}^f$ denote the solutions to
  \Cref{eq:tdie_sl_generic} and \cref{CFIE_proof_generic},
  respectively, and, with reference to \cref{CFIE_proof_generic},
  define $\widetilde{R}_T(\mathbf{r}, \omega)$ by
  \begin{equation}\label{RT_def}
    \widetilde{R}_T(\mathbf{r}, \omega)=   \gamma^-\partial_\mathbf{n} \widetilde{H}^f_T(\mathbf{r}, \omega)
    - \i  \eta_0(\omega) \gamma^- \widetilde{H}^f_T(\mathbf{r}, \omega), \quad\mathbf{r} \in \Gamma,
  \end{equation}
  for $\omega \ge 0$, and by Hermitian symmetry for $\omega < 0$:
$\widetilde{R}_T(\mathbf{r},
\omega)=\widebar{\widetilde{R}_T(\mathbf{r}, -\omega)}$.  Then
  $\mu = \widetilde{\psi}_{+,T}^f \in C^\infty(\mathbb{R};
  L^2(\Gamma))$ and
  $\widetilde{R}_T \in C^\infty(\mathbb{R}\setminus \pm \omega_0;
  L^2(\Gamma))$, and for all nonnegative integers $p$ and all
  $\omega \neq \pm \omega_0$ (cf. \Cref{Aop_def}) we have
  \begin{equation}\label{recursion_hypothesis_squared_general}
    \begin{split}
      \left\| \partial_\omega^p \mu(\cdot, \omega)\right\|_{L^2(\Gamma)}^2
      \le \sum_{i=0}^{p} &\left(\sum_{j=0}^{(i+1)(q+1)-1} d_{ij}^p \omega^{2j}
      \left\|\partial_\omega^{p-i} \widetilde{R}_T(\cdot,
      |\omega|)\right\|^2_{L^2(\Gamma)}\right),
    \end{split}
  \end{equation}
  where $d_{ij}^p > 0$ denote certain non-negative constants
  independent of $T$. Additionally, for each nonnegative integer $p$
  there exists a constant $C$ (dependent on $p$, $\alpha$, $T$, and on certain norms of $\widetilde{b}$),
  such that
  \begin{equation}\label{psit_nongrowth}
    \left\|\partial_\omega^p\mu(\cdot, \omega)\right\|_{L^2(\Gamma)} \le C |\omega|^{(p+1)(q+1)}
  \end{equation}
  for all sufficiently large values of $|\omega|$.
\end{lemma}
\begin{proof}
  Let $p$ denote a nonnegative integer and let
  $\omega \ne \pm \omega_0$.  \Cref{int_eq_freq_regularity} tells us
  that
  $\widetilde{R}_T \in C^\infty(\mathbb{R}\setminus \pm \omega_0;
  L^2(\Gamma))$ and
  $\widetilde{\psi}_{+,T}^f \in C^\infty(\mathbb{R}; L^2(\Gamma))$, as
  claimed. We restrict the remainder of the proof to the case
  $\omega > 0$; the full result then follows by the property of
  Hermitian symmetry satisfied by $\mu = \widetilde{\psi}^f_{+,T}$
  (\Cref{negative_freq}).

  In order to establish \Cref{recursion_hypothesis_squared_general} we
  first show that
  \begin{equation}\label{recursion_hypothesis_general}
      \left\| \partial_\omega^p \mu(\cdot, \omega)\right\|_{L^2(\Gamma)}
      \le \sum_{i=0}^p \left(\sum_{j=0}^{(i+1)(q+1)-1} b_{ij}^p \omega^j
      \left\|\partial_\omega^{p-i} \widetilde{R}_T(\cdot,
      \omega)\right\|_{L^2(\Gamma)}\right),
  \end{equation}
  for certain constants $b_{ij}^p \geq 0$.  The proof of
  \Cref{recursion_hypothesis_general} proceeds inductively: assuming
  that there exist constants $b_{ij}^s \geq 0$ ($0\leq s\leq p$) such
  that the relation
  \begin{equation}\label{recursion_hypothesis_general_s}
      \left\| \partial_\omega^s \mu(\cdot, \omega)\right\|_{L^2(\Gamma)}
      \le \sum_{i=0}^s \left(\sum_{j=0}^{(i+1)(q+1)-1} b_{ij}^s \omega^j
      \left\|\partial_\omega^{s-i} \widetilde{R}_T(\cdot,
      \omega)\right\|_{L^2(\Gamma)}\right)
\end{equation}
holds for all nonnegative integers $s \le p$, we show that there exist
constants $b_{ij}^{p+1} \geq 0$ such that
\Cref{recursion_hypothesis_general_s} holds for $s = p + 1$. (The base
case $s = 0$ follows from \Cref{CFIE_proof_generic} on account of the
$q$-growth condition.)  To carry out the inductive step we
use~\eqref{mu_freq_deriv} in \Cref{int_eq_freq_regularity}, which
tells us that
  \begin{equation}\label{leibniz}
    \left(\partial_\omega^{p+1} \mu \right)(\mathbf{r}, \omega) =
    A_\omega^{-1}\left(\partial_\omega^{p+1} \widetilde{R}_T(\mathbf{r}, \omega) -
    \sum_{k=1}^{p+1} a_k^{p+1}(\partial_\omega^k A_\omega)
    (\partial_\omega^{p+1-k} \mu)(\mathbf{r}, \omega) \right)
  \end{equation}
  for certain integers $a_k^{p+1}$; $k=1, \ldots, p+1$.  But the
  $q$-growth condition tells us that there exist
  positive $C_1, C_2$ such that
  $\left\|A_\omega^{-1}\right\|_{L^2(\Gamma)\to L^2(\Gamma)}\le C_1 +
  C_2 \omega^q$. Further, the operator-norm bound
  \Cref{Aop_deriv_bound} in \Cref{omega_explicit_norms_deriv_int_op}
  tells us that, for certain constants $\alpha_{0k}$ and
  $\alpha_{1k}$, we have
  $\left\|\partial_\omega^k A_\omega\right\|_{L^2(\Gamma)\to
    L^2(\Gamma)} \le \alpha_{0k} + \alpha_{1k} \omega$ for all
  $\omega \in \mathbb{R}^+ \setminus \omega_0$. It thus follows
  from~\eqref{leibniz} that
  \begin{equation}\label{partial_p_mu_deriv_first_bound}
    \begin{split}
      \left\|\vphantom{\partial_\omega^{p+1} \mu(\cdot, \omega)}
      \partial_\omega^{p+1} \mu(\cdot, \omega)\right.&
      \left.\vphantom{\partial_\omega^{p+1} \mu(\cdot, \omega)}\right\|_{L^2(\Gamma)}
      \le
      (C_1 + C_2\omega^q) \left(\vphantom{\sum_{k=1}^{p+1}}
      \left\|\partial_\omega^{p+1}
      \widetilde{R}_T(\cdot, \omega)\right\|_{L^2(\Gamma)} \right.\\
      &+ \left.\sum_{k=1}^{p+1} |a_k^{p+1}|
      (\alpha_{0k} + \alpha_{1k} \omega)\left\|\partial_\omega^{p+1-k}
      \mu(\cdot, \omega)\right\|_{L^2(\Gamma)}\right).
    \end{split}
  \end{equation}
  Substituting \Cref{recursion_hypothesis_general_s} with
  $s = p + 1 - k $ for $k = 1, \ldots, p + 1$ into
  \Cref{partial_p_mu_deriv_first_bound} we obtain
  \begin{alignat*}{2}
    \left\|\partial_\omega^{p+1}\mu(\cdot, \omega)\right\|_{L^2(\Gamma)} &\le
    (C_1 + C_2\omega^q)\left\|\partial_\omega^{p+1} \widetilde{R}_T(\cdot,
    \omega)\right\|_{L^2(\Gamma)}\\
    &\begin{alignedat}[t]{2}
      + \sum_{k=1}^{p+1} &|a_k^{p+1}|\left(C_1\alpha_{0k} +
      C_2\alpha_{0k}\omega^q + C_1\alpha_{1k}\omega +
      C_2\alpha_{1k}\omega^{q+1}\right)\times\\
      &\begin{alignedat}[t]{2}
        \negthickspace\negthickspace\times\left[\vphantom{\sum_{i=0}^{p+1-k}\sum_{j=0}^i}\right.&\left.
        \sum_{i=0}^{p+1-k}\sum_{j=0}^{(i+1)(q+1)-1} b_{ij}^{p+1-k} \omega^j
        \left\|\partial_\omega^{p+1-k-i} \widetilde{R}_T(\cdot,
    \omega)\right\|_{L^2(\Gamma)}\right],
    \end{alignedat}
    \end{alignedat}
  \end{alignat*}
  from which, expanding the products, we obtain
  \begin{alignat}{2}\label{final_inequality}
    \left\|\partial_\omega^{p+1}\mu(\cdot, \omega)\right\|_{L^2(\Gamma)} &
    \le (C_1 + C_2\omega^q)\left\|\partial_\omega^{p+1} \widetilde{R}_T(\cdot,
    \omega)\right\|_{L^2(\Gamma)} + (A),
  \end{alignat}
where
\begin{alignat*}{2}
  (A) = \sum_{k=1}^{p+1}&\sum_{i=0}^{p+1-k}\sum_{j=0}^{(i+1)(q+1)-1}
      |a_k^{p+1}b_{ij}^{p+1-k}|\left(C_1\alpha_{0k}\omega^j +
      C_2\alpha_{0k}\omega^{q+j}\vphantom{+ C_1\alpha_{1k}\omega^{j+1} +
      C_2\alpha_{1k}\omega^{q+j+1}}\right.\\
      &\left.+\;C_1\alpha_{1k}\omega^{j+1} +
      C_2\alpha_{1k}\omega^{q+j+1}\right)\left\|\partial_\omega^{p+1-k-i}
      \widetilde{R}_T(\cdot, \omega)\right\|_{L^2(\Gamma)}.
\end{alignat*}
It is easy to check that \Cref{final_inequality} implies that there
exist constants $b_{ij}^{p+1} \geq 0$ such that the relation
\Cref{recursion_hypothesis_general_s} with $s=p+1$ holds. Indeed, the
first term on the right-hand side of \Cref{final_inequality} and every
term arising from the summations in $(A)$ can be expressed as a
constant multiplied by a term of the form
$\omega^\ell \left\|\partial_\omega^m \widetilde{R}_T(\cdot,
  \omega)\right\|_{L^2(\Gamma)}$ for some $m \le p$ and for some
$\ell\leq (p+1)(q+1) + q$---all of which match corresponding terms in
\Cref{recursion_hypothesis_general_s} with $s = p + 1$. This
concludes the inductive proof, showing that for each nonnegative
integer $p$ there exist constants $b_{ij}^p\geq 0$ such that the
inequality~\eqref{recursion_hypothesis_general} holds. The desired
inequality~\Cref{recursion_hypothesis_squared_general} follows
directly from \Cref{recursion_hypothesis_general} using the relation
$\left\|\sum_{i=1}^p a_i\right\|^2 \le p\sum_{i=1}^p
\left\|a_i\right\|^2$.

In view of~\eqref{recursion_hypothesis_general}, in order to
establish the bound \Cref{psit_nongrowth} we estimate the expression
$\|\partial^{\ell}_\omega \widetilde{R}_T(\cdot,
\omega)\|_{L^2(\Gamma)}$ in~\eqref{recursion_hypothesis_general} (with
$\ell = p - i$, $0\leq i,\ell\leq p$). To do this we note that, in
view of~\eqref{RT_def} and~\eqref{Aomega_def_eqn}, for
$\omega \ge \omega_0$ we have
$\widetilde{R}_T = \left(\gamma^- \partial_\mathbf{n} -
  \i\omega\gamma^- \right) \widetilde{H}^f_T$. Thus,
using~\eqref{Ht_untilded_def} and~\eqref{jump_cond}), we obtain the
relation
\begin{equation}\label{Rtderiv_def}
    \partial^{\ell}_\omega \widetilde{R}_T = \partial^{\ell}_\omega \gamma^- \partial_\mathbf{n} \widetilde{B}^f - \partial^{\ell}_\omega \left( \i\omega \gamma^- \widetilde{B}^f\right) - \partial^{\ell}_\omega \left(\frac{1}{2}I + K_\omega^* -
    \i\omega S_\omega\right) \widetilde{\psi}_{-,T},\, \, (\omega > \omega_0),
\end{equation}
whose right-hand terms we estimate in what follows.  In view
of~\eqref{freq_u_b_tilde}, the function $\gamma^-\widetilde{B}^f$
equals the temporal Fourier transform of the compactly-supported
function $\gamma^- \widetilde{b} = \gamma^+ \widetilde{b}$. But, in
view of the $s$-regularity hypotheses and other assumptions in force,
the function
$\gamma^- \widetilde{b} = \gamma^- \widetilde{b}(\mathbf{r}, t)$ is an
element of $L^2(\mathbb{R}; L^2(\Gamma))$) that is compactly-supported
as a function of $t$. We may thus differentiate under the
Fourier-transform integral sign in~\eqref{freq_u_b_tilde}, which shows
that $\partial^{\ell}_\omega \gamma^- \widetilde{B}^f$ equals the
temporal Fourier transform of the compactly-supported function
$(-\i t)^\ell\gamma^- \widetilde{b}\in L^2(\mathbb{R};
L^2(\Gamma))$. Using the Cauchy-Schwarz inequality it follows that for
a certain constant $\widetilde{C}_0>0$ (that depends on $p$, $\alpha$,
$T$, and $\|\gamma^- \widetilde{b}\|_{L^2(\mathbb{R}; L^2(\Gamma))}$,
but which does not depend on $\omega$) we have
$\|\partial^{\ell}_\omega \gamma^- \widetilde{B}^f\|_{L^2(\Gamma)}\leq
\widetilde{C}_0$ for $0\leq \ell\leq p$ and for all
$\omega \in \mathbb{R}$.  In view of the triangle inequality we
conclude that, for each $0 \le \ell \le p$ and for all
$\omega \in \mathbb{R}$ we have
  \begin{equation}
    \label{paley_der_b}
    \left\|\partial^{\ell}_\omega \left(\omega \gamma^-\widetilde{B}^f(\cdot,
      \omega)\right)\right\|_{L^2(\Gamma)} \le \widetilde{C}_1(1 + \omega)
  \end{equation}
  where $ \widetilde{C}_1$ denotes a constant which, once again,
  depends on $p$, $\alpha$, $T$, and on
  $\|\gamma^- \widetilde{b}\|_{L^2(\mathbb{R};
    L^2(\Gamma))}$ but does not depend on $\omega$, which provides the
  needed estimate of the second right-hand term. A similar argument
  applied to
  $\partial_\omega^\ell \gamma^- \partial_\mathbf{n} \widetilde{B}^f$
  leads to the desired estimate
  \begin{equation}
    \label{paley_der_c}
    \left\|\partial^{\ell}_\omega \gamma^- \partial_\mathbf{n} \widetilde{B}^f(\cdot,
      \omega)\right\|_{L^2(\Gamma)} \le  \widetilde{C}_2,
  \end{equation}
  for the first right-hand term in \Cref{Rtderiv_def}, where
  $ \widetilde{C}_2$ denotes a constant with dependencies analogous to
  those found for $ \widetilde{C}_1$.

  In order to obtain a bound for the last term on the right-hand
  side of~\Cref{Rtderiv_def}, in turn, we note that 
  $\widetilde{\psi}_{-,T}$ is compactly-supported
  (cf.~\Cref{psipm_def}) and satisfies
  $\widetilde{\psi}_{-,T} \in L^2(\mathbb{R}; L^2(\Gamma))$
  (cf. \Cref{3d_decay_lemma_2ndkind_wellposed}), and, thus, an argument similar to the one leading
  to~\eqref{paley_der_b} yields the bound
  \begin{equation}\label{eq:dpi}
    \left\|\partial_\omega^{\ell} \widetilde{\psi}^f_{-,T}(\cdot, \omega)\right\|_{L^2(\Gamma)} \le  \widetilde{C}_3
  \end{equation}
  for all $\ell$, $0 \le \ell \le p$, and all $\omega > 0$, where
  $ \widetilde{C}_3$ depends on the integer $p$, $\alpha$, $T$, and,
  via \Cref{3d_decay_lemma_2ndkind_wellposed} with $p=0$, on
  $\|\gamma^+ \widetilde{b}\|_{H^{q+1}(\mathbb{R}; L^2(\Gamma))}$ and
  $\|\gamma^+ \partial_\mathbf{n} \widetilde{b}\|_{H^q(\mathbb{R};
    L^2(\Gamma))}$. Then, employing the operator norm bounds presented
  in \Cref{omega_explicit_norms_deriv_SK} for
  $\partial_\omega^\ell S_\omega$ and
  $\partial_\omega^\ell K_\omega^*$ ($\ell = 0, \ldots, p$) together
  with \Cref{eq:dpi} we obtain the bound
\begin{equation}\label{eq:dpii}
  \left\|\partial^{\ell}_\omega \left(\left(\frac{1}{2}I + K_\omega^* -
    \i\omega S_\omega\right) \widetilde{\psi}_{-,T}(\cdot, \omega)\right)\right\|_{L^2(\Gamma)} \le  \widetilde{C}_4(1 + \omega)
\end{equation}
for all integers $\ell$, $0 \le \ell \le p$ and for all $\omega > 0$,
where $\widetilde{C}_4$ denotes a constant dependent on
$\widetilde{C}_3$ and the constants in
\Cref{omega_explicit_norms_deriv_SK} but which is independent of
$\omega$.

Estimating the norm of~\eqref{Rtderiv_def} by means of the triangle
inequality, and bounding the right-hand side of the resulting
inequality by means of \Cref{paley_der_b}, \Cref{paley_der_c} and
\Cref{eq:dpii}, we obtain
\begin{equation}\label{eq:dpiii}
  \left\|\partial_\omega^\ell \widetilde{R}_T(\cdot, \omega)\right\|_{L^2(\Gamma)} \le  \widetilde{C}_5\omega \quad (\omega > \omega_0),
\end{equation}
for all integers $\ell$, $0 \le \ell \le p$, where $\widetilde{C}_5$ denotes a constant
independent of $\omega$ but dependent on $\omega_0$, $p$, $\alpha$ and
$T$, and on the norm values
$\|\gamma^+ \widetilde{b}\|_{H^{q+1}(\mathbb{R}; L^2(\Gamma))}$ and
$\|\gamma^+ \partial_\mathbf{n} \widetilde{b}\|_{H^q(\mathbb{R};
  L^2(\Gamma))}$. The estimate \Cref{psit_nongrowth} follows directly
from \Cref{recursion_hypothesis_general} and \Cref{eq:dpiii}.
\end{proof}

\begin{rem}\label{rem:breve}
  In order to obtain the estimates in \Cref{3d_decay_thm_ii} it will
  be necessary to perform time-recentering on the data $\widetilde{b}$
  and the solution $\widetilde{\psi}$---see Remarks \ref{ITvsI0}
  and~\ref{rem:tilde_1}.  Given a real number $T_0$ we define for a
  given $\widetilde{b}$ the time-shifted ``breve'' quantities
  \begin{equation}\label{def:breve}
    \breve{b}(\bfr, t) = \widetilde{b}(\bfr, t + T_0),\quad\mbox{and}\quad \breve{\psi}(\bfr, t) = \widetilde{\psi}(\bfr, t + T_0).
  \end{equation}
  With reference to \Cref{rem:tilde_1}, note that
  $\breve{\psi}(\bfr, t)$ equals the solution
  $\widetilde{\psi}(\bfr, t)$ of \Cref{eq:tdie_sl_generic} with
  $\widetilde{b}$ substituted by $\breve{b}$. Note also that if
  $\widetilde{b}$ satisfies \Cref{all_t_b_generic} for some $\alpha$
  then $\breve{b}$ satisfies \Cref{all_t_b_generic} with $\alpha$
  substituted by $\alpha - T_0$.  We will consider the $T_0$-dependent
  density $\breve{\psi}$ in conjunction with the interval $I_0$ equal
  to $I_T$ with $T = 0$ (see~\Cref{ITvsI0}), and we thus define
  $\breve{\psi}_{\pm,0}$ and $\breve{\psi}_{*,0}$ by analogy with
  $\psi_{\pm,T}$ and $\psi_{*,T}$ in \Cref{timewinddens}, but with
  $\breve{\psi}$ in lieu of $\psi$ and with $T=0$.
\end{rem}

  Consistent with the conventions laid out in
  \Cref{timewinddens} and \Cref{rem:tilde_1} and letting
    \begin{equation}\label{breveupmstardef}
        \breve{u}_{\pm,0}(\bfr, t) = (\mathscr{S} \breve{\psi}_{\pm,0})(\bfr, t)\quad\mbox{and}\quad \breve{u}_{*,0}(\bfr, t) = (\mathscr{S} \breve{\psi}_{*,0})(\bfr, t),\quad (\bfr, t) \in \mathbb{R}^3 \times \mathbb{R},
    \end{equation}
    we define also the function $\breve{h}_0$ (cf. \Cref{htildedef}),
    \begin{equation}\label{brevehdef}
        \breve{h}_0(\bfr, t) = \breve{b}(\bfr, t) - \breve{u}_{-,0}(\bfr, t),\quad (\mathbf{r}, t) \in \Omega^{inc} \times \mathbb{R},
    \end{equation}
    and, using~\eqref{potent_time_freq}, its Fourier transform
    \begin{equation}\label{Ht_def_breve}
        \breve{H}^f_0(\mathbf{r}, \omega) = \breve{B}^f(\mathbf{r}, \omega) - \left(\mathscr{S}_\omega \breve{\psi}_{-,0}^f\right)(\mathbf{r}, \omega),\quad \mathbf{r}\in\Omega^{inc} \times \mathbb{R}.
    \end{equation}
    Similarly, the right-hand side of
    \Cref{CFIE_proof_generic} becomes
    \begin{equation}\label{Rt_def_breve}
      \breve{R}_0(\bfr, \omega) = \gamma^- \partial_\mathbf{n} \breve{H}^f_0(\bfr, \omega) - \i\eta_0(\omega)\gamma^- \breve{H}^f_0(\bfr, \omega), \quad (\bfr,\omega) \in \Gamma\times \mathbb{R}.
    \end{equation}
\begin{lemma}\label{Rderiv_sobolev_bound}
  Let $T_0$ and $\tau$ denote given real numbers, let $q$ denote a
  non-negative integer, let $\breve{b}$ be defined as in
  \Cref{rem:breve}, and assume that $\breve{b}(\bfr, t)$ vanishes for
  all
  $(\bfr, t) \in \widebar\Omega\times \left\lbrace I_0\cup \left[0,
      \infty\right) \right\rbrace$ (where $I_0 = I_T$ with $T=0$, see
  \Cref{domainofdep}). Additionally, let $\Omega$ satisfy the
  $q$-growth condition and assume $\breve{b}$ satisfies the
  $s$-regularity conditions \Cref{eq:gamma_Hs_assump} with $s = q$.
  Finally, assume that for a given nonnegative integer $n$,
  $\breve{\psi}_{*,0}$ satisfies
  $\breve{\psi}_{*,0} \in H^{n+1}(I_0;L^2(\Gamma))$.  Then for all
  integers $m \ge 0$ and all integers $j$ such that $0 \le j \le n$ we
  have
  \[
    \int_0^\infty \omega^{2j} \left\| \partial_\omega^m \breve{R}_0(\cdot,
    \omega)\right\|_{L^2(\Gamma)}^2\,\d\omega \le C \left\| \breve{\psi}_{*,0}
    \right\|_{H^{j+1}(I_0;\,L^2(\Gamma))}^2,
  \]
  where $C$ is a constant independent of $T_0$ and $b$.
\end{lemma}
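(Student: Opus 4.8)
The plan is to adapt the Plancherel ``there-and-back'' argument from the proof of \Cref{conv_H_to_psi} so as to accommodate the $\omega$-derivatives and the weight $\omega^{2j}$. First I would invoke \Cref{3d_decay_thm_h_equiv} with $\widetilde{b}=\breve{b}$ and $T=0$; the vanishing hypothesis on $\breve{b}$ is precisely what that lemma requires when $T=0$, since $I_0\cup[0,\infty)=[-T_*-2\tau,\infty)$. This yields, for $\mathbf{r}\in\widebar{\Omega}$, that $\breve{h}_0(\mathbf{r},\cdot)$ is supported in $[-\tau,T_*]$ and that $\breve{h}_0(\mathbf{r},t)=-\breve{u}_{*,0}(\mathbf{r},t)$ for $t\ge-\tau$, where $\breve{u}_{*,0}=S\breve{\psi}_{*,0}$; since $\gamma^-$ and $\gamma^-\partial_\mathbf{n}$ act only in the spatial variables, the same two statements hold with $\gamma^-\breve{h}_0$, $\gamma^-\partial_\mathbf{n}\breve{h}_0$ in place of $\breve{h}_0$, and with $-\gamma^-\breve{u}_{*,0}$, $-\gamma^-\partial_\mathbf{n}\breve{u}_{*,0}$ on the right. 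The decisive structural feature, exactly as in the proof of \Cref{conv_H_to_psi}, is that the length $T_*+\tau$ of the supporting interval depends only on $\Gamma$ and $\tau$ and never on $T_0$; this is what ultimately makes the constant $T_0$-independent. (The $s=q$ regularity hypothesis on $\breve{b}$ enters only to guarantee, via \Cref{psi_Hp}, that $\breve{\psi}_{-,0}\in L^2(\mathbb{R};L^2(\Gamma))$, and hence that $\breve{h}_0$ and $\breve{H}^f_0$ are well defined.)

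Next I would split $\int_0^\infty=\int_0^{\omega_0}+\int_{\omega_0}^\infty$, so that on each open subinterval the factor $\eta_0$ of \Cref{Aomega_def_eqn} is smooth --- identically $1$, resp.\ identically $\omega$ --- and on each piece differentiate $\breve{R}_0=\gamma^-\partial_\mathbf{n}\breve{H}^f_0-\i\eta_0\gamma^-\breve{H}^f_0$ in $\omega$ by the Leibniz rule (the smoothness of $\breve{H}^f_0$ in $\omega$ coming from the compact temporal support of $\breve{h}_0$, and that of $S_\omega$, $K_\omega^*$ from \Cref{omega_explicit_norms_deriv_SK}). One then checks that $\omega^j\partial_\omega^m\breve{R}_0$ is a finite linear combination, with coefficients depending only on $m$ and $j$, of terms $\omega^a(\gamma^-\partial_\mathbf{n})^\epsilon\partial_\omega^b\breve{H}^f_0$ with $\epsilon\in\{0,1\}$, $0\le b\le m$, and --- the key bookkeeping point --- $a+\epsilon\le j+1$ in every term: on $\{\omega\ge\omega_0\}$ the normal-derivative term ($\epsilon=1$) carries no explicit power of $\omega$, so $a\le j$, while the $\i\omega\gamma^-$ term ($\epsilon=0$) carries exactly one extra power, so $a\le j+1$; on $\{\omega<\omega_0\}$ one may in addition replace $\omega^j$ by the constant $\omega_0^j$.

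For each such term I would bound $\int_0^\infty\omega^{2a}\|(\gamma^-\partial_\mathbf{n})^\epsilon\partial_\omega^b\breve{H}^f_0(\cdot,\omega)\|_{L^2(\Gamma)}^2\,\d\omega$ above by the same integral over all of $\mathbb{R}$ (the integrand being nonnegative), use the identity $\omega^a\partial_\omega^b\widehat g=(-\i)^a\widehat{\partial_t^a[(-\i t)^b g]}$ and the Bochner Plancherel theorem to rewrite it as $2\pi\int_\Gamma\int_{\mathbb{R}}|\partial_t^a[(-\i t)^b(\gamma^-\partial_\mathbf{n})^\epsilon\breve{h}_0(\mathbf{r},t)]|^2\,\d t\,\d\sigma(\mathbf{r})$, and then expand the $t$-derivative by Leibniz into terms $c\,t^{b-l}\partial_t^{a-l}\bigl((\gamma^-\partial_\mathbf{n})^\epsilon\breve{h}_0\bigr)$, each supported in $[-\tau,T_*]$ where $|t^{b-l}|\le(\max(\tau,T_*,1))^{b}$ and where $(\gamma^-\partial_\mathbf{n})^\epsilon\breve{h}_0$ equals $-(\gamma^-\partial_\mathbf{n})^\epsilon\breve{u}_{*,0}$. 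Pulling out the bounded power of $t$, replacing $\breve{h}_0$ by $-\breve{u}_{*,0}$, and enlarging the $t$-integration back to $\mathbb{R}$ bounds the term by $C\sum_{0\le r\le a}\|(\gamma^-\partial_\mathbf{n})^\epsilon\partial_t^r\breve{u}_{*,0}\|_{L^2(\mathbb{R};L^2(\Gamma))}^2$. A final Plancherel step, together with $\breve{u}_{*,0}=S\breve{\psi}_{*,0}$ and the operator bounds $\|\gamma^-S_\omega\|\le D_1$ and $\|\gamma^-\partial_\mathbf{n}S_\omega\|=\|\tfrac12 I+K_\omega^*\|\le C(1+|\omega|)$ from \Cref{Somega_Kstaromega_bounds}, yields $\|(\gamma^-\partial_\mathbf{n})^\epsilon\partial_t^r\breve{u}_{*,0}\|_{L^2(\mathbb{R};L^2(\Gamma))}^2\le C\|\breve{\psi}_{*,0}\|_{H^{r+\epsilon}(\mathbb{R};L^2(\Gamma))}^2$. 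Since $r\le a$ and $a+\epsilon\le j+1$, we get $r+\epsilon\le j+1$, so each term is bounded by $C\|\breve{\psi}_{*,0}\|_{H^{j+1}(\mathbb{R};L^2(\Gamma))}^2=C\|\breve{\psi}_{*,0}\|_{H^{j+1}(I_0;L^2(\Gamma))}^2$, the last equality because $\breve{\psi}_{*,0}$ is supported in $I_0$; summing the finitely many terms completes the proof, the resulting constant being built only from $D_1,D_2,D_3$, a power of $\max(\tau,T_*,1)$, and combinatorial factors depending on $m$ and $j$, hence independent of $T_0$ and of $b$.

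The main obstacle is the frequency-power bookkeeping in the second step: one must verify that in each term of the Leibniz-expanded $\omega^j\partial_\omega^m\breve{R}_0$ the explicit weight $\omega^a$, together with the power of $\omega$ implicitly contributed by $\|\gamma^-\partial_\mathbf{n}S_\omega\|$ when $\epsilon=1$, never exceeds $\omega^{j+1}$. This balance --- which rests on the fact that in $\breve{R}_0$ the normal-derivative term is not multiplied by $\eta_0=\omega$ --- is exactly what makes $H^{j+1}$, rather than $H^{j+2}$, the correct right-hand-side regularity. A secondary technical point is to confirm that $\breve{h}_0(\mathbf{r},\cdot)$ and $\gamma^-\partial_\mathbf{n}\breve{h}_0(\mathbf{r},\cdot)$ are bona fide compactly supported $L^2$-in-time functions to which the Bochner Plancherel theorem applies; this follows from $\breve{\psi}_{*,0}\in H^{n+1}(I_0;L^2(\Gamma))$ (with $j+1\le n+1$), the $L^2(\Gamma)$-boundedness of $\gamma^-S_\omega$ and $\tfrac12 I+K_\omega^*$, and the appendix regularity lemmas.
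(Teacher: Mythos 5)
Your proposal is correct and follows essentially the same route as the paper's proof: split the frequency integral at $\omega_0$, pass to the time domain by Plancherel, use the bootstrap relations~\eqref{h_equiv} and~\eqref{h_compactsupp} to replace $\breve{h}_0$ by $-\breve{u}_{*,0}$ on the $T_0$-independent interval $[-\tau,T_*]$ (which is exactly where the uniformity in $T_0$ comes from), absorb the bounded powers of $t$, return to frequency, and apply the $(1+\omega^2)^{1/2}$ operator bounds to land in $H^{j+1}(I_0;L^2(\Gamma))$. The only cosmetic difference is that you Leibniz-expand $\gamma^-\partial_\mathbf{n}-\i\eta\gamma^-$ term by term using~\eqref{Somega_Kstaromega_bounds}, whereas the paper keeps that operator bundled and invokes \Cref{per_freq_rhs_oper_bounds}; since no cancellation between the two pieces is exploited, the bookkeeping and the resulting exponent are identical.
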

\begin{rem}\label{breve{h}_0_impact_rem}
  In order to obtain an upper bound on integrals containing powers of
  the temporal variable $t$, the estimate~\eqref{breve{h}_0_impact} in
  the proof of \Cref{Rderiv_sobolev_bound} relies in an essential
  manner on the domain-of-dependence relations~\eqref{h_equiv}
  and~\eqref{h_compactsupp} for the function $\breve{h}_0$ (cf.
  \Cref{rem:h_equiv_rem_2})---which, limiting the integration of the
  aforementioned powers of $t$ to a bounded interval, yields
  meaningful integral estimates necessary for the proof of the lemma.
\end{rem}
\begin{proof}[Proof of \Cref{Rderiv_sobolev_bound}.]
  Since $\breve{b}$ satisfies the $s$-regularity conditions with
  $s = q$, using \Cref{3d_decay_lemma_2ndkind_wellposed}, with
  reference to \Cref{rem:breve}, shows that the quantity
  $\breve{\psi}_{-,0}^f$ in \eqref{Ht_def_breve} (which also enters in
  the definition~\eqref{Rt_def_breve}) satisfies
  $\breve{\psi}_{-,0}^f \in L^2(\mathbb{R}; L^2(\Gamma))$.  In view
  of~\eqref{Rt_def_breve} we have
  \begin{equation}\label{w2j_proof_first_eqn}
      \int_0^\infty \omega^{2j} \left\| \partial_\omega^m \breve{R}_0(\cdot,
      \omega)\right\|_{L^2(\Gamma)}^2\,\d\omega = \int_0^\infty \int_\Gamma \left|\widehat{S}_{jm}
    \breve{H}^f_0(\mathbf{r}, \omega)\right|^2\,\d\sigma(\mathbf{r})\,\d\omega,
  \end{equation}
  where the operator $\widehat{S}_{jm}$ is defined as
  \begin{equation}\label{Sdef_def}
    \widehat{S}_{jm} = \omega^{j}
    \partial_\omega^m\left(\gamma^-\partial_{\mathbf{n}} - \i \eta_0 \gamma^-\right).
  \end{equation}
  In view of the definition~\Cref{Aomega_def_eqn} of the function
  $\eta_0(\omega)$ in~\eqref{Sdef_def}, which depends on whether
  $0\leq\omega< \omega_0$ or
  $\omega>\omega_0$,~\cref{w2j_proof_first_eqn} is re-expressed in the
  form
  \begin{equation}\label{Rderiv_split}
    \begin{split}
        \int_0^\infty \omega^{2j} &\left\| \partial_\omega^m \breve{R}_0(\cdot,
    \omega)\right\|_{L^2(\Gamma)}^2\,\d\omega\\
    &= \int_\Gamma\left(\int_0^{\omega_0} + \int_{\omega_0}^\infty\right)\left|\widehat{S}_{jm}
    \breve{H}^f_0(\mathbf{r}, \omega)\right|^2\,\d\omega\,\d\sigma(\mathbf{r}).
    \end{split}
  \end{equation}
  Then, defining the operators
  \begin{equation}\label{S_split_omega}
    \widehat{S}_{jm}^1 = \omega^j \partial_\omega^{m}
    (\gamma^-\partial_{\mathbf{n}} - \i\gamma^-),\quad\mbox{and}\quad
    \widehat{S}_{jm}^2 = \omega^j \partial_\omega^{m}
    (\gamma^-\partial_{\mathbf{n}} - \i\omega\gamma^-),
  \end{equation}
  which clearly satisfy $\widehat{S}_{jm} = \widehat{S}_{jm}^1$ for
  $\omega < \omega_0$ and $\widehat{S}_{jm} = \widehat{S}_{jm}^2$ for
  $\omega > \omega_0$,~\eqref{Rderiv_split} yields
  \begin{align*}
    \int_0^\infty \omega^{2j} &\left\| \partial_\omega^m \breve{R}_0(\cdot,
                                \omega)\right\|_{L^2(\Gamma)}^2\,\d\omega \\
    &= \int_\Gamma
    \int_{0}^{\omega_0} \left|\widehat{S}_{jm}^1 \breve{H}^f_0(\mathbf{r},
    \omega)\right|^2\,\d\omega\,\d\sigma(\mathbf{r})
    +\int_\Gamma\int_{\omega_0}^\infty \left|\widehat{S}_{jm}^2 \breve{H}^f_0(\mathbf{r},
    \omega)\right|^2\,\d\omega\,\d\sigma(\mathbf{r})\\
    &\le \int_\Gamma
    \int_{-\infty}^\infty \left|\widehat{S}_{jm}^1 \breve{H}^f_0(\mathbf{r},
    \omega)\right|^2\,\d\omega\,\d\sigma(\mathbf{r})
    +\int_\Gamma\int_{-\infty}^\infty \left|\widehat{S}_{jm}^2 \breve{H}^f_0(\mathbf{r},
    \omega)\right|^2\,\d\omega\,\d\sigma(\mathbf{r}).
  \end{align*}
  Thus, utilizing the time-domain operators
  \begin{equation}\label{S_split_t}
    S_{jm}^1 = (-\i\frac{\partial}{\partial t})^j (-it)^{m}
    (\gamma^-\partial_{\mathbf{n}} - \i \gamma^-) \quad\mbox{and}\quad
    S_{jm}^2 = (-\i\frac{\partial}{\partial t})^j (-it)^{m}
    (\gamma^-\partial_{\mathbf{n}} - \frac{\partial}{\partial t}\gamma^-)
  \end{equation}
  corresponding to~\eqref{S_split_omega}, together with Plancherel's
  theorem and equations~\eqref{brevehdef}--\eqref{Rt_def_breve}, we
  obtain
  \begin{equation} \label{breve{h}_0_impact}
 \begin{split}
      &\int_0^\infty \omega^{2j} \left\| \partial_\omega^m \breve{R}_0(\cdot,
                                  \omega)\right\|_{L^2(\Gamma)}^2\,\d\omega \\
    &\le \int_\Gamma
    \int_{-\infty}^\infty \left|S_{jm}^1 \breve{h}_0(\mathbf{r},
    t')\right|^2\,\d t'\,\d\sigma(\mathbf{r})
    +\int_\Gamma\int_{-\infty}^\infty \left|S_{jm}^2 \breve{h}_0(\mathbf{r},
    t')\right|^2\,\d t'\,\d\sigma(\mathbf{r})\\
    &=\int_\Gamma
        \int_{-\tau}^{T_*} \left|S_{jm}^1 \breve{u}_{*,0}(\mathbf{r},
    t')\right|^2\,\d t'\,\d\sigma(\mathbf{r})
        +\int_\Gamma\int_{-\tau}^{T_*} \left|S_{jm}^2 \breve{u}_{*,0}(\mathbf{r},
    t')\right|^2\,\d t'\,\d\sigma(\mathbf{r}),
    \end{split}
    \end{equation}
    where, since $\Gamma = \partial\Omega \subset \widebar\Omega$, the
    last equality follows by using~\Cref{h_equiv}
    and~\Cref{h_compactsupp} with $T=0$ (see \Cref{rem:breve}).
    Letting
  \begin{equation}\label{v1v2_def}
  v_1 = (\gamma^-\partial_{\mathbf{n}} -
    \i\gamma^-)\breve{u}_{*,0}, \quad\mbox{and}\quad
  v_2 = (\gamma^-\partial_{\mathbf{n}} -
  \frac{\partial}{\partial t}\gamma^-)\breve{u}_{*,0},
  \end{equation}
  and calling $a_\ell = {j \choose \ell}$, by Leibniz's product rule we
  then obtain
  \begin{alignat*}{2}
    \int_0^\infty \omega^{2j} &\left\| \partial_\omega^m \breve{R}_0(\cdot,
    \omega)\right\|_{L^2(\Gamma)}^2\,\d\omega\\
    &\begin{alignedat}{2}\le \int_\Gamma&
      \int_{-\tau}^{T_*} \left| (-\i \frac{\partial}{\partial t'})^j (-\i t')^m
    v_1(\mathbf{r}, t')\right|^2\,\d t'\,\d\sigma(\mathbf{r})\\
      &+ \int_\Gamma \int_{-\tau}^{T_*} \left| (-\i \frac{\partial}{\partial
      t'})^j (-\i t')^m
    v_2(\mathbf{r}, t')\right|^2\,\d t'\,\d\sigma(\mathbf{r})
    \end{alignedat}\\
    &\begin{alignedat}{2}=
      \int_\Gamma&
      \int_{-\tau}^{T_*} \left|\sum_{\ell=0}^j a_\ell \left(
      \left(\frac{\partial}{\partial t'}\right)^{\ell} (t')^m\right)
      \left(\left(\frac{\partial}{\partial t'}\right)^{j-\ell}
      v_1(\mathbf{r}, t')\right)\right|^2\,\d t'\,\d\sigma(\mathbf{r})\\
      &+ \int_\Gamma
      \int_{-\tau}^{T_*} \left|\sum_{\ell=0}^j a_\ell \left(
      \left(\frac{\partial}{\partial t'}\right)^\ell (t')^m\right)
      \left(\left(\frac{\partial}{\partial t'}\right)^{j-\ell}
      v_2(\mathbf{r}, t')\right)\right|^2\,\d t'\,\d\sigma(\mathbf{r}).
    \end{alignedat}
  \end{alignat*}
  Substituting the exact value of the derivative
  $\left(\frac{\partial}{\partial t'}\right)^\ell (\i t')^m$ in these
  expressions, we further obtain
  \begin{alignat*}{2}
      \int_0^\infty \omega^{2j} &\left\| \partial_\omega^m \breve{R}_0(\cdot,
    \omega)\right\|_{L^2(\Gamma)}^2\,\d\omega\\
    &\begin{alignedat}{2}\le
      \int_\Gamma&
      \int_{-\tau}^{T_*} \left|\sum_{\ell=0}^j \tilde{a}_\ell (t')^{m-\ell}
      \left(\left(\frac{\partial}{\partial t'}\right)^{j-\ell}
      v_1(\mathbf{r}, t')\right)\right|^2\,\d t'\,\d\sigma(\mathbf{r})\\
      &+ \int_\Gamma
      \int_{-\tau}^{T_*} \left|\sum_{\ell=0}^j \tilde{a}_\ell (t')^{m-\ell}
      \left(\left(\frac{\partial}{\partial t'}\right)^{j-\ell}
      v_2(\mathbf{r}, t')\right)\right|^2\,\d t'\,\d\sigma(\mathbf{r}),
    \end{alignedat}
  \end{alignat*}
  where $\tilde{a}_\ell = \frac{m!}{(m-\ell)!} a_\ell$ for
  $m - \ell \ge 0$ and $\tilde{a}_\ell = 0$ for $m - \ell < 0$.  Since
  the $t'$-integration is limited to the bounded region
  $[- \tau, T_*]$ the quantities $|t'|^{m-\ell}$ are bounded by a
  constant (which, importantly, is independent of $T_0$---see \Cref{ITvsI0}),
  and thus
  \begin{alignat*}{2}
      \int_0^\infty \omega^{2j} &\left\| \partial_\omega^m \breve{R}_0(\cdot,
    \omega)\right\|_{L^2(\Gamma)}^2\,\d\omega\\
    &\le C_1\sum_{\ell=0}^j \int_\Gamma \int_{-\tau}^{T_*}
    \left(\left|\left(\frac{\partial}{\partial t'}\right)^\ell v_1(\mathbf{r},
      t')\right|^2 +\left|\left(\frac{\partial}{\partial t'}\right)^\ell v_2(\mathbf{r},
    t')\right|^2 \right)\,\d t'\,\d\sigma(\mathbf{r})\\
    &\le C_1\sum_{\ell=0}^j \int_\Gamma \int_{-\infty}^{\infty}
    \left(\left|\left(\frac{\partial}{\partial t'}\right)^\ell v_1(\mathbf{r},
      t')\right|^2 +\left|\left(\frac{\partial}{\partial t'}\right)^\ell v_2(\mathbf{r},
    t')\right|^2 \right)\,\d t'\,\d\sigma(\mathbf{r}),
  \end{alignat*}
  where the last inequality simply bounds the space-time norm on the
  finite temporal region $[- \tau, T_*]$ by the full time integral on
  $\mathbb{R}$. In view of~\Cref{v1v2_def}, using once again the Plancherel
  theorem, and denoting, per \Cref{FT_conv},
  $\breve{U}_{*,0}^f$ the Fourier transform of $\breve{u}_{*,0}$, we
  estimate
  \begin{alignat*}{2}
      \int_0^\infty \omega^{2j} &\left\| \partial_\omega^m \breve{R}_0(\cdot,
    \omega)\right\|_{L^2(\Gamma)}^2\,\d\omega\\
    &\begin{alignedat}{2}%
      \le C_1\sum_{\ell=0}^j &\left(\int_\Gamma \int_{-\infty}^{\infty}
      \left|\omega^\ell(\gamma^-\partial_\mathbf{n} -
        \i\gamma^-)\breve{U}_{*,0}^f(\mathbf{r},
    \omega)\right|^2\,\d\omega\,\d\sigma(\mathbf{r})\right.\\
      &\left.+ \int_\Gamma \int_{-\infty}^{\infty}
      \left|\omega^\ell(\gamma^-\partial_\mathbf{n} - \i\omega\gamma^-)\breve{U}_{*,0}^f(\mathbf{r},
    \omega)\right|^2\,\d\omega\,\d\sigma(\mathbf{r})\right)
    \end{alignedat}\\
    &\begin{alignedat}{2}%
      \le \tilde{C}_1&\int_\Gamma \int_{-\infty}^{\infty}
      \left|(1 + \omega^2)^{j/2}(\gamma^-\partial_\mathbf{n} -
        \i\gamma^-)\breve{U}_{*,0}^f(\mathbf{r},
    \omega)\right|^2\,\d\omega\,\d\sigma(\mathbf{r})\\
      &+ \tilde{C}_1\int_\Gamma \int_{-\infty}^{\infty}
      \left|(1 + \omega^2)^{j/2}(\gamma^-\partial_\mathbf{n} -
        \i\omega\gamma^-)\breve{U}_{*,0}^f(\mathbf{r},
    \omega)\right|^2\,\d\omega\,\d\sigma(\mathbf{r}).
    \end{alignedat}\\
  \end{alignat*}

  We thus have established that
  \begin{equation}\label{Rderiv_sobolev_norm_estimate}
  \begin{split}
      \int_0^\infty \omega^{2j} &\left\| \partial_\omega^m \breve{R}_0(\cdot,
    \omega)\right\|_{L^2(\Gamma)}^2\,\d\omega\\
    &\le \tilde{C}_1\int_{-\infty}^\infty (1 + \omega^2)^j \left\| (\gamma^- \partial_\mathbf{n} -
      \i\gamma^-) \breve{U}_{*,0}^f(\cdot, \omega)\right\|_{L^2(\Gamma)}^2\,\d\omega\\
    &+ \tilde{C}_1\int_{-\infty}^\infty (1 + \omega^2)^j \left\| (\gamma^-\partial_\mathbf{n} -
      \i\omega\gamma^-) \breve{U}_{*,0}^f(\cdot, \omega)\right\|_{L^2(\Gamma)}^2\,\d\omega.
  \end{split}
  \end{equation}
  To complete the proof we now use \Cref{per_freq_rhs_oper_bounds},
  which provides the frequency-wise bounds
  \begin{equation}\label{Rderiv_normbound_1}
      \left\| (\gamma^-\partial_\mathbf{n} - \i\gamma^-) \breve{U}_{*,0}^f(\cdot, \omega)\right\|_{L^2(\Gamma)}
    \le D(1 + \omega^2)^{1/2}\left\|\breve{\psi}_{*,0}^f(\cdot,
    \omega)\right\|_{L^2(\Gamma)},
  \end{equation}
  and
  \begin{equation}\label{Rderiv_normbound_2}
    \left\| (\gamma^-\partial_\mathbf{n} - \i \omega\gamma^-) \breve{U}_{*,0}^f(\cdot, \omega)\right\|_{L^2(\Gamma)}
      \le E (1 + \omega^2)^{1/2}\left\|\breve{\psi}_{*,0}^f(\cdot, \omega)\right\|_{L^2(\Gamma)},
  \end{equation}
  where $D, E > 0$ are constants independent of $\omega$, $\breve{b}$
  and $\breve{\psi}$.  Substituting~\cref{Rderiv_normbound_1}
  and~\cref{Rderiv_normbound_2} in~\Cref{Rderiv_sobolev_norm_estimate}
  we conclude that
  \begin{alignat*}{2}
      \int_0^\infty \omega^{2j} &\left\| \partial_\omega^m \breve{R}_0(\cdot,
      \omega)\right\|_{L^2(\Gamma)}^2\,\d\omega\\
      &\begin{alignedat}{2}
        \le \int_{-\infty}^\infty C_2(1 + &\omega^2)^{j+1}
          \left\|\breve{\psi}_{*,0}^f(\cdot,
          \omega)\right\|_{L^2(\Gamma)}^2\,\d\omega\\
          &+ \int_{-\infty}^\infty C_3(1 + \omega^2)^{j+1}\left\|\breve{\psi}_{*,0}^f(\cdot,
        \omega)\right\|_{L^2(\Gamma)}^2\,\d\omega.
    \end{alignedat}\\
    &\le
    C\left\|\breve{\psi}_{*,0}\right\|_{H^{j+1}(\mathbb{R};\,L^2(\Gamma))}^2
    =
    C\left\|\breve{\psi}_{*,0}\right\|_{H^{j+1}(I_0;\,L^2(\Gamma))}^2,
    \end{alignat*}
    as desired.
\end{proof}

\Cref{Rderiv_sobolev_bound} is used in what follows to establish the
main building block in the proof of \Cref{3d_decay_thm_ii}, namely,
\Cref{decay_estimate_L2}. The proof of \Cref{decay_estimate_L2}, in
turn, incorporates a smooth windowing procedure which relies on use of
compactly-supported time-domain window functions. For definiteness, we
utilize the time-window functions introduced in what follows.
\begin{defi}\label{def_w}
  Letting $v(u) = \exp(\frac{2e^{-1/u}}{u-1})$, we define
\begin{equation}\label{wTdef}
  w(s) =
    \begin{cases}
      1 - v(\frac{s + 2s_0}{s_0}), & -2s_0 \le s \le -s_0\\
      1, & -s_0 < s < s_0\\
      v(\frac{s - s_0}{s_0}), & s_0 \le s \le 2s_0\\
      0, & |s| > 2s_0,
    \end{cases}
    \qquad\mbox{and}\qquad w_\varphi(s) = w(s - \varphi),
\end{equation}
where $\varphi\in\mathbb{R}$ denotes an important ``time-shift''
parameter that enacts the time-recentering approach alluded to in
\Cref{ITvsI0}, and where $s_0 > 0$ is a fixed parameter that, in the
context of this paper, can be selected arbitrarily.
\end{defi}

Clearly, the functions $w$ and $w_\varphi$ (i)~Satisfy
$w,w_\varphi\in C^\infty(\mathbb{R})$; (ii)~Equal $1$ in an interval
of length $2s_0$; (iii)~Increase (decrease) from $0$ to $1$ ($1$ to
$0$) in intervals of length $s_0$; (iv)~Satisfy $0\leq w(s) \le 1$ and
$0\leq w_\varphi(s) \le 1$ for all $s\in\mathbb{R}$.  It is easy to
check that for every $\varphi \in\mathbb{R}$ we have
\begin{equation}
  \label{eq:POU2}
  \begin{split}
   & w_{\varphi + s_0}(s)  + w_{\varphi + 4 s_0}(s) =1\\
   & w_{\varphi + s_0 + 3\ell s_0}(s) = 0\quad (\ell \not\in \{0,1 \})
  \end{split}
  \qquad\mbox{for} \quad s \in [\varphi, \varphi + 3s_0].
\end{equation}
In particular, the functions $w_{3\ell s_0}(s)$ with $\ell \in\mathbb{Z}$
form a partition of unity, wherein at most two functions in the family do not vanish at any given $s\in\mathbb{R}$.

\begin{lemma}\label{decay_estimate_L2}
  Let $n$ and $q$ denote non-negative integers, $n>0$, let $T_0>0$,
  let $\widetilde{b}$ be defined as in \Cref{rem:tilde_1},  and assume that
  $\widetilde{b}$ vanishes for all $(\bfr, t) \in \widebar{\Omega}\times\left \{I_{T_0} \cup [T_0, \infty)\right\}$, with $I_{T_0}$ as in
  \Cref{domainofdep}. Additionally, let $\Omega$ satisfy the
  $q$-growth condition and assume $\widetilde{b}$ satisfies
  the $s$-regularity conditions~\eqref{eq:gamma_Hs_assump} with $s = (n+1)(q+1) + q$.
Then, the functions $\breve{\psi}_{+,0}$ and $\breve{\psi}$ defined in~\Cref{rem:breve} satisfy
\begin{equation}\label{L2_rchidecay}
  \left\|w_\varphi \breve{\psi}_{+,0}\right\|_{L^2(\mathbb{R};\,L^2(\Gamma)}^2 \le
    C(\Gamma, \tau, n, s_0) \varphi^{-2n}
    \left\|\breve{\psi}\right\|_{H^{(n+1)(q+1)}(I_0;\,L^2(\Gamma))}^2,
\end{equation}
for all $\varphi > 0$, where $C(\Gamma, \tau, n, s_0)$ denotes a
constant independent of $\varphi, T_0,$ and $\widetilde{b}$.
\end{lemma}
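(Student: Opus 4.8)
The plan is to bound $\|w_\varphi \breve\psi_{+,0}\|^2_{L^2(\mathbb R;L^2(\Gamma))}$ by applying the Fourier-domain estimates of \Cref{3d_decay_lemma_2ndkind} and \Cref{3d_decay_lemma_Ainv_freq_bounds} not to $\breve\psi_{+,0}$ itself, but to the \emph{windowed} function $w_\varphi\breve\psi_{+,0}$, exploiting the fact that multiplication by $w_\varphi$ in time corresponds to a smoothing convolution in frequency. The essential gain of the factor $\varphi^{-2n}$ will come from an integration-by-parts-in-$\omega$ argument (cf.\ \Cref{3d_decay_thm_recursive_bound_general} and the discussion preceding it): since $w_\varphi$ is a smooth bump centered at $\varphi$, its Fourier transform $\widehat{w_\varphi}(\omega)=e^{-i\omega\varphi}\widehat w(\omega)$ carries the oscillatory factor $e^{-i\omega\varphi}$, and each integration by parts against that factor produces a power of $\varphi^{-1}$ while transferring an $\omega$-derivative onto the remaining integrand. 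Carrying out $n$ such integrations by parts yields the claimed $\varphi^{-2n}$ after squaring.

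\smallskip

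\textbf{Key steps.} First, I would record, via Plancherel, that
\[
  \left\|w_\varphi \breve\psi_{+,0}\right\|^2_{L^2(\mathbb R;L^2(\Gamma))}
  = \frac{1}{2\pi}\int_{-\infty}^\infty \left\|\widehat{w_\varphi \breve\psi_{+,0}}(\cdot,\omega)\right\|^2_{L^2(\Gamma)}\,\d\omega ,
\]
and then express $\widehat{w_\varphi \breve\psi_{+,0}}$ as a frequency convolution $\widehat{w_\varphi}*\breve\psi_{+,0}^f$. Next, using \Cref{3d_decay_lemma_2ndkind} and \Cref{rem:breve} (with $T=0$), I would substitute $\breve\psi_{+,0}^f = A_\omega^{-1}\breve R_0$, where $\breve R_0$ is the right-hand side \Cref{Rt_def_breve}; the $q$-growth condition gives the polynomial bound $\|A_\omega^{-1}\|\le C_1+C_2\omega^q$, and \Cref{3d_decay_thm_recursive_bound_general} (applied to $\widetilde b=\breve b$, which is compactly supported in time by hypothesis) gives the pointwise bounds \Cref{recursion_hypothesis_squared_general} on $\partial_\omega^p\breve\psi_{+,0}^f$ in terms of $\omega^{2j}\|\partial_\omega^{p-i}\breve R_0\|^2$. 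Then comes the integration-by-parts step: writing the convolution integral explicitly and integrating by parts $n$ times in the integration variable against $e^{-i\omega\varphi}$ (the oscillation hidden in $\widehat{w_\varphi}$), I peel off a factor $\varphi^{-n}$ and am left with integrals of $\partial_\omega^n$ of the product $\widehat w(\cdot)\,\breve\psi_{+,0}^f(\cdot)$ — whose $L^2$-in-$\omega$ norm I control by a Young-type inequality together with the derivative bounds \Cref{recursion_hypothesis_squared_general} and the non-growth bound \Cref{psit_nongrowth} (to ensure convergence at infinity, using the rapid decay of $\widehat w$ and its derivatives). Finally, the resulting integral $\int \omega^{2j}\|\partial_\omega^m \breve R_0\|^2\,\d\omega$ is precisely what \Cref{Rderiv_sobolev_bound} bounds by $C\|\breve\psi_{*,0}\|^2_{H^{j+1}(I_0;L^2(\Gamma))}$ for $0\le j\le n$; since $\breve\psi_{*,0}$ is obtained from $\breve\psi$ by multiplication with fixed smooth windows supported in $I_0$ (cf.\ \Cref{psistar_def}), the Leibniz rule converts this into $C\|\breve\psi\|^2_{H^{(n+1)(q+1)}(I_0;L^2(\Gamma))}$, matching the exponent $s=(n+1)(q+1)+q$ in the regularity hypothesis (which, via \Cref{psi_Hp}, guarantees the right-hand norm is finite).

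\smallskip

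\textbf{Main obstacle.} The delicate part will be making the integration-by-parts-in-$\omega$ argument rigorous while keeping the constant \emph{independent of $\varphi$ and of $T_0$}. Independence of $T_0$ is inherited from the earlier lemmas — \Cref{3d_decay_thm_recursive_bound_general} has $T$-independent constants, and \Cref{Rderiv_sobolev_bound} is explicitly $T_0$-independent because the bootstrap DoD relations \Cref{h_equiv}--\Cref{h_compactsupp} confine the $t$-integration to the fixed bounded window $[-\tau,T_*]$ — but one must be careful that the convolution/integration-by-parts manipulations do not secretly reintroduce $T_0$ or $\varphi$ through boundary terms. The boundary terms in the integration by parts vanish because $\widehat w$ and all its derivatives decay faster than any polynomial (Schwartz class) while $\partial_\omega^m\breve\psi_{+,0}^f$ grows only polynomially by \Cref{psit_nongrowth}; verifying the interchange of the $\omega$-integration with the convolution integral, and checking absolute convergence at each stage, is the technical heart of the argument. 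A secondary bookkeeping point is tracking which Sobolev order is needed: each of the up-to-$n$ derivatives landing on $\breve\psi_{+,0}^f$ costs, via \Cref{recursion_hypothesis_squared_general}, up to $q+1$ extra powers of $\omega$ inside $\|\partial_\omega^m\breve R_0\|$, and via \Cref{Rderiv_sobolev_bound} each such power of $\omega$ costs one temporal Sobolev derivative of $\breve\psi_{*,0}$ — so the worst case $j=n$ together with the initial $q$ from $A_\omega^{-1}$ accounts exactly for $(n+1)(q+1)$ derivatives of $\breve\psi$ on $I_0$, consistent with \Cref{L2_rchidecay}.
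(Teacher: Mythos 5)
Your proposal follows essentially the same route as the paper's proof: Plancherel to pass to the frequency convolution $\hat w_\varphi \ast \breve\psi_{+,0}^f$, $n$-fold integration by parts against the oscillatory factor $e^{-\i\tau\varphi}$ (with boundary terms killed by the Schwartz decay of $\hat w$ against the polynomial growth from \Cref{psit_nongrowth}), Young's convolution inequality applied to the resulting terms $\partial^{n-m}\hat w \ast \partial^m\breve\psi_{+,0}^f$, and then \Cref{recursion_hypothesis_squared_general} together with \Cref{Rderiv_sobolev_bound} to land on the $H^{(n+1)(q+1)}(I_0)$ norm of $\breve\psi_{*,0}$, hence of $\breve\psi$. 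The key steps, lemma dependencies, and Sobolev-order bookkeeping all match the paper's argument.
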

\begin{proof}
  We first note that since by hypothesis $\widetilde{b}(\bfr, t)$
  vanishes for
  $(\bfr,t) \in \widebar\Omega \times \left\lbrace I_{T_0}\cup
    \left[T_0, \infty\right)\right\rbrace$, it follows that
  $\breve{b}(\bfr, t)$ vanishes for all
  $(\bfr,t) \in \widebar\Omega \times \left\lbrace I_0\cup \left[0,
      \infty\right)\right\rbrace$.  Similarly, the $s$-regularity
  condition hypotheses are satisfied for $\breve{b}$.

  In order to establish the desired decay
  estimate~\eqref{L2_rchidecay} for $w_\varphi \breve{\psi}_{+,0}$, by
  Plancherel's theorem we may instead provide an estimate for the
  $L^2$ norm of its Fourier transform:
  \begin{equation}\label{conv_w_2}
   \left\|w_\varphi \breve{\psi}_{+,0}\right\|_{L^2(\mathbb{R};\,L^2(\Gamma)}^2 =   \left\|\widehat{w_\varphi \breve{\psi}_{+,0}}\right\|_{L^2(\mathbb{R};\,L^2(\Gamma)}^2 =
    \left\|\widehat{w}_\varphi \ast
    \breve{\psi}_{+,0}^f\right\|_{L^2(\mathbb{R};\,L^2(\Gamma)}^2.
  \end{equation}
  Using the relation
  $\widehat{w}_\varphi(\omega) = \e^{-\i\omega \varphi} \widehat{w}(\omega)$ we obtain
  \begin{equation}\label{conv_w}
    \left(\widehat{w}_\varphi \ast
    \breve{\psi}_{+,0}^f\right)(\bfr, \omega)   =  \int_{-\infty}^\infty \e^{-\i \tau \varphi} \widehat{w}(\tau) \breve{\psi}_{+,0}^f(\bfr, \omega - \tau)\,\d \tau,
   \end{equation}
   and we proceed to integrate by parts this integral $n$ times with
   respect to $\tau$. To do this we note that, (i)~Both
   $\widehat{w}_\varphi(\omega)$ and
   $\breve{\psi}_{+,0}^f (\mathbf{r}, \omega)$ are infinitely
   differentiable functions of $\omega$ (in view of \Cref{def_w},
   \Cref{rem:breve} and \Cref{int_eq_freq_regularity}); (ii)~The
   Fourier transform $\widehat w(\tau)$ and all of its derivatives
   tends to zero faster than any negative power of $\tau$ as
   $\tau\to\pm\infty$, as it befits the Fourier transform of a
   compactly supported function; and (iii)~For each non-negative
   integer $m$ there exist an integer $N_m > 0$ and a constant
   $C_m >0$ such that
   $\left\|\partial_\omega^m \breve{\psi}_{+,0}^f(\cdot, \omega -
     \tau)\right\|_{L^2(\Gamma)} \le C_m|\omega - \tau|^{N_m}$ as
   $|\tau| \to \infty$, as it follows directly from
   \Cref{psit_nongrowth} in
   \Cref{3d_decay_thm_recursive_bound_general}. Thus, integrating by
   parts~\eqref{conv_w}, using Leibniz's product rule, and noting that
   all boundary terms at $\tau = \pm\infty$ vanish, \eqref{conv_w}
   becomes
   \begin{equation}\label{nIntByParts}
\begin{split}
  \left( \widehat{w}_\varphi \ast \breve{\psi}_{+,0}^f \right)(\mathbf{r}, \omega) =
    \left(\frac{1}{i\varphi}\right)^n \int_{-\infty}^\infty
    \e^{-i\tau \varphi}  \left(\vphantom{\sum_{m=0}^n} \right. & \sum_{m=0}^n a_m \left( \partial^{n-m}_\tau
    \widehat{w}(\tau)\right) \times\\
    &\left.\times \left( \vphantom{\sum_{m=0}^n} \partial^m_\omega \breve{\psi}_{+,0}^f(\mathbf{r}, \omega -
    \tau)\right)\right)d\tau,
\end{split}
\end{equation}
where $a_m = {n \choose m}$.

In view of~\eqref{conv_w_2} and \Cref{nIntByParts}, calling, for $0 \le m \le n$,
\begin{equation}\label{w_psi}
    F_{nm}^\varphi(\tau) = \e^{-\i \tau \varphi}\partial^{n-m}_\tau \widehat{w}(\tau) \quad\mbox{and}\quad P_m(\bfr,\tau) = \partial^m_\tau \breve{\psi}_{+,0}^f(\bfr, \tau),
\end{equation}
and using the relation $\| \sum_{i=1}^n a_i \|^2 \le n \sum_{i=1}^n \| a_i\|^2$,
we obtain
\begin{equation}
\begin{split}
  \left\| w_\varphi \breve {\psi}_{+,0} \right.&\negmedspace\left.\vphantom{w_\varphi
  \breve{\psi}_{+,0}}\right\|_{L^2(\mathbb{R};\,L^2(\Gamma))}^2
  \le (n+1)\varphi^{-2n} \sum_{m=0}^n |a_m|^2 \left\|
  F_{nm}^\varphi \ast P_m\right\|_{L^2(\mathbb{R}; L^2(\Gamma))}^2.\label{pre_young}
\end{split}
\end{equation}
In order to obtain a bound on the norms on the right-hand side
of~\eqref{pre_young} we rely on the Bochner version~\cite[Lem.\
1.2.30]{Weis} of Young's convolution inequality, and we thus first
establish that the required hypotheses are satisfied, namely, that
\begin{equation}\label{w_psi_spaces}
    F_{nm}^\varphi \in L^1(\mathbb{R})\quad\mbox{and}\quad P_m \in L^2(\mathbb{R}; L^2(\Gamma)) \quad\mbox{for}\quad  0 \le m \le n.
\end{equation}
The first of these relations is easily established: since $w$ is
smooth and compactly supported, it is in the Schwartz class, and
thus~\cite{Folland}, its Fourier transform is also in the Schwartz
class---and, in particular, $\widehat{w}$ and all of its derivatives are
elements of $L^1(\mathbb{R})$.  To verify the second relation
in~\eqref{w_psi_spaces}, on the other hand, we first note from
\Cref{3d_decay_lemma_2ndkind} that, for $\omega\geq 0$,
$\breve{\psi}_{+,0}^f$ satisfies the integral equation
\[
    \left(A_\omega \breve{\psi}_{+,0}^f\right)(\mathbf{r}, \omega) = \breve{R}_0(\mathbf{r}, \omega),
\]
where $\breve{R}_0(\mathbf{r}, \omega)$ is defined by
\Cref{Rt_def_breve}.  It then follows
from~\cref{recursion_hypothesis_squared_general} in
\Cref{3d_decay_thm_recursive_bound_general} that, for $m \le n$, there
exist constants $d_{ij}^m > 0$ such that for all
$\omega \in \mathbb{R}$, $\omega \neq \pm \omega_0$,
\begin{equation}\label{bound_on_deriv_psiplus}
    \begin{split}
      \left\| P_m(\cdot, \omega)\right\|_{L^2(\Gamma)}^2
      \le \sum_{i=0}^{m} &\sum_{j=0}^{(i+1)(q+1)-1} d_{ij}^m \omega^{2j}
      \left\|\partial_\omega^{m-i} \breve{R}_0(\cdot,
      |\omega|)\right\|^2_{L^2(\Gamma)}.
    \end{split}
\end{equation}
Integrating, we obtain
\begin{equation}\label{fixmeplease}
\begin{split}
    \int_{-\infty}^\infty \left\|\vphantom{P_m(\cdot,
    \omega)}\right.&\hspace{-1.4mm}\left.P_m(\cdot,
    \omega)\right\|_{L^2(\Gamma)}^2 \,\d\omega\\
    &\le \sum_{i=0}^{m} \sum_{j=0}^{(i+1)(q+1)-1} 2d_{ij}^m \int_0^\infty \omega^{2j}
  \left\|\partial_\omega^{m-i} \breve{R}_0(\cdot,
    \omega)\right\|_{L^2(\Gamma)}^2\,\d\omega.
\end{split}
\end{equation}
We now use \Cref{Rderiv_sobolev_bound} to estimate each term on the
right-hand side of \Cref{fixmeplease}. The main hypothesis of that
lemma, which, in the present context amounts to the requirement that
$\breve{\psi}_{*,0} \in H^{(n+1)(q+1)}(I_0;L^2(\Gamma))$, follows from
\Cref{3d_decay_lemma_2ndkind_wellposed} since $\breve{b}$ satisfies
the $s$-regularity conditions~\eqref{eq:gamma_Hs_assump} with
$s = (n+1)(q+1)+q$ (which hold in view of the $s$-regularity condition
hypotheses for $\widetilde{b}$ and the definition~\cref{def:breve} of
$\breve{b}$).  For $m \le n$ we may therefore write
\begin{equation}\label{partialomega_psibreve}
\begin{split}
  \int_{-\infty}^\infty \left\| P_m\right.&\hspace{-1.4mm}\left.
  (\cdot,
  \omega)\right\|_{L^2(\Gamma)}^2\,\d\omega\\
    &\le C_1\left\| \breve{\psi}_{*,0}\right\|_{H^{(m+1)(q+1)}(I_0;\,L^2(\Gamma))}^2 \le C_2\left\|
    \breve{\psi}_{*,0}\right\|_{H^{(n+1)(q+1)}(I_0;\,L^2(\Gamma))}^2,
\end{split}
\end{equation}
with $C_1$ and $C_2 > 0$ independent of $\widetilde{b}$ and $T_0$, which, in
particular, establishes the second relation in \Cref{w_psi_spaces}.

Having established \Cref{w_psi_spaces} we may apply Young's
convolution inequality~\cite[Lemma\ 1.2.30]{Weis} to each of the terms
in the right-hand sum in \Cref{pre_young} and obtain
\begin{equation}\label{3d_decay_thm_main_estimate}
  \begin{split}
    \left\| w_\varphi \breve{\psi}_{+,0} \right\|_{L^2(\mathbb{R};\,L^2(\Gamma))}^2
      &\le C(n) \varphi^{-2n} \sum_{m=0}^n |a_m|^2
  \left\|F_{nm}^\varphi \right\|_{L^1}^2
  \left\|P_m \right\|_{L^2(\mathbb{R}; L^2(\Gamma))}^2\\
  &\le C(n, s_0) \varphi^{-2n}
  \sum_{m=0}^n \left\|P_m \right\|^2_{L^2(\mathbb{R};\,L^2(\Gamma))}
  \end{split}
\end{equation}
(since, of course,
$\left\| F_{nm}^\varphi\right\|_{L^1(\mathbb{R}} =
\left\|\partial_\omega^{n-m}\widehat{w}\right\|_{L^1(\mathbb{R})}$), which using
\Cref{partialomega_psibreve} yields
\begin{equation*}
    \left\|w_\varphi \breve{\psi}_{+,0}\right\|_{L^2(\mathbb{R};\,L^2(\Gamma)}^2 \le C(\Gamma, \tau, n, s_0) \varphi^{-2n}
    \left\|\breve{\psi}_{*,0}\right\|_{H^{(n+1)(q+1)}(I_0;\,L^2(\Gamma))}^2.
\end{equation*}
The claimed relation \Cref{L2_rchidecay} follows from this inequality
since, according to \Cref{rem:breve},
$\breve{\psi}_{*,0}(\bfr', t') = w_-(t' + T_* +
\tau)w_+(t')\breve{\psi}(\bfr',t')$, and since each derivative of each
one of the functions $w_-$ and $w_+$ is uniformly bounded.
\end{proof}

On the basis of the preparatory lemmas established above in this
section we now present the proof of \Cref{3d_decay_thm_ii}.
\begin{proof}[Proof of \Cref{3d_decay_thm_ii}]
  Letting (cf.\ \Cref{def:breve})
  \begin{equation}\label{psi_check}
    \breve{b}(\bfr, t) = b(\bfr, t + T_0)\quad\mbox{and}\quad \breve{\psi}(\bfr', \theta) = \psi(\bfr', \theta + T_0),
  \end{equation}
  instead of \Cref{density_Hp_time_bound_Tn}
  we establish the equivalent $\theta$-decay ($\theta = t - T_0 > 0$) estimate
\begin{equation}\label{density_Hp_time_bound_Tn_shifted}
  \left\|\breve{\psi}\right\|_{H^p([\theta, \infty); L^2(\Gamma))} \le C(\Gamma,
  \tau, p, n) \theta^{1/2-n} \left\|\breve{\psi}\right\|_{H^{p + n(q+1)}(I_0;
    L^2(\Gamma))} < \infty,
\end{equation}
where $I_0$ (i.e. $I_T$ for $T=0$) was introduced in
\Cref{domainofdep}. Clearly, $\breve{\psi}$ in \Cref{psi_check}
satisfies the integral equation
\begin{equation}\label{brevepsi_int_eq}
  \left(S\breve{\psi}\right)(\mathbf{r}, t) = \gamma^+ \breve{b}(\mathbf{r}, t)\quad\mbox{for}\quad (\mathbf{r}, t) \in
  \Gamma\times\mathbb{R},
\end{equation}
which coincides with \Cref{eq:tdie_sl_generic} for
$\widetilde{b}(\bfr, t) = \breve{b}(\bfr, t) = b(\bfr, t + T_0)$
and $\widetilde{\psi} = \breve{\psi}$.

To establish \Cref{density_Hp_time_bound_Tn_shifted} we first obtain
certain decay results for each element of a sequence of bounded time
intervals and then produce the final estimate by summing over the
sequence. Using \Cref{eq:POU2} with $\varphi = \theta + 3\ell s_0$
together with the identity
$\breve{\psi}(\cdot, t') = \breve{\psi}_{+,0}(\cdot, t')$ for $t' > 0$
(see \Cref{rem:breve}) and the relation $(A +B)^2\leq 2A^2 +2 B^2$ we
obtain
\begin{equation}\label{ptwise_tprime_bound}
  \begin{split}
    \left\|\breve{\psi}\right\|^2_{L^2([\theta, \infty); L^2(\Gamma))} &=  \int_\theta^\infty\hspace{-2mm} \int_\Gamma \left( \sum_{\ell =0}^\infty (w_{\theta + 3\ell s_0 + s_0} + w_{\theta + 3(\ell+1) s_0 + s_0}) \breve{\psi}_{+,0} (\bfr', t')\hspace{-1mm}\right)^2 \hspace{-2mm}\d\sigma(\bfr') \d t'\\
    &\le 4 \sum_{\ell=0}^\infty \left\|w_{\theta +
    3\ell s_0 + s_0} \breve{\psi}_{+,0}\right\|^2_{L^2(\mathbb{R}; L^2(\Gamma))}.
    \end{split}
\end{equation}
The same argument applied to $\partial_t^p \breve{\psi}$ tells us that
\begin{equation}\label{Wt_to_sum_ii}
    \left\|\partial_t^p \breve{\psi}\right\|^2_{L^2([\theta, \infty); L^2(\Gamma))} \le 4
    \sum_{\ell=0}^\infty \left\|w_{\theta +
    3\ell s_0 + s_0} \breve{\psi}_{p,+,0}\right\|^2_{L^2(\mathbb{R}; L^2(\Gamma))},
\end{equation}
where
\begin{equation}\label{check_psi_p}
  \breve{\psi}_{p,+,0} = w_+(t) \partial_t^p \breve{\psi}.
\end{equation}
Combining \Cref{ptwise_tprime_bound} and \Cref{Wt_to_sum_ii} and using
\Cref{def:sob_bochner} we obtain
\begin{equation}\label{psik_to_wT}
\begin{split}
\left\|\breve{\psi}\right\|^2_{H^p([\theta,
        \infty);\,L^2(\Gamma))} &\le  4 \sum_{\ell = 0}^\infty \left(\left\| w_{\theta + 3\ell s_0 + s_0} \breve{\psi}_{+,0} \right\|_{L^2(\mathbb{R};
        L^2(\Gamma))}^2\right.\\
        &\quad\quad\quad\quad\quad\left. + \left\| w_{\theta + 3\ell s_0 + s_0} \breve{\psi}_{p,+,0} \right\|_{L^2(\mathbb{R};
        L^2(\Gamma))}^2\right).
\end{split}
\end{equation}
To complete the proof we now estimate the $L^2$ norms on the
right-hand side of~\eqref{psik_to_wT}, that is to say, the norms
$\left\| w_{\varphi} \breve{\psi}_{+,0} \right\|_{L^2(\mathbb{R};
  L^2(\Gamma))}$ and
$\left\| w_{\varphi} \breve{\psi}_{p,+,0} \right\|_{L^2(\mathbb{R};
  L^2(\Gamma))}$ for $\varphi = \theta + 3\ell s_0 + s_0$,
$\ell = 0, 1, \cdots$; the desired result then follows by addition of
the resulting estimates.

\paragraph{\bf Decay estimate for
  $\left\| w_{\varphi} \breve{\psi}_{+,0} \right\|_{L^2(\mathbb{R};
    L^2(\Gamma))}^2$.}

Since by assumption $b$ satisfies the $s$-regularity conditions~\eqref{eq:gamma_Hs_assump}
with $s=(n+1)(q+1)+q$, and since, by hypothesis, $\breve{b}(\bfr', t')$ vanishes for
$(\bfr',t') \in \widebar{\Omega} \times \left\lbrace I_0\cup[0, \infty)\right\rbrace$,
\Cref{decay_estimate_L2} with $\widetilde{b} = b$ yields
\begin{equation}\label{rchipsi_L2_estimate}
  \left\|w_\varphi \breve{\psi}_{+,0}\right\|_{L^2(\mathbb{R};\,L^2(\Gamma)}^2 \le
    C(\Gamma, \tau, n, s_0) \varphi^{-2n}
    \left\|\breve{\psi}\right\|_{H^{(n+1)(q+1)}(I_0;\,L^2(\Gamma))}^2
\end{equation}
for arbitrary $\varphi > 0$, where $C$ denotes a constant independent
of $\varphi$, $T_0$, and $b$.

\paragraph{\bf Decay estimate for
  $\left\| w_{\varphi} \breve{\psi}_{p,+,0} \right\|_{L^2(\mathbb{R};
    L^2(\Gamma))}$ ($p > 0$).}
\Cref{int_eq_pderiv} tells us that
\begin{equation}\label{eq:pderiv_brevepsi}
    \left(S \partial_t^p \psi\right)(\bfr, t) = \gamma^+ \partial_t^p b(\bfr, t),\quad\mbox{for}\quad
    (\bfr, t) \in \Gamma \times \mathbb{R},
\end{equation}
and we may thus apply \Cref{decay_estimate_L2} to obtain decay
estimates for $w_\varphi \breve{\psi}_{p,+,0}$. Certainly,
$\partial_t^p b$ satisfies the hypotheses of that lemma: (i)
$\partial_t^p b$ vanishes for
$(\bfr',t') \in \widebar{\Omega} \times \left\lbrace I_{T_0} \cup
  [T_0, \infty)\right\rbrace$ since, by hypothesis, $b$ vanishes in
that set; and (ii) $\partial_t^p b$ satisfies the $s$-regularity
conditions~\eqref{eq:gamma_Hs_assump} with $s=(n+1)(q+1)+q$, as it
follows from the present hypotheses on $b$. Applying
\Cref{decay_estimate_L2} with $\widetilde{b} = \partial_t^p b$ we then
obtain the estimate
\begin{equation}\label{L2_deriv_rchidecay}
    \left\| w_\varphi \breve{\psi}_{p,+,0} \right\|_{L^2(\mathbb{R}; L^2(\Gamma))}^2 \le
    C(\Gamma, \tau, n, s_0) \varphi^{-2n} \left\|\partial_t^p
    \breve{\psi}\right\|_{H^{(n+1)(q+1)}(I_0;\,L^2(\Gamma))}^2,
\end{equation}
for arbitrary $\varphi > 0$, where $C$ is again a constant independent of
$\varphi$, $T_0$, and $b$.

\paragraph{\bf Combined decay estimate.}

Using  \Cref{psik_to_wT}, \Cref{rchipsi_L2_estimate}, and \Cref{L2_deriv_rchidecay} we obtain
\begin{equation}\label{Hp_tdecay_i}
    \begin{split}
    \left\|\breve{\psi}\right\|^2_{H^p([\theta,
        \infty);\,L^2(\Gamma))} &\le 4 \sum_{\ell = 0}^\infty \left(\left\| w_{\theta + 3\ell s_0 + s_0} \breve{\psi}_{+,0} \right\|_{L^2(\mathbb{R};
        L^2(\Gamma))}^2\right.\\
        &\quad\quad\quad\left. + \left\| w_{\theta + 3\ell s_0 + s_0} \breve{\psi}_{p,+,0} \right\|_{L^2(\mathbb{R};
        L^2(\Gamma))}^2\right)\\
        &\le C_1 \sum_{\ell = 0}^\infty  \left( (\theta + 3\ell s_0 + s_0)^{-2n}
        \left\|\breve{\psi}\right\|^2_{H^{(n+1)(q+1)}(I_0;\,L^2(\Gamma))}\right.\\
        &\quad\quad\quad\left. + (\theta + 3\ell s_0 + s_0)^{-2n}
        \left\|\partial_t^p \breve{\psi}\right\|^2_{H^{(n+1)(q+1)}(I_0;\,L^2(\Gamma))} \right),
    \end{split}
\end{equation}
where $C_1$ is a constant dependent only on $\Gamma$,
$\tau$, $n$ and $s_0$.  It follows that
\begin{equation}\label{Hp_tdecay_ii}
    \begin{split}
        \left\|\breve{\psi}\right\|^2_{H^p([\theta, \infty);\,L^2(\Gamma))} &\le C \left\|\breve{\psi}\right\|^2_{H^{p + (n+1)(q+1)}(I_0;\,L^2(\Gamma))}
        \sum_{\ell = 0}^\infty (\theta + 3\ell s_0 + s_0)^{-2n},
    \end{split}
\end{equation}
where, again, $C = C(\Gamma, \tau, n, s_0) > 0$.  Since $\theta > 0$,
the term $(\theta + 3\ell s_0 + s_0)^{-n}$ is a strictly decreasing and
positive function of $\ell$, and, thus, estimating the sum by an
integral it is easy to check that
\[
    \sum_{\ell=0}^\infty (\theta + 3\ell s_0 + s_0)^{-2n} \le \widetilde{C} \theta^{-2n+1},\quad \widetilde{C} > 0,
\]
which in conjunction with \Cref{Hp_tdecay_ii} establishes
\Cref{density_Hp_time_bound_Tn_shifted}, and, therefore, its
equivalent decay estimate~\cref{density_Hp_time_bound_Tn}.  The
result~\cref{density_unif_time_bound_Tn}, finally, follows from use
of~\cref{density_Hp_time_bound_Tn} with $p=1$ together with
\Cref{sob_lemma}. The proof is now complete.
\end{proof}

\appendix
\section{Sobolev-Bochner spaces\label{sob-boch}}
\renewcommand{\thesection}{A}
This appendix introduces the class of Bochner spaces and the
associated trace operators utilized in this paper.
\begin{defi}\label{trace_def}
  We denote by $\gamma^+$ and $\gamma^-$ the well-known trace operators
 \[
     \gamma^+: H^s_{\mathrm{loc}}(\mathbb{R}^3 \setminus \widebar{\Omega}) \to H^{s - 1/2}(\Gamma),
  \]
  and
  \[
    \gamma^-: H^s(\Omega) \to H^{s - 1/2}(\Gamma),
  \]
  with $1/2 < s \le 1$~\cite[Thm.\ 3.37]{McLean}, that are associated with a
  Lipschitz obstacle $\Omega$ and its boundary $\Gamma$.
\end{defi}

The Sobolev-Bochner spaces and norms introduced in this section, whose
elements are functions defined on the complete real line, are similar
to, but different from, the corresponding Sobolev-Bochner spaces and
norms on finite and semi-infinite intervals that have been used in the
context of parabolic PDEs, cf.~\cite[\S 5.9]{Evans}. The corresponding
definitions for bounded and semi-infinite intervals, which are also
used in this paper, are defined analogously.
\begin{defi}[Sobolev-Bochner spaces]\label{def:sob_bochner}
  For an integer $r \ge 0$, $\partial^r_\xi u(\mathbf{r}, \xi)$
  denotes the $r$-th distributional derivative of the function $u$
  with respect to the real variable $\xi$. Further, for given integers
  $r,s \ge 0$ and a given set $\mathcal{U}\subset\mathbb{R}^3$, where
  either $\mathcal{U} = \Gamma$ equals the Lipschitz boundary of an
  open and bounded domain $\Omega\subset\mathbb{R}^3$, or
  $\mathcal{U} = D$ equals an open and bounded domain
  $D\subset\mathbb{R}^3$ with a Lipschitz boundary, we define the
  Sobolev-Bochner spaces
\begin{equation}\label{sob_bochner}
  H^r(\mathbb{R}; H^s(\mathcal{U})) = \left\lbrace u: \mathbb{R} \to H^s(\mathcal{U})\ |\
  \left\|u\right\|_{H^r(\mathbb{R};
  H^s(\mathcal{U}))} < \infty\right\rbrace,
\end{equation}
 with the norm
\begin{equation}\label{sob_bochner_norm}
  \left\|u\right\|_{H^r(\mathbb{R}; H^s(\mathcal{U}))} = \left[\left\|u\right\|_{L^2(\mathbb{R}; H^s(\mathcal{U}))}^2 +
    \left\|\partial_\xi^r u\right\|^2_{L^2(\mathbb{R}; H^s(\mathcal{U}))}\right]^{1/2},
\end{equation}
where the
  $L^2(\mathbb{R}; H^s(\mathcal{U}))$-norm of a function
  $v: \mathbb{R} \to H^s(\mathcal{U})$ is given by
\begin{equation}
  \left\|v\right\|_{L^2(\mathbb{R}; H^s(\mathcal{U}))} = \left[ \int_{-\infty}^\infty \left\|
    v(\cdot, \xi)\right\|_{H^s(\mathcal{U})}^2\,\d \xi\right]^{1/2}.
\end{equation}
Note that the integrals inherent in the Sobolev-Bochner norms may be
interpreted in the sense of Bochner~\cite{Hille,DunfordSchwartz}, or,
equivalently, and more simply, as double integrals with respect to
$\bfr\in\mathcal{U}$ and $\xi\in \mathbb{R}$.
\end{defi}

An equivalent norm for
$H^r(\mathbb{R}; H^s(\mathcal{U}))$ is given by
\begin{equation}\label{sob_bochner_fourier}
  \vertiii{u}_{H^r(\mathbb{R}; H^s(\mathcal{U}))} =
\left[\int_{-\infty}^\infty (1 + \omega^2)^{r/2} \left\|U^f(\cdot,
  \omega)\right\|^2_{H^s(\mathcal{U})}\,\d\omega\right]^{1/2},
\end{equation}
where $U^f$ denotes the Fourier transform of $u$ with respect to
$\xi$. (The equivalence of this norm to~\eqref{sob_bochner_norm}
results from a simple application of the classical Plancherel
theorem~\cite[Thm.\ 6.1]{Folland} for real-valued functions together
with Fubini's theorem; cf.~\cite{Karunakaran:98} where the general
Bochner case is considered for $r=0$.)  For integers $k, s \ge 0$ we
also utilize the spaces
\begin{equation}\label{Ck_def}
  C^k(\mathbb{R}; H^s(\mathcal{U})) = \left\lbrace u\in H^k_{{\mathrm{loc}}}( \mathbb{R};H^q(\mathcal{U})\ \lvert\ \partial_\xi^k u
    \in C(\mathbb{R}; H^q(\mathcal{U}))\right\rbrace,
\end{equation}
where $C(\mathbb{R}; H^s(\mathcal{U}))$ denotes the set of all
strongly continuous functions $u: \mathbb{R} \to H^s(\mathcal{U})$.
Note that this notation is consistent with the classical
definition~\cite{ErnGuermond} of Bochner $C^k$ spaces: any element
$u = u(\bfr, \xi)$ in the space of $C^k(\mathbb{R}; H^s(\mathcal{U}))$
introduced in~\eqref{Ck_def} is indeed $k$-times (\emph{strongly})
continuously differentiable as a function of $\xi$---since its weak
and strong derivatives coincide almost everywhere~\cite[Cor.\
64.32]{ErnGuermond}, and since, by definition, the weak derivative is
continuous.  Finally we define
$C^\infty(\mathbb{R}; H^s(\mathcal{U})) = \cap_{k=1}^\infty
C^k(\mathbb{R}; H^s(\mathcal{U}))$.

\renewcommand{\thesection}{B}
\section{Proof of \Cref{td_sl_lemma}\label{app_c}}

\begin{proof}
  We first establish the $L^2$-norm continuity of the
  operator~\eqref{S_oper_bochner} restricted to
  $\mathcal{S}(\mathbb{R},L^2(\Gamma))$. To do this, given
  $\mu \in \mathcal{S}(\mathbb{R},L^2(\Gamma))$, we use the change of
  variables
  \begin{equation}\label{tau_chvar}
    \tau = t-|\mathbf{r} - \mathbf{r}'|/c
  \end{equation}
  to re-express the norm of $S\mu$ in the form
\begin{equation}\label{eq:td_sl_l2_bound}
\|S\mu\|^2_{L^2(\mathbb{R}; L^2(\Gamma))}=  \int_{-\infty}^\infty \int_\Gamma \left|\int_\Gamma \frac{\mu(\mathbf{r}',
    \tau) }{4\pi |\mathbf{r} -
    \mathbf{r}'|}\,\d\sigma(\mathbf{r}')\right|^2\,\d\sigma(\mathbf{r})\,\d \tau.
\end{equation}
Since
$\mu \in \mathcal{S}(\mathbb{R},L^2(\Gamma))\subset
L^2(\mathbb{R},L^2(\Gamma))$, Tonelli's theorem tells us that
$\mu(\mathbf{r}',\tau)$ is an element of $L^2(\Gamma)$ for all fixed
$\tau\in\mathbb{R}$ outside a set of measure zero. We can therefore
invoke \cref{single_layer_uniform} with $\omega =0$ to obtain
  \begin{equation}
    \int_\Gamma \left|\int_\Gamma \frac{\mu(\mathbf{r}', \tau) }{4\pi
        |\mathbf{r} -
        \mathbf{r}'|}\,\d\sigma(\mathbf{r}')\right|^2\,\d\sigma(\mathbf{r})
    \leq C \left\| \mu(\cdot, \tau)\right\|_{L^2(\Gamma)}^2
  \end{equation}
  for all $\tau$ outside a set of measure zero. Integrating with
  respect to $\tau$ we then obtain
  \begin{equation}\label{S_in_L2_2}
    \left\| S\mu\right\|^2_{L^2(\mathbb{R}; L^2(\Gamma))}  \le C \int_{-\infty}^\infty \left\| \mu(\cdot, \tau)\right\|_{L^2(\Gamma)}^2\,\d \tau = C\left\| \mu\right\|_{L^2(\mathbb{R}; L^2(\Gamma))}^2.
  \end{equation}
  The extension of the operator $S$ to an operator as indicated
  in~\eqref{S_oper_bochner}, as well as the continuity of that
  operator, follow from~\eqref{S_in_L2_2}, by using suitable Cauchy
  sequences and the Plancherel theorem, in view of the
  density~\cite[Prop.\ 2.4.23]{Weis} of the space
  $\mathcal{S}(\mathbb{R}; L^2(\Gamma))$ in
  $L^2(\mathbb{R}; L^2(\Gamma))$.

  In view of the continuity of the operators $\widetilde S$, $S$, and
  the Fourier transform $\mathcal{F}$ in the space
  $L^2(\mathbb{R}; L^2(\Gamma))$ (cf.,
  respectively,~\eqref{SL_cont_Bochner}, \eqref{S_oper_bochner}, and
  the Bochner-Plancherel theorem~\cite{Karunakaran:98}), together with
  the aforementioned density of the space
  $\mathcal{S}(\mathbb{R}; L^2(\Gamma))$ in
  $L^2(\mathbb{R}; L^2(\Gamma))$, to
  establish~\eqref{eq:single_layer_op_ii} it suffices to show that
  this equality holds for for each
  $\mu\in \mathcal{S}(\mathbb{R}; L^2(\Gamma))$. But a function
  $\mu\in \mathcal{S}(\mathbb{R}; L^2(\Gamma))$, $\mu = \mu(\bfr, t)$,
  is necessarily integrable for
  $(\mathbf{r},t)\in \Gamma\times\mathbb{R}$, and we may thus use the
  change of variables~\eqref{tau_chvar} followed by the Fubini theorem
  to obtain
\begin{equation}\label{fubini_1}
  \begin{split}
    \int_{-\infty}^\infty \e^{-\i\omega t}\int_\Gamma \frac{\mu(\mathbf{r}', t - |\mathbf{r} - \mathbf{r}'|/c)}{4\pi |\mathbf{r} - \mathbf{r}'|} &   \d\sigma(\mathbf{r}') \,\d t 
    \\&= \int_\Gamma \frac{\e^{\i\frac{\omega}{c} |\mathbf{r} - \mathbf{r}'|}}{4\pi |\mathbf{r} - \mathbf{r}'|}\,\d\sigma(\mathbf{r}') \int_{-\infty}^\infty \e^{-\i\omega \tau}  \mu(\mathbf{r}', \tau)\,\d \tau,
  \end{split}
\end{equation}
establishing~\eqref{eq:single_layer_op_ii} and thus completing the proof of the lemma.
\end{proof}
\renewcommand{\thesection}{C}
\section{Frequency-differentiated integral equation
  solutions\label{app}}
This appendix presents necessary technical lemmas concerning the
differentiability of frequency domain solutions with respect to the
temporal frequency $\omega$. In detail, \Cref{op_deriv_define}
introduces relevant boundary integral operators with
frequency-differentiated kernels;
Lemmas~\ref{omega_explicit_norms_deriv_SK}
and~\ref{omega_explicit_norms_deriv_int_op} show that certain standard
integral operators in scattering theory are strongly differentiable
with respect to frequency, and they present frequency-explicit bounds
on the norms of the frequency-derivatives of these operators; and
\Cref{int_eq_freq_regularity} establishes that, for temporally
compactly supported data $\widetilde{b}$, the frequency-domain
integral equation solution is infinitely differentiable as a function
of frequency, and its derivatives can be produced by means of
Leibniz's formula.
\begin{defi}\label{op_deriv_define}
  With reference to~\Cref{Aop_def}, for $m \in \mathbb{N}_0$ define
  the operators $S_{\omega, m}$ and $K_{\omega,m}^*$ by
  $S_{\omega,0}= S_{\omega}$, $K^*_{\omega,0} = K^*_\omega$, and, for
  $m\geq 1$,
    \begin{equation}\label{Somegaop_deriv_def}
      \left(\left(S_{\omega,m}\right)\mu\right)(\mathbf{r}) \coloneqq
    \int_\Gamma \left(\partial_\omega^m G_\omega(\mathbf{r}, \mathbf{r}')\right)
    \mu(\mathbf{r}')\,\d\sigma(\mathbf{r}'),\quad\mathbf{r} \in \Gamma,
  \end{equation}
and
  \begin{equation}\label{Komegaop_deriv_def}
      \left( \left(K^*_{\omega,m}\right)\mu\right)(\mathbf{r})
    \coloneqq
  \int_\Gamma \left(\partial_\omega^m \frac{\partial G_\omega(\mathbf{r},
    \mathbf{r}')}{\partial
  \mathbf{n}(\mathbf{r})}\right) \mu(\mathbf{r}')\,\d\sigma(\mathbf{r}'),\quad\mathbf{r} \in \Gamma.
  \end{equation}
  Also, for a given $\omega_0 > 0$, define $A^{(m)}_\omega$ as
\begin{equation}\label{Aomegaop_deriv_def}
  \hspace{1.cm} A^{(m)}_\omega =
  \begin{cases} K_{\omega,m}^* - \i
    S_{\omega,m},&\quad\mbox{for}\quad 0 \le |\omega| < \omega_0\\
    K_{\omega,m}^* - \i m S_{\omega,m-1} -
    \i\omega S_{\omega,m},&\quad\mbox{for}\quad |\omega| > \omega_0.\hspace{1.cm}\qedhere
    \end{cases}
\end{equation}
\end{defi}

\begin{lemma}\label{omega_explicit_norms_deriv_SK}
  The operators $S_{\omega, m}$ and $K^*_{\omega, m}$ are strongly
  differentiable, and for all positive integers $m$ we have
\begin{equation}\label{Sderiv_freq}
    S_{\omega, m} = \lim_{\Delta \omega \to 0}
  \frac{S_{\widetilde{\omega}, m- 1} -
    S_{\omega, m - 1}}{\Delta \omega}, \quad \widetilde{\omega} = \omega + \Delta \omega,
\end{equation}
and
\begin{equation}\label{Kderiv_freq}
    K^*_{\omega, m} = \lim_{\Delta \omega \to 0}
  \frac{K^*_{\widetilde{\omega}, m-1} -
    K^*_{\omega, m-1}}{\Delta \omega}, \quad \widetilde{\omega} = \omega + \Delta \omega,
\end{equation}
where the limit is understood in the sense of the $L^2(\Gamma)$
operator norm.  In other words, using the symbol $\partial_\omega^m$
to denote the $m$-th derivative of an operator with respect to
frequency (defined as the limit of quotients of increments, with
convergence according to the $L^2(\Gamma)$ operator norm), we have
\[
    \partial_\omega^m S_\omega = S_{\omega, m}
\]
and
\[
    \partial_\omega^m K_\omega^* = K^*_{\omega, m},
\]
for all non-negative integers $m$. Further,
$\partial_\omega^m S_\omega$ and $\partial_\omega^m K_\omega^*$
satisfy
\begin{equation}\label{SK_deriv_bounds}
  \left\|\partial_\omega^m S_\omega\right\|_{L^2(\Gamma)\to L^2(\Gamma)} \le
  C_1,\quad\mbox{and}\quad\left\|\partial_\omega^m K_\omega^*
  \right\|_{L^2(\Gamma)\to L^2(\Gamma)} \le C_2 + C_3|\omega|,
\end{equation}
for all $\omega\in\mathbb{R}$, for certain $\omega$-independent finite
constants $C_j = C_j(\Gamma, m)$, $j = 1, 2, 3$.
\end{lemma}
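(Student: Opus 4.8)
The plan is to prove the claimed differentiability and norm bounds by differentiating directly under the integral sign in the definitions \eqref{eq:single_layer_op} and \eqref{eq:adjoint_double_layer_op}, and by using the explicit form of the Helmholtz Green function $G_\omega(\mathbf{r},\mathbf{r}') = \frac{\e^{\i\omega|\mathbf{r}-\mathbf{r}'|/c}}{4\pi|\mathbf{r}-\mathbf{r}'|}$ together with the $L^2(\Gamma)$ mapping properties of weakly singular integral operators. First I would establish the strong differentiability relations \eqref{Sderiv_freq} and \eqref{Kderiv_freq} by induction on $m$, starting from the base case $m=1$: for fixed $\mathbf{r}\neq\mathbf{r}'$ one has $\partial_\omega G_\omega(\mathbf{r},\mathbf{r}') = \frac{\i}{c}\,\e^{\i\omega|\mathbf{r}-\mathbf{r}'|/c}/(4\pi)$, which is \emph{bounded} (the singular factor $1/|\mathbf{r}-\mathbf{r}'|$ cancels), and more generally $\partial_\omega^m G_\omega(\mathbf{r},\mathbf{r}') = \big(\tfrac{\i}{c}\big)^m |\mathbf{r}-\mathbf{r}'|^{m-1}\e^{\i\omega|\mathbf{r}-\mathbf{r}'|/c}/(4\pi)$, which for $m\geq 1$ is a bounded kernel on the compact set $\Gamma\times\Gamma$. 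To pass the limit \eqref{Sderiv_freq} through the integral I would write the difference quotient as an integral of a kernel difference quotient, use the mean value theorem in $\omega$ to bound the kernel difference quotient uniformly by a bounded function of $(\mathbf{r},\mathbf{r}')$, and invoke dominated convergence together with the continuity of integration against bounded kernels as operators on $L^2(\Gamma)$ (using that $\Gamma$ is bounded Lipschitz, so $L^2(\Gamma)$-boundedness of an integral operator follows from a Schur-type / Cauchy–Schwarz estimate on the kernel). The same argument applies to $K^*_{\omega,m}$ once one notes that $\partial_{\mathbf{n}(\mathbf{r})}G_\omega$ has a kernel that is $O(|\mathbf{r}-\mathbf{r}'|^{-2})$ for $m=0$ but whose $\omega$-derivatives $\partial_\omega^m\partial_{\mathbf{n}(\mathbf{r})}G_\omega$ for $m\geq 1$ are, after the cancellation of one power of $1/|\mathbf{r}-\mathbf{r}'|$, only $O(|\mathbf{r}-\mathbf{r}'|^{-1})$ (weakly singular) uniformly for $\omega$ in compact sets, with an extra factor growing at most linearly in $|\omega|$ coming from the innermost $\i\omega/c$ that appears when differentiating the oscillatory exponent.

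Next, with the identifications $\partial_\omega^m S_\omega = S_{\omega,m}$ and $\partial_\omega^m K^*_\omega = K^*_{\omega,m}$ in hand, I would establish the norm bounds \eqref{SK_deriv_bounds}. For $\partial_\omega^m S_\omega$, $m\geq 1$, the kernel $\big(\tfrac{\i}{c}\big)^m|\mathbf{r}-\mathbf{r}'|^{m-1}\e^{\i\omega|\mathbf{r}-\mathbf{r}'|/c}/(4\pi)$ has modulus bounded by $(c^{-1}\operatorname{diam}\Gamma)^{m-1}/(4\pi c^m)$, a constant \emph{independent of} $\omega$; since $\Gamma$ is compact this immediately gives $\|\partial_\omega^m S_\omega\|_{L^2(\Gamma)\to L^2(\Gamma)}\leq C_1(\Gamma,m)$ via Cauchy–Schwarz. (For $m=0$ the single-layer bound $\|S_\omega\|\leq D_1$ uniformly in $\omega$ is the content of \cite[Thm.\ 3.3]{ChandlerWilde:09}, already quoted in the proof of \Cref{per_freq_rhs_oper_bounds}.) For $\partial_\omega^m K^*_\omega$ I would decompose the kernel $\partial_\omega^m\big(\partial_{\mathbf{n}(\mathbf{r})}G_\omega(\mathbf{r},\mathbf{r}')\big)$ by the Leibniz rule applied to the product of the smooth geometric factor $\mathbf{n}(\mathbf{r})\cdot(\mathbf{r}-\mathbf{r}')\,|\mathbf{r}-\mathbf{r}'|^{-2}(\i\omega/c - |\mathbf{r}-\mathbf{r}'|^{-1})$-type terms arising from $\partial_{\mathbf{n}(\mathbf{r})}$ acting on $\e^{\i\omega|\mathbf{r}-\mathbf{r}'|/c}/(4\pi|\mathbf{r}-\mathbf{r}'|)$: the term where all $m$ $\omega$-derivatives fall on the exponent produces a factor $(\i|\mathbf{r}-\mathbf{r}'|/c)^m$ times the original (now-cancelled) singularity, hence is weakly singular and uniformly bounded in $\omega$, while terms where fewer derivatives hit the exponent carry lower powers of $\omega$; the single surviving term with an actual power of $\omega$ is linear in $\omega$ and multiplies a weakly-singular ($O(|\mathbf{r}-\mathbf{r}'|^{-1})$) kernel. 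Weakly singular kernels of the form $|\mathbf{r}-\mathbf{r}'|^{-1}$ on a bounded Lipschitz surface define bounded operators on $L^2(\Gamma)$ (e.g.\ by the standard Schur test using $\int_\Gamma |\mathbf{r}-\mathbf{r}'|^{-1}\,\d\sigma(\mathbf{r}')\leq C(\Gamma)$), which yields $\|\partial_\omega^m K^*_\omega\|_{L^2(\Gamma)\to L^2(\Gamma)}\leq C_2(\Gamma,m) + C_3(\Gamma,m)|\omega|$ as claimed.

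The main obstacle I expect is bookkeeping rather than conceptual: carefully tracking, via the Leibniz expansion, exactly which terms in $\partial_\omega^m\partial_{\mathbf{n}(\mathbf{r})}G_\omega$ retain an $O(|\mathbf{r}-\mathbf{r}'|^{-2})$ strong singularity and which are reduced to weakly singular by cancellation of a power of $|\mathbf{r}-\mathbf{r}'|^{-1}$ against the $|\mathbf{r}-\mathbf{r}'|$ produced by $\partial_\omega$ of the oscillatory exponent — and verifying that the single strongly-singular contribution is precisely $\partial_\omega^m$ of the $\omega$-independent part, which is $0$ for $m\geq 1$, so that only weakly-singular pieces survive. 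A secondary technical point is justifying the interchange of the strong-derivative limit with the $L^2(\Gamma)$ integral in \eqref{Sderiv_freq}–\eqref{Kderiv_freq} uniformly in the density; this is handled cleanly by bounding the operator-norm of the difference quotient minus the candidate derivative through the $L^\infty$ (resp.\ weakly-singular) norm of the corresponding kernel difference, which tends to $0$ by dominated convergence since all the kernels in question, and their $\omega$-difference quotients, are dominated uniformly for $\omega$ in any compact interval by a fixed integrable-against-$L^2$ kernel. Once these kernel estimates are assembled, the statements of the lemma — strong differentiability, the identities $\partial_\omega^m S_\omega = S_{\omega,m}$, $\partial_\omega^m K^*_\omega = K^*_{\omega,m}$, and the bounds \eqref{SK_deriv_bounds} — all follow.
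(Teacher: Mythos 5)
Your proposal is correct and follows essentially the same route as the paper: both differentiate the explicit Green-function kernels in $\omega$, observe that for $m\ge 1$ the kernels of $S_{\omega,m}$ are bounded and those of $K^*_{\omega,m}$ are weakly singular with an at-most-linear $\omega$ factor, control the difference quotients by a uniform $O(\Delta\omega)$ kernel estimate, and conclude via $L^2(\Gamma)$-boundedness of the weakly singular (Laplace single-layer type) operator on a Lipschitz surface. The only cosmetic differences are that the paper uses a second-order Taylor expansion where you invoke the mean value theorem plus dominated convergence, and it cites the $L^2$-boundedness of $S_0$ rather than running a Schur test; both substitutions are sound.
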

\begin{proof}[Proof of \Cref{omega_explicit_norms_deriv_SK}.]
  We establish \Cref{Sderiv_freq} and \Cref{Kderiv_freq} by showing
  that there exist positive constants $\widetilde{C}_1$ and
  $\widetilde{C}_2$ dependent only on $\Gamma$ and $m$ such that for all sufficiently small
  $|\Delta \omega| > 0$ we have
\begin{equation}\label{Sop_freq_deriv_bound}
    \left\|\frac{S_{\widetilde\omega, m - 1} -
      S_{\omega,m-1}}{\Delta \omega} -
      S_{\omega,m}\right\|_{L^2(\Gamma) \to L^2(\Gamma)} \le \widetilde{C}_1|\Delta
      \omega|,
\end{equation}
and
\begin{equation}\label{Kop_freq_deriv_bound}
  \left\|\frac{K^*_{\widetilde\omega,m-1} -
      K^*_{\omega,m-1}}{\Delta \omega} -
    K^*_{\omega,m}\right\|_{L^2(\Gamma) \to L^2(\Gamma)} \le \widetilde{C}_2|\Delta
  \omega|.
\end{equation}
To do this we rely on the expressions
\begin{equation}\label{omeg_deriv_G}
    \partial_\omega^m G_\omega(\mathbf{r}, \mathbf{r}') = \left(\i\frac{|\mathbf{r} - \mathbf{r}'|}{c}\right)^m
    \frac{\e^{\i\frac{\omega}{c}|\mathbf{r}
    - \mathbf{r}'|}}{4\pi|\mathbf{r} - \mathbf{r}'|},
\end{equation}
and
\begin{equation}\label{omeg_deriv_dGdn}
    \begin{split}
        \partial_\omega^m &\frac{\partial G_\omega(\mathbf{r}, \mathbf{r}')}{\partial
        \mathbf{n}(\mathbf{r})} =\\
        &= \frac{(\mathbf{r} -
    \mathbf{r}')\cdot\mathbf{n}(\mathbf{r})}{4\pi|\mathbf{r} - \mathbf{r}'|^3}
    \e^{\i\frac{\omega}{c}|\mathbf{r} - \mathbf{r}'|} \left(\i \frac{|\mathbf{r} -
    \mathbf{r}'|}{c}\right)^m \left(\i\omega \frac{|\mathbf{r} - \mathbf{r}'|}{c} + m -
    1\right)\\
    &= \frac{\i}{c} \frac{(\mathbf{r} -
    \mathbf{r}')\cdot\mathbf{n}(\mathbf{r})}{4\pi|\mathbf{r} - \mathbf{r}'|^2}
    \e^{\i\frac{\omega}{c}|\mathbf{r} - \mathbf{r}'|} \left(\i \frac{|\mathbf{r} -
        \mathbf{r}'|}{c}\right)^{m-1} \left(\i\omega \frac{|\mathbf{r} - \mathbf{r}'|}{c} + m -
    1\right),
    \end{split}
\end{equation}
that are valid for all $m\in \mathbb{N}_0$.

In order to establish \Cref{Sop_freq_deriv_bound} and
\Cref{Kop_freq_deriv_bound}, we let $a \in L^2(\Gamma)$ and we note that the
difference between $S_{\omega,m}$ and the associated quotient of
increments, when applied to $a$, reads
\begin{equation}\label{Sbound_0}
  \begin{split}
      \int_\Gamma \left[ \frac{1}{\Delta \omega} \right.&\left.\vphantom{\frac{1}{\Delta \omega}}\left(\partial_\omega^{m-1}
    G_{\widetilde{\omega}} - \partial_\omega^{m-1} G_{\omega}\right) -
    \partial_\omega^m G_{\omega}\right] a(\mathbf{r}')\,\d\sigma(\mathbf{r}')\\
    &= \int_\Gamma \frac{1}{4\pi|\mathbf{r} - \mathbf{r}'|}\left[\frac{1}{\Delta \omega}\left\lbrace
      \left(\i\frac{|\mathbf{r} - \mathbf{r}'|}{c}\right)^{m-1}\left( \e^{\i\frac{\widetilde{\omega}}{c}|\mathbf{r} -
      \mathbf{r}'|} - \e^{\i\frac{\omega}{c}|\mathbf{r} - \mathbf{r}'|}\right)
        \right\rbrace\right.\\
        &\quad \quad \left.-
        \e^{\i\frac{\omega}{c}|\mathbf{r} - \mathbf{r}'|}\left(\i\frac{|\mathbf{r} - \mathbf{r}'|}{c}\right)^m \vphantom{\frac{1}{\Delta \omega}} \right]a(\mathbf{r}')\,\d\sigma(\mathbf{r}')\\
    &= \int_\Gamma \frac{1}{4\pi|\mathbf{r} - \mathbf{r}'|} \mathcal{Q}^1_{\Delta \omega}(\mathbf{r}, \mathbf{r}')
        a(\mathbf{r}')\,\d\sigma(\mathbf{r}'),
  \end{split}
\end{equation}
where $\mathcal{Q}_{\Delta \omega}^1$ is the term in square brackets
in the next to last integral.  Similarly, the difference between
$K^*_{\omega,m}$ and the associated quotient of increments, applied to
$a$, equals
\begin{equation}\label{Kbound_0}
  \begin{split}
      &\int_\Gamma \left[ \vphantom{\frac{1}{\Delta \omega}}\frac{1}{\Delta \omega} \left(\partial_\omega^{m-1} \frac{\partial
    G_{\widetilde{\omega}}}{\partial \mathbf{n}(\mathbf{r})} - \partial_\omega^{m-1} \frac{\partial G_{\omega}}{\partial
    \mathbf{n}(\mathbf{r})}\right) - \partial_\omega^m \frac{\partial G_{\omega}}{\partial
    \mathbf{n}(\mathbf{r})}\right] a(\mathbf{r}')\,\d\sigma(\mathbf{r}')\\
    &= \int_\Gamma\frac{(\mathbf{r} - \mathbf{r}')\cdot
        \mathbf{n}(\mathbf{r})}{4\pi|\mathbf{r} - \mathbf{r}'|^2}\left[\frac{1}{\Delta \omega}\left\lbrace
        \left(\i\frac{|\mathbf{r} - \mathbf{r}'|}{c}\right)^{m-1}\left(
        \i\frac{\widetilde{\omega}}{c}\e^{\i\frac{\widetilde{\omega}}{c}|\mathbf{r} - \mathbf{r}'|}
        - \i\frac{\omega}{c}\e^{\i\frac{\omega}{c}|\mathbf{r} - \mathbf{r}'|}\right)\right.\right.\\
        &\quad\quad\left.\left.+ \frac{\i}{c}(m -
        2)\left(\i\frac{|\mathbf{r} - \mathbf{r}'|}{c}\right)^{m-2}\left(\e^{\i\frac{\widetilde{\omega}}{c}|\mathbf{r} -
        \mathbf{r}'|} - \e^{\i\frac{\omega}{c}|\mathbf{r} -
        \mathbf{r}'|}\right)\right\rbrace\right.\\
        &\quad\quad \left.-
        \frac{\i}{c}\e^{\i\frac{\omega}{c}|\mathbf{r} - \mathbf{r}'|}\left(\i\frac{|\mathbf{r} -
        \mathbf{r}'|}{c}\right)^{m-1}\left(\i\frac{\omega}{c}|\mathbf{r} -
        \mathbf{r}'| + m - 1\right) \vphantom{\frac{1}{\Delta \omega}} \right]f(\mathbf{r}')\,\d\sigma(\mathbf{r}')\\
        &= \int_\Gamma \frac{(\mathbf{r} - \mathbf{r}')\cdot
        \mathbf{n}(\mathbf{r})}{4\pi|\mathbf{r} - \mathbf{r}'|^2} \mathcal{Q}^2_{\Delta \omega}(\mathbf{r}, \mathbf{r}')
        f(\mathbf{r}')\,\d\sigma(\mathbf{r}'),
  \end{split}
\end{equation}
where we have used the second expression in \Cref{omeg_deriv_dGdn}, and
where $\mathcal{Q}_{\Delta \omega}^2$ is the quantity in square
brackets in the next to last integral.

Since, for $m\in\mathbb{N}$ and for all
$\mathbf{r}, \mathbf{r}'\in\Gamma$, $\mathcal{Q}_{\Delta \omega}^1$
and $\mathcal{Q}_{\Delta \omega}^2$ amount to difference quotients of
smooth functions of $\omega$ minus the corresponding derivatives, use
of second-order Taylor expansions of $\mathcal{Q}_{\Delta \omega}^1$
and $\mathcal{Q}_{\Delta \omega}^2$ in the variable $\omega$ ensures
that, for certain constants $D_1$ and $D_2$,
$|\mathcal{Q}_{\Delta \omega}^1(\mathbf{r}, \mathbf{r}')| \le
D_1|\Delta \omega|$ and
$|\mathcal{Q}_{\Delta \omega}^2(\mathbf{r}, \mathbf{r}')| \le
D_2|\Delta \omega|$ for all $\mathbf{r}, \mathbf{r}' \in \Gamma$, and
for all $\Delta \omega$ in a bounded interval around
$\Delta \omega = 0$.

Using the bound on $\mathcal{Q}_{\Delta \omega}^1$ together with
\Cref{Sbound_0} we obtain
\begin{equation}\label{S_bound}
    \left|\left(\frac{S_{\widetilde{\omega},m-1} -
    S_{\omega,m-1}}{\Delta \omega} - S_{\omega,m}\right)[a](\mathbf{r})\right| \le D_1|\Delta\omega| \int_\Gamma
    \frac{1}{4\pi|\mathbf{r} - \mathbf{r}'|} |a(\mathbf{r}')|\,\d\sigma(\mathbf{r}').
\end{equation}
Similarly, using the bound for $\mathcal{Q}_{\Delta \omega}^2$
together with \Cref{Kbound_0} we obtain
\begin{equation}\label{K_bound}
    \left| \left(\frac{K^*_{\widetilde{\omega},m-1} -
    K^*_{\omega,m-1}}{\Delta \omega} - K^*_{\omega,m}\right)[a](\mathbf{r})\right| \le D_2|\Delta\omega| \int_\Gamma
    \frac{1}{4\pi|\mathbf{r} - \mathbf{r}'|} |a(\mathbf{r}')|\,\d\sigma(\mathbf{r}'),
\end{equation}
where the relation
$|(\mathbf{r} - \mathbf{r}')\cdot \mathbf{n}(\mathbf{r}) / |\mathbf{r}
- \mathbf{r}'|| \le 1$ was used.  But the integral expressions on the
right-hand sides of \Cref{S_bound} and \Cref{K_bound} are Laplace
single-layer operators, equal to $S_\omega$ with $\omega = 0$,
acting on $|a|$, and, thus, by~\eqref{single_layer_uniform} we obtain
\[
    \left\| \left(\frac{S_{\widetilde{\omega},m-1} -
    S_{\omega,m-1}}{\Delta \omega} - S_{\omega,m}\right)a\right\|_{L^2(\Gamma)} \le C D_1 |\Delta \omega|\left\|a\right\|_{L^2(\Gamma)}
\]
and
\[
    \left\| \left(\frac{K^*_{\widetilde{\omega},m-1} -
    K^*_{\omega,m}}{\Delta \omega} - K^*_{\omega,m}\right)a\right\|_{L^2(\Gamma)} \le C D_2 |\Delta \omega|\left\|a\right\|_{L^2(\Gamma)},
\]
where $C = C(\Gamma)$ denotes the operator norm
$\left\|S_0\right\|_{L^2(\Gamma)\to L^2(\Gamma)}$ of the Laplace
single-layer operator $S_0:L^2(\Gamma)\to L^2(\Gamma)$. The relations
\Cref{Sop_freq_deriv_bound} and \Cref {Kop_freq_deriv_bound}, and thus
\Cref{Sderiv_freq} and \Cref{Kderiv_freq}, follow directly.

In order to establish the estimates in \Cref{SK_deriv_bounds},
finally, let $a \in L^2(\Gamma)$ and $m\in\mathbb{N}$. Then,
using~\eqref{omeg_deriv_G} and the Cauchy-Schwarz inequality we obtain
\begin{equation*}
    \begin{split}
        \left\|\left(\partial_\omega^m S_\omega\right) a\right\|_{L^2(\Gamma)}^2 &= \int_\Gamma\left| \int_\Gamma
        \left(\i
    \frac{|\mathbf{r} - \mathbf{r}'|}{c}\right)^m \frac{\e^{\i\frac{\omega}{c}|\mathbf{r} - \mathbf{r}'|}}{4\pi|\mathbf{r} -
    \mathbf{r}'|} a(\mathbf{r}')\,\d\sigma(\mathbf{r}')\right|^2\,\d\sigma(\mathbf{r})\\
        &\le \left(\frac{\diam(\Gamma)^{m-1} (\mathrm{area}(\Gamma))}{4\pi c^m}\right)^2 \int_\Gamma
        \left|a(\mathbf{r}')\right|^2\,\d\sigma(\mathbf{r}')\\
        &= C_1^2(\Gamma, m) \left\|a\right\|^2_{L^2(\Gamma)}
    \end{split}
\end{equation*}
for some constant $C_1 = C_1(\Gamma, m)$ independent of $\omega$, from
which the first inequality in \Cref{SK_deriv_bounds}
follows. Similarly, using the second expression in
\Cref{omeg_deriv_dGdn} and the fact that
$|(\mathbf{r} - \mathbf{r}')\cdot \mathbf{n}(\mathbf{r}) / |\mathbf{r}
- \mathbf{r}'|| \le 1$ we obtain
\begin{equation*}
    \begin{split}
    \left\|\left(\partial_\omega^m K^*_\omega\right) a\right\|_{L^2(\Gamma)}^2 &=
    \int_\Gamma\left| \int_\Gamma \frac{\i}{c} \frac{(\mathbf{r} -
    \mathbf{r}')\cdot\mathbf{n}(\mathbf{r})}{4\pi|\mathbf{r} - \mathbf{r}'|^2}
    \e^{\i\frac{\omega}{c}|\mathbf{r} - \mathbf{r}'|} \left(\i \frac{|\mathbf{r} -
        \mathbf{r}'|}{c}\right)^{m-1}\right.\times \\
        &\quad\quad\times \left.\left(\i\omega \frac{|\mathbf{r} - \mathbf{r}'|}{c} + m - 1\right)a(\mathbf{r}')\,\d\sigma(\mathbf{r}')\right|^2\,\d\sigma(\mathbf{r})\\
        &\le \left[\frac{1}{c}\left(\frac{\diam(\Gamma)}{c}\right)^{m-1} \left(|\omega| \frac{\diam(\Gamma)}{c} + m -
        1\right)\right]^2\times\\
        &\quad\quad\times \int_\Gamma \left(\int_\Gamma \frac{|a(\mathbf{r}')|}{4\pi|\mathbf{r} -
        \mathbf{r}'|}\d\sigma(\mathbf{r}')\right)^2\,\d\sigma(\mathbf{r})\\
        &\le C'_1(\Gamma, m) \left(C'_2(\Gamma)|\omega| +C'_3(m)\right)^2 \left\|S_0 |a|\right\|^2_{L^2(\Gamma)}\\
        &\le  \left(C_2|\omega| + C_3\right)^2 \left\|a\right\|^2_{L^2(\Gamma)}
    \end{split}
\end{equation*}
where $C_j = C_j(\Gamma,m)$, $j=2,3$, are positive constants
independent of $\omega$, and where in the last step we used once again
the boundedness~\eqref{single_layer_uniform} in norm enjoyed by the
single-layer operator $S_0$. This establishes the second inequality in
\Cref{SK_deriv_bounds} and it completes the proof of the Lemma.
\end{proof}

\begin{lemma}\label{omega_explicit_norms_deriv_int_op}
  The operator $A_\omega^{(m)}$ in~\eqref{Aomegaop_deriv_def} is
  strongly infinitely differentiable at all $\omega \ne \pm \omega_0$,
  and for all $m\in\mathbb{N}$ we have
\begin{equation}\label{Adiv_freq}
  A_\omega^{(m)} = \lim_{\Delta \omega \to 0}
      \frac{A_{\widetilde\omega}^{(m-1)} -
      A_{\omega}^{(m-1)}}{\Delta \omega},\quad \widetilde{\omega} = \omega + \Delta \omega,
  \end{equation}
  in the sense of the $L^2(\Gamma)$ operator norm.  Further, for all
  $m\in\mathbb{N}$ and $\omega \ne \pm \omega_0$ we have
\begin{equation}\label{Aop_deriv_def}
  \partial_\omega^m A_\omega = A_\omega^{(m)},\;\;  A_\omega^{(m)} =
  \begin{cases} \partial_\omega^m K_\omega^* - \i\partial_\omega^m
    S_\omega,&\quad\mbox{for}\quad 0 \le |\omega| < \omega_0,\\
    \partial_\omega^m K_\omega^* - \i m \partial_\omega^{m-1}S_\omega -
    \i\omega\partial_\omega^m S_\omega,&\quad\mbox{for}\quad |\omega| >
    \omega_0,
  \end{cases}
\end{equation}
and $\partial_\omega^m A_\omega$ satisfies
\begin{equation}\label{Aop_deriv_bound}
  \norm{\partial_\omega^m A_{\omega}}_{L^2(\Gamma)\to L^2(\Gamma)} \le \alpha_0 +
  \alpha_1\omega
\end{equation}
for some $\omega$-independent positive constants
$\alpha_j = \alpha_j(\Gamma, m)$, $j=0,1$.
\end{lemma}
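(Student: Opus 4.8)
The plan is to derive all three assertions from Lemma~\ref{omega_explicit_norms_deriv_SK}, which already provides the strong differentiability of $S_{\omega,m}$ and $K^*_{\omega,m}$ together with the identities $\partial_\omega^m S_\omega = S_{\omega,m}$, $\partial_\omega^m K^*_\omega = K^*_{\omega,m}$ and the frequency-explicit operator bounds~\eqref{SK_deriv_bounds}. Everything else is bookkeeping on top of these facts, organized around the two branches of the definition~\eqref{Aomegaop_deriv_def}.

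First I would establish~\eqref{Adiv_freq}. Fix $\omega\ne\pm\omega_0$ and take $\Delta\omega$ small enough that $\widetilde\omega=\omega+\Delta\omega$ lies on the same side of the thresholds $\pm\omega_0$ as $\omega$; then $A^{(m-1)}_{\widetilde\omega}$ and $A^{(m-1)}_\omega$ are given by the same branch of~\eqref{Aomegaop_deriv_def}, so the difference quotient $\tfrac{1}{\Delta\omega}\bigl(A^{(m-1)}_{\widetilde\omega}-A^{(m-1)}_\omega\bigr)$ splits termwise. In the low-frequency branch $0\le|\omega|<\omega_0$ the difference quotient equals $\tfrac{1}{\Delta\omega}\bigl(K^*_{\widetilde\omega,m-1}-K^*_{\omega,m-1}\bigr)-\i\,\tfrac{1}{\Delta\omega}\bigl(S_{\widetilde\omega,m-1}-S_{\omega,m-1}\bigr)$, and~\eqref{Sderiv_freq}--\eqref{Kderiv_freq} give the $L^2(\Gamma)$-operator-norm limit $K^*_{\omega,m}-\i S_{\omega,m}=A^{(m)}_\omega$. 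In the high-frequency branch $|\omega|>\omega_0$ the only new feature is the product term $-\i\omega S_{\omega,m-1}$; writing $\widetilde\omega S_{\widetilde\omega,m-1}-\omega S_{\omega,m-1}=\omega\bigl(S_{\widetilde\omega,m-1}-S_{\omega,m-1}\bigr)+\Delta\omega\,S_{\widetilde\omega,m-1}$ and dividing by $\Delta\omega$, the first part tends to $\omega S_{\omega,m}$ by~\eqref{Sderiv_freq} while the second part tends to $S_{\omega,m-1}$ by the operator-norm continuity of $\omega\mapsto S_{\omega,m-1}$ (a consequence of its differentiability). Adding the contributions of $K^*$, of the fixed term $-\i(m-1)S_{\omega,m-2}$ (which is absent when $m=1$, its coefficient being $0$), and of the product term, the limit is $K^*_{\omega,m}-\i m S_{\omega,m-1}-\i\omega S_{\omega,m}=A^{(m)}_\omega$, matching the second branch of~\eqref{Aomegaop_deriv_def}. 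This proves~\eqref{Adiv_freq}, hence the strong differentiability of $A^{(m)}_\omega$ for $m\in\mathbb{N}$.

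Next I would obtain~\eqref{Aop_deriv_def} by a short induction on $m$: the case $m=0$ is trivial, and $\partial_\omega^m A^{(0)}_\omega=\partial_\omega\bigl(\partial_\omega^{m-1}A^{(0)}_\omega\bigr)=\partial_\omega A^{(m-1)}_\omega=A^{(m)}_\omega$ by the induction hypothesis and~\eqref{Adiv_freq}. Substituting $\partial_\omega^j S_\omega=S_{\omega,j}$ and $\partial_\omega^j K^*_\omega=K^*_{\omega,j}$ from Lemma~\ref{omega_explicit_norms_deriv_SK} into~\eqref{Aomegaop_deriv_def} yields the explicit branch formulas in~\eqref{Aop_deriv_def}. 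Finally, since $A_\omega-A^{(0)}_\omega=\frac12 I$ is independent of $\omega$, for $m\ge 1$ we have $\partial_\omega^m A_\omega=\partial_\omega^m A^{(0)}_\omega=A^{(m)}_\omega$.

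It remains to prove the bound~\eqref{Aop_deriv_bound}, which follows by the triangle inequality from~\eqref{SK_deriv_bounds}: for $0\le|\omega|<\omega_0$, $\bigl\|\partial_\omega^m A_\omega\bigr\|\le\bigl\|\partial_\omega^m K^*_\omega\bigr\|+\bigl\|\partial_\omega^m S_\omega\bigr\|\le(C_2+C_3|\omega|)+C_1$; for $|\omega|>\omega_0$, $\bigl\|\partial_\omega^m A_\omega\bigr\|\le\bigl\|\partial_\omega^m K^*_\omega\bigr\|+m\bigl\|\partial_\omega^{m-1}S_\omega\bigr\|+|\omega|\,\bigl\|\partial_\omega^m S_\omega\bigr\|\le(C_2+C_3|\omega|)+mC_1'+|\omega|C_1$, and in either case the right-hand side is dominated by $\alpha_0+\alpha_1|\omega|$ for suitable $\alpha_j=\alpha_j(\Gamma,m)$. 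I expect no serious difficulty: the one place needing genuine care is the manipulation of the product term $-\i\omega S_{\omega,m-1}$ in the high-frequency regime within the proof of~\eqref{Adiv_freq}, where the increment of $\omega$ must be separated from the increment of $S_{\omega,m-1}$ and the norm-continuity of the latter invoked; all the rest is direct bookkeeping on the branch formulas and the estimates of Lemma~\ref{omega_explicit_norms_deriv_SK}.
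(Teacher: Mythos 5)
Your proposal is correct and follows essentially the same route as the paper's proof: both derive \eqref{Adiv_freq} and \eqref{Aop_deriv_def} from Lemma~\ref{omega_explicit_norms_deriv_SK} together with the product differentiation rule applied to the $-\i\omega S_{\omega,m-1}$ term, and both obtain \eqref{Aop_deriv_bound} by the triangle inequality from \eqref{SK_deriv_bounds}. You simply spell out the branch-by-branch bookkeeping (and the observation that $A_\omega-A^{(0)}_\omega=\tfrac12 I$ is constant) that the paper compresses into a single sentence.
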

\begin{proof}
  Equations \Cref{Adiv_freq} and \Cref{Aop_deriv_def} follow directly
  from \Cref{omega_explicit_norms_deriv_SK} and the product
  differentiation rule. The frequency-explicit operator norm
  bound~\cref{Aop_deriv_bound} follows immediately from the estimates
  \Cref{SK_deriv_bounds} and~\Cref{Aop_deriv_def}. Indeed, for
  $0 \le |\omega| < \omega_0$, using the first case
  in~\Cref{Aop_deriv_def} we obtain
  \begin{equation}\label{Aomega_lowfreq_op_bound}
    \norm{\partial_\omega^m A_{\omega}}_{L^2(\Gamma)\to L^2(\Gamma)} \le
    \norm{\partial_\omega^m K_\omega^*}_{L^2(\Gamma)\to L^2(\Gamma)}
    + \norm{\partial_\omega^m S_\omega}_{L^2(\Gamma)\to L^2(\Gamma)},
  \end{equation}
  while for $|\omega| > \omega_0$, from the second case
  in~\Cref{Aop_deriv_def} we have
  \begin{equation}\label{Aomega_op_bound}
    \begin{split}
      \norm{\partial_\omega^m A_{\omega}}_{L^2(\Gamma)\to L^2(\Gamma)} &\le
      \norm{\partial_\omega^m K_\omega^*}_{L^2(\Gamma)\to L^2(\Gamma)}\\
      &+ m\norm{\partial_\omega^{m-1} S_\omega}_{L^2(\Gamma)\to L^2(\Gamma)} +
      \omega \norm{\partial_\omega^m S_\omega}_{L^2(\Gamma)\to L^2(\Gamma)}.
    \end{split}
  \end{equation}
  In either case~\eqref{Aop_deriv_bound} follows for some constants
  $\alpha_j = \alpha_j(\Gamma, m)$, and the proof is complete.
\end{proof}

\begin{lemma}\label{int_eq_freq_regularity}
  Let $\widetilde{b}$ satisfy the assumptions of
  Lemmas~\ref{3d_decay_lemma_2ndkind} and~\ref{3d_decay_thm_h_equiv}
  and let $\widetilde{\psi}$ denote the solution to
  \Cref{eq:tdie_sl_generic}. Further, using the expressions for
  $\eta_0(\omega)$ and $\widetilde{H}^f_T$ given in
  \Cref{Aomega_def_eqn} and \Cref{Ht_def_generic}, respectively,
  define $\widetilde{R}_T(\mathbf{r}, \omega)$ by
\begin{equation}\label{Rdef}
    \widetilde{R}_T(\mathbf{r}, \omega) = \gamma^- \partial_\mathbf{n} \widetilde{H}^f_T(\mathbf{r}, \omega) - \i\eta_0(\omega) \gamma^-
    \widetilde{H}^f_T(\mathbf{r}, \omega),
\end{equation}
for $\omega \ge 0$, and by Hermitian symmetry for $\omega < 0$:
$\widetilde{R}_T(\mathbf{r},
\omega)=\widebar{\widetilde{R}_T(\mathbf{r}, -\omega)}$.  Then
$\widetilde{R}_T \in C^\infty(\mathbb{R}\setminus \pm \omega_0;
L^2(\Gamma))$, the solution $\widetilde{\psi}^f_{+,T}$ to
\Cref{CFIE_proof_generic} satisfies
$\widetilde{\psi}_{+,T}^f \in C^\infty(\mathbb{R}; L^2(\Gamma))$, and,
letting $\mu = \widetilde{\psi}_{+,T}^f$ and $a_k^p = {{p}\choose{k}}$
($k=1,\dots,p$, $p\in\{\mathbb{N}\cup\{0\})$, for all
$\omega \in \mathbb{R}^+ \setminus \{ \omega_0\}$ and for all
non-negative integers $p$ we have
\begin{equation}\label{mu_freq_deriv}
  \left(A_\omega \left( \partial_\omega^p \mu\right)\right)(\mathbf{r}, \omega) =
    \partial_\omega^p \widetilde{R}_T(\mathbf{r}, \omega) - \sum_{k=1}^p a_k^p \left(\partial_\omega^k
  A_\omega\right)\left( \partial_\omega^{p - k} \mu\right)(\mathbf{r}, \omega).
\end{equation}
 \end{lemma}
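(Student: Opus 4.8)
The whole argument is a ``differentiate-under-an-operator-product'' computation resting on two ingredients already available: the operator-norm smoothness and frequency-explicit bounds of $S_\omega$, $K_\omega^*$ and $A_\omega$ from Lemmas~\ref{omega_explicit_norms_deriv_SK} and~\ref{omega_explicit_norms_deriv_int_op}, and the bootstrap domain-of-dependence identities~\eqref{h_equiv}--\eqref{h_compactsupp} of \Cref{3d_decay_thm_h_equiv}, which guarantee that $\widetilde h_T(\mathbf r,\cdot)$ is \emph{compactly supported} in time for $\mathbf r\in\widebar\Omega$. The plan is: (i) deduce that $\gamma^-\widetilde H^f_T$ and $\gamma^-\partial_\mathbf n\widetilde H^f_T$ depend smoothly on $\omega$; (ii) conclude from this, and from the $\eta$-freedom of \Cref{psi_omega_independent}, that $\widetilde R_T\in C^\infty(\mathbb R^+\setminus\{\omega_0\};L^2(\Gamma))$ and that $\mu=\widetilde\psi^f_{+,T}\in C^\infty(\mathbb R;L^2(\Gamma))$; (iii) obtain~\eqref{mu_freq_deriv} by applying the Leibniz rule to the second-kind identity $A_\omega\mu=\widetilde R_T$ of~\eqref{CFIE_proof_generic}.

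For step (i), I would first record that $\gamma^-\widetilde h_T$ and $\gamma^-\partial_\mathbf n\widetilde h_T$ are elements of $L^2(\mathbb R;L^2(\Gamma))$: the former because $\gamma^+\widetilde h_T$ equals the boundary single-layer convolution $S\widetilde\psi_{+,T}$ of~\eqref{int_eq_lambda}, which is bounded on $L^2(\mathbb R;L^2(\Gamma))$ by the uniform bound $\|S_\omega\|\le D_1$, and $\widetilde\psi_{+,T}\in L^2(\mathbb R;L^2(\Gamma))$ by \Cref{psi_Hp}; the latter via the single-layer jump relations together with the $L^2(\Gamma)$-regularity of the relevant interior normal-derivative traces established inside the proof of \Cref{3d_decay_lemma_2ndkind}. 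Because, by~\eqref{h_compactsupp}, the interior trace of $\widetilde h_T$ --- hence also $\gamma^-\widetilde h_T$ and $\gamma^-\partial_\mathbf n\widetilde h_T$ --- is supported in the bounded interval $[T-\tau,T+T_*]$, those functions in fact lie in $L^1(\mathbb R;L^2(\Gamma))$, and so do $(-\i t)^m\gamma^-\widetilde h_T$ and $(-\i t)^m\gamma^-\partial_\mathbf n\widetilde h_T$ for every $m$. Differentiating the (Bochner) Fourier integral then shows that $\omega\mapsto\gamma^-\widetilde H^f_T(\mathbf r,\omega)$ and $\omega\mapsto\gamma^-\partial_\mathbf n\widetilde H^f_T(\mathbf r,\omega)$ are $C^\infty$ as $L^2(\Gamma)$-valued maps, with $\partial_\omega^m$ equal to the Fourier transform of $(-\i t)^m$ times the corresponding trace. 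This is the only delicate step: once the compact support of $\widetilde h_T$ --- the real engine, supplied by \Cref{3d_decay_thm_h_equiv} --- is combined with the $L^2$-membership just described, unrestricted differentiation under the Fourier integral is licensed by dominated convergence.

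For step (ii), on each of $(0,\omega_0)$ and $(\omega_0,\infty)$ the factor $\eta=\eta_0$ of~\eqref{Aomega_def_eqn} is smooth, so~\eqref{Rdef} and step (i) give $\widetilde R_T\in C^\infty(\mathbb R^+\setminus\{\omega_0\};L^2(\Gamma))$ (and, by the Hermitian symmetry $\widetilde R_T(\mathbf r,-\omega)=\overline{\widetilde R_T(\mathbf r,\omega)}$, smoothness on $\mathbb R^-\setminus\{-\omega_0\}$), as asserted. For $\mu$ I would exploit that, by \Cref{psi_omega_independent} and the combination of~\eqref{single_layer_lambda} and~\eqref{psiplus_2nd_kind} in the proof of \Cref{3d_decay_lemma_2ndkind}, $\mu$ solves $A_{\omega,\eta}\mu=(\gamma^-\partial_\mathbf n-\i\eta\gamma^-)\widetilde H^f_T$ for \emph{every} real $\eta$; choosing the fixed value $\eta=1$ gives $\mu=A_{\omega,1}^{-1}\bigl(\gamma^-\partial_\mathbf n-\i\gamma^-\bigr)\widetilde H^f_T$ for all $\omega\ge0$, a representation free of the $\omega_0$-discontinuity. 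Since $\omega\mapsto A_{\omega,1}$ is $C^\infty$ in operator norm (\Cref{omega_explicit_norms_deriv_SK}), is boundedly invertible for $\omega\ge0$ by \Cref{A_om_invert}, and hence --- by a Neumann-series perturbation of $A_{0,1}$ --- also for $\omega$ slightly negative, the identity $\partial_\omega(A_{\omega,1}^{-1})=-A_{\omega,1}^{-1}(\partial_\omega A_{\omega,1})A_{\omega,1}^{-1}$ (from $A_{\widetilde\omega,1}^{-1}-A_{\omega,1}^{-1}=A_{\widetilde\omega,1}^{-1}(A_{\omega,1}-A_{\widetilde\omega,1})A_{\omega,1}^{-1}$ and operator-norm continuity of inversion) and its iterates show $A_{\omega,1}^{-1}\in C^\infty$ in operator norm there; the Leibniz product rule for an operator-norm-$C^\infty$ family acting on the vector-valued $C^\infty$ map of step (i) then yields $\mu\in C^\infty((-\delta,\infty);L^2(\Gamma))$, and the Hermitian symmetry of $\mu$ (\Cref{negative_freq}), together with a short continuation argument showing that this extension coincides with the symmetric one near $0$, yields $\mu\in C^\infty(\mathbb R;L^2(\Gamma))$.

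For step (iii), on each of $(0,\omega_0)$ and $(\omega_0,\infty)$ every factor in the identity $A_\omega\mu=\widetilde R_T$ of~\eqref{CFIE_proof_generic} is $C^\infty$, so I would apply the general Leibniz rule $\partial_\omega^p(A_\omega\mu)=\sum_{k=0}^p\binom{p}{k}(\partial_\omega^kA_\omega)(\partial_\omega^{p-k}\mu)$ --- proved by induction from the $C^1$ product rule for an operator-norm-differentiable family times a vector-valued differentiable function, using that $\partial_\omega^kA_\omega=A^{(k)}_\omega$ exist in operator norm (\Cref{omega_explicit_norms_deriv_int_op}) and the vector derivatives $\partial_\omega^{p-k}\mu$ exist by step (ii) --- then isolate the $k=0$ summand $A_\omega\partial_\omega^p\mu$ and rearrange, arriving at~\eqref{mu_freq_deriv} with $a^p_k=\binom{p}{k}$ for all $p\ge1$ and all $\omega\in\mathbb R^+\setminus\{\omega_0\}$; the excluded frequency $\omega_0$ is exactly the point at which $A_\omega$ and $\widetilde R_T$ (through the piecewise definition of $\eta_0$) cease to be differentiable. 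The main obstacle in the whole proof is the bookkeeping of step (i) --- verifying the $L^2(\mathbb R;L^2(\Gamma))$-membership of the two boundary traces of $\widetilde h_T$ --- after which everything is routine calculus in the operator-norm and Bochner-$L^2$ settings.
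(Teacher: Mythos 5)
Your proposal is correct and follows essentially the same route as the paper: the compact temporal support of $\widetilde h_T$ from \Cref{3d_decay_thm_h_equiv} licenses differentiation under the Fourier integral to get smoothness of $\widetilde H^f_T$ and hence of $\widetilde R_T$ away from $\pm\omega_0$; the resolvent identity $\partial_\omega A_\omega^{-1}=-A_\omega^{-1}(\partial_\omega A_\omega)A_\omega^{-1}$ combined with \Cref{omega_explicit_norms_deriv_int_op} gives smoothness of $\mu$; and the Leibniz rule applied to $A_\omega\mu=\widetilde R_T$ yields~\eqref{mu_freq_deriv}. The only cosmetic difference is that you remove the $\omega_0$-discontinuity for $\mu$ by fixing $\eta=1$ via \Cref{psi_omega_independent}, whereas the paper invokes the same remark to note that $\widetilde\psi^f_{+,T}$ is independent of the choice of $\omega_0$ — two faces of the same observation.
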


\begin{proof}
  According to \Cref{h_compactsupp}, $\widetilde{h}_T$ is compactly
  supported as a function of time within the temporal interval
  $[\alpha,T-T_*-2\tau]$. Thus, by differentiation under the
  Fourier-transform integral sign we see that the Fourier transform
  $\widetilde{H}^f_T$ of $\widetilde{h}_T$ is infinitely
  differentiable in the sense introduced at the end of
  \Cref{sob-boch}:
  $\widetilde{H}^f_T \in C^\infty(\mathbb{R}; L^2(\Gamma))$. (All necessary
  differentiations under the integral sign are easily justified using
  the dominated convergence theorem, since $\widetilde{h}_T$ is
  compactly supported.) It follows that the function $\widetilde{R}_T$
  defined by \Cref{Rdef} is infinitely differentiable with respect to
  $\omega$ for each $\omega \in \mathbb{R}^+ \setminus \omega_0$ (see
  also \Cref{negative_freq} concerning negative $\omega$).

  It remains to show that
  $\mu = \widetilde{\psi}_{+,T}^f \in C^\infty(\mathbb{R};
  L^2(\Gamma))$ and that \Cref{mu_freq_deriv} holds. Suppressing $\mathbf{r}$-dependence, consider the equation
  \[
    A_\omega \mu(\omega) = R_T(\omega)\quad\mbox{or, equivalently}\quad \mu(\omega) = A^{-1}_\omega R_T(\omega).
  \]
  To establish the differentiability of $\mu$ we first show that the
  operator $A^{-1}_\omega$ is strongly infinitely differentiable (the quotients of increments converge strongly, as in Lemmas~\ref{omega_explicit_norms_deriv_SK} and~\ref{omega_explicit_norms_deriv_int_op}) with
  respect to $\omega$, and that
\begin{equation}\label{inv_der_expl}
   \partial_\omega A_{\omega}^{-1} = -A_{\omega}^{-1}\left( \partial_\omega A_{\omega}\right) A_{\omega}^{-1}.
 \end{equation}
 To do this we first note that $A_\omega^{-1}$ is strongly continuous
 with respect to $\omega$ (cf.~\cite{Ramm:04}), as it follows
 directly from the relations
 \begin{equation*}
   \begin{split}
     \left\|A^{-1}_{\omega_* + \Delta \omega} - A^{-1}_{\omega_*}\right.&\hspace{-1.5mm}\left.\right\|_{L^2(\Gamma) \to L^2(\Gamma)} = \left\|A^{-1}{\omega_* + \Delta \omega} \left( A_{\omega_* + \Delta \omega} - A_{\omega_*}\right) A^{-1}_{\omega^*}\right\|_{L^2(\Gamma) \to L^2(\Gamma)}\\
     &\le C \left\|A_{\omega_* + \Delta \omega} - A_{\omega_*}\right\|_{L^2(\Gamma) \to L^2(\Gamma)} \to  0\quad\mbox{ as }\quad \Delta \omega \to 0.
   \end{split}
   \end{equation*}
Then, using \Cref{omega_explicit_norms_deriv_int_op} together with the
identity
\[
\frac{A^{-1}_{\omega_* + \Delta \omega} - A^{-1}_{\omega_*}}{\Delta \omega} = -A^{-1}_{\omega_*} \frac{A_{\omega_* + \Delta \omega} - A_{\omega_*}}{\Delta \omega} A^{-1}_{\omega_* + \Delta \omega},
\]
in view of the strong continuity of $A_\omega$ with respect to
$\omega$, it follows that $A_\omega^{-1}$ is strongly differentiable,
and that for every $\omega \in \mathbb{R}^+ \setminus \omega_0$
equation \Cref{inv_der_expl} holds.  Utilizing the easily established
rule for differentiation of an operator product of the form
$B_{\omega}C_{\omega}$ together with~\eqref{inv_der_expl}, further,
shows that $A_{\omega}^{-1}$ is infinitely differentiable.  The
differentiability of $\mu = \widetilde{\psi}_{+,T}^f$ to all orders
and for all real frequencies $\omega$ then follows
directly---including at $\omega = \omega_0$, since, by
\Cref{3d_decay_lemma_2ndkind_wellposed}, $\widetilde{\psi}_{+,T}^f$
does not depend on the specific choice of $\omega_0$. The
expression~\eqref{mu_freq_deriv}, finally, results from an application
of Leibniz's differentiation rule to the quantity
$A_\omega\mu_\omega$. The proof is complete.
\end{proof}

\bibliographystyle{amsplain}
\bibliography{tdie}
\end{document}